\numberwithin{equation}{section}
\renewcommand{\L}{{\mathcal L}}
\allowdisplaybreaks \allowdisplaybreaks[4]
\begin{document}

\title[Besov spaces on Grushin spaces ]{Geometric topics   related to  Besov type spaces on the Grushin setting}

\author[N. Zhao]{Nan Zhao}
\address{School of Mathematics and Physics, University of Science and Technology Beijing, Beijing 100083, China}
\email{zhaonanmath@126.com}

\author[Z. Wang]{Zhiyong Wang}
\address{School of Mathematics and Physics, University of Science and Technology Beijing, Beijing 100083, China}
\email{zywang199703@163.com}

\author[P. Li]{Pengtao Li}
\address{School of Mathematics and Statistics, Qingdao University, Qingdao, 266071, China}
\email{ptli@qdu.edu.cn}\thanks{P.T. Li was supported by the National
Natural Science Foundation of China  (No.\,12071272) and the
Shandong Natural Science Foundation of China (No.\,ZR2020MA004). }

\author[Y. Liu]{Yu Liu}
\address{School of Mathematics and Physics, University of Science and Technology Beijing, Beijing 100083, China}
\email{liuyu75@pku.org.cn}
\thanks{Y. Liu was supported by  the Beijing Natural Science
Foundation of China (No.\,1232023),  the National Natural Science
Foundation of China (No.\,12271042) and    the National Science and
Technology Major Project of China (No. J2019-I-0019-0018, No.
J2019-I-0001-0001).}
\thanks {The corresponding author
is Yu Liu, E-mail: liuyu75@pku.org.cn}


\maketitle

\begin{abstract}

The Grushin spaces, as one of the most
important models in the Carnot-Carath\'eodory space,    are a class
of locally compact and geodesic metric spaces which admit a
dilation. Function spaces on Grushin spaces and some related
geometric problems are always the research hotspots in this field.
Firstly, we investigate two classes of Besov type spaces based on
the Grushin semigroup and the fractional Grushin semigroup,
respectively, and prove some important properties of these two Besov
type spaces. Moreover, we also reveal the relationship between them.
Secondly, we establish the isoperimetric inequality for the
fractional perimeter, which is defined by the Grushin-Laplace
operator on Grushin spaces. Finally, we combine the semigroup theory
with a nonlocal calculus for the Grushin-Laplace operator to obtain
the Sobolev type inequality. As a corollary, we also obtain the
embedding theorem for Besov type spaces.
\end{abstract}

\hspace{0.2cm}{\Small {\bf Keywords:}{ Besov space, Grushin space,
fractional perimeter, isoperimetric inequality, Sobolev type
inequality.}}

\hspace{0.2cm}{\Small {\bf 2010 Mathematics Subject Classification:}
{30H25, 53C17,
28A75, 35R11}}

\tableofcontents
 \pagenumbering{arabic}
\numberwithin{equation}{section}
\newtheorem{theorem}{Theorem}[section]
\newtheorem{lemma}[theorem]{Lemma}
\newtheorem{definition}[theorem]{Definition}
\newtheorem{corollary}[theorem]{Corollary}
\newtheorem{proposition}[theorem]{Proposition}
\newtheorem{claim}[theorem]{Claim}
\newtheorem{remark}[theorem]{Remark}
\newtheorem{assumption}[theorem]{Assumption}
\newtheorem{example}[theorem]{Example}
\allowdisplaybreaks

\section{Introduction}
Besov spaces play an important role in the theory of partial differential equations,
 and can also be regarded as a generalization of Sobolev spaces. In the literature, many scholars
 have shown great interest in Besov type spaces under different frameworks,
 see \cite{ABC2020,CG2022,GT2020,GL2015,P2010} and the references
 therein. Recently, more and more scholars pay attention to some geometric topics related to Besov type spaces,
 such as fractional perimeters, isoperimetric inequalities,  and so on.
As a continuation of the previous research, we focus on Besov type
spaces and some related geometric problems on the Grushin setting.

As we know, isoperimetric inequalities occupy a central position in
many areas of mathematics such as geometry, linear and nonlinear
PDEs, and probability theory. In the Euclidean setting, the classical
isoperimetric inequality states that for all measurable sets
$E\subset\mathbb{R}^{n},$ one has
\begin{equation}\label{CII}
P(E)\geq n\omega_{n}^{1/n}|E|^{(n-1)/n},
\end{equation}
where $\omega_{n}$ is defined as the $n$-dimensional Lebesgue measure
of the unit ball in $\mathbb{R}^{n}$.
Here $P(E)$ denotes the distributional
perimeter of $E$ which coincides with the $(n-1)$-dimensional measure of
$\partial E$ when $E$ has a smooth boundary. It should be mentioned that
De Giorgi \cite{De1954,De1958} first provided a complete proof of (\ref{CII}) by exploiting the concept of perimeter.
 Specifically, he proved that equality holds in (\ref{CII})  if and only if $E$ is a ball.
Later, these results have sparked intense research interest and have
been used and extended in many different directions. For instance,
we refer to (\cite{A1998,CDG1994,CDPT2007,F2003,GN1996,MM}) for the
  Carnot-Carath\'eodory spaces, (\cite{bor,car}) for the
  Gauss spaces and (\cite{sch}) for  the
hyperbolic spaces.

As an alternative to classical perimeters, fractional perimeters
started to gain attraction in study of nonlocal minimal surfaces,
phase transitions, fractal sets and many other problems over the
previous decade (see for example \cite{maz,val,V1991}). Specially,
Caffarelli, Roquejoffre and Savin \cite{CRS2010} considered the
following nonlocal perimeter in the study of problems related to the
nonlocal minimal surface. For any $s\in(0,1/2),$ a measurable set
$E\subset\mathbb{R}^{n}$ is said to have finite $s$-perimeter if
\begin{align}\label{Ps}
P_{s}(E):=\int_{\mathbb{R}^{n}}\int_{\mathbb{R}^{n}}\frac{|\mathbf{1}_{E}(x)-\mathbf{1}_{E}(y)|^{2}}{|x-y|^{n+2s}}
dxdy=2\int_{\mathbb{R}^{n}\setminus E}\int_{E}\frac{dxdy}{|x-y|^{n+2s}}<\infty.
\end{align}
It is worth noting that (\ref{Ps}) is equivalent to
$\mathbf{1}_{E}\in W^{s,2}(\mathbb{R}^{n}),$ where
$W^{s,p}(\mathbb{R}^{n})$ is defined as the Banach space of
functions $u\in L^{p}(\mathbb{R}^{n})$ with finite
Aronszajn-Gagliardo-Slobedetzky semi-norm, that is,
\begin{align}\label{AGS}
[u]_{s,p}:=\bigg(\int_{\mathbb{R}^{n}}\int_{\mathbb{R}^{n}}\frac{|u(x)-u(y)|^{p}}{|x-y|^{n+ps}}dxdy\bigg)^{1/p}<\infty
\end{align}
for $p\in[1,\infty)$ and $s>0.$ This implies that
\begin{align}\label{AGSP}
P_{s}(E)=[\mathbf{1}_{E}]^{2}_{s,2}=[\mathbf{1}_{E}]_{2s,1}.
\end{align}
 Using a symmetrization result established by Almgren and Lieb
\cite{AL1989}, we can see that
the nonlocal version of the classical isoperimetric inequality (\ref{CII}) is in fact true.
More precisely,
given $s\in(0,1/2),$ there exists a positive constant $C,$ depending on $n$ and $s,$
such that for any measurable set $E\subset\mathbb{R}^{n}$ with $|E|<\infty,$
\begin{align}\label{nonlocal}
P_{s}(E)\geq C|E|^{(n-2s)/n}.
\end{align}
Then Frank and Seiringer \cite{FS2008} proved that equality holds in
(\ref{nonlocal}) precisely for balls (up to sets of measure zero). A
stability version has been recently established by Fusco, Millot,
and Morini \cite{FMM2011}. Inspired by Ledoux's powerful and
flexible semigroup method for isoperimetric inequalities in the
local case in \cite{L1994}, many scholars have made profound researches
on (\ref{nonlocal}) from different perspectives.
For instance, Garofalo and Tralli \cite{GT2020} proved the  nonlocal
isoperimetric inequality such as (\ref{nonlocal}) adapted to a class
of Kolmogorov-Fokker-Planck operators which are of interest in
 analysis, physics and the
applied sciences. These operators are degenerate and do not possess
a variational structure. Recently, Alonso-Ruiz et al. \cite{ABC2020}
introduced the heat semigroup-based Besov classes in the general
framework of Dirichlet spaces and established isoperimetric
inequalities such as (\ref{nonlocal}).
 We also refer the reader to \cite{ABCRST2020,ABCRST2021,MPPP2007,P2004} for more
insights on such a heat-kernel approach to perimeters and isoperimetric properties.

 Grushin spaces are a important model in
Carnot-Carath\'eodory spaces and it appears in the theory of
hypoelliptic operators (see \cite{G1970,G1971,W}).  Let
$\mathbb{R}^{n}=\mathbb{R}^{m}\times\mathbb{R}^{k},$ where
$m,k\geq1$ are integers and $n=m+k.$ Let $\alpha\geq0$. A Grushin
space, denoted as $\mathbb{G}_{\alpha}^{n}$, is the Euclidean space
$\mathbb{R}^{n}$ endowed with the Carnot-Carath\'{e}odory distance
$d_{\alpha}$ associated to the family of vector fields
$$X_{\alpha}=\{X_{1},X_{2},\ldots,X_{m},X_{m+1},X_{m+2},\ldots,X_{m+k}\}
$$ with
\begin{equation}\label{vector}
\left\{ \begin{aligned}
         &X_{i}=\partial_{x_{i}},&\ \ i=1,2,\ldots,m; \\
         &X_{m+j}=|x|^{\alpha}\partial_{y_{j}},&\ \ j=1,2,\ldots,k,
\end{aligned} \right.
\end{equation}
where $|x|$ is the standard norm of $x\in\mathbb{R}^{m}.$
Especially, if
$m=k=1,\mathbb{G}_{\alpha}^{2}:=(\mathbb{R}^{2},d_{\alpha})$ is
called the Grushin plane. On the Grushin plane, Monti and Morbidelli
\cite{MM} obtained  the sharp constant and extremal sets for the
isoperimetric inequality such as (\ref{CII}). Subsequently,
Franceschi and Monti \cite{FM2016} studied the isoperimetric problem
on Grushin spaces (regarded as the high-dimensional case of the
Grushin plane). Via a symmetry assumption that depends on the
dimension, they proved the existence, additional symmetry, and
regularity of an isoperimetric set on Grushin spaces. For a more
detailed introduction to Grushin spaces, please refer to Section
\ref{sec-2}.

In what follows, we recall the following differential
operators on Grushin spaces,
\begin{equation}\label{K}
  \mathcal{L}:=-\Delta_{x}-|x|^{2\alpha}\Delta_{y}
\end{equation}
where $\Delta_{x}$ and $\Delta_{y}$ denote Laplace operators with the
variables $x\in\mathbb{R}^{m}$ and $y\in\mathbb{R}^{k},$
respectively. The differential operator $\mathcal{L}$ is known in
the literature as the Grushin-Laplace operator (see \cite{G1970}),
and it is hypoelliptic for $\alpha\in\mathbb{N}.$ Franchi and
Lanconelli \cite{FL} proved the H\"older regularity of the weak
solutions of $\mathcal{L}u=0$ by using Moser's technique.

The first aim of this paper is to investigate Besov type spaces in Grushin settings. In Section \ref{sec-3}, we introduce two classes of Besov type spaces
$B_{p,q}^{\mathcal{L},\beta}(\mathbb{G}^{n}_{\alpha})$ and
$B_{p,q}^{\L^s,\beta}(\mathbb{G}^{n}_{\alpha})$ for
$(p,q,\beta,s)\in[1,\infty)\times[1,\infty]\times(0,\infty)\times(0,1),$ which are
 generated
by the semigroup $\{e^{-t\L}\}_{t>0}$ and the fractional semigroup
$\{e^{-t\L^s}\}_{t>0}$, respectively. We first prove several basic properties of $B_{p,q}^{\mathcal{L},\beta}(\mathbb{G}^{n}_{\alpha})$ and
$B_{p,q}^{\L^s,\beta}(\mathbb{G}^{n}_{\alpha})$. Then we prove that
$B_{p,q}^{\mathcal{L},\beta}(\mathbb{G}^{n}_{\alpha})$ and
$B_{p,q}^{\L^s,\beta}(\mathbb{G}^{n}_{\alpha})$ coincide with the Besov type spaces $B^{\beta}_{p,q}(\mathbb G^{n}_{\alpha})$ defined via the difference. Theorems \ref{com-1}\ \&\ \ref{com-2} indicate that for $u\in L^{p}(\mathbb G^{n}_{\alpha})$, the following three statements are equivalent:
$$\left\{\begin{aligned}
&\bigg(\int_{0}^{\infty}\Big(\int_{\mathbb{G}^{n}_{\alpha}}\int_{B(g,t)}\frac{|u(g)-u(g')|^{p}}{t^{2\beta p}|B(g,t)|}dg'dg\Big)^{q/p}\frac{dt}{t}\bigg)^{1/q}<\infty;\\
&\bigg(\int_{0}^{\infty}
\Big(\int_{\mathbb{G}^{n}_{\alpha}}e^{-t\mathcal{L}}(|u-u(g)|^{p})(g)dg\Big)^{q/p}\frac{dt}{t^{{\beta q}+1}}\bigg)^{1/q}<\infty;\\
&\bigg(\int_{0}^{\infty}
\Big(\int_{\mathbb{G}^{n}_{\alpha}}e^{-t\mathcal{L}^{s}}(|u-u(g)|^{p})(g)dg\Big)^{q/p}\frac{dt}{t^{{\beta q}/{s}+1}}
\bigg)^{1/q}<\infty.
\end{aligned}\right.$$

The second aim of this paper is to establish isoperimetric
inequalities for fractional perimeters $P^{\mathcal{L}}_{s},
P^{\mathcal{L},*}_{s}$ and $P^{\mathcal{L}}_{s,\infty}$ on the
Grushin setting, respectively. For the definitions of these three
perimeters, please refer to Definitions \ref{perimeter},
\ref{perimeter1} \& \ref{FP3}. Different from \cite{FM2016,MM}, we
define the fractional perimeter mainly by means of the interplay
between the semigroup $\{e^{-t\L}\}_{t>0}$ and the Grushin-Laplace
operator $\L$, which provides a different perspective to   nonlocal
interactions and the minimal surfaces. Notice that any non-empty
bounded open set $E$ has infinite $s$-perimeter as soon as $s\geq
1/2.$ This fact explains the restriction on the range of $s$ (see
\cite[Remark 4.2]{GT2020}, or the explicit
              constant in \cite[Proposition 1.1]{G2020}).

\begin{theorem}\label{main1}
Let $\mathcal{L}$ be as in (\ref{K}) and $s\in(0,1/2).$ For any
$E\subset \mathbb{G}_{\alpha}^{n}$ with finite $s$-perimeter, there exists a positive constant
$C(Q,s),$ depending on $Q$ and $s,$ such that
\begin{align}\label{Per}
\mathscr{P}^{\mathcal{L}}_{s}(E)\geq C(Q,s)|E|^{(Q-2s)/Q},
\end{align}
where $Q$ is the homogeneous dimension of
$\mathbb{G}_{\alpha}^{n}$ and $\mathscr{P}^{\mathcal{L}}_{s}$ is $P^{\mathcal{L}}_{s}, P^{\mathcal{L},*}_{s}$
or $P^{\mathcal{L}}_{s,\infty}.$
\end{theorem}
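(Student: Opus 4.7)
My overall plan is to adapt Ledoux's semigroup approach \cite{L1994} (in the nonlocal form developed in \cite{ABC2020,GT2020}) to the Grushin setting, using the Besov equivalences of Theorems \ref{com-1} and \ref{com-2} to treat the three perimeters $P^{\L}_{s}$, $P^{\L,*}_{s}$ and $P^{\L}_{s,\infty}$ by a common mechanism: express $|E|$ through the heat (or fractional heat) semigroup acting on $\mathbf{1}_E$, use ultracontractivity to absorb one piece, and identify the other piece with a quantity controlled by $\mathscr{P}^{\L}_{s}(E)$.

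First, from $|E|=\langle \mathbf{1}_E,\mathbf{1}_E\rangle$ I would split $\mathbf{1}_E=e^{-t\L}\mathbf{1}_E+(\mathbf{1}_E-e^{-t\L}\mathbf{1}_E)$ and use the elementary identity
$$\int_{\mathbb{G}_{\alpha}^{n}}\mathbf{1}_E(g)\bigl(\mathbf{1}_E(g)-e^{-t\L}\mathbf{1}_E(g)\bigr)\,dg=\frac{1}{2}\int_{\mathbb{G}_{\alpha}^{n}}e^{-t\L}\bigl(|\mathbf{1}_E-\mathbf{1}_E(g)|\bigr)(g)\,dg,$$
valid because $\mathbf{1}_E$ takes values in $\{0,1\}$. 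Combined with the ultracontractive bound $\|e^{-t\L}\|_{L^{1}\to L^{\infty}}\leq Ct^{-Q/2}$ for the Grushin heat semigroup, this yields the Ledoux-type inequality
$$|E|\leq Ct^{-Q/2}|E|^{2}+\frac{1}{2}\int_{\mathbb{G}_{\alpha}^{n}}e^{-t\L}\bigl(|\mathbf{1}_E-\mathbf{1}_E(g)|\bigr)(g)\,dg.$$
Choosing $t^{*}\sim|E|^{2/Q}$ so that the first term on the right absorbs $|E|/2$ produces the core estimate $|E|\lesssim \int_{\mathbb{G}_{\alpha}^{n}}e^{-t^{*}\L}(|\mathbf{1}_E-\mathbf{1}_E(g)|)(g)\,dg$.

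To conclude, I interpret the right-hand side through the three different perimeters. For $P^{\L}_{s,\infty}$, whose definition is a supremum in $t$, the right-hand side is bounded by $(t^{*})^{s}P^{\L}_{s,\infty}(E)$ up to a dimensional constant, which directly gives $|E|^{(Q-2s)/Q}\leq C(Q,s)\,P^{\L}_{s,\infty}(E)$. For $P^{\L,*}_{s}$, driven by the fractional semigroup, I rerun the same argument with $e^{-t\L^{s}}$ in place of $e^{-t\L}$, invoking the ultracontractive bound $\|e^{-t\L^{s}}\|_{L^{1}\to L^{\infty}}\leq Ct^{-Q/(2s)}$ and the scaling $t^{*}\sim|E|^{2s/Q}$. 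For $P^{\L}_{s}$ the Besov equivalences of Theorems \ref{com-1} and \ref{com-2}, applied to $u=\mathbf{1}_E$, reduce the analysis either to $P^{\L,*}_{s}$ or to an integrated variant of the single-time estimate, in either case producing (\ref{Per}).

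The chief obstacle is the ultracontractivity itself: the argument requires the on-diagonal heat-kernel decay $\|e^{-t\L}\|_{L^{1}\to L^{\infty}}\leq Ct^{-Q/2}$ and its fractional analogue, with the correct homogeneous dimension $Q=m+(1+\alpha)k$ of $\mathbb{G}_{\alpha}^{n}$; this must be drawn from the theory of sub-Riemannian heat kernels and a subordination argument for $\L^{s}$. A secondary subtlety will be to convert the single-time Ledoux estimate into clean bounds for all three perimeters simultaneously, and this is precisely where the Besov equivalences established earlier in the paper do the organizing work of matching the three definitions to a common semigroup quantity.
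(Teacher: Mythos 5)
Your core mechanism is sound and matches the paper for the easiest of the three perimeters: the identity $|E|-\langle \mathbf{1}_E,e^{-t\mathcal{L}}\mathbf{1}_E\rangle=\tfrac12\int e^{-t\mathcal{L}}(|\mathbf{1}_E-\mathbf{1}_E(g)|)(g)\,dg$, the on-diagonal bound $K_t\lesssim t^{-Q/2}$, and the choice $t\sim|E|^{2/Q}$ together give $|E|\lesssim \|e^{-t\mathcal{L}}\mathbf{1}_E-\mathbf{1}_E\|_{L^1}$ at that time, and since $P^{\mathcal{L}}_{s,\infty}(E)=N^{\mathcal{L},2s}_{1,\infty}(\mathbf{1}_E)$ is a supremum over $t$ of exactly this quantity times $t^{-s}$, that case closes. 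The gaps are in the other two cases. For $P^{\mathcal{L}}_s$ — the relaxed BV-type perimeter $\inf\liminf_k\|\mathcal{L}^s u_k\|_{L^1}$ — you never explain how to bound $\|e^{-t\mathcal{L}}\mathbf{1}_E-\mathbf{1}_E\|_{L^1}$ from above by $C\,t^{s}P^{\mathcal{L}}_s(E)$, and the tools you invoke cannot do it: Theorems \ref{com-1} and \ref{com-2} compare the various Besov seminorms to one another, while Proposition \ref{com} gives $P^{\mathcal{L}}_s\lesssim P^{\mathcal{L},*}_s$, i.e.\ an \emph{upper} bound on $P^{\mathcal{L}}_s$ — the wrong direction for a lower (isoperimetric) bound on $P^{\mathcal{L}}_s$. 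The missing ingredient is the Ledoux-type estimate $\|e^{-t\mathcal{L}}u-u\|_{L^1}\leq \frac{2t^s}{\Gamma(1+s)}\sup_{\sigma>0}\|\mathcal{L}^s e^{-\sigma\mathcal{L}}u\|_{L^1}$ for $u\in\mathscr{S}(\mathbb{G}^n_{\alpha})$, which the paper proves by writing $e^{-t\mathcal{L}}u-u=\int_0^\infty h_s(\sigma,t)\mathcal{L}^s e^{-\sigma\mathcal{L}}u\,d\sigma$ via the inversion $u=\mathscr{I}_{2s}(\mathcal{L}^s u)$ and computing $\int_0^\infty|h_s(\sigma,t)|\,d\sigma=2t^s/\Gamma(1+s)$, combined with the monotonicity of $\sigma\mapsto\|\mathcal{L}^s e^{-\sigma\mathcal{L}}u\|_{L^1}$ and a limiting argument along the Schwartz sequence $u_k\to\mathbf{1}_E$. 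Without some version of this, the case $\mathscr{P}^{\mathcal{L}}_s=P^{\mathcal{L}}_s$ does not follow.

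For $P^{\mathcal{L},*}_s$ you have misread the definition: by Definition \ref{perimeter1} it is $N^{\mathcal{L},2s}_{1,1}(\mathbf{1}_E)=\int_0^\infty\|e^{-t\mathcal{L}}\mathbf{1}_E-\mathbf{1}_E\|_{L^1}\,\frac{dt}{t^{1+s}}$, built from the \emph{heat} semigroup integrated in $t$ with $q=1$, not from the fractional semigroup $\{e^{-t\mathcal{L}^s}\}$. Rerunning your single-time argument with $e^{-t\mathcal{L}^s}$ therefore produces an estimate for a different object, and even the correct single-time estimate does not by itself control an integral in $dt/t^{1+s}$. Two repairs are available: the paper's route, which shows $\|\mathcal{L}^s\mathbf{1}_E\|_{L^1}=\frac{s}{\Gamma(1-s)}N^{\mathcal{L},2s}_{1,1}(\mathbf{1}_E)$ (using that $e^{-t\mathcal{L}}\mathbf{1}_E-\mathbf{1}_E$ has a fixed sign on $E$ and on $E^c$) and then deduces the $P^{\mathcal{L},*}_s$ case from the $P^{\mathcal{L}}_s$ case via Proposition \ref{com}; or a direct route using that $t\mapsto\|e^{-t\mathcal{L}}\mathbf{1}_E-\mathbf{1}_E\|_{L^1}=2\big(|E|-\|e^{-t\mathcal{L}/2}\mathbf{1}_E\|_{L^2}^2\big)$ is non-decreasing, so that $N^{\mathcal{L},2s}_{1,1}(\mathbf{1}_E)\geq \frac1s (t^*)^{-s}\|e^{-t^*\mathcal{L}}\mathbf{1}_E-\mathbf{1}_E\|_{L^1}$. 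Either way an extra argument is required that your proposal does not supply.
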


For the classical Sobolev spaces, we know that $$W^{1,1}(\mathbb{R}^{n})\hookrightarrow L^{n/(n-1),\infty}(\mathbb{R}^{n}).$$
The weak Sobolev embedding above actually implies that the isoperimetric inequality (\ref{CII}) holds.
Combining this with the coarea formula, we conclude that the strong embedding
$$W^{1,1}(\mathbb{R}^{n})\hookrightarrow L^{n/(n-1)}(\mathbb{R}^{n}).$$
This establishes the fact that when $p=1$, the weak Sobolev embedding is equivalent to the strong Sobolev embedding,
and they are both equivalent to the isoperimetric inequality. For more details on this aspect, we can refer to \cite{M2010}.

It should be noted that  the isoperimetric inequality (\ref{Per}) in
Theorem \ref{main1}  can be applied to a class of Besov type spaces
$\mathfrak{B}_{p,q}^{\mathcal{L},\beta}(\mathbb{G}^{n}_{\alpha})$
related to the Grushin-Laplace operator $\mathcal{L}$, which are the
closed subspaces of
$B_{p,q}^{\mathcal{L},\beta}(\mathbb{G}^{n}_{\alpha})$ (see
Definition \ref{Besov2} below). To be precise, we have the following endpoint results
for Besov type spaces $\mathfrak{B}_{p,q}^{\mathcal{L},2s}(\mathbb{G}_{\alpha}^{n})$ with $p=q=1.$
\begin{theorem}\label{main2}
Let $\mathcal{L}$ be as in (\ref{K}) and $s\in(0,1/2).$ Then
\begin{align}\label{BEG}
\mathfrak{B}_{1,1}^{\mathcal{L},2s}(\mathbb{G}_{\alpha}^{n})\hookrightarrow L^{Q/(Q-2s)}(\mathbb{G}_{\alpha}^{n}).
\end{align}
To be precise, for every $u\in \mathfrak{B}_{1,1}^{\mathcal{L},2s}(\mathbb{G}_{\alpha}^{n}),$ we have
\begin{align}\label{BEG1}
\|u\|_{L^{Q/(Q-2s)}(\mathbb{G}_{\alpha}^{n})}\leq C(Q,s)^{-1}N_{1,1}^{\mathcal{L},2s}(u),
\end{align}
where $C(Q,s)$ is the constant appearing in Theorem \ref{main1}.
Moreover, in Theorem \ref{main1}, if we take $\mathscr{P}^{\mathcal{L}}_{s}=P^{\mathcal{L},*}_{s},$ then (\ref{Per}) and (\ref{BEG1}) are equivalent.
\end{theorem}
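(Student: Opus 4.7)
The plan is to adapt the classical layer-cake derivation of the fractional Sobolev embedding from the isoperimetric inequality, combined with a coarea-type identity tailored to the semigroup seminorm $N_{1,1}^{\mathcal{L},2s}$. First, for $u\in\mathfrak{B}_{1,1}^{\mathcal{L},2s}(\mathbb{G}^n_\alpha)$, I would write $|u|=\int_0^\infty\mathbf{1}_{E_t}\,dt$, where $E_t=\{|u|>t\}$, and apply Minkowski's integral inequality in $L^{Q/(Q-2s)}(\mathbb{G}^n_\alpha)$ to obtain
\begin{align*}
\|u\|_{L^{Q/(Q-2s)}(\mathbb{G}^n_\alpha)} \leq \int_0^\infty |E_t|^{(Q-2s)/Q}\,dt.
\end{align*}
Splitting $E_t=\{u>t\}\cup\{u<-t\}$ as a disjoint union and using the subadditivity of $\tau\mapsto\tau^{(Q-2s)/Q}$ (valid since the exponent lies in $(0,1)$), the integrand is bounded by $|\{u>t\}|^{(Q-2s)/Q}+|\{u<-t\}|^{(Q-2s)/Q}$. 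Applying Theorem \ref{main1} with $\mathscr{P}^{\mathcal{L}}_s=P^{\mathcal{L},*}_s$ to each level set then yields
\begin{align*}
\|u\|_{L^{Q/(Q-2s)}(\mathbb{G}^n_\alpha)} \leq C(Q,s)^{-1}\int_0^\infty \bigl[P^{\mathcal{L},*}_s(\{u>t\})+P^{\mathcal{L},*}_s(\{u<-t\})\bigr]\,dt.
\end{align*}

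The decisive step is a coarea-type identity for the semigroup seminorm,
\begin{align*}
N_{1,1}^{\mathcal{L},2s}(u)=\int_0^\infty \bigl[P^{\mathcal{L},*}_s(\{u>t\})+P^{\mathcal{L},*}_s(\{u<-t\})\bigr]\,dt.
\end{align*}
To establish it, I would rely on the pointwise identity $|u(g)-u(g')|=|u^+(g)-u^+(g')|+|u^-(g)-u^-(g')|$, which a routine case analysis on the signs of $u(g)$ and $u(g')$ confirms with equality (never mere inequality), coupled with the layer-cake representations $|u^\pm(g)-u^\pm(g')|=\int_0^\infty|\mathbf{1}_{F^\pm_t}(g)-\mathbf{1}_{F^\pm_t}(g')|\,dt$, where $F^+_t=\{u>t\}$ and $F^-_t=\{u<-t\}$. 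Plugging these into the defining integral representation of $N_{1,1}^{\mathcal{L},2s}(u)$ (namely the semigroup expression displayed in the equivalence after Theorems~\ref{com-1}--\ref{com-2}, specialized to $p=q=1$, $\beta=2s$) and interchanging integration orders via Fubini, whose hypotheses are met because the heat kernel of $\mathcal{L}$ is positive and every integrand is non-negative, one recognises the resulting inner integral as $P^{\mathcal{L},*}_s(\{u>t\})+P^{\mathcal{L},*}_s(\{u<-t\})$. Justifying this Fubini interchange under the sole assumption $u\in\mathfrak{B}_{1,1}^{\mathcal{L},2s}$, and in particular securing joint integrability in $(t,\tau)$, is the main technical hurdle.

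Chaining the two displays yields \eqref{BEG1}, and the embedding \eqref{BEG} follows at once. For the concluding equivalence, the implication \eqref{Per}$\Rightarrow$\eqref{BEG1} is precisely the chain above specialized to $\mathscr{P}^{\mathcal{L}}_s=P^{\mathcal{L},*}_s$. Conversely, testing \eqref{BEG1} against $u=\mathbf{1}_E$ for a set $E$ of finite $s$-perimeter gives $|E|^{(Q-2s)/Q}=\|\mathbf{1}_E\|_{L^{Q/(Q-2s)}(\mathbb{G}^n_\alpha)}\leq C(Q,s)^{-1}N_{1,1}^{\mathcal{L},2s}(\mathbf{1}_E)=C(Q,s)^{-1}P^{\mathcal{L},*}_s(E)$, recovering \eqref{Per} with the same constant; thus the isoperimetric inequality and the Sobolev-type inequality are two manifestations of the same statement in this endpoint case.
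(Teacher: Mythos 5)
Your proposal is correct and its skeleton coincides with the paper's: reduce the $L^{Q/(Q-2s)}$ norm to $\int_0^\infty|E_t|^{(Q-2s)/Q}\,dt$, apply the isoperimetric inequality of Theorem \ref{main1} levelwise, and resum via a coarea identity for $N_{1,1}^{\mathcal{L},2s}$. The differences are in implementation. For the first reduction the paper invokes the Hardy-type rearrangement inequality of Lemma \ref{inequality} applied to the non-increasing distribution function $U(\sigma)=|E_\sigma|$, whereas you use Minkowski's integral inequality on the layer-cake decomposition $|u|=\int_0^\infty\mathbf{1}_{E_t}\,dt$; both give the same bound and yours is more self-contained. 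Your coarea identity is exactly (\ref{coarea1}) of Lemma \ref{co-area}, proved by the same layer-cake plus Tonelli computation; your $u=u^{+}-u^{-}$ splitting with the two families $\{u>t\}$, $\{u<-t\}$, $t>0$, is equivalent to the paper's single family $\{u>\sigma\}$, $\sigma\in\mathbb{R}$, via $\int_{\mathbb{R}}|\mathbf{1}_{\{u>\sigma\}}(g)-\mathbf{1}_{\{u>\sigma\}}(g')|\,d\sigma=|u(g)-u(g')|$. One point to sharpen: the ``main technical hurdle'' you flag is not Fubini --- Tonelli applies for free since every integrand is nonnegative --- but rather that $P_s^{\mathcal{L},*}(E_t)$ is only defined (Definition \ref{perimeter1}) when $\mathbf{1}_{E_t}$ lies in the closure $\mathfrak{B}_{1,1}^{\mathcal{L},2s}(\mathbb{G}_\alpha^n)$, not merely when $N_{1,1}^{\mathcal{L},2s}(\mathbf{1}_{E_t})<\infty$, and this membership is what licenses applying Theorem \ref{main1} with $\mathscr{P}_s^{\mathcal{L}}=P_s^{\mathcal{L},*}$ to almost every level set. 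The paper sidesteps this by running the main estimate through the relaxed perimeter $P_s^{\mathcal{L}}$ (defined for every measurable set) together with the one-sided coarea inequality (\ref{coarea}), reserving $P_s^{\mathcal{L},*}$ for the equivalence statement, where the converse direction is proved for $0\le u\in C_c^\infty(\mathbb{G}_\alpha^n)$ by a Lipschitz truncation of $\sigma\mapsto\|\min\{\sigma,u\}\|_{L^{Q/(Q-2s)}(\mathbb{G}_\alpha^n)}$ rather than by Minkowski. Your treatment of the equivalence (testing (\ref{BEG1}) on $u=\mathbf{1}_E$ in one direction, and noting that your main chain consumes only (\ref{Per}) in the other) matches the paper's in substance.
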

We are in the position to pass to the Sobolev type inequality which
is a widely studied topic in analysis and PDEs. The classical
Sobolev inequality states that for any $p\in[1,n),$ there exists a
constant $C$ depending on $n$ and $p$ such that for any function $u$
in the Schwartz class $\mathscr{S}(\mathbb{R}^{n}),$ one has
\begin{align}\label{Sobolev1}
\|u\|_{L^{q}(\mathbb{R}^{n})}\leq C\|\nabla u\|_{L^{p}(\mathbb{R}^{n})},
\end{align}
where $q=np/(n-p).$ This is a fundamental result in analysis and it
has been widely studied in a variety of contexts (see e.g. the
classical references \cite{M2010} and \cite{S2002}).

The inequality (\ref{Sobolev1}) has also been appropriately extended
to the Carnot-Carath\'eodory spaces in
\cite{CDG1994,CDPT2007,GN1996}. Recently, Garofalo and Tralli
\cite{GT2021} have been influenced by the ideas of Stein in
\cite{S1970} and Varopoulos in \cite{V1985} in the setting of
positive symmetric semigroups to establish Sobolev type inequalities
similar to (\ref{Sobolev1}) for a class of hypoelliptic operators.
Alonso-Ruiz et al. \cite{ABC2020} also proved some Sobolev type
inequalities in the general framework of Dirichlet spaces. It is
worth noting that the results mentioned above are mainly based on
the properties of operator semigroups. For more applications of the
heat kernel methods in Sobolev inequalities, we also refer to
\cite{ABCRST2020,ABCRST2021,L1994} and the references therein.

 Motivated by \cite{ABC2020,ABCRST2020,ABCRST2021,GT2021}, the third aim of
this paper is to introduce Sobolev type spaces
$\mathcal{W}^{2s,p}(\mathbb{G}_{\alpha}^{n})$ (see
Definition \ref{STS} below) related to the fractional powers $\L^{s}$
and establish Sobolev type inequalities on Grushin spaces by
combining semigroup theory with nonlocal calculus of Grushin-Laplace
operators.

\begin{theorem}\label{Sobolev}
Let $(s,p)\in(0,1)\times[1,Q/2s)$ and $1/p-1/q=2s/Q.$
Then the following statements hold:
\begin{itemize}
  \item [(i)] If $p=1,$ then $\mathcal{W}^{2s,1}(\mathbb{G}_{\alpha}^{n})\hookrightarrow L^{Q/(Q-2s),\infty}(\mathbb{G}^{n}_{\alpha}).$ More precisely,
  there exists a positive constant $C(Q,s),$ depending on $Q$ and $s,$ such that for any $u\in \mathscr{S}(\mathbb{G}^{n}_{\alpha})$ we have
  $$\sup_{\lambda>0}\lambda|\{g\in\mathbb{G}^{n}_{\alpha}:|u(g)|>\lambda\}|^{(Q-2s)/Q}
  \leq C(Q,s)\|\mathcal{L}^{s}u\|_{L^{1}(\mathbb{G}^{n}_{\alpha})}.$$
  \item [(ii)] If $p\in(1,Q/2s),$ then $\mathcal{W}^{2s,p}(\mathbb{G}_{\alpha}^{n})\hookrightarrow L^{pQ/(Q-2s p)}(\mathbb{G}^{n}_{\alpha}).$ More precisely,
   there exists a positive constant $C(Q,p,s),$ depending on $Q,p$ and $s,$ such that for any $u\in \mathscr{S}(\mathbb{G}^{n}_{\alpha})$ we have
  $$\|u\|_{L^{q}(\mathbb{G}^{n}_{\alpha})}\leq C(Q,p,s)\|\mathcal{L}^{s}u\|_{L^{p}(\mathbb{G}^{n}_{\alpha})}.$$
\end{itemize}
\end{theorem}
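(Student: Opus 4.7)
The strategy is the semigroup/Riesz-potential method of Varopoulos and Stein, adapted to the Grushin setting. Writing $u=\mathcal{L}^{-s}(\mathcal{L}^{s}u)$ and using the subordination identity
\[
\mathcal{L}^{-s}f(g)=\frac{1}{\Gamma(s)}\int_{0}^{\infty}t^{s-1}\,e^{-t\mathcal{L}}f(g)\,dt,
\]
valid on $\mathscr{S}(\mathbb{G}_{\alpha}^{n})$ by spectral calculus, I reduce both parts of the theorem to Hardy--Littlewood--Sobolev type bounds for the Riesz-potential operator $\mathcal{L}^{-s}$ applied to $f=\mathcal{L}^{s}u$. The main analytic ingredient is the Gaussian ultracontractivity $\|e^{-t\mathcal{L}}\|_{L^{1}\to L^{\infty}}\leq C\,t^{-Q/2}$, a consequence of the two-sided Gaussian heat-kernel bounds on $\mathbb{G}_{\alpha}^{n}$ (with respect to the homogeneous dimension $Q$) recalled in Section~\ref{sec-2}. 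Riesz--Thorin interpolation with the $L^{\infty}$-contractivity then yields $\|e^{-t\mathcal{L}}\|_{L^{p}\to L^{\infty}}\leq C\,t^{-Q/(2p)}$ for every $p\in[1,\infty]$.

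For part (i), fix $f\in L^{1}(\mathbb{G}_{\alpha}^{n})$ and $\lambda>0$, and split the Riesz-potential integral at a free threshold $T>0$. By ultracontractivity the tail satisfies
\[
\int_{T}^{\infty}t^{s-1}\,e^{-t\mathcal{L}}|f|(g)\,dt\;\leq\; C\,T^{s-Q/2}\,\|f\|_{1},
\]
so choosing $T\sim(\|f\|_{1}/\lambda)^{2/(Q-2s)}$ forces this contribution to be at most $\lambda/2$ uniformly in $g$. The head is then controlled via Chebyshev together with the $L^{1}$-contractivity of $e^{-t\mathcal{L}}$, which produces
\[
\Bigl|\Bigl\{g\in\mathbb{G}_{\alpha}^{n}:\;\int_{0}^{T}t^{s-1}e^{-t\mathcal{L}}|f|(g)\,dt>\lambda/2\Bigr\}\Bigr|\;\leq\;\frac{C\,T^{s}\,\|f\|_{1}}{\lambda}\;\sim\;\Bigl(\frac{\|f\|_{1}}{\lambda}\Bigr)^{Q/(Q-2s)},
\]
which is exactly the required weak $L^{Q/(Q-2s)}$-bound.

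For part (ii), I apply Hedberg's trick. Splitting at $T>0$ as before, the tail is dominated by $C\,T^{s-Q/(2p)}\|f\|_{p}$, while the head is controlled pointwise by
\[
\int_{0}^{T}t^{s-1}\,e^{-t\mathcal{L}}|f|(g)\,dt\;\leq\;\frac{T^{s}}{s}\,\sup_{t>0}e^{-t\mathcal{L}}|f|(g)\;\leq\;C\,T^{s}\,Mf(g),
\]
where $Mf$ is the centered Hardy--Littlewood maximal function on the doubling space $(\mathbb{G}_{\alpha}^{n},d_{\alpha},dg)$ and the last comparison follows from the Gaussian kernel bounds. Optimizing in $T$ yields the pointwise estimate
\[
|\mathcal{L}^{-s}f(g)|\;\leq\;C\,Mf(g)^{1-2sp/Q}\,\|f\|_{p}^{2sp/Q};
\]
raising to the power $q=pQ/(Q-2sp)$, integrating, and invoking the $L^{p}$-boundedness of $M$ for $p>1$ delivers (ii).

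The main obstacle is verifying the two background facts this argument feeds on in the Grushin geometry: the Gaussian upper bound for the heat kernel of $\mathcal{L}$ with exponent $Q/2$, and the $L^{p}$-boundedness of $M$ on $\mathbb{G}_{\alpha}^{n}$ for $p>1$. Both rest on the sub-Riemannian structure of $(\mathbb{G}_{\alpha}^{n},d_{\alpha},dg)$, in particular its doubling property with respect to the homogeneous dimension $Q$ summarized in Section~\ref{sec-2}; once these are in hand, the remaining steps are routine manipulations of the Riesz-potential representation.
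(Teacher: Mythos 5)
Your proposal is correct, and its overall architecture (Riesz potential representation of $u=\mathscr{I}_{2s}(\mathcal{L}^{s}u)$, splitting the time integral at a threshold, Hedberg's trick for $p>1$) matches the paper's proof via Proposition \ref{HLS}. The genuine difference lies in the maximal ingredient controlling the ``head'' of the integral and in the treatment of $p=1$. The paper represents $\mathscr{I}_{2s}$ through the subordinated semigroup $\{e^{-t\sqrt{\mathcal{L}}}\}_{t>0}$ and controls the head by the radial maximal function $\mathcal{M}u=\sup_{t>0}|e^{-t\sqrt{\mathcal{L}}}u|$, whose weak $(1,1)$ and strong $(p,p)$ bounds are derived from the Hopf--Dunford--Schwartz maximal ergodic theorem (Lemma \ref{stro-weak}); this route works for any sub-Markovian symmetric semigroup and needs no covering lemma. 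You instead work directly with $\{e^{-t\mathcal{L}}\}_{t>0}$: for $p=1$ you bypass maximal functions entirely, bounding the head in $L^{1}$ by $T^{s}\|f\|_{1}/s$ via the contractivity of $e^{-t\mathcal{L}}$ and applying Chebyshev, which is more elementary than the paper's use of the weak $(1,1)$ bound for $\mathcal{M}$; for $p>1$ you dominate $\sup_{t>0}e^{-t\mathcal{L}}|f|$ by the centered Hardy--Littlewood maximal function on $(\mathbb{G}_{\alpha}^{n},d_{\alpha},dg)$, which requires importing the $L^{p}$-boundedness of $M$ on a doubling space (valid here by the doubling inequality following (\ref{Q}) together with (\ref{Kt}) and (\ref{Re}), via the standard Vitali argument), a background fact the paper deliberately avoids. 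Both the tail estimate via the $L^{p}\to L^{\infty}$ ultracontractivity of Proposition \ref{lem4} and the exponent bookkeeping in your optimization check out, and the inversion identity you invoke is exactly Lemma \ref{FF} combined with (\ref{R}); so your argument is a valid, slightly more classical-real-analysis alternative to the paper's semigroup-intrinsic one.
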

As a corollary to Theorem \ref{Sobolev}, we have the following embedding relationship for Besov type spaces.
\begin{corollary}\label{propo2.6}
Let $s\in(0,1).$ The following statements hold:
\begin{itemize}
  \item [(i)] If $p=q\in(1,Q/2s)$ and $\beta>2s$ then $$B_{p,p}^{\mathcal{L},\beta}(\mathbb{G}^{n}_{\alpha})\hookrightarrow L^{pQ/(Q-2s p)}(\mathbb{G}^{n}_{\alpha}).$$
                  In particular, when $p=q=1$ and $\beta\geq2s,$ we have
                  $$B_{1,1}^{\mathcal{L},\beta}(\mathbb{G}_{\alpha}^{n})\hookrightarrow L^{Q/(Q-2s),\infty}(\mathbb{G}^{n}_{\alpha}).$$
  \item [(ii)] If $p\in(1,Q/2s),q=\infty$ and $\beta>2s$ then
  $$B_{p,\infty}^{\mathcal{L},\beta}(\mathbb{G}^{n}_{\alpha})\hookrightarrow L^{pQ/(Q-2s p)}(\mathbb{G}^{n}_{\alpha}).$$
   If $p=1,q=\infty$ and $\beta>2s$ then
   $$B_{1,\infty}^{\mathcal{L},\beta}(\mathbb{G}_{\alpha}^{n})\hookrightarrow L^{Q/(Q-2s),\infty}(\mathbb{G}^{n}_{\alpha}).$$
\end{itemize}
\end{corollary}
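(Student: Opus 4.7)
The plan is to deduce Corollary \ref{propo2.6} from Theorem \ref{Sobolev} by establishing the continuous inclusion
$$B_{p,q}^{\mathcal{L},\beta}(\mathbb{G}^{n}_{\alpha})\hookrightarrow\mathcal{W}^{2s,p}(\mathbb{G}^{n}_{\alpha})$$
in each of the admissible regimes for $(p,q,\beta)$; once this inclusion is in hand, the claimed embeddings into $L^{pQ/(Q-2sp)}$ (or into the weak space $L^{Q/(Q-2s),\infty}$ when $p=1$) follow immediately from Theorem \ref{Sobolev}(i)--(ii) applied to $u\in B_{p,q}^{\mathcal{L},\beta}(\mathbb{G}^{n}_{\alpha})$.

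The analytic core is therefore to bound $\|\mathcal{L}^{s}u\|_{L^{p}(\mathbb{G}^{n}_{\alpha})}$ by the Besov seminorm. I would start from the Balakrishnan representation of the fractional power,
$$\mathcal{L}^{s}u(g)=\frac{1}{|\Gamma(-s)|}\int_{0}^{\infty}\bigl(u(g)-e^{-t\mathcal{L}}u(g)\bigr)\frac{dt}{t^{1+s}},$$
combined with Jensen's inequality for the Markov semigroup $\{e^{-t\mathcal{L}}\}_{t>0}$, which gives the pointwise bound
$$\bigl|u(g)-e^{-t\mathcal{L}}u(g)\bigr|^{p}\leq e^{-t\mathcal{L}}\bigl(|u-u(g)|^{p}\bigr)(g),$$
and an application of Minkowski's inequality in integral form. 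This reduces the estimate to controlling a weighted integral in $t$ of
$$\phi_{p}(t):=\Big(\int_{\mathbb{G}^{n}_{\alpha}}e^{-t\mathcal{L}}(|u-u(g)|^{p})(g)\,dg\Big)^{1/p}$$
against $t^{-1-s}\,dt$; by Theorems \ref{com-1} and \ref{com-2}, the weighted $L^{q}$-norm of $\phi_{p}$ against $t^{-\beta q-1}\,dt$ is precisely $N_{p,q}^{\mathcal{L},\beta}(u)$. I would then split the $t$-integral at $t=1$: on $(0,1]$ using Hölder's inequality against the Besov weight (for $p=q\in(1,Q/2s)$) or the pointwise bound $\phi_{p}(t)\leq t^{\beta}N_{p,\infty}^{\mathcal{L},\beta}(u)$ (for $q=\infty$), and on $(1,\infty)$ using the trivial bound $\phi_{p}(t)\leq 2\|u\|_{L^{p}}$ together with $s>0$ to close off the tail. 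This yields
$$\|\mathcal{L}^{s}u\|_{L^{p}(\mathbb{G}^{n}_{\alpha})}\lesssim N_{p,q}^{\mathcal{L},\beta}(u)+\|u\|_{L^{p}(\mathbb{G}^{n}_{\alpha})}$$
whenever $\beta>2s$, and at the endpoint $p=q=1$ with $\beta\geq 2s$ Fubini replaces Hölder at the critical $\beta=2s$, after which Theorem \ref{Sobolev}(i) delivers the weak $L^{Q/(Q-2s)}$ inclusion.

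The main obstacle is making the splitting and the exponent matching quantitative with the precise threshold $\beta>2s$ (respectively $\beta\geq 2s$ at the endpoint): the Balakrishnan weight $t^{-1-s}$, the Besov weight $t^{-\beta q-1}$, and the mass-conservation bound on the large-$t$ tail must be arranged so that Hölder's inequality closes exactly at this critical exponent. The Grushin geometry itself should not intervene at this stage, since Theorems \ref{com-1} and \ref{com-2} have already absorbed the anisotropic heat-kernel information into the equivalence of the semigroup and ball-based Besov seminorms; everything beyond the Besov-to-Sobolev bound is a direct invocation of Theorem \ref{Sobolev}.
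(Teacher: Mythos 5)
Your proposal is correct and follows essentially the same route as the paper: the embedding $B_{p,q}^{\mathcal{L},\beta}(\mathbb{G}^{n}_{\alpha})\hookrightarrow\mathcal{W}^{2s,p}(\mathbb{G}^{n}_{\alpha})$ you describe is exactly the paper's Corollary \ref{BW}, which is proved there via Propositions \ref{boundedness} and \ref{boundedness1} using precisely your ingredients (Balakrishnan's formula, the Jensen-type pointwise bound (\ref{ineq20}), the split at $t=1$, H\"older on $(0,1]$ or the $q=\infty$ pointwise bound, and mass conservation on the tail), after which Theorem \ref{Sobolev} is invoked. One small slip: the Besov weight in the paper's normalization is $t^{-\beta q/2-1}\,dt$ (not $t^{-\beta q-1}\,dt$), and this identification is simply the definition of $N_{p,q}^{\mathcal{L},\beta}$ rather than a consequence of Theorems \ref{com-1} and \ref{com-2}; with the correct weight the H\"older exponents close exactly at $\beta>2s$ (and Fubini at $\beta=2s$ when $p=q=1$), as you state.
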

\subsection{Structure of the paper}
This article is organized as follows.

$\bullet$ Sections \ref{sec-2.1} \& \ref{sec-2.2} deal with some basic concepts of Grushin spaces and some
known facts and results of the semigroup
$\{e^{-t\mathcal{L}}\}_{t>0}$ generated by the Grushin-Laplace
operator $\mathcal{L}.$ Here, we also obtain some ultracontractivity
properties of $\{e^{-t\mathcal{L}}\}_{t>0}$,  which are used to
establish Ledoux type estimates and some limiting behaviour of Besov
semi-norms, respectively (see Propositions \ref{lem4} \&\
\ref{ulc}).

Moreover, in Section \ref{sec-2.3}, via Balakrishnan's formula
(\ref{-G}), we can precisely identify the fractional Grushin-Laplace
operator $\mathcal{L}^{s}$ by means of
$\{e^{-t\mathcal{L}}\}_{t>0}$. This allows us to introduce Sobolev
type spaces $\mathcal{W}^{2s,p}(\mathbb{G}_{\alpha}^{n})$ defined by
$\mathcal{L}^{s} $ (see Definition \ref{STS}). Then, we prove that $\mathscr{S}(\mathbb{G}_{\alpha}^{n})$ is dense in $\mathcal{W}^{2s,p}(\mathbb{G}_{\alpha}^{n})$ (see Proposition \ref{W-de}).

$\bullet$
In Section \ref{sec-3.1}, we first establish some basic properties of $B_{p,q}^{\mathcal{L},\beta}(\mathbb{G}^{n}_{\alpha})$,
such as completeness, nontriviality and the min-max property (see Propositions  \ref{RB}, \ref{desi}   and Lemma \ref{max}).
Additionally, we provide some inclusion relationships between
Sobolev type spaces $\mathcal{W}^{2s,p}(\mathbb{G}_{\alpha}^{n})$
and Besov type spaces
$B_{p,q}^{\mathcal{L},\beta}(\mathbb{G}^{n}_{\alpha})$ under certain
indicator constraints  (see Corollary  \ref{BW}).

Then, in Section \ref{sec-3.2}, by the subordinative formula
(\ref{k-1}), we also introduce another class of Besov type spaces
$B_{p,q}^{\mathcal{L}^{s},\beta}(\mathbb{G}^{n}_{\alpha})$ which are
related to $\{e^{-tL^{s}}\}_{t>0}$ with $s\in(0,1)$. By comparing
the inclusion relation with the Besov space
$B_{p,q}^{\beta}(\mathbb{G}^{n}_{\alpha})$ defined via the
difference, we can see that the difference between
$B_{p,q}^{\mathcal{L},\beta}(\mathbb{G}^{n}_{\alpha})$ and
$B_{p,q}^{\mathcal{L}^{s},\beta}(\mathbb{G}^{n}_{\alpha})$ (see
Theorems \ref{com-1} \& \ref{com-2}).

Finally, we analyze some limiting behaviour of Besov semi-norms in
Section \ref{sec-3.3}. Our results, Theorem \ref{MS1} and
Proposition \ref{BBM} generalize a previous one of
Maz'ya-Shaposhnikova and Bourgain-Brezis-Mironescu for the classical
fractional Sobolev spaces on the Grushin setting.

$\bullet$ In Section \ref{sec-4}, we first introduce  the notion of
the fractional bounded variation function and the fractional
perimeter on Grushin spaces, which depend on the fractional power
$\mathcal{L}^{s}$. See Definitions \ref{FBV} \&\ \ref{perimeter},
respectively. Then inspired by that originally given by Caffarelli,
Roquejoffre and Savin in \cite{CRS2010}, we also define a second
notion of the fractional perimeter $P_{s}^{\mathcal{L},*},$ which
corresponded to the semi-norm of Besov type spaces (see Definition
\ref{perimeter1}). With such notions in hands, we provide a
connection between these two fractional perimeters in Proposition
\ref{com}.

In Section \ref{sec-4.1}, we prove Theorem \ref{main1} by using a
key Ledoux type estimate for Grushin semigroups (see Proposition
\ref{Led}). As an application, with the help of Theorem \ref{main1}
and the coarea formula (\ref{coarea}),
we prove Theorem \ref{main2} in Section \ref{sec-4.2}.
 As a corollary of
Theorem \ref{main1}, the isoperimetric inequality similar to
(\ref{Per}) holds for $P_{s}^{\mathcal{L},*},$ and the isoperimetric
inequality for $P_{s}^{\mathcal{L},*}$ is equivalent to Theorem
\ref{main2}.

$\bullet$ In Section \ref{sec-4.2}, we obtain the Sobolev embedding
on Grushin spaces (see Theorem \ref{Sobolev}). Our strategy follows
the classical approach to the subject. We first establish the key
Hardy-Littlewood-Sobolev type result (see Proposition \ref{HLS}).
With such tools in hands, we are easily able to prove Theorem
\ref{Sobolev}.

%

\subsection{Notation}
Throughout this paper,  all notations will be listed  as needed.

\begin{itemize}
  \item If $E$ is a measurable subset of $\mathbb{G}_{\alpha}^{n},$ then $\mathbf{1}_{E}$ denotes the characteristic function of $E.$
  \item We denote by $C^{1}(\mathbb{G}_{\alpha}^{n})$ the space of all functions that are continuous together
    with their partial derivatives up to order 1,
    by $C_{c}^{\infty}(\mathbb{G}_{\alpha}^{n})$ the space of all smooth functions on $\mathbb{G}_{\alpha}^{n}$ with compact
    support, and by $\mathscr{S}(\mathbb{G}_{\alpha}^{n})$ the Schwartz class on $\mathbb{G}_{\alpha}^{n}.$
  \item The symbol $\thicksim$ between two positive expressions $u,v$ means that their ratio
  $\frac{u}{v}$ is bounded from above and below by positive constants.
  \item The symbol $\lesssim$ (respectively $\gtrsim$)  between two nonnegative expressions $u,v$
  means that there exists a constant $C>0$ such that $u\leq Cv$ (respectively $u\geq Cv$).
\end{itemize}

\section{Preliminaries}\label{sec-2}
\subsection{Grushin spaces and their metrics}\label{sec-2.1}
Let $\mathbb{R}^{n}=\mathbb{R}^{m}\times\mathbb{R}^{k},$
where $m,k\geq1,n=m+k$ are integers. For a given number
$\alpha\geq0,$ define a family of vector fields on $\mathbb{R}^{n}$ by
(\ref{vector}).
These vector fields induce the following distance between two points
$g,g'$ in $\mathbb{R}^{n}:$
\begin{align*}d_{\alpha}(g,g')&:=\inf\{\delta\mid \exists\ \mathrm{Lipschitz}\ \mathrm{continuous}\
 \mathrm{curve}\ \gamma:[0,\delta]\rightarrow \mathbb{R}^{n}\mid \gamma(0)=g,\gamma(\delta)=g'
\ \mathrm{and} \ \\ &\gamma'(t)=\sum^n_{i=1}a_i(t)X_i(\gamma(t))\
 \mathrm{with}\
 \sum^n_{i=1}|a_i(t)|^2\leq 1, \mathrm{for} \ \mathrm{all}\ t\in[0,\delta] \}.
\end{align*}
From \cite{F2016}, $d_{\alpha}(g,g')$ is well defined and coincides with the Carnot-Carath\'{e}odory
distance-namely-one has:
\begin{equation*}
d_{\alpha}(g,g')=\inf_{\gamma=(x,y)\in\Gamma_{g,g'}}\int_{0}^{1}\bigg(\sum_{i=1}^{m}|
\dot{x_{i}}(t)|^{2}+|x(t)|^{-2\alpha}\sum_{j=1}^{k}|\dot{y_{j}}(t)|^{2}\bigg)^{1/2}dt,
\end{equation*}
where $\Gamma_{g,g'}$ is the set of all Lipschitz continuous curves
$$\gamma:[0,1]\rightarrow\mathbb{R}^{n}\ \mbox{with}\ \gamma(0)=g\ \mbox{and}\ \gamma(1)=g',$$
  $\dot{x}_i(t):=dx_i/dt $ and $\dot{y}_j(t):=dy_j/dt.$ The
resulting space
$\mathbb{G}_{\alpha}^{n}:=(\mathbb{R}^{n},d_{\alpha})$, which is
called a Grushin space, is the completion of the Riemannian metric
space $\{(x,y)\in\mathbb{R}^{n}:x\neq 0\}$ equipped with the
Riemannian metric $dw^{2}=dx^{2}+|x|^{-2\alpha}dy^{2}.$

The Grushin space $\mathbb{G}_{\alpha}^{n}$  can be endowed with a
family of non-isotropic dilations parametrized by $\lambda>0$
\begin{align*}\label{anisotropic}
\delta^{\alpha}_{\lambda}g=(\lambda x,\lambda^{\alpha+1}y),~~g=(x,y)\in \mathbb{G}_{\alpha}^{n}
\end{align*}
such that
\begin{align*}
d_{\alpha}(\delta^{\alpha}_{\lambda}g,\delta^{\alpha}_{\lambda}g')=\lambda d_{\alpha}(g,g')
\end{align*}
for $g=(x,y),g'=(x',y')\in \mathbb{G}_{\alpha}^{n}.$

 The measure on
$\mathbb{G}_{\alpha}^{n}$ is the usual Lebesgue measure $dg=dxdy.$
For any measurable set
$E\subseteq\mathbb{G}_{\alpha}^{n},|\delta^{\alpha}_{\lambda}(E)|=\lambda^{Q}|E|
$ (see~\cite{MM}), where $$Q=m+(\alpha+1)k=n+\alpha k$$ is called the homogeneous dimension of
$\mathbb{G}_{\alpha}^{n}.$

Let $B(g,r)=\{g'\in\mathbb{G}_{\alpha}^{n}:d_{\alpha}(g,g')<r\}$
denote the ball with center $g$ and radius $r>0$ in the metric
$d_{\alpha}(g,g')$ and let $|B(g,r)|$ be its Lebesgue measure.
Then
\begin{equation}\label{B}
|B(g,r)|\thicksim r^{n}(r+|x|)^{k\alpha}.
\end{equation}
According to \cite{FL} there are two positive constants
$C_{1}<C_{2}$ such that
\begin{equation}\label{Q}
Q_{\alpha}(g,C_{1}r)\subseteq B(g,r)\subseteq Q_{\alpha}(g,C_{2}r)
\end{equation}
for all $g=(x,y)\in\mathbb{G}_{\alpha}^{n},$ where the boxes
$$Q_{\alpha}(g,r)=\prod_{i=1}^{m}[x_{i}-r,x_{i}+r]\times\prod_{j=1}^{k}[y_{j}-r(|x|+r)^{\alpha},y_{j}+r(|x|+r)^{\alpha}].$$
It follows from (\ref{Q}) that there exists a positive constant
$C$ such that
$$|B(g,2r)|\leq C|B(g,r)|$$
for all $(g,r)\in\mathbb{G}_{\alpha}^{n}\times(0,\infty).$
Moreover,
\begin{equation}\label{Re}
\Big(\frac{R}{r}\Big)^{n}\lesssim \frac{|B(g,R)|}{|B(g,r)|}\lesssim\Big(\frac{R}{r}\Big)^{Q}
\ \mbox{for}\ R>r>0.
\end{equation}
 The divergence
of a vector-valued function
$$\phi=\{\phi_{1,1},\ldots,\phi_{1,m},\phi_{2,1},\ldots,\phi_{2,k}\}\in
C^{1}(\mathbb{G}_{\alpha}^{n};\mathbb{R}^{n})$$ is
$$\textrm{div}_{\alpha}(\phi)=\sum_{i=1}^{m}X_{i}\phi_{1i}+\sum_{j=1}^{k}X_{m+j}\phi_{2j}.$$
Let $$\nabla_{\alpha}=(X_{1},X_{2},\ldots
X_{m},X_{m+1},X_{m+2},\ldots,X_{m+k})$$ be the Grushin gradient operator.
The Grushin-Laplace operator can also be expressed  as
$\mathcal{L}=-\textrm{div}_{\alpha}(\nabla_{\alpha}).$

\subsection{The Grushin semigroup $\{e^{-t\mathcal{L}}\}_{t>0}$}\label{sec-2.2}

In this section, we collect various properties of the semigroup
associated with the Grushin-Laplace operator $\mathcal{L}$ which will be used throughout the rest of
this paper.

The semigroup $e^{-t\mathcal{L}}$ is defined as
\begin{equation}\label{G}
e^{-t\mathcal{L}}u(g)=\int_{\mathbb{G}^{n}_{\alpha}}K_{t}(g,g')u(g')dg',\quad
u\in\mathscr{S}(\mathbb{G}^{n}_{\alpha}),
\end{equation} where $K_t(\cdot,\cdot)$ is the heat kernel of this semigroup.
The semigroup is sub-Markovian, and
defines a family of bounded operators
$e^{-t\mathcal{L}}:L^{2}(\mathbb{G}^{n}_{\alpha})\rightarrow
L^{2}(\mathbb{G}^{n}_{\alpha})$ satisfying  the following
properties:
\begin{proposition}\label{G-Pro}
The following statements hold:
\begin{itemize}
  \item [(i)]   For $t,t'\geq0,\ e^{-t\mathcal{L}}e^{-t'\mathcal{L}}=e^{-(t+t')\mathcal{L}};$
  \item [(ii)] For $u\geq0$ and $t>0,$ $e^{-t\mathcal{L}}u\geq0;$
  \item [(iii)] For $t>0$, $e^{-t\mathcal{L}}$ is a self-adjoint operator on $L^{2}(\mathbb{G}^{n}_{\alpha});$
  \item [(iv)] For $u\in L^{p}(\mathbb{G}^{n}_{\alpha})$ with $p\in[1,\infty],$ $e^{-t\mathcal{L}}:L^{p}(\mathbb{G}^{n}_{\alpha})\rightarrow L^{p}(\mathbb{G}^{n}_{\alpha})$ with $$\|e^{-t\mathcal{L}}\|_{L^{p}(\mathbb{G}^{n}_{p})\rightarrow L^{p}(\mathbb{G}^{n}_{\alpha})}\le1;$$
  \item [(v)] For $p\in[1,\infty),$ the semigroup $e^{-t\mathcal{L}}$ is a strongly continuous semigroup on $L^{p}(\mathbb{G}^{n}_{\alpha}).$
\end{itemize}
\end{proposition}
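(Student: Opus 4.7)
The plan is to realize $\mathcal{L}$ as a nonnegative self-adjoint operator and then deduce the five properties from standard semigroup theory applied to the sub-Markovian heat semigroup with kernel $K_t$. First I would remark that since $\mathcal{L} = -\operatorname{div}_\alpha(\nabla_\alpha)$ is formally symmetric and satisfies $\int_{\mathbb{G}^n_\alpha} (\mathcal{L}u) u\, dg = \int_{\mathbb{G}^n_\alpha} |\nabla_\alpha u|^2 \, dg \geq 0$ for $u \in C_c^\infty(\mathbb{G}^n_\alpha)$, the Friedrichs extension gives a nonnegative self-adjoint realization on $L^2(\mathbb{G}^n_\alpha)$. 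The spectral theorem then defines $e^{-t\mathcal{L}}$ as a strongly continuous contraction semigroup on $L^2$, and the integral representation \eqref{G} with a nonnegative symmetric heat kernel $K_t(g,g')$ that is sub-stochastic (i.e., $K_t(g,g') = K_t(g',g) \geq 0$ and $\int K_t(g,g')\, dg' \leq 1$) is a known fact for the Grushin-Laplacian, see e.g.\ \cite{FL}.

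With these ingredients in place, the five items are essentially bookkeeping. Property (i) is immediate from functional calculus applied to $\mathcal{L}$ (or equivalently the Chapman-Kolmogorov identity $\int K_t(g,g'')K_{t'}(g'',g')\,dg'' = K_{t+t'}(g,g')$). Property (ii) follows from $K_t \geq 0$ together with \eqref{G}. Property (iii) reflects the symmetry $K_t(g,g') = K_t(g',g)$ inherited from the self-adjointness of $\mathcal{L}$. For (iv), the $L^\infty$-contractivity follows from $\int K_t(g,g')\,dg' \leq 1$, the $L^1$-contractivity follows by Fubini from the symmetry of $K_t$ together with this sub-stochastic bound, and the $L^p$-case for $1 < p < \infty$ is then obtained by Riesz-Thorin interpolation.

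For (v), I would argue by a standard density/uniform-boundedness scheme. One first verifies strong continuity on a dense subset, namely $\mathscr{S}(\mathbb{G}^n_\alpha)$ or $C_c^\infty(\mathbb{G}^n_\alpha)$: for $u$ in this class, Taylor expansion and the fact that $\mathcal{L}u \in L^p$ give $\|e^{-t\mathcal{L}}u - u\|_{L^p} \leq t \|\mathcal{L}u\|_{L^p} + o(t) \to 0$ as $t \to 0^+$. A standard $3\varepsilon$-argument combined with the uniform bound from (iv) then extends strong continuity to all of $L^p(\mathbb{G}^n_\alpha)$ for $p \in [1,\infty)$; one must restrict to $p < \infty$ since $C_c^\infty$ is not dense in $L^\infty$.

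The only genuinely nontrivial ingredient is the existence and basic properties of the heat kernel $K_t$ for the hypoelliptic operator $\mathcal{L}$ (positivity, symmetry, sub-stochastic normalization). These are available from Franchi-Lanconelli \cite{FL} and the general theory of subelliptic heat kernels on Carnot-Carathéodory spaces, so in the body of the paper I would simply cite these works and then carry out the deductions above. The main care-point, if any, is not a mathematical obstacle but rather ensuring that each property is referenced to the correct feature of $K_t$ (symmetry for (iii), positivity for (ii), sub-stochastic bound for (iv)) before invoking interpolation and density.
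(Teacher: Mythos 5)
Your proposal is correct and follows essentially the same route as the paper: both arguments reduce everything to the fact that $\{e^{-t\mathcal{L}}\}_{t>0}$ is a symmetric, positivity-preserving, sub-Markovian semigroup with a nonnegative symmetric (sub-)stochastic kernel, and then deduce (i)--(v) by functional calculus, interpolation/duality, and density. The only difference is one of packaging: the paper outsources the kernel properties and the $L^{\infty}$-contraction to Robinson--Sikora \cite{RS1,RS2}, item (ii) to \cite{J}, and items (iv)--(v) to Davies \cite{D1990}, whereas you unpack those standard citations (Riesz--Thorin plus Fubini for (iv), a $3\varepsilon$/density argument for (v)) explicitly; either presentation is acceptable.
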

\begin{proof}
Statements (i), (iii) follows from \cite[Section 6]{RS1} and
\cite{RS2}. In addition, the semigroup $e^{-t\mathcal{L}}$ is a
contraction on $L^{\infty}(\mathbb{G}^{n}_{\alpha})$ for any $t>0$
(see \cite[Section 6]{RS1}). Notice that $\mathcal{L}$ is a
non-negative self-adjoint operator on
$L^{2}(\mathbb{G}_{\alpha}^{n}).$ Thus, we use \cite[Theorem
1.3.3]{D1990} to deduce that (iv). (ii) can be directly obtained
from \cite[p365]{J}. Combining (ii) and (iv), we conclude from
\cite[Theorem 1.4.1]{D1990} that (v).
\end{proof}

Moreover, the semigroup $e^{-t\mathcal{L}}$ is stochastically
complete, which means that for almost all
$g\in\mathbb{G}^{n}_{\alpha}$ and $t>0,$
\begin{align}\label{SC}
e^{-t\mathcal{L}}1(g)=\int_{\mathbb{G}^{n}_{\alpha}}K_{t}(g,g')dg'=1.
\end{align}
For a proof of (\ref{SC}) we can refer to  \cite[Theorem 6.1]{RS1}.

The infinitesimal generator $\mathcal{L}$ of the semigroup $\{e^{-t\mathcal{L}}\}_{t>0}$
is defined by
\begin{align}\label{inf-gen}
\mathcal{L}u:=\lim_{t\rightarrow0}\frac{e^{-t\mathcal{L}}u-u}{t},
\end{align}
where the limit is in $L^{p}(\mathbb{G}_{\alpha}^{n}).$
The domain $D(\mathcal{L})$ of the generator $\mathcal{L}$
is the space of functions $u\in L^{p}(\mathbb{G}_{\alpha}^{n})$ for which
the limit in (\ref{inf-gen}) exists. By \cite[Theorem 1.4]{EN}, $\mathcal{L}$ is closed and $D(\mathcal{L})$
is dense in $L^{p}(\mathbb{G}_{\alpha}^{n}).$
Moreover, for any
$t>0$ and $u\in D(\mathcal{L}),$ then $e^{-t\mathcal{L}}u\in D(\mathcal{L})$ and
\begin{equation}\label{D}
\frac{d}{dt}e^{-t\mathcal{L}}u=e^{-t\mathcal{L}}\mathcal{L}u=\mathcal{L}e^{-t\mathcal{L}}u
\end{equation}
(see \cite[Lemma 1.3 (ii)]{EN}).
\begin{remark}\label{rem3}
Combining (\ref{inf-gen}) with (\ref{D}), we conclude
$\mathscr{S}(\mathbb{G}^{n}_{\alpha})\subseteq D(\mathcal{L}).$

In fact, for any $u\in\mathscr{S}(\mathbb{G}^{n}_{\alpha}),$ we first notice that
\begin{align}\label{E}
e^{-t\mathcal{L}}u-u=\int_{0}^{t}\frac{d}{dt'}e^{-t'\mathcal{L}}udt'
=\int_{0}^{t}\mathcal{L}e^{-t'\mathcal{L}}udt'=\int_{0}^{t}e^{-t'\mathcal{L}}\mathcal{L}udt',
\end{align}
which implies that
$$\frac{e^{-t\mathcal{L}}u-u}{t}-\mathcal{L}u=\frac{1}{t}\int_{0}^{t}\Big(e^{-t'\mathcal{L}}\mathcal{L}u
-\mathcal{L}u\Big)dt'.$$
Therefore, we obtain from (\ref{E}) and Proposition \ref{G-Pro} (iv) that
$$\Big\|\frac{e^{-t\mathcal{L}}u-u}{t}-\mathcal{L}u\Big\|_{L^{p}(\mathbb{G}_{\alpha}^{n})}
\leq\frac{1}{t}\int_{0}^{t}\Big\|e^{-t'\mathcal{L}}\mathcal{L}u
-\mathcal{L}u\Big\|_{L^{p}(\mathbb{G}_{\alpha}^{n})}dt'\leq t\|\mathcal{L}(\mathcal{L}u)\|_{L^{p}(\mathbb{G}_{\alpha}^{n})},$$
where we have used the fact $\mathcal{L}u\in\mathscr{S}(\mathbb{G}^{n}_{\alpha}).$
This shows that $\mathscr{S}(\mathbb{G}^{n}_{\alpha})\subseteq D(\mathcal{L})$ by letting $t\rightarrow0.$
 \qed
\end{remark}

The following upper bounds for the semigroup kernel $K_{t}(\cdot,\cdot)$ can be found in \cite{RS1}.
\begin{lemma}$\mathrm{(}$\cite[Theorem 6.4]{RS1}$\mathrm{)}$\label{upper}
There exists a positive constant $C$ such that the kernel
$K_{t}(\cdot,\cdot)$ of the Grushin semigroup satisfies
$$0\leq K_{t}(g,g')\leq C(|B(g,\sqrt{t})||B(g',\sqrt{t})|)^{-1/2} $$
for all $t>0$ and almost all $g,g'\in\mathbb{G}^{n}_{\alpha}.$
\end{lemma}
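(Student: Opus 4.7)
The plan is to establish the kernel bound in two steps: first reduce the off-diagonal claim to an on-diagonal estimate using self-adjointness, then prove the on-diagonal bound via a Nash--type inequality adapted to the Grushin volume (\ref{B}).

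First, I would exploit Proposition \ref{G-Pro}(i) and (iii) to write, for almost every $g,g'\in\mathbb{G}_{\alpha}^{n}$,
$$K_{t}(g,g')=\int_{\mathbb{G}_{\alpha}^{n}}K_{t/2}(g,z)K_{t/2}(z,g')\,dz=\int_{\mathbb{G}_{\alpha}^{n}}K_{t/2}(g,z)K_{t/2}(g',z)\,dz.$$
Non-negativity (Proposition \ref{G-Pro}(ii)) and the Cauchy--Schwarz inequality then give
$$0\leq K_{t}(g,g')\leq K_{t}(g,g)^{1/2}\,K_{t}(g',g')^{1/2},$$
so the claim reduces to the symmetric on-diagonal estimate $K_{t}(g,g)\leq C|B(g,\sqrt t)|^{-1}$, which by duality is equivalent to the ultracontractive bound $\|e^{-(t/2)\mathcal L}\delta_{g}\|_{L^{2}(\mathbb G_\alpha^n)}^{2}\lesssim|B(g,\sqrt t)|^{-1}$.

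Second, I would attack the on-diagonal bound by a localized Faber--Krahn/Nash argument. The Grushin--Laplace operator, being sub-elliptic of homogeneous dimension $Q$, satisfies a Sobolev inequality that translates, after a base-point dependent rescaling using the non-isotropic dilations $\delta^{\alpha}_{\lambda}$, into a Nash-type inequality of the form
$$\|u\|_{2}^{2}\leq C\,\bigl(|B(g,\sqrt t)|\,\langle\mathcal{L}u,u\rangle+t^{-1}\|u\|_{2}^{2}\bigr)^{\theta}\|u\|_{1}^{2(1-\theta)}$$
for functions $u$ essentially supported near $g$ at scale $\sqrt t$. Applied to $\phi(\tau)=\|e^{-\tau\mathcal L}\delta_{g}\|_{2}^{2}$, this Nash inequality produces an ODE whose integration yields $\phi(t)\lesssim |B(g,\sqrt t)|^{-1}$; combined with the first step, this gives the stated bound.

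The main obstacle will be establishing the localized Nash/Faber--Krahn inequality with the correct base-point dependence, because (\ref{B}) shows that $|B(g,r)|$ interpolates between $r^{n}$ (when $|x|\gg r$) and $r^{Q}$ (when $|x|\ll r$), so no single homogeneous Sobolev inequality with a fixed exponent will suffice. To handle this I would pass to the boxes $Q_{\alpha}(g,r)$ via the comparison (\ref{Q}), exploit their product structure in the $x$ and $y$ variables together with the doubling property (\ref{Re}), and apply the classical Euclidean Sobolev inequality in the $x$--slice followed by an $|x|^{\alpha}$-weighted Sobolev inequality in $y$ with scaling governed by $\delta^{\alpha}_{\lambda}$. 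Once the point-dependent Faber--Krahn inequality is available, a standard Grigoryan-type lemma delivers the on-diagonal estimate, closing the argument via the Cauchy--Schwarz reduction above.
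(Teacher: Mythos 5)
The paper does not prove this lemma: it is imported verbatim from \cite[Theorem 6.4]{RS1}, and in fact it also follows in one line from the two-sided Gaussian bound (\ref{Kt}) quoted from \cite{RS2}, since by symmetry of $K_t$ one has both $K_t(g,g')\lesssim|B(g,\sqrt t)|^{-1}$ and $K_t(g,g')=K_t(g',g)\lesssim|B(g',\sqrt t)|^{-1}$, and taking the geometric mean of the two gives the stated estimate. So there is no in-paper argument to compare yours against; what can be assessed is whether your sketch would actually reconstruct the Robinson--Sikora result. Your first step is correct and complete: the Chapman--Kolmogorov identity together with symmetry and Cauchy--Schwarz reduces everything to the on-diagonal bound $K_t(g,g)\lesssim|B(g,\sqrt t)|^{-1}$, and the identification $K_t(g,g)=\|K_{t/2}(g,\cdot)\|_{L^2}^2$ is the right way to set up the ultracontractivity argument.

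The gap is in the second step, which is where the entire content of the theorem lives. First, the displayed Nash-type inequality is not dimensionally consistent: the two terms $|B(g,\sqrt t)|\langle\mathcal Lu,u\rangle$ and $t^{-1}\|u\|_2^2$ inside the parenthesis do not scale the same way, and no choice of $\theta$ balances the resulting inequality against $\|u\|_2^2$ on the left for a space whose volume growth varies between $r^n$ and $r^Q$. What is actually needed is the relative Faber--Krahn inequality $\lambda_1(\Omega)\ge c\,r^{-2}\bigl(|B(g,r)|/|\Omega|\bigr)^{2/\nu}$ for open $\Omega\subset B(g,r)$, with a single fixed exponent $\nu$ (one may take $\nu=Q$) and a constant uniform in $g$ and $r$; since this is an inequality rather than an identity, the mismatch between the exponents $n$ and $Q$ in (\ref{B}) is absorbed by the doubling bounds (\ref{Re}). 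By the Grigor'yan--Saloff-Coste theory this is equivalent to the on-diagonal upper bound, so granting it your ODE argument closes. But you have only announced a plan for proving it (slicing the boxes $Q_\alpha(g,r)$ and combining a Euclidean Sobolev inequality in $x$ with a weighted one in $y$), and this is precisely the nontrivial sub-elliptic input that \cite{RS1} supplies; near the degeneracy set $\{x=0\}$ the weighted Sobolev inequality in $y$ with the correct base-point dependence is not a routine consequence of the product structure. As it stands the proposal identifies the right machinery but leaves its load-bearing component unproved.
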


Moreover, we know from \cite[Theorem 1.2]{RS2} that the semigroup kernel $K_{t}(\cdot,\cdot)$ also satisfies the
following estimates for Gaussian bounds: there exist two positive
constants $C_1, C_2$ such that for all
$g,g'\in\mathbb{G}^{n}_{\alpha},t>0,$
\begin{equation}\label{Kt}
|B(g,\sqrt{t})|^{-1}e^{-{C_1d_{\alpha}(g,g')^{2}}/{t}}\lesssim
K_{t}(g,g')\lesssim
|B(g,\sqrt{t})|^{-1}e^{-{C_2d_{\alpha}(g,g')^{2}}/{t}}.
\end{equation}

The following lemma implies the
$L^{p}(\mathbb{G}^{n}_{\alpha})\rightarrow
L^{\infty}(\mathbb{G}^{n}_{\alpha})$ ultracontractivity of the
semigroup $e^{-t\mathcal{L}}$ generated by the Grushin-Laplace
operator $\mathcal{L}.$
\begin{proposition}\label{lem4}
Let $p\in[1,\infty)$ and $u\in L^{p}(\mathbb{G}^{n}_{\alpha}).$
Then there exists a positive constant $C(p,Q),$ depending on $p$ and $Q,$ such that
for any $g\in \mathbb{G}^{n}_{\alpha}$
and $t>0,$
$$|e^{-t\mathcal{L}}u(g)|\leq C(p,Q) t^{-Q/2p}\|u\|_{L^{p}(\mathbb{G}^{n}_{\alpha})}.$$
\end{proposition}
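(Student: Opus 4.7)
The plan is to exploit the explicit heat kernel representation of $e^{-t\mathcal{L}}$ from (\ref{G}) together with the pointwise bound of Lemma \ref{upper}, and then apply Jensen's inequality using stochastic completeness (\ref{SC}).

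First, I would observe that the volume estimate (\ref{B}) together with the form $|B(g,r)| \sim r^n(r+|x|)^{k\alpha}$ gives the lower bound $|B(g,\sqrt{t})| \gtrsim (\sqrt{t})^n (\sqrt{t})^{k\alpha} = t^{Q/2}$, uniformly in $g\in\mathbb{G}^n_\alpha$, since $r+|x|\geq r$. Consequently, Lemma \ref{upper} yields the crude but uniform heat-kernel bound
\begin{equation*}
K_t(g,g') \;\leq\; C\,\bigl(|B(g,\sqrt{t})|\,|B(g',\sqrt{t})|\bigr)^{-1/2} \;\leq\; C(Q)\, t^{-Q/2},
\end{equation*}
valid for all $g,g'\in\mathbb{G}^n_\alpha$ and all $t>0$.

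Next, since $\int_{\mathbb{G}^n_\alpha} K_t(g,g')\,dg' = 1$ by stochastic completeness (\ref{SC}) and $K_t(g,\cdot)\geq 0$, the measure $K_t(g,g')\,dg'$ is a probability measure on $\mathbb{G}^n_\alpha$. Applying Jensen's inequality with the convex function $r\mapsto |r|^p$ (for $p\in[1,\infty)$) to the representation (\ref{G}) gives
\begin{equation*}
|e^{-t\mathcal{L}}u(g)|^p \;\leq\; \left(\int_{\mathbb{G}^n_\alpha} K_t(g,g')\,|u(g')|\,dg'\right)^{p} \;\leq\; \int_{\mathbb{G}^n_\alpha} K_t(g,g')\,|u(g')|^p\,dg'.
\end{equation*}
Substituting the uniform kernel bound from the first step then yields
\begin{equation*}
|e^{-t\mathcal{L}}u(g)|^p \;\leq\; C(Q)\, t^{-Q/2}\int_{\mathbb{G}^n_\alpha}|u(g')|^p\,dg' \;=\; C(Q)\, t^{-Q/2}\,\|u\|_{L^p(\mathbb{G}^n_\alpha)}^p,
\end{equation*}
and taking $p$-th roots gives the desired inequality with $C(p,Q):=C(Q)^{1/p}$.

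I don't anticipate any genuine obstacle: both ingredients (the volume lower bound $|B(g,\sqrt{t})|\gtrsim t^{Q/2}$ coming from (\ref{B}), and stochastic completeness) have already been established in the paper, and the rest is the standard Jensen-type argument. The only small subtlety is to notice that the bound $|B(g,\sqrt t)|\gtrsim t^{Q/2}$ really does hold uniformly in $g$ — this is what forces the correct exponent $Q/2p$ in the statement (rather than $n/2p$) and is the genuine reason the homogeneous dimension $Q$ appears.
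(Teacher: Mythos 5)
Your proof is correct, but it takes a genuinely different (and somewhat more elementary) route than the paper's. The paper applies H\"older's inequality to the representation (\ref{G}), reducing the claim to estimating $\big(\int_{\mathbb{G}^{n}_{\alpha}}K_{t}(g,g')^{p'}dg'\big)^{1/p'}$, and then computes this $L^{p'}$ norm using the full Gaussian upper bound (\ref{Kt}) together with a dyadic decomposition into annuli $\{2^{j}\sqrt{t}\le d_{\alpha}(g,g')<2^{j+1}\sqrt{t}\}$ and the volume comparison (\ref{Re}), arriving at $|B(g,\sqrt{t})|^{-1/p}\lesssim t^{-Q/(2p)}$. You instead combine the on-diagonal bound of Lemma \ref{upper} with the uniform lower volume bound $|B(g,\sqrt{t})|\gtrsim t^{Q/2}$ (which follows from (\ref{B}) since $Q=n+k\alpha$) to get $\sup_{g'}K_{t}(g,g')\lesssim t^{-Q/2}$, and then use stochastic completeness (\ref{SC}) plus Jensen's inequality to pass from this $L^{\infty}$ kernel bound to the $L^{p}\to L^{\infty}$ estimate. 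Both arguments hinge on the same quantitative input, namely $|B(g,\sqrt{t})|\gtrsim t^{Q/2}$ uniformly in $g$, but your version avoids both the off-diagonal Gaussian decay and the annuli decomposition, needing only Lemma \ref{upper} and (\ref{SC}); the paper's H\"older computation is slightly more robust in that the same calculation (with the kernel's symmetry) is reused verbatim to prove the $L^{p}\to L^{q}$ ultracontractivity in Proposition \ref{ulc}, which your sup-bound argument would recover by interpolating the $L^{1}\to L^{\infty}$ and $L^{p}\to L^{p}$ bounds instead.
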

\begin{proof}
Applying the H\"{o}lder inequality to (\ref{G}), we see
$$|e^{-t\mathcal{L}}u(g)|\leq\|u\|_{L^{p}(\mathbb{G}^{n}_{\alpha})}\Big(\int_{\mathbb{G}^{n}_{\alpha}}K_{t}(g,g')^{p'}dg'\Big)^{{1}/{p'}},$$ where $1/p+1/p'=1.$
By using the Gaussian upper bound
(\ref{Kt}) and (\ref{Re}), we obtain
\begin{align*}
\Big(\int_{\mathbb{G}^{n}_{\alpha}}K_{t}(g,g')^{p'}dg'\Big)^{{1}/{p'}}&\lesssim\Big(\int_{\mathbb{G}^{n}_{\alpha}}
|B(g,\sqrt{t})|^{-p'}e^{-{p'C_{2}d_{\alpha}(g,g')^{2}}/{t}}dg'\Big)^{{1}/{p'}}\\
&\lesssim\Big(\int_{d_{\alpha}(g,g')< \sqrt{t}}
|B(g,\sqrt{t})|^{-p'}e^{-{p'C_{2}d_{\alpha}(g,g')^{2}}/{t}}dg'\\
&\quad+\sum^{\infty}_{j=0}\int_{2^j\sqrt{t}\leq d_{\alpha}(g,g')<
2^{j+1}\sqrt{t}}
|B(g,\sqrt{t})|^{-p'}e^{-{p'C_{2}d_{\alpha}(g,g')^{2}}/{t}}dg'\Big)^{{1}/{p'}}\\
&\lesssim\Big(|B(g,\sqrt{t})|^{-p'+1}+\sum^{\infty}_{j=0}e^{-p'C_{2}4^j}|B(g,\sqrt{t})|^{-p'} |B(g,2^j\sqrt{t})|\Big)^{{1}/{p'}}\\
&\lesssim\Big(|B(g,\sqrt{t})|^{-p'+1}+\sum^{\infty}_{j=0}e^{-p'C_{2}4^j}2^{jQ}|B(g,\sqrt{t})|^{-p'+1}  \Big)^{{1}/{p'}}\\
&\leq C(p,Q)
 |B(g,\sqrt{t})|^{-1+{1}/{p'}}\\&\lesssim C(p,Q) t^{-{Q}/{(2p)}}.
\end{align*}
This completes the proof of Proposition \ref{lem4}.
\end{proof}

Using Proposition \ref{lem4}, we also establish the
following  $L^{p}(\mathbb{G}^{n}_{\alpha})\rightarrow L^{q}(\mathbb{G}^{n}_{\alpha})$ ultracontractivity of the Grushin semigroup.
\begin{proposition}\label{ulc}
Let $1\leq p\leq q<\infty$ and $u\in L^{p}(\mathbb{G}^{n}_{\alpha}).$ Then there exists a positive constant $C(p,q,Q),$ depending
on $p,q$ and $Q,$ such that for any $t>0,$
$$\|e^{-t\mathcal{L}}u\|_{L^{q}(\mathbb{G}^{n}_{\alpha})}\leq C(p,q,Q) t^{Q(p-q)/(2pq)}\|u\|_{L^{p}(\mathbb{G}^{n}_{\alpha})}.$$
\end{proposition}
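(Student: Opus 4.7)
The plan is to deduce Proposition \ref{ulc} by a one-line interpolation argument, using the two endpoint bounds that are already available: the $L^{p}\to L^{\infty}$ ultracontractivity from Proposition \ref{lem4} and the $L^{p}\to L^{p}$ contractivity from Proposition \ref{G-Pro}(iv). First I would dispose of the trivial case $p=q$, where the claim reduces to $\|e^{-t\mathcal{L}}u\|_{L^{p}}\leq\|u\|_{L^{p}}$ with $t^{Q(p-q)/(2pq)}=1$, which is exactly Proposition \ref{G-Pro}(iv).

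For the main case $1\le p<q<\infty$, the plan is to exploit the pointwise inequality $|e^{-t\mathcal{L}}u(g)|^{q}\leq\|e^{-t\mathcal{L}}u\|_{L^{\infty}(\mathbb{G}^{n}_{\alpha})}^{q-p}\,|e^{-t\mathcal{L}}u(g)|^{p}$, integrate in $g$, and estimate the two resulting factors separately. More precisely, I would write
\begin{align*}
\|e^{-t\mathcal{L}}u\|_{L^{q}(\mathbb{G}^{n}_{\alpha})}^{q}
&\leq \|e^{-t\mathcal{L}}u\|_{L^{\infty}(\mathbb{G}^{n}_{\alpha})}^{q-p}\,\|e^{-t\mathcal{L}}u\|_{L^{p}(\mathbb{G}^{n}_{\alpha})}^{p}\\
&\leq \bigl(C(p,Q)\,t^{-Q/(2p)}\|u\|_{L^{p}(\mathbb{G}^{n}_{\alpha})}\bigr)^{q-p}\,\|u\|_{L^{p}(\mathbb{G}^{n}_{\alpha})}^{p},
\end{align*}
where the first bracket uses Proposition \ref{lem4} and the second factor uses Proposition \ref{G-Pro}(iv). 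Taking the $q$-th root produces the constant $C(p,q,Q):=C(p,Q)^{(q-p)/q}$ and the exponent $-Q(q-p)/(2pq)=Q(p-q)/(2pq)$, matching the statement exactly.

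There is no real obstacle here; the only point that requires a moment's thought is to verify that the exponent on $t$ coming from the interpolation agrees with the advertised $t^{Q(p-q)/(2pq)}$, which follows from the identity $\tfrac{Q}{2p}\cdot\tfrac{q-p}{q}=\tfrac{Q(q-p)}{2pq}=-\tfrac{Q(p-q)}{2pq}$. The argument is the standard Riesz--Thorin-type interpolation between $L^{p}\to L^{p}$ and $L^{p}\to L^{\infty}$, and it requires nothing specific to the Grushin structure beyond the two results already proved in Section \ref{sec-2.2}.
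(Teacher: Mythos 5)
Your proof is correct, and it takes a genuinely different (and more elementary) route than the paper. The paper's argument first establishes the bound $e^{-t\mathcal{L}}:L^{1}(\mathbb{G}^{n}_{\alpha})\rightarrow L^{r}(\mathbb{G}^{n}_{\alpha})$ with norm $\lesssim t^{Q(1-r)/(2r)}$ via Minkowski's integral inequality and the kernel estimates, pairs it with the $L^{r'}\rightarrow L^{\infty}$ bound from Proposition \ref{lem4}, and then invokes the Riesz--Thorin interpolation theorem to land on $L^{p}\rightarrow L^{q}$. You instead apply the elementary interpolation inequality $\|f\|_{L^{q}}^{q}\leq\|f\|_{L^{\infty}}^{q-p}\|f\|_{L^{p}}^{p}$ to the single function $f=e^{-t\mathcal{L}}u$, feeding in the two bounds already on record (Proposition \ref{lem4} for the $L^{\infty}$ factor, Proposition \ref{G-Pro}(iv) for the $L^{p}$ factor). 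The exponent bookkeeping checks out: $t^{-Q(q-p)/(2p)}$ raised to the power $1/q$ gives exactly $t^{Q(p-q)/(2pq)}$. What your approach buys is brevity and the avoidance of any operator-interpolation machinery and of the auxiliary $L^{1}\rightarrow L^{r}$ estimate; what the paper's approach buys is essentially nothing extra here, since both arguments rest on the same two ingredients. The degenerate case $p=q$ is handled correctly by contractivity, so the proof is complete as written.
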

\begin{proof}Let $u\in L^{1}(\mathbb{G}^{n}_{\alpha})$.
For any given $r\geq1,$  via Minkowski's integral inequality we have
\begin{align*}
\bigg(\int_{\mathbb{G}^{n}_{\alpha}}|e^{-t\mathcal{L}}u(g)|^{r}dg\bigg)^{1/r}
&=\bigg(\int_{\mathbb{G}^{n}_{\alpha}}\bigg|\int_{\mathbb{G}^{n}_{\alpha}}K_{t}(g,g')u(g')dg'
\bigg|^{r}dg\bigg)^{1/r}\\
&\leq\int_{\mathbb{G}^{n}_{\alpha}}|u(g')|\bigg(\int_{\mathbb{G}^{n}_{\alpha}}K_{t}(g,g')^{r}dg\bigg)^{1/r}dg'.
\end{align*}
Similarly to the proof of Proposition \ref{lem4}, we deduce from the symmetry of $K_{t}(\cdot,\cdot)$ that
\begin{align*}
\Big(\int_{\mathbb{G}^{n}_{\alpha}}K_{t}(g,g')^{r}dg\Big)^{1/r}\leq C(r,Q) t^{Q(1-r)/2r}.
\end{align*}
Therefore,
$$\bigg(\int_{\mathbb{G}^{n}_{\alpha}}|e^{-t\mathcal{L}}u(g)|^{r}dg\bigg)^{1/r}\leq C(r,Q)
t^{Q(1-r)/2r}\|u\|_{L^{1}(\mathbb{G}^{n}_{\alpha})}.$$ That is
$$e^{-t\mathcal{L}}:L^{1}(\mathbb{G}^{n}_{\alpha})\rightarrow L^{r}(\mathbb{G}^{n}_{\alpha}).$$
On the other hand, we know from Proposition \ref{lem4} that
$$e^{-t\mathcal{L}}:L^{r'}(\mathbb{G}^{n}_{\alpha})\rightarrow L^{\infty}(\mathbb{G}^{n}_{\alpha})$$
with $$\|e^{-t\mathcal{L}}\|_{L^{r'}(\mathbb{G}^{n}_{\alpha})\rightarrow L^{\infty}(\mathbb{G}^{n}_{\alpha})}
\leq C(r',Q)t^{-Q/2r'}.$$
We now choose $r\geq1$ such that $r'\geq p,$ then there exists $\theta\in[0,1]$ such that
$$\frac{1}{p}=\frac{1-\theta}{1}+\frac{\theta}{r'}=1-\frac{\theta}{r}.$$
By the Riesz-Thorin interpolation theorem, we have
$$e^{-t\mathcal{L}}:L^{p}(\mathbb{G}^{n}_{\alpha})\rightarrow L^{q}(\mathbb{G}^{n}_{\alpha})$$
with
$$\frac{1}{q}=\frac{1-\theta}{r}+\frac{\theta}{\infty}=\frac{1}{p}+\frac{1}{r}-1.$$
Moreover, we obtain
$$\|e^{-t\mathcal{L}}\|_{L^{p}(\mathbb{G}^{n}_{\alpha})\rightarrow L^{q}(\mathbb{G}^{n}_{\alpha})}
\leq
\Big(\frac{C(r,Q)}{t^{Q(r-1)/2r}}\Big)^{1-\theta}\Big(\frac{C(r',Q)}{t^{Q/2r'}}\Big)^{\theta}
=C(p,q,Q)t^{Q(p-q)/(2pq)},$$ where we have used the fact that
$(1-\theta)/r=1/q,1-\theta/r=1/p$ and
$$C(p,q,Q)=C(r,Q)^{1-\theta}C(r',Q)^{\theta},$$ which implies the
desired conclusion.
\end{proof}

\subsection{Fractional powers of the Grushin-Laplace operator}\label{sec-2.3}
For a given closed operator $A$ on a Banach space $X,$ under the
assumption that $\|\lambda R(\lambda,A)\|\leq M$ for $\lambda>0$
(there exists a operator  $A$ which satisfies such
hypothesis but does not generate a semigroup),
Balakrishnan \cite{B} constructed the fractional powers  of $A$ by
the following formula
\begin{equation}\label{A}
A^{s}x=-\dfrac{\sin(\pi
s)}{\pi}\int_{0}^{\infty}\lambda^{s-1}R(\lambda,A)Axd\lambda,\quad
\mathfrak{R}es\in(0,1), x\in X,
\end{equation}
where $R(\lambda,A)=(\lambda I-A)^{-1}$ and $\mathfrak{R}es$ is the
real part of $s.$

When $A$ does generate a strongly continuous semigroup
$\{T(t)\}_{t>0}$ on $X,$ then it is well-known that (\ref{A}) can
be also expressed as follows:
\begin{equation}\label{Aa}
A^{s}x=-\dfrac{s}{\Gamma{(1-s)}}\int_{0}^{\infty}t^{-s-1}(T(t)x-x)dt,\quad
\mathfrak{R}e s\in (0,1), x\in X.
\end{equation}

With $\mathcal{L}$ as in (\ref{K}), we use (\ref{Aa}) and the
semigroup $\{e^{-t\mathcal{L}}\}_{t>0}$ to define the fractional
power  defined on functions
$u\in\mathscr{S}(\mathbb{G}_{\alpha}^{n})$ by the following
pointwise formula:
\begin{definition}\label{NO}
Let $s\in(0,1).$ For any $u\in\mathscr{S}(\mathbb{G}_{\alpha}^{n}),$ we have
\begin{equation}\label{-G}
\mathcal{L}^{s}u(g)=-\dfrac{s}{\Gamma(1-s)}\int_{0}^{\infty}t^{-s-1}\big(e^{-t\mathcal{L}}u(g)-u(g)\big)dt.
\end{equation}
\end{definition}
\begin{remark}\label{rem}
Observe that the right-hand side of (\ref{-G}) is a convergent integral in $L^{p}(\mathbb{G}_{\alpha}^{n})$ for any $p\in[1,\infty].$

In fact, by (\ref{E}), we first notice that
\begin{align}\label{Strong}
\|e^{-t\mathcal{L}}u-u\|_{L^{p}(\mathbb{G}^{n}_{\alpha})}\leq \int_{0}^{t}\|e^{-t'\mathcal{L}}\mathcal{L}u\|_{L^{p}(\mathbb{G}^{n}_{\alpha})}dt'
\leq t\|\mathcal{L}u\|_{L^{p}(\mathbb{G}^{n}_{\alpha})}.
\end{align}
Thus, it follows from the right-hand side of (\ref{-G}) that
\begin{align*}
\Big(\int_{\mathbb{G}^{n}_{\alpha}}\Big|\int_{0}^{\infty}\big(e^{-t\mathcal{L}}u(g)-u(g)\big)\dfrac{dt}{t^{1+s}}\Big|^{p}dg\Big)^{1/p}
&\leq\int_{0}^{\infty}\|e^{-t\mathcal{L}}u-u\|_{L^{p}(\mathbb{G}^{n}_{\alpha})}\dfrac{dt}{t^{1+s}}
=I+II,
\end{align*}
where
\begin{align*}
  \left\{\begin{aligned}
  I&:=\int_{0}^{1}t^{-s-1}\|e^{-t\mathcal{L}}u-u\|_{L^{p}(\mathbb{G}^{n}_{\alpha})}dt;\\
  II&:=\int_{1}^{\infty}t^{-s-1}\|e^{-t\mathcal{L}}u-u\|_{L^{p}(\mathbb{G}^{n}_{\alpha})}dt.
  \end{aligned}\right.
\end{align*}
For the first integral $I$, by using (\ref{Strong}), we obtain
\begin{align}\label{First}
I\leq \|\mathcal{L}u\|_{L^{p}(\mathbb{G}^{n}_{\alpha})}\int_{0}^{1}t^{-s}dt<\infty.
\end{align}
For the second integral $II$, we deduce from Proposition \ref{G-Pro} (iv) that
have
\begin{align}\label{Second}
II\lesssim \|u\|_{L^{p}(\mathbb{G}^{n}_{\alpha})} \int_{1}^{\infty}t^{-s-1}dt<\infty.
\end{align}
Combining (\ref{First}) with (\ref{Second}) yields the desired
result.\qed
\end{remark}

In what follows, we recall Sobolev type spaces in terms of the
Grushin-Laplace operator $\mathcal{L}$ on Grushin spaces.
\begin{definition}\label{STS}
Let $(p,s)\in[1,\infty)\times(0,1).$ The Sobolev type space $\mathcal{W}^{2s,p}(\mathbb{G}^{n}_{\alpha})$ related to $\mathcal{L}^{s}$ is defined by
$$\mathcal{W}^{2s,p}(\mathbb{G}^{n}_{\alpha}):=\{u\in L^{p}(\mathbb{G}^{n}_{\alpha}):\mathcal{L}^{s}u\in L^{p}(\mathbb{G}^{n}_{\alpha})\},$$
endowed with the norm
\begin{align}\label{G-N}
\|u\|_{\mathcal{W}^{2s,p}(\mathbb{G}^{n}_{\alpha})}:=\|u\|_{L^{p}(\mathbb{G}^{n}_{\alpha})}+\|\mathcal{L}^{s}u\|_{L^{p}(\mathbb{G}^{n}_{\alpha})}.
\end{align}
If we also denote the domain of $\mathcal{L}^{s}$ in
$L^{p}(\mathbb{G}^{n}_{\alpha})$ as
$\mathcal{W}^{2s,p}(\mathbb{G}^{n}_{\alpha}),$ then the operator
$\mathcal{L}^{s}$ can be extended to a closed operator on
$\mathcal{W}^{2s,p}(\mathbb{G}^{n}_{\alpha})$ (see \cite[Lemma
2.1]{B}). Therefore, $\mathcal{W}^{2s,p}(\mathbb{G}^{n}_{\alpha})$,
endowed with the norm (\ref{G-N}),  becomes a Banach space.
\end{definition}
\begin{remark}\label{rem1}
Two comments on Definition \ref{STS} are in order:
\begin{itemize}
  \item [(i)] From Remark \ref{rem}, it is not difficult to verify
$\mathscr{S}(\mathbb{G}^{n}_{\alpha})\subseteq
\mathcal{W}^{2s,p}(\mathbb{G}^{n}_{\alpha}).$
  \item [(ii)] For $p\in(1,\infty),$ if $\alpha=0,s=1/2,$  then the space
$\mathcal{W}^{1,p}(\mathbb{G}_{\alpha}^{n})$ coincides with the
classical Sobolev space $$W^{1,p}(\mathbb{R}^{n})=\{u\in
L^{p}(\mathbb{R}^{n}):\nabla u\in L^{p}(\mathbb{R}^{n})\},$$ which
is endowed with the usual norm
$$\|u\|_{W^{1,p}(\mathbb{R}^{n})}:=\|u\|_{L^{p}(\mathbb{R}^{n})}+\|\nabla
u\|_{L^{p}(\mathbb{R}^{n})}.$$
\end{itemize}
\end{remark}

Below we provide a conclusion on density, which plays a crucial role in the proof of the main theorem.
\begin{proposition}\label{W-de}
Let $(p,s)\in[1,\infty)\times(0,1).$ Then $\mathscr{S}(\mathbb{G}^{n}_{\alpha})$ is dense in $\mathcal{W}^{2s,p}(\mathbb{G}^{n}_{\alpha}).$
\end{proposition}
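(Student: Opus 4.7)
The plan is to show density by a two-step approximation: first regularize $u \in \W^{2s,p}(\mathbb{G}^{n}_{\a})$ via the heat semigroup, and then localize the regularized object with a smooth compactly supported cutoff to land inside $\mathscr{S}(\mathbb{G}^{n}_{\a})$.

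First I would set $u_{\e} := e^{-\e \L} u$ for $\e > 0$. Proposition \ref{G-Pro}(v) gives $u_{\e} \to u$ in $L^{p}(\mathbb{G}^{n}_{\a})$ as $\e \to 0^{+}$. Inserting $u_{\e}$ into the Balakrishnan formula (\ref{-G}) and exchanging $e^{-\e\L}$ with the time integral (justified by Fubini together with the $L^{p}$-integrability shown in Remark \ref{rem} and the semigroup property), one obtains the commutation identity $\L^{s} u_{\e} = e^{-\e \L} \L^{s} u$. Strong continuity then forces $\L^{s} u_{\e} \to \L^{s} u$ in $L^{p}$, and hence $u_{\e} \to u$ in $\W^{2s,p}(\mathbb{G}^{n}_{\a})$. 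Moreover, by analyticity of the self-adjoint semigroup one has $\L u_{\e} \in L^{p}$ with norm $O(\e^{-1} \|u\|_{L^{p}})$, and differentiating the kernel representation (\ref{G}) under the integral together with the Gaussian bounds (\ref{Kt}) shows that $u_{\e}$ and its Grushin derivatives have Gaussian decay at infinity; in particular $u_{\e}, \nabla_{\a} u_{\e} \in L^{p}$.

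Next I would fix a radial $\chi \in C_{c}^{\infty}(\mathbb{G}^{n}_{\a})$ with $\chi \equiv 1$ on $B(0,1)$ and $\chi \equiv 0$ off $B(0,2)$, set $\chi_{R} := \chi \circ \delta^{\a}_{1/R}$, and define $v_{\e,R} := \chi_{R} u_{\e} \in C_{c}^{\infty}(\mathbb{G}^{n}_{\a}) \subset \mathscr{S}(\mathbb{G}^{n}_{\a})$. The $L^{p}$-convergence $v_{\e,R} \to u_{\e}$ is immediate from dominated convergence. For the $\L^{s}$ piece, let $w_{R} := (1 - \chi_{R}) u_{\e}$ and apply (\ref{-G}) to obtain
$$
\|\L^{s} w_{R}\|_{L^{p}(\mathbb{G}^{n}_{\a})} \lesssim \int_{0}^{1} t^{-s-1} \|e^{-t\L} w_{R} - w_{R}\|_{L^{p}} \, dt + \int_{1}^{\infty} t^{-s-1} \|e^{-t\L} w_{R} - w_{R}\|_{L^{p}} \, dt.
$$
The tail integral is dominated by $(2/s)\|w_{R}\|_{L^{p}} \to 0$, and the near-origin integral is controlled by $(1-s)^{-1}\|\L w_{R}\|_{L^{p}}$ through the estimate (\ref{Strong}).

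The principal obstacle is therefore to show $\|\L w_{R}\|_{L^{p}} \to 0$ as $R \to \infty$. Expanding by the product rule,
$$
\L w_{R} = (1 - \chi_{R}) \L u_{\e} - 2 \langle \nabla_{\a} \chi_{R}, \nabla_{\a} u_{\e} \rangle + (\L \chi_{R}) u_{\e},
$$
the first term vanishes in $L^{p}$ by dominated convergence since $\L u_{\e} \in L^{p}$ and $\chi_{R} \to 1$ pointwise, while the dilation-scaling bounds $|\nabla_{\a} \chi_{R}| \lesssim R^{-1}$ and $|\L \chi_{R}| \lesssim R^{-2}$ combined with $\nabla_{\a} u_{\e}, u_{\e} \in L^{p}$ dispose of the cross term and the second-order term. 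Combining the two approximation steps then yields $v_{\e,R} \to u$ in $\W^{2s,p}(\mathbb{G}^{n}_{\a})$ along a suitable sequence $(\e,R)=(\e_{k},R_{k})$ with $\e_{k} \to 0$ and $R_{k} \to \infty$, which proves the density.
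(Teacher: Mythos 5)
Your route (heat-semigroup mollification followed by a spatial cutoff) is genuinely different from the paper's, which instead shifts to $\mathcal{L}_{\varepsilon}=\mathcal{L}+\varepsilon I$, uses the invertibility of $\mathcal{L}_{\varepsilon}^{s}$ to show that $D(\mathcal{L})$ is dense in $\mathcal{W}^{2s,p}_{\varepsilon}(\mathbb{G}^{n}_{\alpha})$ in the graph norm, and then passes from $D(\mathcal{L})$ to $\mathscr{S}(\mathbb{G}^{n}_{\alpha})$. The first half of your argument --- $u_{\varepsilon}=e^{-\varepsilon\mathcal{L}}u\to u$ in $\mathcal{W}^{2s,p}(\mathbb{G}^{n}_{\alpha})$ via the commutation $\mathcal{L}^{s}u_{\varepsilon}=e^{-\varepsilon\mathcal{L}}\mathcal{L}^{s}u$ and strong continuity --- is sound, granted the extension of Lemma \ref{co} from $\mathscr{S}(\mathbb{G}^{n}_{\alpha})$ to all of $\mathcal{W}^{2s,p}(\mathbb{G}^{n}_{\alpha})$ by closedness of $\mathcal{L}^{s}$.

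The cutoff step, however, has genuine gaps. First and most seriously, $v_{\varepsilon,R}=\chi_{R}u_{\varepsilon}$ need not belong to $C_{c}^{\infty}(\mathbb{G}^{n}_{\alpha})$, because $u_{\varepsilon}=e^{-\varepsilon\mathcal{L}}u$ need not be smooth: $\mathcal{L}$ is hypoelliptic only for $\alpha\in\mathbb{N}$ (the coefficient $|x|^{2\alpha}$ is not smooth across $\{x=0\}$ for non-integer $\alpha$), so the very object you truncate may fail to lie in $\mathscr{S}(\mathbb{G}^{n}_{\alpha})$. Second, the assertion that $u_{\varepsilon}$ and $\nabla_{\alpha}u_{\varepsilon}$ ``have Gaussian decay at infinity'' is false for a general $u\in L^{p}(\mathbb{G}^{n}_{\alpha})$ --- smoothing does not create spatial decay --- and what you actually need, namely $\nabla_{\alpha}u_{\varepsilon}\in L^{p}(\mathbb{G}^{n}_{\alpha})$, amounts to an $L^{p}$ gradient bound $\|\nabla_{\alpha}e^{-t\mathcal{L}}\|_{L^{p}\to L^{p}}\lesssim t^{-1/2}$, which does not follow from the kernel estimates (\ref{Kt}) and is nowhere established in the paper except implicitly for $p=2$. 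Third, invoking (\ref{Strong}) to bound the near-origin integral by $\|\mathcal{L}w_{R}\|_{L^{p}(\mathbb{G}^{n}_{\alpha})}$ presupposes both that $w_{R}=(1-\chi_{R})u_{\varepsilon}\in D(\mathcal{L})$ and that the Leibniz expansion of $\mathcal{L}w_{R}$ is valid in $L^{p}$; this again requires second-order $L^{p}$ regularity of $u_{\varepsilon}$ that is not available from the paper's toolbox (the analyticity bound $\|\mathcal{L}u_{\varepsilon}\|_{L^{p}}=O(\varepsilon^{-1})$ is itself imported, and is delicate at $p=1$). Each of these points could perhaps be repaired under extra hypotheses ($\alpha\in\mathbb{N}$, $p\in(1,\infty)$, Riesz-transform bounds), but as written the localization half of the proof does not go through; avoiding exactly these regularity issues is what the paper's resolvent argument is designed to do.
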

\begin{proof}
We borrow  the ideas from \cite[Proposition 2.13]{GT2020} to
complete the proof of Proposition \ref{W-de}. For $\varepsilon>0,$
we first construct the following operator
$$\mathcal{L}_{\varepsilon}=\mathcal{L}+\varepsilon I: D(\mathcal{L})\rightarrow L^{p}(\mathbb{G}^{n}_{\alpha}),$$
where $D(\mathcal{L})$ is the domain of $\mathcal{L}$ in $L^{p}(\mathbb{G}^{n}_{\alpha})$ with the norm
$$\|u\|_{D(\mathcal{L})}:=\|u\|_{L^{p}(\mathbb{G}^{n}_{\alpha})}
+\|\mathcal{L}u\|_{L^{p}(\mathbb{G}^{n}_{\alpha})}$$
(see Section \ref{sec-2.2}). According to Proposition \ref{G-Pro} (iv) and \cite[Theorem 1.10]{EN},
$\mathcal{L}_{\varepsilon}$ is invertible, and the inverse is given by the formula
$$R(\varepsilon,\mathcal{L})u=\int_{0}^{\infty}e^{-\varepsilon t}e^{-t\mathcal{L}}udt.$$
By this fact, we can define the rescaled semigroup $e^{-t\mathcal{L_{\varepsilon}}}:=e^{-\varepsilon t}e^{-t\mathcal{L}}.$
In addition, this semigroup has $\mathcal{L}_{\varepsilon}$ as its infinitesimal generator.
Similarly to (\ref{-G}), the fractional powers of $\mathcal{L}_{\varepsilon}$ is defined by
\begin{align}\label{-G1}
\mathcal{L}_{\varepsilon}^{s}u(g)=-\dfrac{s}{\Gamma(1-s)}\int_{0}^{\infty}t^{-s-1}\big(e^{-t\mathcal{L}_{\varepsilon}}u(g)-u(g)\big)dt.
\end{align}
As in Definition \ref{STS}, the domain of the generator $\mathcal{L}_{\varepsilon}^{s}$
is the space
$$\mathcal{W}_{\varepsilon}^{2s,p}(\mathbb{G}^{n}_{\alpha}):=\{u\in L^{p}(\mathbb{G}^{n}_{\alpha}):\mathcal{L}_{\varepsilon}^{s}u\in L^{p}(\mathbb{G}^{n}_{\alpha})\},$$
endowed with the norm
\begin{align*}
\|u\|_{\mathcal{W}_{\varepsilon}^{2s,p}(\mathbb{G}^{n}_{\alpha})}:=\|u\|_{L^{p}(\mathbb{G}^{n}_{\alpha})}+\|\mathcal{L}_{\varepsilon}^{s}u\|_{L^{p}(\mathbb{G}^{n}_{\alpha})}.
\end{align*}
Note that $$\mathcal{W}_{\varepsilon}^{2s,p}(\mathbb{G}^{n}_{\alpha})=\mathcal{W}^{2s,p}(\mathbb{G}^{n}_{\alpha}),$$
and there exists positive constant $C$ such that for any $u\in \mathcal{W}^{2s,p}(\mathbb{G}^{n}_{\alpha}),$
$$\|\mathcal{L}_{\varepsilon}^{s}u-\mathcal{L}^{s}u\|_{L^{p}(\mathbb{G}^{n}_{\alpha})}\leq C\varepsilon^{s}\|u\|_{L^{p}(\mathbb{G}^{n}_{\alpha})}$$
(see \cite[Lemma 4.11]{L2018}). In order to prove that $\mathscr{S}(\mathbb{G}^{n}_{\alpha})$ is dense in $\mathcal{W}^{2s,p}(\mathbb{G}^{n}_{\alpha}),$ we only need to prove that $\mathscr{S}(\mathbb{G}^{n}_{\alpha})$ is dense in $\mathcal{W}_{\varepsilon}^{2s,p}(\mathbb{G}^{n}_{\alpha}).$
Once the latter holds, then for any $u\in \mathcal{W}_{\varepsilon}^{2s,p}(\mathbb{G}^{n}_{\alpha})=\mathcal{W}^{2s,p}(\mathbb{G}^{n}_{\alpha}),$ there exists $u_{k}\in \mathscr{S}(\mathbb{G}^{n}_{\alpha})$ such that
$$\|u_{k}-u\|_{L^{p}(\mathbb{G}^{n}_{\alpha})},\ \  \|\mathcal{L}_{\varepsilon}^{s}(u_{k}-u)\|_{L^{p}(\mathbb{G}^{n}_{\alpha})}
\rightarrow0\ \ \mbox{as}\ k\rightarrow\infty.$$
Further, we deduce that
\begin{align*}
\|\mathcal{L}^{s}(u_{k}-u)\|_{L^{p}(\mathbb{G}^{n}_{\alpha})}
&=\|\mathcal{L}^{s}(u_{k}-u)-\mathcal{L}_{\varepsilon}^{s}(u_{k}-u)+\mathcal{L}_{\varepsilon}^{s}(u_{k}-u)\|_{L^{p}(\mathbb{G}^{n}_{\alpha})}\\
&\leq C\varepsilon^{s}\|u_{k}-u\|_{L^{p}(\mathbb{G}^{n}_{\alpha})}+
\|\mathcal{L}_{\varepsilon}^{s}(u_{k}-u)\|_{L^{p}(\mathbb{G}^{n}_{\alpha})}\rightarrow 0
\end{align*}
as $k\rightarrow\infty.$ This implies that the density of
$\mathscr{S}(\mathbb{G}^{n}_{\alpha})$ in
$\mathcal{W}^{2s,p}(\mathbb{G}^{n}_{\alpha})$.

Therefore, it suffices to prove that $\mathscr{S}(\mathbb{G}^{n}_{\alpha})$ is dense in $\mathcal{W}_{\varepsilon}^{2s,p}(\mathbb{G}^{n}_{\alpha}),$
that is,
\begin{align}\label{DD-De}
\overline{\mathscr{S}(\mathbb{G}^{n}_{\alpha})}
^{\|\cdot\|_{\mathcal{W}_{\varepsilon}^{2s,p}(\mathbb{G}_{\alpha}^{n})}}=\mathcal{W}_{\varepsilon}^{2s,p}(\mathbb{G}_{\alpha}^{n}).
\end{align}
(Here, we have used the fact
$\mathscr{S}(\mathbb{G}^{n}_{\alpha})\subseteq
\mathcal{W}_{\varepsilon}^{2s,p}(\mathbb{G}^{n}_{\alpha}),$ see also
Remark \ref{rem1} (i).)  In addition, for any $u\in D(\mathcal{L}),$
it follows from (\ref{-G1}), (\ref{First}) and $(\ref{Second})$ that
$$\|\mathcal{L}_{\varepsilon}^{s}u\|_{L^{p}(\mathbb{G}^{n}_{\alpha})}\lesssim \|u\|_{L^{p}(\mathbb{G}^{n}_{\alpha})}
+\|\mathcal{L}_{\varepsilon}u\|_{L^{p}(\mathbb{G}^{n}_{\alpha})},$$
which implies that $D(\mathcal{L})\subseteq
\mathcal{W}_{\varepsilon}^{2s,p}(\mathbb{G}_{\alpha}^{n}).$ Observe
that the invertibility of $\mathcal{L}_{\varepsilon}^{s} $ (see
Lemma \ref{FF}). Using the same argument as in \cite[Proposition
2.13 Step II]{GT2020} tells us that $D(\mathcal{L})$ is dense in
$\mathcal{W}_{\varepsilon}^{2s,p}(\mathbb{G}_{\alpha}^{n}).$ Thus,
we have
\begin{align}\label{D-de}
\overline{D(\mathcal{L})}
^{\|\cdot\|_{\mathcal{W}_{\varepsilon}^{2s,p}(\mathbb{G}_{\alpha}^{n})}}=\mathcal{W}_{\varepsilon}^{2s,p}(\mathbb{G}_{\alpha}^{n}).
\end{align}
Since $\mathscr{S}(\mathbb{G}^{n}_{\alpha})$ is dense in
$L^{p}(\mathbb{G}^{n}_{\alpha})$ and $D(\mathcal{L})\subset
L^{p}(\mathbb{G}^{n}_{\alpha}),$ we use Remark   \ref{rem3} and
(\ref{D-de}) to  derive
$$\overline{\mathscr{S}(\mathbb{G}^{n}_{\alpha})}
^{\|\cdot\|_{\mathcal{W}_{\varepsilon}^{2s,p}(\mathbb{G}_{\alpha}^{n})}}=\overline{D(\mathcal{L})}
^{\|\cdot\|_{\mathcal{W}_{\varepsilon}^{2s,p}(\mathbb{G}_{\alpha}^{n})}}=\mathcal{W}_{\varepsilon}^{2s,p}(\mathbb{G}_{\alpha}^{n}).$$
This proves the desired conclusion (\ref{DD-De}).
\end{proof}

\begin{lemma}\label{co}
Let $(p,s)\in [1,\infty)\times(0,1).$ For any $u\in \mathscr{S}(\mathbb{G}^{n}_{\alpha})$ and for all $t>0,$ we have
$e^{-t\mathcal{L}}u\in \mathcal{W}^{2s,p}(\mathbb{G}^{n}_{\alpha}).$ Moreover,
\begin{equation*}
\mathcal{L}^{s}e^{-t\mathcal{L}}u=e^{-t\mathcal{L}}\mathcal{L}^{s}u\ \ a.e.\ in\ \mathbb{G}^{n}_{\alpha}.
\end{equation*}
\end{lemma}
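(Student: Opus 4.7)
The plan is to verify the two assertions by applying the Balakrishnan pointwise formula (\ref{-G}) to $v := e^{-t\mathcal{L}}u$ and then commuting the bounded operator $e^{-t\mathcal{L}}$ with the $\tau$-integral via a Bochner/Fubini argument. First I would check the $L^{p}$-membership: since $u\in\mathscr{S}(\mathbb{G}^{n}_{\alpha})\subset L^{p}(\mathbb{G}^{n}_{\alpha})$, the contractivity in Proposition \ref{G-Pro}(iv) gives $e^{-t\mathcal{L}}u\in L^{p}(\mathbb{G}^{n}_{\alpha})$. This reduces the lemma to establishing the identity $\mathcal{L}^{s}e^{-t\mathcal{L}}u=e^{-t\mathcal{L}}\mathcal{L}^{s}u$ in $L^{p}(\mathbb{G}^{n}_{\alpha})$, after which the right-hand side automatically belongs to $L^{p}(\mathbb{G}^{n}_{\alpha})$ because $\mathscr{S}(\mathbb{G}^{n}_{\alpha})\subseteq\mathcal{W}^{2s,p}(\mathbb{G}^{n}_{\alpha})$ by Remark \ref{rem1}(i).

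For the identity, apply Definition \ref{NO} with $v=e^{-t\mathcal{L}}u$ to write
\begin{equation*}
\mathcal{L}^{s}e^{-t\mathcal{L}}u(g)=-\dfrac{s}{\Gamma(1-s)}\int_{0}^{\infty}\tau^{-s-1}\bigl(e^{-\tau\mathcal{L}}e^{-t\mathcal{L}}u(g)-e^{-t\mathcal{L}}u(g)\bigr)\,d\tau,
\end{equation*}
and use the semigroup law from Proposition \ref{G-Pro}(i) to rewrite $e^{-\tau\mathcal{L}}e^{-t\mathcal{L}}u=e^{-t\mathcal{L}}e^{-\tau\mathcal{L}}u$. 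The target identity then amounts to interchanging the operator $e^{-t\mathcal{L}}$ (an integral operator against the kernel $K_{t}$, bounded on $L^{p}(\mathbb{G}^{n}_{\alpha})$) with the $\tau$-integral. To justify this I would view the integrand $\tau\mapsto \tau^{-s-1}(e^{-\tau\mathcal{L}}u-u)$ as a Bochner-integrable $L^{p}(\mathbb{G}^{n}_{\alpha})$-valued function on $(0,\infty)$: the splitting near $0$ and $\infty$ carried out in Remark \ref{rem} (using $\|e^{-\tau\mathcal{L}}u-u\|_{L^{p}}\leq \tau\|\mathcal{L}u\|_{L^{p}}$ for small $\tau$, and the contraction bound $\|e^{-\tau\mathcal{L}}u-u\|_{L^{p}}\leq 2\|u\|_{L^{p}}$ for large $\tau$) yields exactly that $\int_{0}^{\infty}\tau^{-s-1}\|e^{-\tau\mathcal{L}}u-u\|_{L^{p}}\,d\tau<\infty$. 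Since any bounded linear operator commutes with Bochner integrals, we may pull $e^{-t\mathcal{L}}$ outside, obtaining
\begin{equation*}
\mathcal{L}^{s}e^{-t\mathcal{L}}u(g)=e^{-t\mathcal{L}}\!\left(-\dfrac{s}{\Gamma(1-s)}\int_{0}^{\infty}\tau^{-s-1}(e^{-\tau\mathcal{L}}u-u)\,d\tau\right)(g)=e^{-t\mathcal{L}}\mathcal{L}^{s}u(g)
\end{equation*}
in $L^{p}(\mathbb{G}^{n}_{\alpha})$, hence almost everywhere on $\mathbb{G}^{n}_{\alpha}$.

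The main obstacle is the rigorous justification of the operator/integral interchange; this is where absolute integrability is essential. An alternative, if one prefers a pointwise argument, would be to insert the explicit representation $e^{-t\mathcal{L}}v(g)=\int K_{t}(g,g')v(g')\,dg'$ and invoke Fubini's theorem to swap the $\tau$- and $g'$-integrals, which requires the integrability of $\tau^{-s-1}|K_{t}(g,g')||e^{-\tau\mathcal{L}}u(g')-u(g')|$ against $d\tau\,dg'$; this can be controlled using the Gaussian bound (\ref{Kt}) together with the same two-scale splitting of the $\tau$-integral. Either route leads to the same conclusion, and once the commutation is established the membership $e^{-t\mathcal{L}}u\in\mathcal{W}^{2s,p}(\mathbb{G}^{n}_{\alpha})$ follows immediately from $\mathcal{L}^{s}u\in L^{p}(\mathbb{G}^{n}_{\alpha})$ and Proposition \ref{G-Pro}(iv).
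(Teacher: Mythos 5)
Your proposal is correct and follows essentially the same route as the paper: the paper likewise bounds $\|\mathcal{L}^{s}e^{-t\mathcal{L}}u\|_{L^{p}(\mathbb{G}^{n}_{\alpha})}$ via the semigroup law, contractivity, and the splitting from Remark \ref{rem}, and then establishes the commutation identity by inserting the kernel $K_{t}(g,g')$ and applying Fubini --- precisely the ``alternative pointwise argument'' you describe, which is interchangeable with your Bochner-integral formulation. No gaps.
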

\begin{proof}
For any $u\in \mathscr{S}(\mathbb{G}^{n}_{\alpha}),$ we deduce from (\ref{-G}), (\ref{SC}) and  Proposition \ref{G-Pro} (iv) that
\begin{align*}
\|\mathcal{L}^{s}e^{-t\mathcal{L}}u\|_{L^{p}(\mathbb{G}^{n}_{\alpha})}&\leq\frac{s}{\Gamma(1-s)}\int_{0}^{\infty}
\frac{\|e^{-t'\mathcal{L}}e^{-t\mathcal{L}}u-e^{-t\mathcal{L}}u\|_{L^{p}(\mathbb{G}^{n}_{\alpha})}}{t'^{1+s}}dt'\\
&\leq\frac{s}{\Gamma(1-s)}\int_{0}^{\infty}
\frac{\|e^{-t'\mathcal{L}}u-u\|_{L^{p}(\mathbb{G}^{n}_{\alpha})}}{t'^{1+s}}dt',
\end{align*}
which, together with (\ref{First}) and (\ref{Second}), yields
$$\|\mathcal{L}^{s}e^{-t\mathcal{L}}u\|_{L^{p}(\mathbb{G}^{n}_{\alpha})}
\lesssim \|\mathcal{L}u\|_{L^{p}(\mathbb{G}^{n}_{\alpha})}+\|u\|_{L^{p}(\mathbb{G}^{n}_{\alpha})}<\infty.$$
This implies $e^{-t\mathcal{L}}u\in
\mathcal{W}^{2s,p}(\mathbb{G}^{n}_{\alpha}).$
Thanks to the Fubini theorem we
conclude  that
\begin{align*}
\mathcal{L}^{s}e^{-t\mathcal{L}}u(g)&=-\frac{s}{\Gamma{(1-s)}}\int_{0}^{\infty}\frac{e^{-t\mathcal{L}}(e^{-t'\mathcal{L}}u-u)(g)}{t'^{1+s}}dt'\\
&=-\frac{s}{\Gamma{(1-s)}}\int_{0}^{\infty}\int_{\mathbb{G}^{n}_{\alpha}}K_{t}(g,g')\frac{e^{-t'\mathcal{L}}u(g')-u(g')}{t'^{1+s}}dg'dt'\\
&=-\frac{s}{\Gamma{(1-s)}}\int_{\mathbb{G}^{n}_{\alpha}}K_{t}(g,g')\Big(\int_{0}^{\infty}\frac{e^{-t'\mathcal{L}}u(g')-u(g')}{t'^{1+s}}dt'\Big)dg'\\
&=e^{-t\mathcal{L}}\mathcal{L}^{s}u(g)
\end{align*} for almost every $g\in \mathbb{G}^{n}_{\alpha}$ and for all $t>0.$
\end{proof}

In order to investigate the Riesz potential operators
$\mathscr{I}_{\tilde{\alpha}}$, we next recall the Riesz kernel of
order $\tilde{\alpha}$ (see \cite{ZL2023}).
\begin{definition}\label{def1}
Let $\tilde{\alpha}\in(0,Q).$ The Riesz kernel of order
$\tilde{\alpha}$ is defined as
\begin{equation}\label{Ia}
I_{\tilde{\alpha}}(g,g'):=\dfrac{1}{\Gamma({\tilde{\alpha}}/{2})}\int_{0}^{\infty}t^{{\tilde{\alpha}}/{2}-1}K_{t}(g,g')dt
\quad \forall~ g,g'\in\mathbb{G}^{n}_{\alpha}.
\end{equation}
\end{definition}

It should be noted that for every
$g\in\mathbb{G}_{\alpha}^{n}\backslash\{0\}$, the integral in
Definition  \ref{def1} converges absolutely, which can be seen from
(\ref{Kt}).

The next lemma, which is exactly \cite[Lemma 2.5]{ZL2023}, tells us
that the Riesz potential operator $\mathscr{I}_{\tilde{\alpha}}$ is
the inverse of $\mathcal{L}^{s},$ where the
Riesz potential operator $\mathscr{I}_{\tilde{\alpha}}$ is defined
as
\begin{equation}\label{FFF}
\mathscr{I}_{\tilde{\alpha}}u(g):=\int_{\mathbb{G}_{\alpha}^{n}}u(g')
I_{\tilde{\alpha}}(g,g')dg'
=\dfrac{1}{\Gamma({\tilde{\alpha}}/{2})}\int_{\mathbb{G}_{\alpha}^{n}}u(g')
\int_{0}^{\infty}t^{{\tilde{\alpha}}/{2}-1}K_{t}(g,g')dtdg'
\end{equation}
for any $u\in \mathscr{S}(\mathbb{G}^{n}_{\alpha}).$ Moreover, using (\ref{G}) and exchanging the
 order of integration in  (\ref{FFF}) deduce  that
 $\mathscr{I}_{\tilde{\alpha}}$ can be also written as
\begin{equation}\label{R}
\mathscr{I}_{\tilde{\alpha}}u(g)=\dfrac{1}{\Gamma({\tilde{\alpha}}/{2})}\int_{0}^{\infty}t^{{\tilde{\alpha}}/{2}-1}e^{-t\mathcal{L}}u(g)dt
\quad \forall~ u\in\mathscr{S}(\mathbb{G}^{n}_{\alpha}).
\end{equation}

\begin{lemma}$\mathrm{(}$\cite[Lemma 2.5]{ZL2023}$\mathrm{)}$ \label{FF}
Let $s\in(0,1).$ Then for any $u\in\mathscr{S}(\mathbb{G}^{n}_{\alpha})$ we have
\begin{equation}\label{ff}
u=\mathscr{I}_{2s}(\mathcal{L}^{s}u)=\mathcal{L}^{s}(\mathscr{I}_{2s}u).
\end{equation}
\end{lemma}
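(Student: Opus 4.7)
The plan is to prove both identities in $u=\mathscr{I}_{2s}(\mathcal{L}^{s}u)=\mathcal{L}^{s}(\mathscr{I}_{2s}u)$ by reducing them, through Fubini and an integration by parts, to the telescoping identity $\int_{0}^{\infty}\mathcal{L}e^{-r\mathcal{L}}u\,dr=-[e^{-r\mathcal{L}}u]_{0}^{\infty}=u$, where the boundary values are handled by the strong continuity of Proposition \ref{G-Pro}(v) at $r=0$ and by the $L^{p}\!\to\!L^{\infty}$ ultracontractivity of Proposition \ref{lem4} at $r=\infty$.

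First I would rewrite the Balakrishnan formula (\ref{-G}) in a form that is better suited to pointwise manipulation. For $v\in\mathscr{S}(\mathbb{G}^{n}_{\alpha})$, using the identity $\frac{d}{dt}e^{-t\mathcal{L}}v=-\mathcal{L}e^{-t\mathcal{L}}v$ from (\ref{D}), an integration by parts in $t$ on the expression (\ref{-G}) yields
\[
\mathcal{L}^{s}v=\frac{1}{\Gamma(1-s)}\int_{0}^{\infty}t^{-s}\,\mathcal{L}e^{-t\mathcal{L}}v\,dt.
\]
The boundary terms vanish: near $t=0$ one has $e^{-t\mathcal{L}}v-v=O(t)$ by (\ref{E}) combined with $\mathcal{L}v\in\mathscr{S}\subset L^{\infty}$, while as $t\to\infty$ the ultracontractivity bound $\|e^{-t\mathcal{L}}v\|_{L^{\infty}}\lesssim t^{-Q/2}\|v\|_{L^{1}}$ forces $t^{-s}(e^{-t\mathcal{L}}v-v)\to 0$.

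Next, for the first identity, I would use (\ref{R}), then Lemma \ref{co}, then the integrated-by-parts representation above with $v=e^{-t\mathcal{L}}u$, to write
\[
\mathscr{I}_{2s}(\mathcal{L}^{s}u)(g)=\frac{1}{\Gamma(s)\Gamma(1-s)}\int_{0}^{\infty}\!\!\int_{0}^{\infty}t^{s-1}\tau^{-s}\,\mathcal{L}e^{-(t+\tau)\mathcal{L}}u(g)\,d\tau\,dt.
\]
Applying Fubini (the double integral being absolutely convergent because $\mathcal{L}u\in\mathscr{S}\subset L^{1}\cap L^{\infty}$ gives $|\mathcal{L}e^{-r\mathcal{L}}u(g)|=|e^{-r\mathcal{L}}\mathcal{L}u(g)|$ bounded uniformly for small $r$ and decaying like $r^{-Q/2}$ for large $r$ by Proposition \ref{lem4}, while the inner kernel has only integrable singularities) and changing variables $(t,\tau)\mapsto(r,t)$ with $r=t+\tau$, the inner integral becomes the Euler Beta integral
\[
\int_{0}^{r}t^{s-1}(r-t)^{-s}\,dt=B(s,1-s)=\Gamma(s)\Gamma(1-s).
\]
Consequently
\[
\mathscr{I}_{2s}(\mathcal{L}^{s}u)(g)=\int_{0}^{\infty}\mathcal{L}e^{-r\mathcal{L}}u(g)\,dr=-\int_{0}^{\infty}\frac{d}{dr}e^{-r\mathcal{L}}u(g)\,dr=u(g),
\]
where the final evaluation uses $e^{-r\mathcal{L}}u\to u$ as $r\to 0^{+}$ (Proposition \ref{G-Pro}(v)) and $e^{-r\mathcal{L}}u(g)\to 0$ as $r\to\infty$ (Proposition \ref{lem4} applied with $u\in L^{p}$ for any $p<\infty$).

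The second identity $u=\mathcal{L}^{s}(\mathscr{I}_{2s}u)$ is obtained by an entirely symmetric calculation: substitute (\ref{R}) for $\mathscr{I}_{2s}u$, apply the integrated-by-parts Balakrishnan formula, use $\mathcal{L}e^{-\tau\mathcal{L}}e^{-t\mathcal{L}}=\mathcal{L}e^{-(t+\tau)\mathcal{L}}$, and collapse with the same Beta-function reduction. The one point requiring care is the legitimacy of commuting $\mathcal{L}^{s}$ through the integral defining $\mathscr{I}_{2s}u$; this can be handled either by a dominated-convergence argument on the difference quotients of $\mathscr{I}_{2s}u$ or, more cleanly, by truncating the $t$-integral in (\ref{R}) to $[\varepsilon,1/\varepsilon]$, applying Lemma \ref{co} inside, and letting $\varepsilon\to 0$ with the same size estimates as above. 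I expect this Fubini/commutation justification, together with the verification that the boundary terms in the integration by parts vanish uniformly in the relevant parameters, to be the main technical obstacle; everything else is routine once the representation formulas are in place.
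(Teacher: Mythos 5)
This lemma is not proved in the paper at all: it is imported verbatim as \cite[Lemma 2.5]{ZL2023}, so there is no in-paper argument to compare against. Your proof is the standard semigroup/Beta-function derivation of the inversion identity (integrate the Balakrishnan formula by parts to get $\mathcal{L}^{s}v=\frac{1}{\Gamma(1-s)}\int_{0}^{\infty}\tau^{-s}\mathcal{L}e^{-\tau\mathcal{L}}v\,d\tau$, compose with $\mathscr{I}_{2s}$, collapse the double integral via $\int_{0}^{r}t^{s-1}(r-t)^{-s}dt=\Gamma(s)\Gamma(1-s)$, and telescope $\int_{0}^{R}\mathcal{L}e^{-r\mathcal{L}}u\,dr=u-e^{-R\mathcal{L}}u$), and the structure, constants, and use of Lemma \ref{co} and Proposition \ref{lem4} are all correct. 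This is almost certainly the argument in the cited source as well.

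Two technical points deserve attention. First, your Fubini domination uses $|\mathcal{L}e^{-r\mathcal{L}}u(g)|=|e^{-r\mathcal{L}}\mathcal{L}u(g)|\lesssim r^{-Q/2}\|\mathcal{L}u\|_{L^{1}}$ at infinity; after the Beta reduction this leaves $\int_{1}^{\infty}r^{-Q/2}dr$, which diverges in the borderline case $Q=2$ (i.e.\ $\alpha=0$, $m=k=1$). Since $\alpha\geq 0$ is permitted, this case is not excluded, so you need either improved decay (e.g.\ $\|\mathcal{L}e^{-r\mathcal{L}}u\|_{L^{\infty}}\lesssim r^{-1-Q/2}$ from analyticity of the semigroup combined with ultracontractivity, or the cancellation $\int_{\mathbb{G}^{n}_{\alpha}}\mathcal{L}u\,dg=0$) or a truncation/regularization $\mathcal{L}_{\varepsilon}=\mathcal{L}+\varepsilon I$ as in Proposition \ref{W-de}. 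Second, for $u=\mathcal{L}^{s}(\mathscr{I}_{2s}u)$ you apply the integrated-by-parts Balakrishnan formula to $v=\mathscr{I}_{2s}u$, which is not Schwartz; the vanishing of the boundary terms and the validity of (\ref{D}) for this $v$ must be justified separately (your proposed truncation of the $t$-integral in (\ref{R}) does handle this, but it is a genuine step, not a remark). Neither issue changes the outcome, but both are exactly the places where the proof as written is incomplete.
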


\section{Besov type spaces on Grushin settings}\label{sec-3}

\subsection{Besov type spaces associated with $\{e^{-t\mathcal{L}}\}_{t>0}$}\label{sec-3.1}

 In this subsection, we mainly consider Besov type spaces based on Grushin semigroups,
 and then provide some properties of Besov type spaces.
Previously, Besov type spaces on the general metric
spaces were studied in
\cite{ABC2020,ABCRST2020,ABCRST2021,CG2022,GL2015,P2010}. It should be
noted that the authors imposed some   Ahlfor's regularity
  on the ball in the metric space in \cite{CG2022,GL2015,P2010}, while the Grushin space doesn't
  satisfy it, which can be seen from (\ref{B}); in addition,
  \cite{ABC2020,ABCRST2020,ABCRST2021} focus on a class of Besov
  type space in the endpoint case on the Dirichlet space. Therefore,
  our results can't be covered by their results on the metric space
  or Dirichlet space.

\begin{definition}\label{Besov1}
The Besov type space
$B_{p,q}^{\mathcal{L},\beta}(\mathbb{G}^{n}_{\alpha})$ is defined
according to the values of the parameters $(p,q,\beta)$ as follows:
\begin{itemize}
  \item [(i)] If $(p,q,\beta)\in[1,\infty)\times[1,\infty)\times(0,\infty),$ then $B_{p,q}^{\mathcal{L},\beta}(\mathbb{G}^{n}_{\alpha})$
              is the class of all $L^{p}(\mathbb{G}^{n}_{\alpha})$-functions $u$ such that
              \begin{align}\label{Besov}
                      N_{p,q}^{\mathcal{L},\beta}(u):=\bigg(\int_{0}^{\infty}
                       \Big(\int_{\mathbb{G}^{n}_{\alpha}}e^{-t\mathcal{L}}(|u-u(g)|^{p})(g)dg\Big)^{q/p}\frac{dt}{t^{{\beta q}/{2}+1}}
                    \bigg)^{1/q}<\infty.
              \end{align}
  \item [(ii)] If $(p,q,\beta)\in[1,\infty)\times\{\infty\}\times(0,\infty),$ then $B_{p,q}^{\mathcal{L},\beta}(\mathbb{G}^{n}_{\alpha})$
              is the class of all $L^{p}(\mathbb{G}^{n}_{\alpha})$-functions $u$ such that
              \begin{align}\label{NP}
                 N_{p,\infty}^{\mathcal{L},\beta}(u):=\sup_{t>0}t^{-\beta/2}\Big(\int_{\mathbb{G}_{\alpha}^{n}}e^{-t\mathcal{L}}(|u-u(g)|^{p})(g)dg\Big)^{1/p}<\infty.
              \end{align}
\end{itemize}
When $p=q,$ we write
$B_{p,q}^{\mathcal{L},\beta}(\mathbb{G}^{n}_{\alpha})$ as
$B_{p,p}^{\mathcal{L},\beta}(\mathbb{G}^{n}_{\alpha}).$
The space $B_{p,q}^{\mathcal{L},\beta}(\mathbb{G}^{n}_{\alpha})$ is endowed
with the following norm
\begin{align}\label{BSN}
\|u\|_{B_{p,q}^{\mathcal{L},\beta}(\mathbb{G}^{n}_{\alpha})}:=
\|u\|_{L^{p}(\mathbb{G}^{n}_{\alpha})}+N_{p,q}^{\mathcal{L},\beta}(u).
\end{align}
\end{definition}

\begin{remark}\label{rem2}
Two comments on Definition \ref{Besov1} are in order:
\begin{itemize}
  \item [(i)] Observe that if $(p,q,\beta)\in[1,\infty)\times[1,\infty)\times(0,\infty),$ then
$$(\ref{Besov})\Longleftrightarrow \widetilde{N}_{p,q}^{\mathcal{L},\beta}(u):=\bigg(\int_{0}^{1}
\Big(\int_{\mathbb{G}^{n}_{\alpha}}e^{-t\mathcal{L}}(|u-u(g)|^{p})(g)dg\Big)^{q/p}\frac{dt}{t^{{\beta q}/{2}+1}}
\bigg)^{1/q}<\infty.$$
\item [(ii)] When $(p,q,\beta)\in[1,\infty)\times\{\infty\}\times(0,\infty),$ then
  $$(\ref{NP})\Longleftrightarrow \limsup_{t\rightarrow 0}t^{-\beta/2}\Big(\int_{\mathbb{G}_{\alpha}^{n}}e^{-t\mathcal{L}}(|u-u(g)|^{p})(g)dg\Big)^{1/p}<\infty.$$
\end{itemize}
\begin{proof}
(i): If $N_{p,q}^{\mathcal{L},\beta}(u)<\infty,$ it is easy to see
that
$$\bigg(\int_{0}^{1}
\Big(\int_{\mathbb{G}^{n}_{\alpha}}e^{-t\mathcal{L}}(|u-u(g)|^{p})(g)dg\Big)^{q/p}\frac{dt}{t^{{\beta q}/{2}+1}}
\bigg)^{1/q}<\infty$$
Conversely, we assume that
$$\bigg(\int_{0}^{1}
\Big(\int_{\mathbb{G}^{n}_{\alpha}}e^{-t\mathcal{L}}(|u-u(g)|^{p})(g)dg\Big)^{q/p}\frac{dt}{t^{{\beta q}/{2}+1}}
\bigg)^{1/q}<\infty.$$
It follows
from (\ref{SC}) and Proposition \ref{G-Pro} (iv) that
\begin{align*}
&\bigg(\int_{0}^{\infty}
\Big(\int_{\mathbb{G}^{n}_{\alpha}}e^{-t\mathcal{L}}(|u-u(g)|^{p})(g)dg\Big)^{q/p}\frac{dt}{t^{{\beta q}/{2}+1}}
\bigg)^{1/q}\\
&\leq\bigg(\int_{0}^{1}
\Big(\int_{\mathbb{G}^{n}_{\alpha}}e^{-t\mathcal{L}}(|u-u(g)|^{p})(g)dg\Big)^{q/p}\frac{dt}{t^{{\beta q}/{2}+1}}
\bigg)^{1/q}\\
&\quad+
\bigg(\int_{1}^{\infty}
\Big(\int_{\mathbb{G}^{n}_{\alpha}}e^{-t\mathcal{L}}(|u-u(g)|^{p})(g)dg\Big)^{q/p}\frac{dt}{t^{{\beta q}/{2}+1}}
\bigg)^{1/q}\\
&\lesssim \widetilde{N}_{p,q}^{\mathcal{L},\beta}(u)+
\bigg(\int_{1}^{\infty}
\Big(\int_{\mathbb{G}^{n}_{\alpha}}\big(e^{-t\mathcal{L}}(|u|^{p})(g)+|u(g)|^{p}
e^{-t\mathcal{L}}1(g)\big)dg\Big)^{q/p}\frac{dt}{t^{{\beta q}/{2}+1}}
\bigg)^{1/q}\\
&\lesssim \widetilde{N}_{p,q}^{\mathcal{L},\beta}(u)+\|u\|_{L^{p}(\mathbb{G}^{n}_{\alpha})}.
\end{align*}
This proves that
$$N_{p,q}^{\mathcal{L},\beta}(u)\lesssim \widetilde{N}_{p,q}^{\mathcal{L},\beta}(u)+\|u\|_{L^{p}(\mathbb{G}^{n}_{\alpha})}<\infty.$$

(ii): We first assume that $N_{p,\infty}^{\mathcal{L},\beta}(u)<\infty,$ then
  $$\limsup_{t\rightarrow 0}t^{-\beta/2}\Big(\int_{\mathbb{G}_{\alpha}^{n}}e^{-t\mathcal{L}}(|u-u(g)|^{p})(g)dg\Big)^{1/p}
  <N_{p,\infty}^{\mathcal{L},\beta}(u)<\infty.$$
  Conversely, if
  $$\limsup_{t\rightarrow 0}t^{-\beta/2}\Big(\int_{\mathbb{G}_{\alpha}^{n}}e^{-t\mathcal{L}}(|u-u(g)|^{p})(g)dg\Big)^{1/p}<\infty,$$
  then there is some $\varepsilon>0$ such that,
  \begin{align}\label{T1}
  \sup_{t\in(0,\varepsilon]}t^{-\beta/2}\Big(\int_{\mathbb{G}_{\alpha}^{n}}e^{-t\mathcal{L}}(|u-u(g)|^{p})(g)dg\Big)^{1/p}<\infty.
  \end{align}
  On the other hand, for any $t>\varepsilon,$ we deduce from (\ref{SC}) and Proposition \ref{G-Pro} (iv) that
  $$t^{-\beta/2}\Big(\int_{\mathbb{G}_{\alpha}^{n}}e^{-t\mathcal{L}}(|u-u(g)|^{p})(g)dg\Big)^{1/p}\lesssim \varepsilon^{-\beta/2}\|u\|_{L^{p}(\mathbb{G}_{\alpha}^{n})},$$
  which, together with (\ref{T1}), implies
  \begin{align*}
  &\sup_{t>0}t^{-\beta/2}\Big(\int_{\mathbb{G}_{\alpha}^{n}}e^{-t\mathcal{L}}(|u-u(g)|^{p})(g)dg\Big)^{1/p}\\
  &\lesssim \sup_{t\in(0,\varepsilon]}t^{-\beta/2}\Big(\int_{\mathbb{G}_{\alpha}^{n}}e^{-t\mathcal{L}}(|u-u(g)|^{p})(g)dg\Big)^{1/p}
    +\varepsilon^{-\beta/2}\|u\|_{L^{p}(\mathbb{G}_{\alpha}^{n})}<\infty.
  \end{align*}
This completes the proof.
\end{proof}
\end{remark}

In what follows,
we define another type of Besov spaces, which is very convenient when dealing with certain problems.
\begin{definition}\label{Besov2}
For any $(p,q,\beta)\in[1,\infty)\times[1,\infty]\times(0,\infty),$
the space $\mathfrak{B}_{p,q}^{\mathcal{L},\beta}(\mathbb{G}^{n}_{\alpha})$ is defined as
the closure of the space $C_{c}^{\infty}(\mathbb{G}^{n}_{\alpha})$ in $B_{p,q}^{\mathcal{L},\beta}(\mathbb{G}^{n}_{\alpha})$
relative to the norm (\ref{BSN}).
That is, $u\in \mathfrak{B}_{p,q}^{\mathcal{L},\beta}(\mathbb{G}^{n}_{\alpha})$ if and only if there exist functions
$u_{k}\in C_{c}^{\infty}(\mathbb{G}^{n}_{\alpha})$ such that
$$\|u_{k}-u\|_{B_{p,q}^{\mathcal{L},\beta}(\mathbb{G}^{n}_{\alpha})}\rightarrow0
\ \mbox{as}\ k\rightarrow\infty.$$
If $p=q,$ we shall denote
$\mathfrak{B}_{p,q}^{\mathcal{L},\beta}(\mathbb{G}^{n}_{\alpha})$
as $\mathfrak{B}_{p,p}^{\mathcal{L},\beta}(\mathbb{G}^{n}_{\alpha}).$

\end{definition}

The following proposition presents the relationship between $\mathfrak{B}_{p,q}^{\mathcal{L},\beta}(\mathbb{G}^{n}_{\alpha})$ and $B_{p,q}^{\mathcal{L},\beta}(\mathbb{G}^{n}_{\alpha})$.

\begin{proposition}\label{desi}
Let $(p,q,\beta)\in[1,\infty)\times[1,\infty]\times(0,1).$ Then
$$\mathfrak{B}_{p,q}^{\mathcal{L},\beta}(\mathbb{G}^{n}_{\alpha})\subseteq
B_{p,q}^{\mathcal{L},\beta}(\mathbb{G}^{n}_{\alpha}).$$
\end{proposition}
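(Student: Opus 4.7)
The plan is to reduce the inclusion to the single claim $C_c^\infty(\mathbb{G}^n_\alpha) \subseteq B_{p,q}^{\mathcal{L},\beta}(\mathbb{G}^n_\alpha)$. Once this holds, since by Definition~\ref{Besov2} the space $\mathfrak{B}_{p,q}^{\mathcal{L},\beta}(\mathbb{G}^n_\alpha)$ is the closure of $C_c^\infty(\mathbb{G}^n_\alpha)$ taken inside the Banach space $B_{p,q}^{\mathcal{L},\beta}(\mathbb{G}^n_\alpha)$ (whose completeness is the content of Proposition~\ref{RB}), the required inclusion $\mathfrak{B}_{p,q}^{\mathcal{L},\beta}(\mathbb{G}^n_\alpha) \subseteq B_{p,q}^{\mathcal{L},\beta}(\mathbb{G}^n_\alpha)$ is tautological.

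To verify $\varphi \in B_{p,q}^{\mathcal{L},\beta}(\mathbb{G}^n_\alpha)$ for $\varphi \in C_c^\infty(\mathbb{G}^n_\alpha)$, let $K = \mathrm{supp}\,\varphi$ and $L = \|\nabla_\alpha \varphi\|_\infty$. Integrating $\varphi$ along horizontal curves connecting $g$ to $g'$ and taking the infimum over such curves in the definition of $d_\alpha$ produces the global Lipschitz bound $|\varphi(g) - \varphi(g')| \leq L\, d_\alpha(g,g')$. The quantity to control is
$$A(t):=\int_{\mathbb{G}^n_\alpha}e^{-t\mathcal{L}}(|\varphi-\varphi(g)|^p)(g)\,dg = \int_{\mathbb{G}^n_\alpha}\int_{\mathbb{G}^n_\alpha} K_t(g,g')|\varphi(g')-\varphi(g)|^p\,dg'\,dg,$$
which will be split into the regimes $t \in (0,1]$ and $t > 1$. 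For small $t$, use that the integrand vanishes unless $g \in K$ or $g' \in K$ and the symmetry of $K_t(\cdot,\cdot)$ to restrict the outer integral to $K$; then combine the Lipschitz bound with the Gaussian upper estimate (\ref{Kt}) and a dyadic decomposition exploiting the volume doubling (\ref{Re}) to obtain
$$A(t) \lesssim L^p |K|\, t^{p/2}, \qquad t\in(0,1].$$
For $t > 1$, expand $|\varphi(g')-\varphi(g)|^p \leq 2^{p-1}(|\varphi(g')|^p + |\varphi(g)|^p)$ and invoke stochastic completeness (\ref{SC}) together with Proposition~\ref{G-Pro}(iv) applied to $|\varphi|^p \in L^1(\mathbb{G}^n_\alpha)$ to conclude $A(t) \lesssim \|\varphi\|_{L^p(\mathbb{G}^n_\alpha)}^p$.

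Inserting these bounds into (\ref{Besov}), the integrand in $N_{p,q}^{\mathcal{L},\beta}(\varphi)$ is controlled by a constant multiple of $t^{q(1-\beta)/2 - 1}$ on $(0,1]$ and of $t^{-\beta q/2 - 1}$ on $[1,\infty)$; both are integrable precisely because $\beta \in (0,1)$ and $\beta q > 0$. The case $q = \infty$ is analogous via (\ref{NP}), yielding $t^{-\beta/2} A(t)^{1/p} \lesssim t^{(1-\beta)/2}$ for small $t$ and decay as $t \to \infty$. Hence $N_{p,q}^{\mathcal{L},\beta}(\varphi) < \infty$, and combined with the trivial membership $\varphi \in L^p(\mathbb{G}^n_\alpha)$ this delivers $C_c^\infty(\mathbb{G}^n_\alpha) \subseteq B_{p,q}^{\mathcal{L},\beta}(\mathbb{G}^n_\alpha)$. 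The main technical step is the Gaussian moment estimate $\int K_t(g,g')\, d_\alpha(g,g')^p\, dg' \lesssim t^{p/2}$ uniformly in $g$; this requires careful dyadic summation of (\ref{Kt}) against the doubling bound (\ref{Re}), but is otherwise standard. The restriction $\beta \in (0,1)$ is essential and cannot be weakened by this argument.
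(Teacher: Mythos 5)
Your proposal is correct and follows essentially the same route as the paper's proof: reduce to showing $C_c^{\infty}(\mathbb{G}^n_\alpha)\subseteq B_{p,q}^{\mathcal{L},\beta}(\mathbb{G}^n_\alpha)$, split the $t$-integral at $t=1$, use the Lipschitz bound $|\varphi(g)-\varphi(g')|\lesssim d_\alpha(g,g')$ together with the Gaussian upper bound (\ref{Kt}), a dyadic decomposition and the doubling property (\ref{Re}) to get the moment estimate $A(t)\lesssim t^{p/2}$ for small $t$, and a crude $L^p$ bound via stochastic completeness and contractivity for large $t$. The only cosmetic difference is that for $t>1$ you invoke $|a-b|^p\le 2^{p-1}(|a|^p+|b|^p)$ where the paper uses $\|\varphi\|_{L^\infty}$ on the support; both yield the same integrable tail.
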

\begin{proof}
We only consider the case $(p,q,\beta)\in[1,\infty)\times[1,\infty)\times(0,1)$, since the proof of $(p,q,\beta)\in[1,\infty)\times\{\infty\}
\times(0,1)$ is similar.
By Definition \ref{Besov2}, it suffices to prove that for any $u\in C_{c}^{\infty}(\mathbb{G}^{n}_{\alpha}),$
\begin{align}\label{NN}
N_{p,q}^{\mathcal{L},\beta}(u)^{q}=\int_{0}^{\infty}
\Big(\int_{\mathbb{G}^{n}_{\alpha}}e^{-t\mathcal{L}}(|u-u(g)|^{p})(g)dg\Big)^{q/p}\frac{dt}{t^{{\beta q}/{2}+1}}
<\infty.
\end{align}
Assume that $\textrm{supp}\ u\subset B(g_{0},r)$ for some $g_{0}\in \mathbb{G}^{n}_{\alpha}$
and $r\in(0,\infty).$ If $|u(g)-u(g')|\neq 0,$ then either $g\in B(g_{0},r)$ or $g'\in B(g_{0},r).$
Via (\ref{Besov}), we can write
\begin{align*}
N_{p,q}^{\mathcal{L},\beta}(u)^{q}&=\int_{0}^{\infty}
\Big(\int_{\mathbb{G}^{n}_{\alpha}}e^{-t\mathcal{L}}(|u-u(g)|^{p})(g)dg\Big)^{q/p}\frac{dt}{t^{{\beta q}/{2}+1}}=I_{1}+I_{2},
\end{align*}
where
\begin{align*}
  \left\{\begin{aligned}
  I_{1}&:=\int_{0}^{1}
\Big(\int_{\mathbb{G}^{n}_{\alpha}}\int_{\mathbb{G}^{n}_{\alpha}}K_{t}(g,g')|u(g')-u(g)|^{p}dg'dg\Big)^{q/p}\frac{dt}{t^{{\beta q}/{2}+1}};\\
  I_{2}&:=\int_{1}^{\infty}
\Big(\int_{\mathbb{G}^{n}_{\alpha}}\int_{\mathbb{G}^{n}_{\alpha}}K_{t}(g,g')|u(g')-u(g)|^{p}dg'dg\Big)^{q/p}\frac{dt}{t^{{\beta q}/{2}+1}}.
  \end{aligned}\right.
\end{align*}
For $I_{2},$ we deduce from the Gaussian upper bound (\ref{Kt}) that
\begin{align*}
I_{2}&\lesssim\int_{1}^{\infty}
\Big(\int_{\mathbb{G}^{n}_{\alpha}}\int_{\mathbb{G}^{n}_{\alpha}}e^{-{C_2d_{\alpha}(g,g')^{2}}/{t}}
|u(g')-u(g)|^{p}\frac{dg'dg}{|B(g,\sqrt{t})|}\Big)^{q/p}\frac{dt}{t^{{\beta q}/{2}+1}}\\
&\lesssim I_{2,1}+I_{2,2},
\end{align*}
where
\begin{align*}
  \left\{\begin{aligned}
  I_{2,1}&:=\int_{1}^{\infty}
\Big(\int_{B(g_{0},r)}\int_{d_{\alpha}(g,g')<\sqrt{t}}e^{-{C_2d_{\alpha}(g,g')^{2}}/{t}}
\|u\|_{L^{\infty}(\mathbb{G}^{n}_{\alpha})}^{p}\frac{dg'dg}{|B(g,\sqrt{t})|}\Big)^{q/p}\frac{dt}{t^{{\beta q}/{2}+1}};\\
  I_{2,2}&:=\int_{1}^{\infty}
\Big(\int_{B(g_{0},r)}\int_{d_{\alpha}(g,g')\geq\sqrt{t}}e^{-{C_2d_{\alpha}(g,g')^{2}}/{t}}
\|u\|_{L^{\infty}(\mathbb{G}^{n}_{\alpha})}^{p}\frac{dg'dg}{|B(g,\sqrt{t})|}\Big)^{q/p}\frac{dt}{t^{{\beta q}/{2}+1}}.
  \end{aligned}\right.
\end{align*}
Clearly, we have
$$I_{2,1}\lesssim |B(g_{0},r)|^{q/p}\int_{1}^{\infty}t^{-{\beta q}/{2}-1}dt<\infty.$$
On the other hand, it follows from (\ref{Re}) that
\begin{align*}
I_{2,2}&\lesssim \int_{1}^{\infty}t^{-{\beta q}/{2}-1}\Big(\int_{B(g_{0},r)}\sum_{i=0}^{\infty}
\int_{d_{\alpha}(g,g')\thicksim2^{i}\sqrt{t}}e^{-2^{2i}C_{2}}\frac{dg'dg}{|B(g,\sqrt{t})|}\Big)^{q/p}dt\\
&\lesssim |B(g_{0},r)|^{q/p}\sum_{i=0}^{\infty}
2^{iQq/p}e^{-2^{2i}C_{2}q/p}\int_{1}^{\infty}t^{-{\beta q}/{2}-1}dt<\infty,
\end{align*}
which, together with the previous estimate $I_{2,1},$ implies
$$I_{2}\lesssim I_{2,1}+I_{2,2}<\infty.$$
Now we estimate the term $I_{1}.$ By using the Gaussian upper bound
(\ref{Kt}), we obtain
\begin{align*}
I_{1}&\lesssim\int_{0}^{1}\frac{1}{t^{{\beta q}/{2}+1}}
\Big(\int_{\mathbb{G}^{n}_{\alpha}}\int_{\mathbb{G}^{n}_{\alpha}}e^{-{C_2d_{\alpha}(g,g')^{2}}/{t}}
|u(g')-u(g)|^{p}\frac{dg'dg}{|B(g,\sqrt{t})|}\Big)^{q/p}dt\\
&\lesssim\int_{0}^{1}\frac{1}{t^{{\beta q}/{2}+1}}
\Big(\int_{B(g_{0},r)}\int_{\mathbb{G}^{n}_{\alpha}}e^{-{C_2d_{\alpha}(g,g')^{2}}/{t}}
\frac{d_{\alpha}(g,g')^{p}}{|B(g,\sqrt{t})|}dg'dg\Big)^{q/p}dt\\
&\lesssim I_{1,1}+I_{1,2},
\end{align*}
where
\begin{align*}
  \left\{\begin{aligned}
  I_{1,1}&:=\int_{0}^{1}
           \Big(\int_{B(g_{0},r)}\int_{d_{\alpha}(g,g')<\sqrt{t}}e^{-{C_2d_{\alpha}(g,g')^{2}}/{t}}
            \frac{d_{\alpha}(g,g')^{p}}{|B(g,\sqrt{t})|}dg'dg\Big)^{q/p}\frac{dt}{t^{{\beta q}/{2}+1}};\\
  I_{1,2}&:=\int_{0}^{1}
             \Big(\int_{B(g_{0},r)}\int_{d_{\alpha}(g,g')\geq\sqrt{t}}e^{-{C_2d_{\alpha}(g,g')^{2}}/{t}}
             \frac{d_{\alpha}(g,g')^{p}}{|B(g,\sqrt{t})|}dg'dg\Big)^{q/p}\frac{dt}{t^{{\beta q}/{2}+1}}.
  \end{aligned}\right.
\end{align*}
For the term $I_{1,1},$ we have
\begin{align*}
I_{1,1}&\lesssim\int_{0}^{1}t^{-{\beta q}/{2}-1}\Big(\int_{B(g_{0},r)}t^{{p}/{2}}dg\Big)^{q/p}dt\\
&\lesssim |B(g_{0},r)|^{q/p}\int_{0}^{1}t^{-{\beta q}/{2}+{q}/{2}-1}dt<\infty,
\end{align*}
where  we have used the fact that $\beta\in(0,1) $ in the last
inequality.

Additionally, we can use (\ref{Re}) again to obtain
\begin{align*}
I_{1,2}&\lesssim \int_{0}^{1}t^{-{\beta q}/{2}-1}\Big(\int_{B(g_{0},r)}\sum_{i=0}^{\infty}
\int_{d_{\alpha}(g,g')\thicksim2^{i}\sqrt{t}}e^{-2^{2i}C_{2}}\frac{d_{\alpha}(g,g')^{p}}{|B(g,\sqrt{t})|}dg'dg\Big)^{q/p}dt\\
&\lesssim |B(g_{0},r)|^{q/p}\sum_{i=0}^{\infty}
2^{iq(p+Q)/p}e^{-2^{2i}C_{2}q/p}\int_{0}^{1}t^{-{\beta q}/{2}+{q}/{2}-1}dt<\infty,
\end{align*}
which, together with the estimate $I_{1,1},$ yields
$$I_{1}\lesssim I_{1,1}+I_{1,2}<\infty.$$
Combining the estimate $I_{1}$ with $I_{2}$ we reach the desired conclusion (\ref{NN}).
\end{proof}
\begin{remark}
We presently do not know whether
$$\mathfrak{B}_{p,q}^{\mathcal{L},\beta}(\mathbb{G}^{n}_{\alpha})=
B_{p,q}^{\mathcal{L},\beta}(\mathbb{G}^{n}_{\alpha})$$
 holds due to
the fact that the metric on the Grushin space does not  satisfy the
translation invariance. However, if $\alpha=0$ and
$\mathcal{L}=-\Delta,$ then
$$\mathfrak{B}_{p,q}^{-\Delta,\beta}(\mathbb{R}^{n})=
B_{p,q}^{-\Delta,\beta}(\mathbb{R}^{n}).$$ Furthermore,  if
$\alpha=0$ and we replace $\mathcal{L}$ by  a class of
Kolmogorov-Fokker-Planck operators on $\mathbb{R}^{n}$, then
$$\mathfrak{B}_{p,q}^{\mathcal{L},\beta}(\mathbb{R}^{n})=
B_{p,q}^{\mathcal{L},\beta}(\mathbb{R}^{n})$$ (see \cite[Proposition
3.2]{BGT2022}).
\end{remark}

\begin{proposition}\label{RB}
Let $(p,q,\beta)\in[1,\infty)\times[1,\infty]\times(0,\infty).$ Then $B_{p,q}^{\mathcal{L},\beta}(\mathbb{G}^{n}_{\alpha})$ is a Banach space.
\end{proposition}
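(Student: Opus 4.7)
The plan is to verify first that $\|\cdot\|_{B_{p,q}^{\mathcal{L},\beta}(\mathbb{G}^{n}_{\alpha})}$ is in fact a norm (the only nontrivial point being the triangle inequality) and then to establish completeness by passing to the limit in a Cauchy sequence via Fatou's lemma. Homogeneity and definiteness of the norm follow immediately from the corresponding properties of $\|\cdot\|_{L^{p}(\mathbb{G}^{n}_{\alpha})}$, since already $\|u\|_{L^{p}(\mathbb{G}^{n}_{\alpha})}=0$ forces $u=0$ almost everywhere.

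For the triangle inequality I would rewrite the inner quantity as a double integral. Using (\ref{G}),
\begin{equation*}
\int_{\mathbb{G}^{n}_{\alpha}}e^{-t\mathcal{L}}(|u-u(g)|^{p})(g)\,dg=\int_{\mathbb{G}^{n}_{\alpha}}\int_{\mathbb{G}^{n}_{\alpha}}K_{t}(g,g')|u(g')-u(g)|^{p}\,dg'dg,
\end{equation*}
which is the $L^{p}$-norm to the $p$th power of the function $F_{u}(g,g'):=u(g')-u(g)$ against the positive measure $d\mu_{t}:=K_{t}(g,g')\,dg'dg$. Since $F_{u_{1}+u_{2}}=F_{u_{1}}+F_{u_{2}}$, Minkowski's inequality in $L^{p}(d\mu_{t})$ gives
\begin{equation*}
\Bigl(\int e^{-t\mathcal{L}}(|u_{1}+u_{2}-(u_{1}+u_{2})(g)|^{p})(g)\,dg\Bigr)^{1/p}\le\sum_{i=1}^{2}\Bigl(\int e^{-t\mathcal{L}}(|u_{i}-u_{i}(g)|^{p})(g)\,dg\Bigr)^{1/p}.
\end{equation*}
A second application of Minkowski's inequality, this time in $L^{q}$ of the measure $dt/t^{\beta q/2+1}$ on $(0,\infty)$ (or a trivial $\sup$ estimate when $q=\infty$), yields $N_{p,q}^{\mathcal{L},\beta}(u_{1}+u_{2})\le N_{p,q}^{\mathcal{L},\beta}(u_{1})+N_{p,q}^{\mathcal{L},\beta}(u_{2})$, and combining with the triangle inequality in $L^{p}(\mathbb{G}^{n}_{\alpha})$ finishes this step.

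For completeness, let $\{u_{k}\}$ be a Cauchy sequence in $B_{p,q}^{\mathcal{L},\beta}(\mathbb{G}^{n}_{\alpha})$. Then $\{u_{k}\}$ is Cauchy in $L^{p}(\mathbb{G}^{n}_{\alpha})$, hence converges to some $u\in L^{p}(\mathbb{G}^{n}_{\alpha})$, and by passing to a subsequence I may assume $u_{k_{j}}\to u$ almost everywhere on $\mathbb{G}^{n}_{\alpha}$. Then $|u_{k_{j}}(g')-u_{k_{j}}(g)|^{p}\to |u(g')-u(g)|^{p}$ a.e.\ on $\mathbb{G}^{n}_{\alpha}\times \mathbb{G}^{n}_{\alpha}$, so Fatou's lemma applied twice (inside in $dg'$ against $K_{t}(g,g')$, then in $dg$) gives
\begin{equation*}
\Bigl(\int e^{-t\mathcal{L}}(|u-u(g)|^{p})(g)\,dg\Bigr)^{1/p}\le \liminf_{j\to\infty}\Bigl(\int e^{-t\mathcal{L}}(|u_{k_{j}}-u_{k_{j}}(g)|^{p})(g)\,dg\Bigr)^{1/p}
\end{equation*}
for every $t>0$. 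One further application of Fatou in $L^{q}(dt/t^{\beta q/2+1})$ (or a $\limsup$ estimate when $q=\infty$, after first observing that the corresponding $\sup$ over $t$ survives the pointwise liminf) yields $N_{p,q}^{\mathcal{L},\beta}(u)\le\liminf_{j}N_{p,q}^{\mathcal{L},\beta}(u_{k_{j}})<\infty$, so $u\in B_{p,q}^{\mathcal{L},\beta}(\mathbb{G}^{n}_{\alpha})$.

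To upgrade this to convergence in the full norm, I apply exactly the same Fatou argument to the difference $u_{k}-u_{k_{j}}$ in place of $u_{k_{j}}$: given $\varepsilon>0$, the Cauchy condition provides $k_{0}$ with $N_{p,q}^{\mathcal{L},\beta}(u_{k}-u_{k_{j}})<\varepsilon$ whenever $k,k_{j}\ge k_{0}$, and letting $j\to\infty$ along the a.e.-convergent subsequence produces $N_{p,q}^{\mathcal{L},\beta}(u_{k}-u)\le\varepsilon$ for $k\ge k_{0}$. Combined with $\|u_{k}-u\|_{L^{p}(\mathbb{G}^{n}_{\alpha})}\to 0$, this gives $u_{k}\to u$ in $B_{p,q}^{\mathcal{L},\beta}(\mathbb{G}^{n}_{\alpha})$. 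The main technical care is simply in verifying that both Minkowski and Fatou can be applied cleanly against the positive measure $K_{t}(g,g')\,dg'dg$ (ensured by Proposition~\ref{G-Pro}(ii) and the Gaussian bound (\ref{Kt})) and in checking the $q=\infty$ endpoint, where suprema replace integrals but the pointwise inequalities survive unchanged.
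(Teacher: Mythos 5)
Your proposal is correct and follows essentially the same route as the paper: extract the $L^{p}$-limit $u$ from the Cauchy condition, show $N_{p,q}^{\mathcal{L},\beta}$ is lower semicontinuous under $L^{p}$-convergence via Fatou's lemma in $t$, and then run the same argument on the differences $u_{k}-u_{m}$ to upgrade to convergence in the full norm. The only (harmless) divergence is in how you control the inner quantity for fixed $t$: you pass to an a.e.-convergent subsequence and apply Fatou twice on $\mathbb{G}^{n}_{\alpha}\times\mathbb{G}^{n}_{\alpha}$, whereas the paper uses Minkowski's inequality together with (\ref{SC}) and Proposition \ref{G-Pro}(iv) to show that $u\mapsto\bigl(\int_{\mathbb{G}^{n}_{\alpha}}e^{-t\mathcal{L}}(|u-u(g)|^{p})(g)\,dg\bigr)^{1/p}$ is Lipschitz from $L^{p}(\mathbb{G}^{n}_{\alpha})$, giving genuine convergence for every $t$ without any subsequence.
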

\begin{proof}
We mainly consider the case $(p,q,\beta)\in[1,\infty)\times[1,\infty)\times(0,\infty)$, since the proof of $(p,q,\beta)\in[1,\infty)\times\{\infty\}\times(0,\infty)$ is similar.
It is easy to check that $\|\cdot\|_{B_{p,q}^{\mathcal{L},\beta}(\mathbb{G}^{n}_{\alpha})}$ is a norm.
Thus, we only need to prove the completeness of
$B_{p,q}^{\mathcal{L},\beta}(\mathbb{G}^{n}_{\alpha})$. Let
$\{u_{k}\}_{k\in\mathbb{N}}$ be a Cauchy sequence in
$B_{p,q}^{\mathcal{L},\beta}(\mathbb{G}^{n}_{\alpha}).$ Via the
definition of the norm of Besov spaces and the completeness of
$L^p(\mathbb{G}^{n}_{\alpha})$, we may assume that there
exists a function $u\in L^p(\mathbb{G}^{n}_{\alpha})$ such that
$$\|u_{k}-u\|_{L^p(\mathbb{G}^{n}_{\alpha})}\rightarrow0 \ \ \textrm{as}\ \ k\rightarrow\infty.$$
It follows from Minkowski's inequality,
  (\ref{SC}) and Proposition \ref{G-Pro} (iv) that
\begin{align*}
&\bigg|\bigg(\int_{\mathbb{G}^{n}_{\alpha}}e^{-t\mathcal{L}}(|u_{k}-u_{k}(g')|^{p})(g')dg'\bigg)^{1/p}-\bigg(\int_{\mathbb{G}^{n}_{\alpha}}
e^{-t\mathcal{L}}(|u-u(g')|^{p})(g')dg'\bigg)^{1/p}\bigg|\\
&\leq\bigg(\int_{\mathbb{G}^{n}_{\alpha}}e^{-t\mathcal{L}}(|(u_{k}-u)-(u_{k}(g')-u(g'))|^{p})(g')dg'\bigg)^{1/p}\\
&\leq\bigg(\int_{\mathbb{G}^{n}_{\alpha}}\int_{\mathbb{G}^{n}_{\alpha}}K_{t}(g,g')(|u_{k}(g)-u(g)|^{p})
dgdg'\bigg)^{1/p}\\
&\ \ \ +
\bigg(\int_{\mathbb{G}^{n}_{\alpha}}\int_{\mathbb{G}^{n}_{\alpha}}K_{t}(g,g')(|u_{k}(g')-u(g')|^{p})dgdg'\bigg)^{1/p}\\
&\lesssim\|u-u_{k}\|_{L^{p}(\mathbb{G}^{n}_{\alpha})}.
\end{align*}
Therefore,
$$\lim_{k\rightarrow\infty}\bigg(\int_{\mathbb{G}^{n}_{\alpha}}e^{-t\mathcal{L}}(|u_{k}-u_{k}(g')|^{p})(g')dg'\bigg)^{1/p}
=\bigg(\int_{\mathbb{G}^{n}_{\alpha}}e^{-t\mathcal{L}}(|u-u(g')|^{p})(g')dg'\bigg)^{1/p}.$$
Furthermore, via Fatou's lemma, we deduce that
\begin{align*}
&\bigg(\int_{0}^{\infty}\dfrac{1}{t^{\beta q/2}}\Big(\int_{\mathbb{G}^{n}_{\alpha}}e^{-t\mathcal{L}}(|u-u(g')|^{p})(g')dg'\Big)^{q/p}\dfrac{dt}{t}\bigg)^{1/q}\\
&\le \liminf_{k\rightarrow\infty}\bigg(\int_{0}^{\infty}\dfrac{1}{t^{{\beta q}/{2}}}\Big(\int_{\mathbb{G}^{n}_{\alpha}}e^{-t\mathcal{L}}(|u_{k}-u_{k}(g')|^{p})(g')dg'\Big)^{q/p}\dfrac{dt}{t}\bigg)^{1/q}\\
&\leq\liminf_{k\rightarrow\infty}N_{p,q}^{\mathcal{L},\beta}(u_{k})<\infty.
\end{align*}
Therefore, $u\in B_{p,q}^{\mathcal{L},\beta}(\mathbb{G}^{n}_{\alpha})$ and
$$N_{p,q}^{\mathcal{L},\beta}(u)\leq\liminf_{k\rightarrow\infty}N_{p,q}^{\mathcal{L},\beta}(u_{k}).$$
Similarly, for each fixed positive integer $m$,
$$N_{p,q}^{\mathcal{L},\beta}(u-u_{m})\leq\lim_{k\rightarrow\infty}N_{p,q}^{\mathcal{L},\beta}(u_{k}-u_{m}).$$
Thus, by taking the limit $m\rightarrow\infty$, together with the
fact that $\{u_{k}\}_{k\in\mathbb{N}}$ is a Cauchy sequence with respect to the
semi-norm $N_{p,q}^{\mathcal{L},\beta}(\cdot)$, we prove the
completeness of
$B_{p,q}^{\mathcal{L},\beta}(\mathbb{G}^{n}_{\alpha})$.
\end{proof}

The following results provide the min-max property of
$B_{p,q}^{\mathcal{L},\beta}(\mathbb{G}^{n}_{\alpha}).$
\begin{lemma}\label{max}
Let $u_{1},u_{2}\in
B_{p,q}^{\mathcal{L},\beta}(\mathbb{G}^{n}_{\alpha})$ and the functions
$$G:=\max\{u_{1},u_{2}\},\ H:=\min\{u_{1},u_{2}\}.$$ Then
the following statements hold:
\begin{itemize}
  \item[(i)] If $p=q\in[1,\infty)$ and $\beta>0,$ then $G, H$
  belong to
$B_{p,q}^{\mathcal{L},\beta}(\mathbb{G}^{n}_{\alpha})$ with
$$\|G\|^{q}_{B_{p,q}^{\mathcal{L},\beta}(\mathbb{G}^{n}_{\alpha})}+\|H\|^{q}_{B_{p,q}^{\mathcal{L},\beta}
(\mathbb{G}^{n}_{\alpha})}\leq
\|u_{1}\|^{q}_{B_{p,q}^{\mathcal{L},\beta}(\mathbb{G}^{n}_{\alpha})}
+\|u_{2}\|^{q}_{B_{p,q}^{\mathcal{L},\beta}(\mathbb{G}^{n}_{\alpha})}.$$
  \item[(ii)] If $p\in[1,\infty),q=\infty$ and $\beta>0,$ then
  the above results are also valid for $B_{p,\infty}^{\mathcal{L},\beta}(\mathbb{G}^{n}_{\alpha}).$
\end{itemize}

\end{lemma}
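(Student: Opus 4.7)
The strategy is to reduce the min-max property to a pointwise rearrangement inequality and then push it through the semigroup via its positivity. The key input is the pointwise inequality
$$|G(g)-G(g')|^{p}+|H(g)-H(g')|^{p}\le |u_{1}(g)-u_{1}(g')|^{p}+|u_{2}(g)-u_{2}(g')|^{p},$$
valid for all $g,g'\in \mathbb{G}_{\alpha}^{n}$ and $p\in[1,\infty)$, together with the pointwise identity $|G(g)|^{p}+|H(g)|^{p}=|u_{1}(g)|^{p}+|u_{2}(g)|^{p}$. The identity is immediate from the multiset equality $\{G(g),H(g)\}=\{u_{1}(g),u_{2}(g)\}$. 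For the inequality, I would split into cases on the orderings of $(u_{1}(g),u_{2}(g))$ and $(u_{1}(g'),u_{2}(g'))$: when the orderings agree the bound is an equality, while when they disagree the problem reduces to showing $|u|^{p}+|v|^{p}\le |x|^{p}+|y|^{p}$ under the constraints $u+v=x+y$ and $|u-v|\le |x-y|$. This latter bound follows by writing $(u,v)$ as a convex combination of $(x,y)$ and its swap $(y,x)$ and invoking the convexity of $t\mapsto |t|^{p}$.

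With this pointwise estimate in hand, the Grushin heat kernel $K_{t}(g,g')$ is nonnegative by Proposition \ref{G-Pro}(ii), so multiplying by $K_{t}(g,g')$ and integrating in $g'$ preserves the ordering and yields
$$e^{-t\mathcal{L}}(|G-G(g)|^{p})(g)+e^{-t\mathcal{L}}(|H-H(g)|^{p})(g)\le e^{-t\mathcal{L}}(|u_{1}-u_{1}(g)|^{p})(g)+e^{-t\mathcal{L}}(|u_{2}-u_{2}(g)|^{p})(g).$$
Setting $\Phi_{u}(t):=\int_{\mathbb{G}_{\alpha}^{n}}e^{-t\mathcal{L}}(|u-u(g)|^{p})(g)\,dg$ and integrating over $g$ via Fubini, I obtain the scalar inequality
$$\Phi_{G}(t)+\Phi_{H}(t)\le \Phi_{u_{1}}(t)+\Phi_{u_{2}}(t)\qquad \text{for all } t>0.$$

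For case (i), with $p=q$ the exponent $q/p=1$, so multiplying the last display by $t^{-\beta q/2-1}$ and integrating over $(0,\infty)$ yields $N_{p,p}^{\mathcal{L},\beta}(G)^{q}+N_{p,p}^{\mathcal{L},\beta}(H)^{q}\le N_{p,p}^{\mathcal{L},\beta}(u_{1})^{q}+N_{p,p}^{\mathcal{L},\beta}(u_{2})^{q}$; combined with the integrated identity $\|G\|_{L^{p}}^{p}+\|H\|_{L^{p}}^{p}=\|u_{1}\|_{L^{p}}^{p}+\|u_{2}\|_{L^{p}}^{p}$, a Minkowski-type combination in $\ell^{p}(\mathbb{R}^{2})$ assembles these componentwise bounds into the desired norm inequality. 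For case (ii), with $q=\infty$, the same inequality $\Phi_{G}(t)+\Phi_{H}(t)\le \Phi_{u_{1}}(t)+\Phi_{u_{2}}(t)$ is instead multiplied by $t^{-\beta p/2}$ and the supremum over $t>0$ is taken on both sides using $\sup(a+b)\le \sup a+\sup b$, producing the analogous bound for the sup-type semi-norm $N_{p,\infty}^{\mathcal{L},\beta}$.

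The main obstacle is the pointwise rearrangement inequality for the max/min operation. Although elementary, it is the only step that is not a direct consequence of the general semigroup properties already established (positivity, $L^{p}$-contractivity, and the Gaussian bounds from Proposition \ref{G-Pro} and Lemma \ref{upper}); everything after it is routine, being a matter of positivity, Fubini, weighted integration in $t$, and a Minkowski-type recombination at the end.
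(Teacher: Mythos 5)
Your proposal is correct and is essentially the paper's proof: both arguments rest on the two-point rearrangement inequality $|u_1(g)-u_2(g')|^p+|u_2(g)-u_1(g')|^p\le|u_1(g)-u_1(g')|^p+|u_2(g)-u_2(g')|^p$, valid exactly when the orderings of $u_1,u_2$ at $g$ and at $g'$ disagree, which is then integrated against the nonnegative kernel $K_t(g,g')$ and in the weight $t^{-\beta q/2-1}\,dt$. The only (cosmetic) difference is that the paper organizes the computation through the partition $D_1=\{u_1\ge u_2\}$, $D_2=\{u_1<u_2\}$ and quotes the convexity lemma of \cite{SW2010} with $F(r,s)=|r-s|^p$, whereas you prove the same two-point inequality self-containedly by writing the crossed pair as a convex combination of $(x,y)$ and its swap $(y,x)$. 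One caution on your final assembly: Minkowski's inequality in $\ell^p(\mathbb{R}^2)$ runs in the wrong direction for upgrading the two componentwise bounds (the $L^p$ part and the semi-norm part) to the stated inequality for the full norm $\|\cdot\|_{L^p}+N_{p,p}^{\mathcal{L},\beta}(\cdot)$, and for $q=\infty$ the supremum of a sum dominates only the maximum, not the sum, of the two suprema; the paper's own proof, however, establishes exactly the same two componentwise inequalities and is silent on these recombinations, so your argument matches it in substance.
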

\begin{proof}
(i) We first prove the case $p=q\in[1,\infty)$ and $\beta>0.$ Let
\begin{align*}
  \left\{\begin{aligned}
  D_{1}&:=\Big\{g\in\mathbb{G}^{n}_{\alpha}:\ u_{1}(g)\geq u_{2}(g)\Big\};\\
  D_{2}&:=\Big\{g\in\mathbb{G}^{n}_{\alpha}:\ u_{1}(g)< u_{2}(g)\Big\}.
  \end{aligned}\right.
\end{align*}
 Then
\begin{align*}
\|G\|^{p}_{L^{p}(\mathbb{G}^{n}_{\alpha})}+\|H\|^{p}_{L^{p}(\mathbb{G}^{n}_{\alpha})}
&\leq\bigg(\int_{D_{1}}|u_{1}(g)|^{p}dg+\int_{D_{2}}|u_{2}(g)|^{p}dg\bigg) \\
&\qquad+\bigg(\int_{D_{1}}|u_{2}(g)|^{p}dg+\int_{D_{2}}|u_{1}(g)|^{p}dg\bigg) \\
&\leq\|u_{1}\|^{p}_{L^{p}(\mathbb{G}^{n}_{\alpha})}+\|u_{2}\|^{p}_{L^{p}(\mathbb{G}^{n}_{\alpha})}.
\end{align*}
Thus, we only need to prove
\begin{equation}\label{semi}
(N^{\mathcal{L},\beta}_{p,p}(G))^{p}+(N^{\mathcal{L},\beta}_{p,p}(H))^{p}\leq(N^{\mathcal{L},\beta}_{p,p}(u_{1}))^{p}
+(N^{\mathcal{L},\beta}_{p,p}(u_{2}))^{p}.
\end{equation}
By calculation, we have
\begin{align*}
(N^{\mathcal{L},\beta}_{p,p}(G))^{p}&=\int_{0}^{\infty}\Big(\int_{\mathbb{G}_{\alpha}^{n}}
e^{-t\mathcal{L}}(|G-G(g')|^{p})(g')dg'\Big)\dfrac{dt}{t^{{\beta
p}/{2}+1}}\\
&=\int_{0}^{\infty}\Big(\int_{\mathbb{G}_{\alpha}^{n}}\int_{\mathbb{G}_{\alpha}^{n}}K_{t}(g,g')|G(g)-G(g')|^{p}dgdg'
\Big) \dfrac{dt}{t^{{\beta p}/{2}+1}}\\
&\leq\int_{0}^{\infty}\Big(\int_{D_{1}}\int_{D_{1}}K_{t}(g,g')|u_{2}(g)-u_{2}(g')|^{p}dgdg'\Big)\dfrac{dt}{t^{{\beta p}/{2}+1}}\\
&\quad+\int_{0}^{\infty}\Big(\int_{D_{1}}\int_{D_{2}}K_{t}(g,g')|u_{1}(g)-u_{2}(g')|^{p}dgdg'\Big)\dfrac{dt}{t^{{\beta p}/{2}+1}}\\
&\quad+\int_{0}^{\infty}\Big(\int_{D_{2}}\int_{D_{1}}K_{t}(g,g')|u_{2}(g)-u_{1}(g')|^{p}dgdg'\Big)\dfrac{dt}{t^{{\beta p}/{2}+1}}\\
&\quad+\int_{0}^{\infty}\Big(\int_{D_{2}}\int_{D_{2}}K_{t}(g,g')|u_{1}(g)-u_{1}(g')|^{p}dgdg'\Big)\dfrac{dt}{t^{{\beta
p}/{2}+1}}
\end{align*}
and
\begin{align*}
(N^{\mathcal{L},\beta}_{p,p}(H))^{p}
&\leq\int_{0}^{\infty}\Big(\int_{D_{1}}\int_{D_{1}}K_{t}(g,g')|u_{1}(g)-u_{1}(g')|^{p}dgdg'\Big)  \dfrac{dt}{t^{{\beta p}/{2}+1}}\\
&\quad+\int_{0}^{\infty}\Big(\int_{D_{1}}\int_{D_{2}}K_{t}(g,g')|u_{2}(g)-u_{1}(g')|^{p}dgdg'\Big)\dfrac{dt}{t^{{\beta p}/{2}+1}}\\
&\quad+\int_{0}^{\infty}\Big(\int_{D_{2}}\int_{D_{1}}K_{t}(g,g')|u_{1}(g)-u_{2}(g')|^{p}dgdg'\Big) \dfrac{dt}{t^{{\beta p}/{2}+1}}\\
&\quad+\int_{0}^{\infty}\Big(\int_{D_{2}}\int_{D_{2}}K_{t}(g,g')|u_{2}(g)-u_{2}(g')|^{p}dgdg'\Big)\dfrac{dt}{t^{{\beta
p}/{2}+1}}.
\end{align*}
Then
\begin{align*}
(N^{\mathcal{L},\beta}_{p,p}(G))^{p}+(N^{\mathcal{L},\beta}_{p,p}(H))^{p}\leq I_{1}+I_{2}+I_{3}+I_{4},
\end{align*}
where
$$\left\{\begin{aligned}
I_{1}&:=\int_{0}^{\infty}\int_{D_{1}}\int_{D_{1}}K_{t}(g,g')|u_{1}(g)-u_{1}(g')|^{p}dgdg'\dfrac{dt}{t^{{\beta p}/{2}+1}}\\
&\quad +\int_{0}^{\infty}\int_{D_{2}}\int_{D_{2}}K_{t}(g,g')|u_{1}(g)-u_{1}(g')|^{p}dgdg'\dfrac{dt}{t^{{\beta p}/{2}+1}};\\
I_{2}&:=\int_{0}^{\infty}\Big(\int_{D_{2}}\int_{D_{1}}K_{t}(g,g')
(|u_{1}(g)-u_{2}(g')|^{p}+|u_{2}(g)-u_{1}(g')|^{p})dgdg'\Big) \dfrac{dt}{t^{{\beta p}/{2}+1}};\\
I_{3}&:=\int_{0}^{\infty}\int_{D_{1}}\int_{D_{1}}K_{t}(g,g')|u_{2}(g)-u_{2}(g')|^{p}dgdg'\dfrac{dt}{t^{{\beta p}/{2}+1}}\\
&\quad +\int_{0}^{\infty}\int_{D_{2}}\int_{D_{2}}K_{t}(g,g')|u_{2}(g)-u_{2}(g')|^{p}dgdg'\dfrac{dt}{t^{{\beta p}/{2}+1}};\\
I_{4}&:=\int_{0}^{\infty}\Big(\int_{D_{1}}\int_{D_{2}}K_{t}(g,g')(|u_{2}(g)-u_{1}(g')|^{p}+
          |u_{1}(g)-u_{2}(g')|^{p})dgdg'\Big)\dfrac{dt}{t^{ {\beta  p}/{2}+1}}.
\end{aligned}\right.$$

Next we deal with $I_{2}$ and $I_{4}$   using \cite[Lemma
3.3]{SW2010} (see also \cite[Lemma 2.2]{W2015}), which says that for
any convex and lower-semi-continuous function
$F:\mathbb{R}^{2}\rightarrow(-\infty,\infty),$ we have
$$F(r_{0},s_{1})+F(s_{0},r_{1})\leq F(r_{0},r_{1})+F(s_{0},s_{1})~\mbox{as}~(r_{0}-s_{0})(r_{1}-s_{1})\leq0.$$
If we take $F(r,s):=|r-s|^{p} $ for any $(r,s)\in \mathbb{R}^{2}$,
then for any $g\in D_{1}$ and $g'\in D_{2},$ we have
\begin{align}\label{ineq10}
|u_{2}(g)-u_{1}(g')|^{p}+|u_{1}(g)-u_{2}(g')|^{p}
&\leq F(u_{2}(g),u_{2}(g'))+F(u_{1}(g),u_{1}(g'))\nonumber\\
&=|u_{2}(g)-u_{2}(g')|^{p}+|u_{1}(g)-u_{1}(g')|^{p}.
\end{align}
Likewise, for any $g\in D_{2}$ and $g'\in D_{1},$ we obtain
\begin{align}\label{ineq11}
|u_{1}(g)-u_{2}(g')|^{p}+|u_{2}(g)-u_{1}(g')|^{p}\leq|u_{1}(g)-u_{1}(g')|^{p}+|u_{2}(g)-u_{2}(g')|^{p}.
\end{align}
Combining (\ref{ineq10}) with (\ref{ineq11}) gives
\begin{align*}
I_{2}+I_{4}
&\leq\int_{0}^{\infty}\int_{D_{1}}\int_{D_{2}}K_{t}(g,g')(|u_{1}(g)-u_{1}(g')|^{p}+|u_{2}(g)-u_{2}(g')|^{p})
dgdg'\dfrac{dt}{t^{{\beta
p}/{2}+1}}\\
&\ \ \ +\int_{0}^{\infty}\int_{D_{2}}\int_{D_{1}}K_{t}(g,g')(|u_{1}(g)-u_{1}(g')|^{p}+|u_{2}(g)-u_{2}(g')|^{p})dgdg'\dfrac{dt}{t^{{\beta
p}/{2}+1}}.
\end{align*}
This,   together  with the expressions of $I_{1}$ and $I_{3}$,
implies (\ref{semi}) immediately.

(ii)  Based on the proof process of (i), it is easy to verify that
the desired result is also valid for
$(p,q,\beta)\in[1,\infty)\times\{\infty\}\times(0,\infty).$
\end{proof}

Furthermore, we have the following Besov type embeddings.
\begin{proposition}\label{emb}
If $0<\beta\leq\beta',$ then for any $(p,q)\in[1,\infty)\times[1,\infty],$
$$B_{p,q}^{\mathcal{L},\beta'}(\mathbb{G}^{n}_{\alpha})
\hookrightarrow B_{p,q}^{\mathcal{L},\beta}(\mathbb{G}^{n}_{\alpha})$$
\end{proposition}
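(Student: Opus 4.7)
The plan is to show that for every $u \in B_{p,q}^{\mathcal{L},\beta'}(\mathbb{G}^{n}_{\alpha})$ one has
$$\|u\|_{B_{p,q}^{\mathcal{L},\beta}(\mathbb{G}^{n}_{\alpha})} \lesssim \|u\|_{B_{p,q}^{\mathcal{L},\beta'}(\mathbb{G}^{n}_{\alpha})},$$
with an implicit constant depending on $\beta, \beta', q$. Since the $L^p$ parts of the two norms coincide, the task reduces to dominating $N_{p,q}^{\mathcal{L},\beta}(u)$ by $N_{p,q}^{\mathcal{L},\beta'}(u) + \|u\|_{L^p(\mathbb{G}^{n}_{\alpha})}$. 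Let
$$F(t) := \Big(\int_{\mathbb{G}^{n}_{\alpha}} e^{-t\mathcal{L}}(|u-u(g)|^{p})(g)\,dg\Big)^{1/p},$$
so that $N_{p,q}^{\mathcal{L},\beta}(u)$ is precisely the weighted $L^{q}(dt/t)$ norm of $t^{-\beta/2}F(t)$.

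The first preliminary step is a uniform-in-$t$ bound for $F$. Using $|a-b|^{p}\le 2^{p-1}(|a|^{p}+|b|^{p})$, self-adjointness of the heat kernel (Proposition \ref{G-Pro}(iii)), stochastic completeness (\ref{SC}), and the $L^{p}$ contraction property (Proposition \ref{G-Pro}(iv)), one gets
$$F(t)^{p} \le 2^{p-1}\big(\|e^{-t\mathcal{L}}(|u|^{p})\|_{L^{1}} + \|u\|_{L^{p}}^{p}\big) \le 2^{p}\|u\|_{L^{p}(\mathbb{G}^{n}_{\alpha})}^{p},$$
hence $F(t)\le 2\|u\|_{L^{p}(\mathbb{G}^{n}_{\alpha})}$ for every $t>0$. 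This is the only bound that survives as $t\to\infty$, and it is exactly what is needed to kill the large-time tail that appears because $t^{-\beta/2}$ is now \emph{larger} (not smaller) than $t^{-\beta'/2}$ when $t\ge 1$.

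The second step is a split of the defining integral at $t=1$. Assume first $q<\infty$. For $t\in(0,1]$ the hypothesis $\beta\le\beta'$ yields $t^{-\beta/2}\le t^{-\beta'/2}$, so
$$\int_{0}^{1}\Big(\frac{F(t)}{t^{\beta/2}}\Big)^{q}\frac{dt}{t}\le \int_{0}^{1}\Big(\frac{F(t)}{t^{\beta'/2}}\Big)^{q}\frac{dt}{t}\le N_{p,q}^{\mathcal{L},\beta'}(u)^{q}.$$
For $t\in[1,\infty)$, insert the uniform bound from Step 1 and use $\beta>0$ to integrate:
$$\int_{1}^{\infty}\Big(\frac{F(t)}{t^{\beta/2}}\Big)^{q}\frac{dt}{t}\le (2\|u\|_{L^{p}})^{q}\int_{1}^{\infty}\frac{dt}{t^{\beta q/2+1}}=\frac{2^{q+1}}{\beta q}\|u\|_{L^{p}(\mathbb{G}^{n}_{\alpha})}^{q}.$$
Adding the two pieces and extracting the $q$-th root produces the required estimate. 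The case $q=\infty$ is handled by the same split: on $(0,1]$ one has $t^{-\beta/2}F(t)\le t^{-\beta'/2}F(t)\le N_{p,\infty}^{\mathcal{L},\beta'}(u)$, while on $[1,\infty)$ one has $t^{-\beta/2}F(t)\le F(t)\le 2\|u\|_{L^{p}(\mathbb{G}^{n}_{\alpha})}$, whence $N_{p,\infty}^{\mathcal{L},\beta}(u)\le N_{p,\infty}^{\mathcal{L},\beta'}(u) + 2\|u\|_{L^{p}(\mathbb{G}^{n}_{\alpha})}$.

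I do not expect a genuine obstacle: this is the standard monotone embedding between heat-semigroup Besov scales, and every ingredient (sub-Markovianity, $L^{p}$ contraction, stochastic completeness) has already been established in Proposition \ref{G-Pro} and equation (\ref{SC}). The only mild subtlety worth flagging is the large-$t$ regime, where the naive weight comparison fails and must be replaced by the uniform bound on $F(t)$; without stochastic completeness (which cancels the constant term coming from $u(g)$ inside $e^{-t\mathcal{L}}$) this tail would not be controllable by $\|u\|_{L^{p}}$ alone.
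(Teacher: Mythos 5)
Your proof is correct and follows essentially the same route as the paper: split the defining integral at $t=1$, use the pointwise weight comparison $t^{-\beta/2}\le t^{-\beta'/2}$ on $(0,1]$, and control the tail on $[1,\infty)$ via the uniform bound $F(t)\lesssim\|u\|_{L^{p}(\mathbb{G}^{n}_{\alpha})}$ obtained from stochastic completeness and the $L^{p}$ contraction. The only cosmetic difference is that you carry out the $q=\infty$ case by the same split directly, whereas the paper delegates it to Remark \ref{rem2}(ii).
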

\begin{proof}
Assume that $(p,q)\in[1,\infty)\times[1,\infty).$
By using the hypothesis and the definition of $N_{p,q}^{\mathcal{L},\beta}(\cdot),$ we have
\begin{align*}
N_{p,q}^{\mathcal{L},\beta}(u)^{q}&=\int_{0}^{1}
\bigg(\int_{\mathbb{G}^{n}_{\alpha}}e^{-t\mathcal{L}}(|u-u(g)|^{p})(g)dg\bigg)^{q/p}\frac{dt}{t^{{\beta q}/{2}+1}}\\
&\quad+\int_{1}^{\infty}
\bigg(\int_{\mathbb{G}^{n}_{\alpha}}e^{-t\mathcal{L}}(|u-u(g)|^{p})(g)dg\bigg)^{q/p}\frac{dt}{t^{{\beta q}/{2}+1}}\\
&\leq\int_{0}^{1}
\bigg(\int_{\mathbb{G}^{n}_{\alpha}}e^{-t\mathcal{L}}(|u-u(g)|^{p})(g)dg\bigg)^{q/p}\frac{dt}{t^{{\beta' q}/{2}+1}}\\
&\quad +2^{p-1}\int_{1}^{\infty}
\bigg(\int_{\mathbb{G}^{n}_{\alpha}}\int_{\mathbb{G}^{n}_{\alpha}}
K_{t}(g,g')(|u(g')|^{p}+|u(g)|^{p})dg'dg\bigg)^{q/p}\frac{dt}{t^{{\beta q}/{2}+1}}\\
&\leq N_{p,q}^{\mathcal{L},\beta'}(u)^{q}+\frac{2^{p+{q}/{p}}}{\beta q}\|u\|^{q}_{L^{p}(\mathbb{G}^{n}_{\alpha})},
\end{align*}
where we have used the fact that (\ref{SC}) and Proposition \ref{G-Pro} (iv).
Therefore, this proves that
$$B_{p,q}^{\mathcal{L},\beta'}(\mathbb{G}^{n}_{\alpha})
\hookrightarrow B_{p,q}^{\mathcal{L},\beta}(\mathbb{G}^{n}_{\alpha}).$$
The case $(p,q)\in[1,\infty)\times\{\infty\}$ can be directly derived from Remark \ref{rem2} (ii).
\end{proof}

The following results imply the boundedness properties of $\mathcal{L}^{s}$ from Besov type spaces
$B_{p,q}^{\mathcal{L},\beta}(\mathbb{G}^{n}_{\alpha})$ into $L^{p}(\mathbb{G}^{n}_{\alpha})$ when $p=q.$
\begin{proposition}\label{boundedness}
Let $s\in(0,1).$
For $p\in(1,\infty)$ and $\beta>2s,$ we have
$$\mathcal{L}^{s}:B_{p,p}^{\mathcal{L},\beta}(\mathbb{G}^{n}_{\alpha})\hookrightarrow L^{p}(\mathbb{G}^{n}_{\alpha}).$$
For $p=1,$ the above results are also valid for $\beta\geq 2s.$
\end{proposition}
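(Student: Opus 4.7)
The plan is to combine Balakrishnan's formula (\ref{-G}) with Jensen's inequality (justified by stochastic completeness (\ref{SC})), Minkowski's integral inequality, and Hölder's inequality in the time variable to compare $\|\mathcal{L}^s u\|_{L^p}$ with the semi-norm $N_{p,p}^{\mathcal{L},\beta}(u)$.

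First I would start from Definition \ref{NO} and take $L^p$ norms: by Minkowski's integral inequality (convergence is guaranteed by Remark \ref{rem} together with an approximation from $\mathscr{S}(\mathbb{G}^n_\alpha)$, which is dense in $B_{p,p}^{\mathcal{L},\beta}(\mathbb{G}^n_\alpha)$ for $u$ at which we need to evaluate), one gets
\begin{equation*}
\|\mathcal{L}^s u\|_{L^p(\mathbb{G}^n_\alpha)} \leq \frac{s}{\Gamma(1-s)}\int_0^\infty t^{-s-1}\|e^{-t\mathcal{L}}u-u\|_{L^p(\mathbb{G}^n_\alpha)}\,dt.
\end{equation*}
Next, since $K_t(g,\cdot)$ is a probability kernel by (\ref{SC}), Jensen's inequality yields the pointwise bound
\begin{equation*}
|e^{-t\mathcal{L}}u(g)-u(g)|^p \leq e^{-t\mathcal{L}}(|u-u(g)|^p)(g),
\end{equation*}
so setting $\phi(t) := \bigl(\int_{\mathbb{G}^n_\alpha} e^{-t\mathcal{L}}(|u-u(g)|^p)(g)\,dg\bigr)^{1/p}$ we have $\|e^{-t\mathcal{L}}u-u\|_{L^p} \leq \phi(t)$, and by definition $N_{p,p}^{\mathcal{L},\beta}(u)^p = \int_0^\infty \phi(t)^p \, t^{-\beta p/2 - 1}\,dt$.

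The integral in $t$ is then split at $t=1$. For the tail $t\geq 1$ the $L^p$-contractivity from Proposition \ref{G-Pro}(iv) gives $\|e^{-t\mathcal{L}}u-u\|_{L^p} \leq 2\|u\|_{L^p}$, so
\begin{equation*}
\int_1^\infty t^{-s-1}\|e^{-t\mathcal{L}}u-u\|_{L^p}\,dt \leq \frac{2}{s}\|u\|_{L^p(\mathbb{G}^n_\alpha)}.
\end{equation*}
For the local piece $0<t<1$ with $p\in(1,\infty)$, I would write
$t^{-s-1} = t^{-\beta/2 - 1/p} \cdot t^{\beta/2 - s - 1 + 1/p}$
and apply Hölder's inequality with exponents $p$ and $p' = p/(p-1)$:
\begin{equation*}
\int_0^1 t^{-s-1}\phi(t)\,dt \leq \left(\int_0^1 \phi(t)^p\, t^{-\beta p/2 - 1}\,dt\right)^{1/p}\left(\int_0^1 t^{(\beta/2 - s - 1 + 1/p)p'}\,dt\right)^{1/p'}.
\end{equation*}
The second factor is finite precisely when $(\beta/2 - s - 1 + 1/p)p' > -1$, which is equivalent to $\beta > 2s$, producing the required bound by $N_{p,p}^{\mathcal{L},\beta}(u)$. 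For the endpoint case $p=1$ (which corresponds to $p'=\infty$), the Hölder step degenerates and one simply uses that $t \leq 1$ together with $\beta \geq 2s$ (so $-s-1 \geq -\beta/2 - 1$ when combined with $t^{-s-1}\leq t^{-\beta/2 - 1}$ on $(0,1]$) to dominate $\int_0^1 t^{-s-1}\phi(t)\,dt$ directly by $N_{1,1}^{\mathcal{L},\beta}(u)$.

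The delicate point is the sharp matching of the exponents: one sees that $\beta > 2s$ is exactly the Hölder integrability threshold for $p>1$, while the borderline $\beta = 2s$ is permitted in $p=1$ because no Hölder step is needed. A minor care has to be taken to justify Minkowski/Fubini in the defining Balakrishnan integral for all $u\in B_{p,p}^{\mathcal{L},\beta}$; this is done by first proving the bound on the dense subclass $\mathscr{S}(\mathbb{G}^n_\alpha)\subseteq\mathcal{W}^{2s,p}(\mathbb{G}^n_\alpha)$ (Remark \ref{rem1}(i)) and extending by Proposition \ref{W-de} together with completeness of $B_{p,p}^{\mathcal{L},\beta}(\mathbb{G}^n_\alpha)$ (Proposition \ref{RB}).
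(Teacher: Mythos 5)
Your argument is correct and is essentially the paper's own proof: the same split of the Balakrishnan integral at $t=1$, the tail controlled by $L^p$-contractivity, the Jensen/H\"older pointwise bound $\|e^{-t\mathcal{L}}u-u\|_{L^p}\le\phi(t)$ (the paper's (\ref{ineq20})), and H\"older in $t$ with exponents $p,p'$ on $(0,1)$ for $p>1$ versus the direct comparison $t^{-s-1}\le t^{-\beta/2-1}$ for $p=1$, with the same thresholds $\beta>2s$ and $\beta\ge 2s$. The only caveat is your closing extension step: density of $\mathscr{S}(\mathbb{G}^{n}_{\alpha})$ in $B_{p,p}^{\mathcal{L},\beta}(\mathbb{G}^{n}_{\alpha})$ in the Besov norm is not available (Proposition \ref{W-de} only gives density in $\mathcal{W}^{2s,p}$, and the paper explicitly leaves open whether $\mathfrak{B}_{p,q}^{\mathcal{L},\beta}=B_{p,q}^{\mathcal{L},\beta}$); this detour is unnecessary, since your own estimates show the Balakrishnan integral converges absolutely in $L^{p}$ for every $u\in B_{p,p}^{\mathcal{L},\beta}(\mathbb{G}^{n}_{\alpha})$, which is all that is needed to define $\mathcal{L}^{s}u$ and conclude.
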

\begin{proof}
For $p\in[1,\infty)$ and $u\in B_{p,p}^{\mathcal{L},\beta}(\mathbb{G}^{n}_{\alpha}),$ we deduce from (\ref{-G}) that
\begin{align*}
\|\mathcal{L}^{s}u\|_{L^{p}(\mathbb{G}^{n}_{\alpha})}&\leq\frac{s}{\Gamma(1-s)}\int_{0}^{\infty}
t^{-(1+s)}\|e^{-t\mathcal{L}}u-u\|_{L^{p}(\mathbb{G}^{n}_{\alpha})}dt\\
&= I+II,
\end{align*}
where
\begin{align*}
  \left\{\begin{aligned}
  I&:=\frac{s}{\Gamma(1-s)}\int_{0}^{1}t^{-(1+s)}\|e^{-t\mathcal{L}}u-u\|_{L^{p}(\mathbb{G}^{n}_{\alpha})}dt;\\
  II&:=\frac{s}{\Gamma(1-s)}\int_{1}^{\infty}t^{-(1+s)}\|e^{-t\mathcal{L}}u-u\|_{L^{p}(\mathbb{G}^{n}_{\alpha})}dt.
  \end{aligned}\right.
\end{align*}
Using Proposition \ref{G-Pro} (iv), we obtain
$$II\lesssim \|u\|_{L^{p}(\mathbb{G}^{n}_{\alpha})}.$$
Next, we will consider the term $I.$
It follows from H\"older's inequality and (\ref{SC}) that
\begin{align}\label{ineq20}
\|e^{-t\mathcal{L}}u-u\|_{L^{p}(\mathbb{G}^{n}_{\alpha})}
&= \bigg(\int_{\mathbb{G}^{n}_{\alpha}}|e^{-t\mathcal{L}}u(g)-u(g)|^{p}dg\bigg)^{1/p}\nonumber\\
&=\bigg(\int_{\mathbb{G}^{n}_{\alpha}}\bigg|\int_{\mathbb{G}^{n}_{\alpha}}
K_{t}(g,g')(u(g')-u(g))dg'\bigg|^{p}dg\bigg)^{1/p}\nonumber\\
&\leq
\bigg(\int_{\mathbb{G}^{n}_{\alpha}}e^{-t\mathcal{L}}(|u-u(g)|^{p})(g)dg\bigg)^{1/p}.
\end{align}
If $p=1$ and $\beta\geq2s,$ we can obtain from (\ref{ineq20}) that
\begin{align*}
&\int_{0}^{1}t^{-(1+s)}\|e^{-t\mathcal{L}}u-u\|_{L^{1}(\mathbb{G}^{n}_{\alpha})}dt\\
&\leq \int_{0}^{1}\frac{t^{{\beta}/{2}-s}}{t^{{\beta}/{2}+1}}\Bigg(
\int_{\mathbb{G}^{n}_{\alpha}}e^{-t\mathcal{L}}(|u-u(g)|)(g)dg\Bigg)dt\\
&\leq \int_{0}^{1}\frac{1}{t^{{\beta}/{2}+1}}
\Bigg(\int_{\mathbb{G}^{n}_{\alpha}}e^{-t\mathcal{L}}(|u-u(g)|)(g)dg\Bigg)dt\\
&=\widetilde{N}_{1,1}^{\mathcal{L},\beta}(u)<\infty,
\end{align*}
where we have used  Remark \ref{rem2} (i) in the last step.

If $p\in(1,\infty)$ and $\beta>2s,$ H\"older's inequality and (\ref{ineq20}) gives
\begin{align*}
&\int_{0}^{1}t^{-(1+s)}\|e^{-t\mathcal{L}}u-u\|_{L^{p}(\mathbb{G}^{n}_{\alpha})}dt\\
&=\int_{0}^{1}t^{-s+{\beta}/{2}-{\beta}/{2}}\|e^{-t\mathcal{L}}u-u\|_{L^{p}(\mathbb{G}^{n}_{\alpha})}\frac{dt}{t}\\
&\leq\bigg(\int_{0}^{1}t^{-(s-{\beta}/{2})p'}\frac{dt}{t}\bigg)^{1/p'}
\bigg(\int_{0}^{1}t^{-{\beta p}/{2}}\|e^{-t\mathcal{L}}u-u\|^{p}_{L^{p}(\mathbb{G}^{n}_{\alpha})}\frac{dt}{t}\bigg)^{1/p}\\
&\leq\bigg(\int_{0}^{1}t^{-(s-{\beta}/{2})p'-1}dt\bigg)^{1/p'}\bigg(\int_{0}^{1}\frac{1}{t^{{\beta p}/{2}+1}}
\int_{\mathbb{G}^{n}_{\alpha}}e^{-t\mathcal{L}}(|u-u(g)|^p{})(g)dg\bigg)^{1/p}\\
&\lesssim \widetilde{N}_{p,p}^{\mathcal{L},\beta}(u)<\infty.
\end{align*}
In view of Remark \ref{rem2} (i), we obtain the desired conclusion.
\end{proof}

Moreover, if
$(p,q,\beta)\in[1,\infty)\times\{\infty\}\times(2s,\infty)$, we can
conclude that $\mathcal{L}^{s}$ is bounded from
$B_{p,\infty}^{\mathcal{L},\beta}(\mathbb{G}^{n}_{\alpha})$ into
$L^{p}(\mathbb{G}^{n}_{\alpha}).$
\begin{proposition}\label{boundedness1}
Let $s\in(0,1).$
For $(p,q,\beta)\in[1,\infty)\times\{\infty\}\times(2s,\infty)$ we have
$$\mathcal{L}^{s}:B_{p,\infty}^{\mathcal{L},\beta}(\mathbb{G}^{n}_{\alpha})\hookrightarrow L^{p}(\mathbb{G}^{n}_{\alpha}).$$
\end{proposition}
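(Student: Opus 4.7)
The plan is to mimic the template of Proposition \ref{boundedness}, the only essential change being how the Besov semi-norm is used on the small-$t$ part of the integral. Starting from Balakrishnan's pointwise formula (\ref{-G}) and applying Minkowski's integral inequality in $L^{p}(\mathbb{G}^{n}_{\alpha})$, I would write
\[
\|\mathcal{L}^{s}u\|_{L^{p}(\mathbb{G}^{n}_{\alpha})}\le \frac{s}{\Gamma(1-s)}\Bigl(\int_{0}^{1}+\int_{1}^{\infty}\Bigr)t^{-(1+s)}\|e^{-t\mathcal{L}}u-u\|_{L^{p}(\mathbb{G}^{n}_{\alpha})}\,dt =: I+II,
\]
and then estimate the two pieces separately. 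The tail $II$ is essentially free: Proposition \ref{G-Pro}(iv) gives $\|e^{-t\mathcal{L}}u-u\|_{L^{p}}\le 2\|u\|_{L^{p}}$, so the convergent factor $\int_{1}^{\infty}t^{-(1+s)}\,dt$ yields $II\lesssim \|u\|_{L^{p}(\mathbb{G}^{n}_{\alpha})}$.

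The core of the argument is the small-$t$ piece $I$. Here I would reuse the identity (\ref{ineq20}) from the proof of Proposition \ref{boundedness}, which, via Jensen's inequality applied with the probability measure $K_{t}(g,\cdot)\,dg'$ (stochastic completeness (\ref{SC})), bounds
\[
\|e^{-t\mathcal{L}}u-u\|_{L^{p}(\mathbb{G}^{n}_{\alpha})}\le \Bigl(\int_{\mathbb{G}^{n}_{\alpha}}e^{-t\mathcal{L}}(|u-u(g)|^{p})(g)\,dg\Bigr)^{1/p}.
\]
This step does not depend on the indices $p$ or $q$, so it works unchanged in the present setting. Now the definition (\ref{NP}) of $N_{p,\infty}^{\mathcal{L},\beta}(u)$ gives directly
\[
\Bigl(\int_{\mathbb{G}^{n}_{\alpha}}e^{-t\mathcal{L}}(|u-u(g)|^{p})(g)\,dg\Bigr)^{1/p}\le N_{p,\infty}^{\mathcal{L},\beta}(u)\,t^{\beta/2}\qquad \text{for all } t>0,
\]
and plugging this into $I$ yields
\[
I\le \frac{s}{\Gamma(1-s)}\,N_{p,\infty}^{\mathcal{L},\beta}(u)\int_{0}^{1}t^{\beta/2-s-1}\,dt.
\]

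At this point the integral converges precisely because of the strict inequality $\beta>2s$ in the hypothesis, giving $I\lesssim N_{p,\infty}^{\mathcal{L},\beta}(u)$. Combining the two bounds yields
\[
\|\mathcal{L}^{s}u\|_{L^{p}(\mathbb{G}^{n}_{\alpha})}\lesssim \|u\|_{L^{p}(\mathbb{G}^{n}_{\alpha})}+N_{p,\infty}^{\mathcal{L},\beta}(u)=\|u\|_{B_{p,\infty}^{\mathcal{L},\beta}(\mathbb{G}^{n}_{\alpha})},
\]
which is the desired embedding. I do not foresee a genuine obstacle: unlike the $q<\infty$ case of Proposition \ref{boundedness}, one does not need H\"older's inequality in $t$ to exchange an $L^{q/p}$-integral for a pointwise bound, because the $q=\infty$ norm already supplies a pointwise-in-$t$ estimate; accordingly the strict inequality $\beta>2s$ is what is needed (and sufficient) for the dyadic integral near $t=0$ to converge, matching the stated hypothesis.
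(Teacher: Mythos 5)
Your argument is correct and coincides with the paper's own proof: the same splitting of the Balakrishnan integral at $t=1$, the same use of (\ref{ineq20}) together with the definition of $N_{p,\infty}^{\mathcal{L},\beta}$ to bound $\|e^{-t\mathcal{L}}u-u\|_{L^{p}}$ by $t^{\beta/2}N_{p,\infty}^{\mathcal{L},\beta}(u)$, and the same convergence of $\int_{0}^{1}t^{\beta/2-s-1}\,dt$ via $\beta>2s$. No gaps.
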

\begin{proof}
For any $u\in B_{p,\infty}^{\mathcal{L},\beta}(\mathbb{G}^{n}_{\alpha}),$
it follows from (\ref{ineq20}) that
$$\|e^{-t\mathcal{L}}u-u\|_{L^{p}(\mathbb{G}^{n}_{\alpha})}\leq t^{\beta/2}N_{p,\infty}^{\mathcal{L},\beta}(u),$$
which, together with (\ref{-G}) and Proposition \ref{G-Pro} (iv), yields
\begin{align*}
\|\mathcal{L}^{s}u\|_{L^{p}(\mathbb{G}^{n}_{\alpha})}
&\leq\frac{s}{\Gamma(1-s)}\int_{0}^{\infty}
t^{-1-s}\|e^{-t\mathcal{L}}u-u\|_{L^{p}(\mathbb{G}^{n}_{\alpha})}dt\\
&\lesssim \int_{0}^{1}t^{-1-s}\|e^{-t\mathcal{L}}u-u\|_{L^{p}(\mathbb{G}^{n}_{\alpha})}dt
+\int_{1}^{\infty}t^{-1-s}\|e^{-t\mathcal{L}}u-u\|_{L^{p}(\mathbb{G}^{n}_{\alpha})}dt\\
&\lesssim N^{\mathcal{L},\beta}_{p,\infty}(u)\int_{0}^{1}t^{-1-s+\beta/2}dt
+\|u\|_{L^{p}(\mathbb{G}^{n}_{\alpha})}\int_{1}^{\infty}t^{-1-s}dt\\
&\lesssim N^{\mathcal{L},\beta}_{p,\infty}(u)+\|u\|_{L^{p}(\mathbb{G}^{n}_{\alpha})}<\infty.
\end{align*}
This completes the proof.
\end{proof}

Combining Proposition \ref{boundedness}, Proposition
\ref{boundedness1} with Definition \ref{STS}, we can directly obtain
the following corollary.
\begin{corollary}\label{BW}
Let $s\in(0,1).$ The following statements   hold:
\begin{itemize}
  \item [(i)]  If $p=q\in(1,\infty)$ and $\beta>2s$ then $$B_{p,p}^{\mathcal{L},\beta}(\mathbb{G}^{n}_{\alpha})\hookrightarrow \mathcal{W}^{2s,p}(\mathbb{G}_{\alpha}^{n}).$$
                  In particular, when $p=q=1$ and $\beta\geq2s,$ we have
                  $$B_{1,1}^{\mathcal{L},\beta}(\mathbb{G}_{\alpha}^{n})\hookrightarrow \mathcal{W}^{2s,1}(\mathbb{G}_{\alpha}^{n}).$$
  \item [(ii)] If $p\in[1,\infty),q=\infty$ and $\beta>2s$ than
  $$B_{p,\infty}^{\mathcal{L},\beta}(\mathbb{G}^{n}_{\alpha})\hookrightarrow \mathcal{W}^{2s,p}(\mathbb{G}_{\alpha}^{n}).$$
\end{itemize}
\end{corollary}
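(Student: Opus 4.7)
The corollary is essentially a direct unpacking of the definition of $\mathcal{W}^{2s,p}(\mathbb{G}_\alpha^n)$ combined with the mapping properties of $\mathcal{L}^s$ on Besov type spaces already established in Propositions \ref{boundedness} and \ref{boundedness1}. My plan is to reduce the asserted embedding, in each parameter regime, to the two inequalities that constitute the norm on $\mathcal{W}^{2s,p}(\mathbb{G}_\alpha^n)$.

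Fix $u \in B_{p,q}^{\mathcal{L},\beta}(\mathbb{G}_\alpha^n)$ in whichever of the two parameter regimes. By Definition \ref{STS}, to prove $u \in \mathcal{W}^{2s,p}(\mathbb{G}_\alpha^n)$ together with a quantitative bound, it suffices to control separately
\[
\|u\|_{L^p(\mathbb{G}_\alpha^n)} \quad\text{and}\quad \|\mathcal{L}^s u\|_{L^p(\mathbb{G}_\alpha^n)}
\]
by the Besov norm $\|u\|_{B_{p,q}^{\mathcal{L},\beta}(\mathbb{G}_\alpha^n)}$. The first bound is immediate from the definition \eqref{BSN} of the Besov norm, which incorporates $\|u\|_{L^p}$ as an additive summand.

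For the second bound I would split according to $q$. In case (i), with $p=q \in (1,\infty)$ and $\beta > 2s$ (or with the endpoint $p=q=1$, $\beta \ge 2s$), Proposition \ref{boundedness} directly asserts that $\mathcal{L}^s$ maps $B_{p,p}^{\mathcal{L},\beta}(\mathbb{G}_\alpha^n)$ continuously into $L^p(\mathbb{G}_\alpha^n)$, so the required estimate on $\|\mathcal{L}^s u\|_{L^p}$ follows at once. In case (ii), with $q=\infty$ and $\beta > 2s$, the analogous assertion is furnished by Proposition \ref{boundedness1}. Summing the two contributions in each case yields $\|u\|_{\mathcal{W}^{2s,p}(\mathbb{G}_\alpha^n)} \lesssim \|u\|_{B_{p,q}^{\mathcal{L},\beta}(\mathbb{G}_\alpha^n)}$, which is exactly the continuous embedding claimed.

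There is no real obstacle here, since all the analytic content is contained in Propositions \ref{boundedness} and \ref{boundedness1}; the proposal is just to point out that the Sobolev type norm \eqref{G-N} decomposes into precisely the two quantities those propositions control. The only points worth flagging in the write-up are (a) tracking the two distinct parameter regimes, in particular the endpoint case $p=q=1$ with $\beta = 2s$ allowed in part (i), and (b) noting that $L^p$ membership of $u$ itself is built into Definition \ref{Besov1}, so no separate argument is needed for that half of the norm.
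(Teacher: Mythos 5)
Your proposal is correct and is exactly the paper's argument: the paper states the corollary as a direct consequence of Propositions \ref{boundedness} and \ref{boundedness1} together with Definition \ref{STS}, which is precisely the decomposition of the norm (\ref{G-N}) into $\|u\|_{L^{p}}$ and $\|\mathcal{L}^{s}u\|_{L^{p}}$ that you describe. Nothing further is needed.
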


In closing, we provide the connection between
$B_{p,q}^{\mathcal{L},\beta}(\mathbb{G}^{n}_{\alpha})$ and Besov
spaces  $B_{p,q}^{\beta}(\mathbb{G}^{n}_{\alpha})$ defined via the
difference. To be precise, for any $p,q\in[1,\infty)$ and $\beta>0,$
the space $B_{p,q}^{\beta}(\mathbb{G}^{n}_{\alpha})$ is defined to
be the collection of all functions $u\in
L^{p}(\mathbb{G}_{\alpha}^{n})$ such that
\begin{align}\label{Dbesov}
N_{p,q}^{\beta}(u):=\Big(\int_{0}^{\infty}\Big(\int_{\mathbb{G}_{\alpha}^{n}}\int_{B(g,r)}\frac{|u(g)-u(g')|^{p}}{r^{2\beta p}|B(g,r)|}dg'dg\Big)^{q/p}\frac{dr}{r}\Big)^{1/q}<\infty,
\end{align}
When $q=\infty$, then $B_{p,\infty}^{\beta}(\mathbb{G}^{n}_{\alpha})$ is defined to be the collection of all
$u\in L^{p}(\mathbb{G}_{\alpha}^{n})$ such that
$$N_{p,\infty}^{\beta}(u):=\sup_{r>0}\Big(\int_{\mathbb{G}_{\alpha}^{n}}\int_{B(g,r)}\frac{|u(g)-u(g')|^{p}}{r^{2\beta p}|B(g,r)|}dg'dg\Big)^{1/p}<\infty.$$
Notice that this definition does not depend on the Grushin-Laplace
operator $\mathcal{L}.$

The space $B_{p,q}^{\beta}(\mathbb{G}^{n}_{\alpha})$ can be endowed
with the norm
$$\|u\|_{B_{p,q}^{\beta}(\mathbb{G}^{n}_{\alpha})}
:=\|u\|_{L^{p}(\mathbb{G}^{n}_{\alpha})}+N_{p,q}^{\beta}(u).$$
\begin{theorem}\label{com-1}
Let $p\in[1,\infty),q\in[1,\infty]$ and $\beta>0$. Then we have
$$B_{p,q}^{\mathcal{L},2\beta}(\mathbb{G}^{n}_{\alpha})=B_{p,q}^{\beta}(\mathbb{G}^{n}_{\alpha}).$$
\end{theorem}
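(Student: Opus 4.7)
The plan is to prove that the two semi-norms $N_{p,q}^{\mathcal{L},2\beta}(u)$ and $N_{p,q}^{\beta}(u)$ are equivalent for every $u\in L^{p}(\mathbb{G}_{\alpha}^{n})$; once this is established, equality of the spaces follows at once since both are defined as the $L^{p}$-functions with finite semi-norm. The two essential tools are the two-sided Gaussian bound on the heat kernel $K_{t}$ recorded in (\ref{Kt}) and the doubling/dilation estimate (\ref{Re}). The key bookkeeping step is the substitution $t=r^{2}$, under which $dt/t^{\beta q+1} = 2\,dr/r^{2\beta q+1}$, so that both semi-norms become integrals in the variable $r$ against the same weight $dr/r^{2\beta q+1}$.

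For the inequality $N_{p,q}^{\beta}(u)\lesssim N_{p,q}^{\mathcal{L},2\beta}(u)$, I would exploit the lower Gaussian bound in (\ref{Kt}): if $g'\in B(g,\sqrt{t})$ then $d_{\alpha}(g,g')^{2}/t\leq 1$, so $K_{t}(g,g')\gtrsim |B(g,\sqrt{t})|^{-1}$. Consequently
\[
e^{-t\mathcal{L}}(|u-u(g)|^{p})(g)\;\gtrsim\;\frac{1}{|B(g,\sqrt{t})|}\int_{B(g,\sqrt{t})}|u(g')-u(g)|^{p}\,dg'.
\]
Integrating in $g$, raising to the $(q/p)$-th power, and applying the substitution $t=r^{2}$ immediately delivers this direction, and the endpoint $q=\infty$ follows by passing to the supremum in $r$.

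For the reverse inequality $N_{p,q}^{\mathcal{L},2\beta}(u)\lesssim N_{p,q}^{\beta}(u)$, I would use the upper Gaussian bound together with an annular decomposition. Setting $A_{0}(g,t)=B(g,\sqrt{t})$ and $A_{j}(g,t)=B(g,2^{j}\sqrt{t})\setminus B(g,2^{j-1}\sqrt{t})$ for $j\geq 1$, the upper bound in (\ref{Kt}) gives $K_{t}(g,g')\lesssim |B(g,\sqrt{t})|^{-1}e^{-c4^{j-1}}$ on $A_{j}$, and (\ref{Re}) gives $|B(g,2^{j}\sqrt{t})|/|B(g,\sqrt{t})|\lesssim 2^{jQ}$. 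Hence
\[
e^{-t\mathcal{L}}(|u-u(g)|^{p})(g)\;\lesssim\;\sum_{j\geq 0}e^{-c4^{j-1}}2^{jQ}\,\frac{1}{|B(g,2^{j}\sqrt{t})|}\int_{B(g,2^{j}\sqrt{t})}|u(g')-u(g)|^{p}\,dg'.
\]
After integrating in $g$ and raising to the $(q/p)$-th power, I would apply Jensen's inequality against the probability measure proportional to $a_{j}:=e^{-c4^{j-1}}2^{jQ}$ when $q\geq p$, and the subadditivity $(\sum x_{j})^{q/p}\leq\sum x_{j}^{q/p}$ when $q<p$. In each resulting summand I would change variables $r=2^{j}\sqrt{t}$, which produces the scale-invariant factor $4^{j\beta q}$ in front of the $r$-integral that precisely reassembles $N_{p,q}^{\beta}(u)^{q}$.

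The main technical obstacle is verifying that, after these manipulations, the remaining series $\sum_{j\geq 0}a_{j}^{\sigma}\,4^{j\beta q}$ (with $\sigma=1$ when $q\geq p$ and $\sigma=q/p$ when $q<p$) converges; this is ensured by the super-exponential factor $e^{-c\sigma 4^{j-1}}$, which easily dominates the polynomial growth $2^{jQ\sigma+2j\beta q}$. The endpoint $q=\infty$ case is handled by replacing the outer integral by a supremum in $t$, carrying out the same annular decomposition, and estimating each term by $\sup_{r>0}$ of the corresponding ball average, the sum over $j$ remaining finite for the same reason. Equality of the $L^{p}$-parts of the two norms is immediate, so the semi-norm equivalence yields the claimed identity of spaces.
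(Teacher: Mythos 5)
Your proposal is correct and follows essentially the same route as the paper: the lower Gaussian bound on $B(g,\sqrt{t})$ for the easy direction, and the upper Gaussian bound with an annular decomposition, the volume comparison (\ref{Re}), and the change of variables $r=2^{j}\sqrt{t}$ for the reverse, with the series controlled by the factor $e^{-c4^{j}}$. Your explicit treatment of pulling the sum through the power $q/p$ (Jensen for $q\geq p$, subadditivity for $q<p$) is slightly more careful than the paper, which passes over that step silently.
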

\begin{proof}
We only consider the case of $p,q\in[1,\infty)$ here, since the
proof of $p\in[1,\infty),q=\infty$ is similar. We first prove
$$B_{p,q}^{\mathcal{L},2\beta}(\mathbb{G}^{n}_{\alpha})\subseteq
B_{p,q}^{\beta}(\mathbb{G}^{n}_{\alpha}).$$
 Using the definition of
the spaces $B_{p,q}^{\mathcal{L},\beta}(\mathbb{G}^{n}_{\alpha})$
and the Gaussian lower bound (\ref{Kt}),
\begin{align*}
   &\int_{0}^{\infty}\frac{1}{t^{\beta q}}\Big(\int_{\mathbb{G}_{\alpha}^{n}}\int_{\mathbb{G}_{\alpha}^{n}}|u(g)-u(g')|^{p}K_{t}(g,g')dg'dg\Big)^{q/p}\frac{dt}{t}  \\
   &\geq \int_{0}^{\infty}\frac{1}{t^{\beta q}}\Big(\int_{\mathbb{G}_{\alpha}^{n}}\int_{B(g,\sqrt{t})}|u(g)-u(g')|^{p}K_{t}(g,g')dg'dg\Big)^{q/p}\frac{dt}{t}  \\
   & \gtrsim\int_{0}^{\infty}\Big(\int_{\mathbb{G}_{\alpha}^{n}}\int_{B(g,\sqrt{t})}\frac{|u(g)-u(g')|^{p}}{t^{\beta p}|B(g,\sqrt{t})|}dg'dg\Big)^{q/p}\frac{dt}{t}.
\end{align*}
By the change of variables $r=\sqrt{t}$,  we have
\begin{align*}
&\int_{0}^{\infty}\frac{1}{t^{\beta q}}\Big(\int_{\mathbb{G}_{\alpha}^{n}}\int_{\mathbb{G}_{\alpha}^{n}}|u(g)-u(g')|^{p}K_{t}(g,g')dg'dg\Big)^{q/p}\frac{dt}{t}  \\
&\gtrsim\int_{0}^{\infty}\Big(\int_{\mathbb{G}_{\alpha}^{n}}\int_{B(g,r)}\frac{|u(g)-u(g')|^{p}}{r^{2\beta p}|B(g,r)|}dg'dg\Big)^{q/p}\frac{dr}{r}\\
  & =(N_{p,q}^{2\beta}(u))^{q}.
\end{align*}
Therefore,
$$B_{p,q}^{\mathcal{L},2\beta}(\mathbb{G}^{n}_{\alpha})\subseteq B_{p,q}^{\beta}(\mathbb{G}^{n}_{\alpha}).$$

Conversely,
it follows from the Gaussian upper bound
(\ref{Kt}) and (\ref{Re}) that
\begin{align*}
&\int_{0}^{\infty}\frac{1}{t^{\beta q}}\Big(\int_{\mathbb{G}_{\alpha}^{n}}\int_{\mathbb{G}_{\alpha}^{n}}|u(g)-u(g')|^{p}K_{t}(g,g')dg'dg\Big)^{q/p}\frac{dt}{t}\\
&\lesssim\int_{0}^{\infty}\frac{1}{t^{\beta q}}\Big(\int_{\mathbb{G}_{\alpha}^{n}}\int_{d_{\alpha}(g,g')<\sqrt{t}}\frac{|u(g)-u(g')|^{p}}{|B(g,\sqrt{t})|}dg'dg\Big)^{q/p}\frac{dt}{t}\\
&\quad+ \int_{0}^{\infty}\frac{1}{t^{\beta q}}\Big(\int_{\mathbb{G}_{\alpha}^{n}}\sum_{i=0}^{\infty}\int_{2^{i}\sqrt{t}
\leq d(g,g')< 2^{i+1}\sqrt{t}}\frac{|u(g)-u(g')|^{p}e^{-C_{2}4^{i}}}{|B(g,\sqrt{t})|}dg'dg\Big)^{q/p}\frac{dt}{t}\\
&\lesssim \sum_{i=0}^{\infty}e^{-C_{2}4^{i}}2^{iQq/p}\int_{0}^{\infty}\frac{1}{t^{\beta q}}\Big(\int_{\mathbb{G}_{\alpha}^{n}e}\int_{B(g, 2^{i}\sqrt{t})}\frac{|u(g)-u(g')|^{p}}{|B(g,2^{i}\sqrt{t})|}dg'dg\Big)^{q/p}\frac{dt}{t}+(N_{p,q}^{2\beta}(u))^{q}.
\end{align*}
Using the change of variables $r=\sqrt{t}$ again, we obtain
\begin{align*}
&\int_{0}^{\infty}\frac{1}{t^{\beta q}}\Big(\int_{\mathbb{G}_{\alpha}^{n}}\int_{\mathbb{G}_{\alpha}^{n}}|u(g)-u(g')|^{p}K_{t}(g,g')dg'dg\Big)^{q/p}\frac{dt}{t}\\
&\lesssim\sum_{i=0}^{\infty}e^{-C_{2}4^{i}}2^{i(Qq/p+2\beta q)}\int_{0}^{\infty}\Big(\int_{\mathbb{G}_{\alpha}^{n}}\int_{B(g,r)}\frac{|u(g)-u(g')|^{p}}{r^{2\beta p}|B(g,r)|}dg'dg\Big)^{q/p}\frac{dr}{r}+(N_{p,q}^{2\beta}(u))^{q}\\
&\lesssim (N_{p,q}^{2\beta}(u))^{q},
\end{align*}
which implies $B_{p,q}^{\beta}(\mathbb{G}^{n}_{\alpha})\subseteq
B_{p,q}^{\mathcal{L},2\beta}(\mathbb{G}^{n}_{\alpha})$. Therefore,
we obtain the desired result.
\end{proof}

\subsection{Besov type spaces associated with $\{e^{-t\mathcal{L}^{s}}\}_{t>0}$}\label{sec-3.2}
Below we investigate the fractional semigroup associated with the
Grushin-Laplace operator $\mathcal{L}.$ In the case of
$\alpha=0,\mathcal{L}=-\Delta,$ the fractional heat semigroup
$\{e^{-t(-\Delta)^{s}}\}_{t>0}$  plays a significant role in many
fields of mathematics, such as  harmonic analysis and partial
differential equations. Due to the  significance and backgrounds in
mathematical physics, the fractional heat semigroup
$\{e^{-t(-\Delta)^{s}}\}_{t>0}$
 has also been applied to  study a wide class of
physical systems and engineering problems,  including  L\'{e}vy
flights,  stochastic  interfaces  and  anomalous  diffusion
problems. In \cite{miao}, Miao, Yuan and Zhang
investigated the regularity of the fractional heat kernel
$e^{-t(-\Delta)^{s}}(\cdot)$ related with $(-\Delta)$ which is
defined via the Fourier transform
  as follows: for $x\in \mathbb{R}^{n}$,
\begin{equation*}
  e^{-t(-\Delta)^{s}}(x)=(2\pi)^{-n/2}\int_{\mathbb{R}^{n}}e^{-t|\xi|^{2s}}e^{ix\xi}d\xi,\  s\in (0,1).
\end{equation*}
and has the following regularity estimate:
$$e^{-t(-\Delta)^{s}}(x)\leq \frac{Ct}{(t^{1/2s}+|x|)^{n+2s}}\ \ \ \  \forall(x,t)\in \mathbb{R}^{n+1}_{+}.$$

  Unlike the case of the Laplace operator,  the fractional heat semigroup
related with $\mathcal{L}$ can be introduced via the following
formulation according to \cite[Section 5.4]{gri} (see also \cite{GL2015}). For $s\in (0,1)$, the
integral kernel $K^{s}_{t}(\cdot,\cdot)$ of $e^{-t\mathcal{L}^{s}}$
can be expressed as
\begin{equation}\label{k-1}
  K^{s}_{t}(g,g')=\int^{\infty}_{0}\eta^{s}_{t}(\sigma)K_{\sigma}(g,g')d\sigma\ \ \forall\ g,g'\in\mathbb{G}_{\alpha}^{n},
\end{equation}
where $\eta_{t}^{s}(\cdot)$ is a non-negative continuous
function on $(0,\infty)$ satisfying
\begin{equation}\label{eq-1.1}
\left\{ \begin{aligned}
 &\int^{\infty}_{0}\eta^{s}_{t}(\sigma)d\sigma=1;\\
 &\eta^{s}_{t}(\sigma)=\frac{1}{t^{1/s}}\eta_{1}^{s}(\frac{\sigma}{t^{1/s}});\\
 &\eta^{s}_{t}(\sigma)\lesssim \frac{t}{\sigma^{1+s}}\ \forall \sigma,t>0;\\
 &\int^{\infty}_{0}\sigma^{-r}\eta^{s}_{1}(\sigma)d\sigma<\infty  \ \ \forall r>0;\\
 &\eta^{s}_{t}(\sigma)\thicksim \frac{t}{\sigma^{1+s}}\ \ \forall \sigma\geq t^{1/s}>0.
\end{aligned} \right.
\end{equation}
With the help of (\ref{k-1}), we define the fractional semigroup
associated with the Grushin-Laplace  operator $\mathcal{L}$ as
follows
\begin{align}\label{sub}
e^{-t\mathcal{L}^{s}}u(g)=\int_{\mathbb{G}_{\alpha}^{n}}K_{t}^{s}(g,g')u(g')dg',
\ \ u\in\mathscr{S}(\mathbb{G}_{\alpha}^{n}).
\end{align}
 Using (\ref{k-1}) and exchanging
the order of integration in (\ref{sub}), we obtain the following
useful representation of the semigroup
$\{e^{-t\mathcal{L}^{s}}\}_{t>0}$ in terms of the Grushin
semigroup
\begin{align}\label{possion}
e^{-t\mathcal{L}^{s}}u(g)=\int_{0}^{\infty}\eta_{t}^{s}(\sigma)
e^{-\sigma\mathcal{L}}u(g)d\sigma.
\end{align}

\begin{remark}\label{1/2}
In Section \ref{sec-4.2}, we are primarily interested in $s=1/2.$
In this case, we have
$$\eta^{1/2}_{t}(\sigma)=\frac{t}{2\sigma^{3/2}}e^{-{t^{2}}/{(4\sigma)}}\ \ \forall\ t,\sigma>0.$$
It is easy to verify that such $\eta_{t}^{1/2}(\cdot)$ satisfies
 (\ref{eq-1.1}) (see \cite[Section 5.4]{gri}).
\end{remark}

Some properties on Grushin semigroups are also valid for the semigroup $\{e^{-t\mathcal{L}^{s}}\}_{t>0}.$
\begin{proposition}\label{Poi}
Let $s\in(0,1).$ The following properties hold:
\begin{itemize}
  \item [(i)] For any $g\in\mathbb{G}_{\alpha}^{n}$ and $t>0,$ we have $e^{-t\mathcal{L}^{s}}1(g)=1;$
  \item [(ii)] Let $p\in[1,\infty].$ Then $e^{-t\mathcal{L}^{s}}$ is  a contraction semigroup on $L^{p}(\mathbb{G}_{\alpha}^{n})$ for any $t>0;$
  \item [(iii)] Let $p\in[1,\infty)$ and $u\in L^{p}(\mathbb{G}^{n}_{\alpha}).$
                 Then there exists a positive constant $C(p,Q),$ depending on $p$ and $Q,$ such that
                 for any $g\in \mathbb{G}^{n}_{\alpha}$ and $t>0,$
                 \begin{align*}
                |e^{-t\mathcal{L}^{s}}u(g)|\leq C(p,Q)\|u\|_{L^{p}(\mathbb{G}^{n}_{\alpha})}
                \int_{0}^{\infty}\eta_{t}^{s}(\sigma)t^{-Q/2p}d\sigma.
                 \end{align*}
          Specifically, if $s=1/2,$ then
                \begin{align*}
                |e^{-t\sqrt{\mathcal{L}}}u(g)|\leq C(p,Q) t^{-Q/p}\|u\|_{L^{p}(\mathbb{G}^{n}_{\alpha})}.
                 \end{align*}
\end{itemize}
\end{proposition}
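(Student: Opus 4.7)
The plan is to deduce all three parts from the subordination identity (\ref{possion}) together with the corresponding facts about $\{e^{-t\mathcal{L}}\}_{t>0}$ established in Section \ref{sec-2.2} and the structural properties of $\eta_t^s$ listed in (\ref{eq-1.1}). The unifying idea is that any inequality true pointwise or in norm for the Grushin semigroup can be pushed through the nonnegative weight $\eta_t^s(\sigma)\,d\sigma$.

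For (i), I would apply (\ref{possion}) to the constant function $1$. The stochastic completeness (\ref{SC}) gives $e^{-\sigma\mathcal{L}}1(g)=1$ for a.e.\ $g$ and every $\sigma>0$, hence $e^{-t\mathcal{L}^{s}}1(g)=\int_{0}^{\infty}\eta_{t}^{s}(\sigma)\,d\sigma=1$ by the first identity in (\ref{eq-1.1}). For (ii), Minkowski's integral inequality applied to (\ref{possion}), the $L^{p}$-contractivity of $\{e^{-\sigma\mathcal{L}}\}$ from Proposition \ref{G-Pro}(iv), and again $\int_{0}^{\infty}\eta_{t}^{s}(\sigma)\,d\sigma=1$ yield
\[
\|e^{-t\mathcal{L}^{s}}u\|_{L^{p}(\mathbb{G}^{n}_{\alpha})}\leq\int_{0}^{\infty}\eta_{t}^{s}(\sigma)\|e^{-\sigma\mathcal{L}}u\|_{L^{p}(\mathbb{G}^{n}_{\alpha})}\,d\sigma\leq\|u\|_{L^{p}(\mathbb{G}^{n}_{\alpha})}.
\]
The case $p=\infty$ follows at once from the pointwise bound $|e^{-t\mathcal{L}^{s}}u(g)|\leq\|u\|_{L^{\infty}(\mathbb{G}^{n}_{\alpha})}\int_{0}^{\infty}\eta_{t}^{s}(\sigma)\,d\sigma$ together with (i).

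For (iii), I would insert the ultracontractivity bound of Proposition \ref{lem4}, $|e^{-\sigma\mathcal{L}}u(g)|\leq C(p,Q)\,\sigma^{-Q/(2p)}\|u\|_{L^{p}(\mathbb{G}^{n}_{\alpha})}$, directly into (\ref{possion}) to obtain
\[
|e^{-t\mathcal{L}^{s}}u(g)|\leq C(p,Q)\|u\|_{L^{p}(\mathbb{G}^{n}_{\alpha})}\int_{0}^{\infty}\eta_{t}^{s}(\sigma)\sigma^{-Q/(2p)}\,d\sigma.
\]
To recover a clean power of $t$, I would then use the scaling identity $\eta_{t}^{s}(\sigma)=t^{-1/s}\eta_{1}^{s}(\sigma/t^{1/s})$ and the change of variables $\sigma=t^{1/s}\tau$:
\[
\int_{0}^{\infty}\eta_{t}^{s}(\sigma)\sigma^{-Q/(2p)}\,d\sigma=t^{-Q/(2ps)}\int_{0}^{\infty}\eta_{1}^{s}(\tau)\tau^{-Q/(2p)}\,d\tau,
\]
where finiteness of the remaining integral is guaranteed by the fourth property in (\ref{eq-1.1}) applied with $r=Q/(2p)>0$. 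Specializing to $s=1/2$ produces the exponent $-Q/p$ announced in the statement, matching Remark \ref{1/2}.

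There is no genuine obstacle here; the only care needed is to justify the interchanges of integration, which are routine applications of Tonelli's theorem given that $\eta_{t}^{s}\geq 0$ and $K_{\sigma}(\cdot,\cdot)\geq 0$. If one wants to be completely tidy, the pointwise statement in (iii) should be read as holding for a.e.\ $g\in\mathbb{G}^{n}_{\alpha}$, consistently with the almost-everywhere character of the bounds coming from Proposition \ref{lem4}.
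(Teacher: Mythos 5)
Your proof is correct and follows essentially the same route the paper indicates: push the stochastic completeness (\ref{SC}), the $L^{p}$-contractivity from Proposition \ref{G-Pro} (iv), and the ultracontractivity of Proposition \ref{lem4} through the subordination formula (\ref{possion}), using the properties (\ref{eq-1.1}) of $\eta_{t}^{s}$. Your reading of the integrand in (iii) as $\sigma^{-Q/(2p)}$ (rather than the $t^{-Q/(2p)}$ printed in the statement) is the right one, since that is what the scaling $\eta_{t}^{s}(\sigma)=t^{-1/s}\eta_{1}^{s}(\sigma/t^{1/s})$ and the $s=1/2$ conclusion require.
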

\begin{proof}
 These results can be obtained immediately from (\ref{SC}), Proposition \ref{G-Pro} (iv) and
Proposition \ref{lem4} by using (\ref{possion}) and (\ref{eq-1.1}).
\end{proof}

We now introduce the Besov classes based on the fractional semigroup
$\{e^{-t\mathcal{L}^{s}}\}_{t>0}$ associated with the
Grushin-Laplace operator $\mathcal{L}.$
\begin{definition}\label{FBesov}
Let $p,q\in[1,\infty), \beta>0$ and $s\in(0,1).$ The Besov type space
$B_{p,q}^{\mathcal{L}^{s},\beta}(\mathbb{G}^{n}_{\alpha})$ is defined to be the collection
of all $u\in L^{p}(\mathbb{G}_{\alpha}^{n})$ such that
$$N_{p,q}^{\mathcal{L}^{s},\beta}(u):=\Big(\int_{0}^{\infty}t^{-{\beta q}/{2}}\Big(\int_{\mathbb{G}_{\alpha}^{n}}e^{-t\mathcal{L}^{s}}(|u-u(g)|^{p})(g)dg\Big)^{q/p}\frac{dt}{t}\Big)^{1/q}<\infty,$$
where $\{e^{-t\mathcal{L}^{s}}\}_{t>0}$ is the fractional
semigroup associated with $\mathcal{L}.$

If $q=\infty,$ then
$B_{p,\infty}^{\mathcal{L}^{s},\beta}(\mathbb{G}^{n}_{\alpha})$ is
defined to be the collection of all $u\in
L^{p}(\mathbb{G}_{\alpha}^{n})$ such that
$$N_{p,\infty}^{\mathcal{L}^{s},\beta}(u):=\sup_{t>0}t^{-{\beta}/{2}}\Big(\int_{\mathbb{G}_{\alpha}^{n}}e^{-t\mathcal{L}^{s}}(|u-u(g)|^{p})(g)dg\Big)^{1/p}<\infty.$$
The space $B_{p,q}^{\mathcal{L}^{s},\beta}(\mathbb{G}^{n}_{\alpha})$
is endowed with the norm
$$\|u\|_{B_{p,q}^{\mathcal{L}^{s},\beta}(\mathbb{G}^{n}_{\alpha})}=\|u\|_{L^{p}(\mathbb{G}_{\alpha}^{n})}+N_{p,q}^{\mathcal{L}^{s},\beta}(u).$$
\end{definition}
\begin{remark}
Via Proposition \ref{possion} (i) \& (ii),
we can similarly prove general properties of Besov type spaces associated with $\{e^{-t\mathcal{L}^{s}}\}_{t>0},$ such as completeness (see Proposition \ref{RB}), the min-max property (see Lemma \ref{max}) and embedding properties (see Proposition \ref{emb}).
\end{remark}

In what follows, we can use the formula (\ref{k-1}) to get the regularity
estimate of the fractional semigroup
$\{e^{-t\mathcal{L}^{s}}\}_{t>0}$ related to the Grushin
operator $\mathcal{L}.$

\begin{lemma}\label{wang-1}
Let $s\in(0,1)$.  For $g,g'\in\mathbb{G}_{\alpha}^{n}$ and $t>0$, then we have
$$K^{s}_{t}(g,g') \thicksim \frac{1}{|B(g,t^{1/(2s)}+d_{\alpha}(g,g'))|}\frac{t}{(t^{1/(2s)}+d_{\alpha}(g,g'))^{2s}}.$$
\end{lemma}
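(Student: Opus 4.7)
The plan is to derive both bounds directly from the subordination formula (\ref{k-1}) combined with the two-sided Gaussian bounds (\ref{Kt}), the volume comparison (\ref{Re}), and the properties of $\eta^{s}_{t}$ listed in (\ref{eq-1.1}). Throughout, set $r = d_\alpha(g,g')$, $\tau = t^{1/s}$, and $R = t^{1/(2s)} + r$, so that $R^2 \asymp \tau + r^2$ and the target bound reads $K^s_t(g,g') \asymp t/(|B(g,R)|\,R^{2s})$. The proof splits into a relatively short lower bound and an upper bound that requires case analysis in $r$ vs.\ $\sqrt{\tau}$.

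For the lower bound, I would restrict the integral in (\ref{k-1}) to $\sigma \in [R^2, 2R^2]$. On this range three things happen simultaneously: (i) $\sigma \geq \tau$, so the fifth line of (\ref{eq-1.1}) gives $\eta^s_t(\sigma) \gtrsim t/\sigma^{1+s} \asymp t/R^{2(1+s)}$; (ii) $r^2/\sigma \leq 1$, so the Gaussian lower bound in (\ref{Kt}) yields $e^{-C_1 r^2/\sigma} \gtrsim 1$; (iii) $\sqrt\sigma \asymp R$, so volume doubling (a consequence of (\ref{Re})) gives $|B(g,\sqrt\sigma)| \asymp |B(g,R)|$. Multiplying these and integrating over an interval of length $\asymp R^2$ produces $K^s_t(g,g') \gtrsim t/(R^{2s}|B(g,R)|)$, as required.

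For the upper bound, apply the Gaussian upper bound of (\ref{Kt}) and $\eta^s_t(\sigma) \lesssim t/\sigma^{1+s}$ to obtain $K^s_t(g,g') \lesssim \int_0^\infty (t/\sigma^{1+s}) |B(g,\sqrt\sigma)|^{-1} e^{-C_2 r^2/\sigma}\, d\sigma$, and then split in two cases. When $r \geq \sqrt\tau$ (so $R \asymp r$), I would split at $\sigma = r^2$: on $[r^2,\infty)$ use $|B(g,\sqrt\sigma)|^{-1} \leq |B(g,r)|^{-1}$ and integrate $\sigma^{-1-s}$; on $(0,r^2]$ apply (\ref{Re}) in the form $|B(g,\sqrt\sigma)|^{-1} \lesssim (r/\sqrt\sigma)^Q |B(g,r)|^{-1}$ and perform the substitution $u = r^2/\sigma$, absorbing the polynomial blow-up into the convergent Gaussian integral $\int_1^\infty u^{s+Q/2-1} e^{-C_2 u}\,du$. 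Both pieces are bounded by $t/(|B(g,r)|r^{2s}) \asymp t/(|B(g,R)|R^{2s})$.

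The subtler case is $r \leq \sqrt\tau$ (so $R \asymp \sqrt\tau$), which I expect to be the main obstacle: the Gaussian factor no longer provides decay for $\sigma$ in the intermediate range $[r^2,\tau]$, so the change-of-variable argument above fails. Here I would split at $\sigma = \tau$ instead. The tail $\int_\tau^\infty$ is handled as before. For $\int_0^\tau$, apply (\ref{Re}) to get $|B(g,\sqrt\sigma)|^{-1} \lesssim (\tau/\sigma)^{Q/2} |B(g,\sqrt\tau)|^{-1}$; the resulting integral is $(\tau^{Q/2}/|B(g,\sqrt\tau)|) \int_0^\tau \eta^s_t(\sigma)\sigma^{-Q/2}\,d\sigma$. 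The key point is now to use the scaling identity $\eta^s_t(\sigma) = t^{-1/s}\eta^s_1(\sigma/\tau)$ together with the fourth line of (\ref{eq-1.1}), which gives $\int_0^1 u^{-Q/2}\eta^s_1(u)\,du < \infty$ and hence $\int_0^\tau \eta^s_t(\sigma)\sigma^{-Q/2}\,d\sigma \lesssim \tau^{-Q/2}$. This exactly cancels the $\tau^{Q/2}$ prefactor, yielding the bound $|B(g,\sqrt\tau)|^{-1} \asymp t/(|B(g,R)|R^{2s})$. Absent the $\sigma^{-r}$ moment bound in (\ref{eq-1.1}), the naive estimate $\eta^s_t \lesssim t/\sigma^{1+s}$ would produce a divergent integral near $\sigma = 0$, so this observation is the crux of the argument.
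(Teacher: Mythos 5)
Your proof is correct and uses essentially the same ingredients and estimates as the paper's: the lower bound by restricting the subordination integral to $\sigma\asymp (t^{1/(2s)}+d_\alpha(g,g'))^2$ where $\eta^s_t(\sigma)\gtrsim t/\sigma^{1+s}$ and the Gaussian factor is bounded below, and the two upper bounds $t/(|B(g,r)|r^{2s})$ and $|B(g,t^{1/(2s)})|^{-1}$ combined via the case split $r\gtrless t^{1/(2s)}$, with the moment condition $\int_0^\infty\sigma^{-r}\eta^s_1(\sigma)\,d\sigma<\infty$ doing exactly the work you identify near $\sigma=0$. The only difference is organizational (you case-split before deriving each upper bound, the paper derives both global bounds first), which is immaterial.
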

\begin{proof}
It follows from the definition (\ref{k-1}) of the integral kernel
$K^{s}_{t}(\cdot,\cdot)$, the Gaussian upper bound (\ref{Kt}) and (\ref{eq-1.1}) that
\begin{align*}
K^{s}_{t}(g,g')&\lesssim\int^{\infty}_{0}\frac{t}{\sigma^{1+s}}K_{\sigma}(g,g')d\sigma \\
&\lesssim\int^{\infty}_{0}\frac{t}{\sigma^{1+s}}\frac{1}{|B(g,\sqrt{\sigma})|}e^{{-C_{2}d_{\alpha}(g,g')^{2}/\sigma}}d\sigma.
\end{align*}
By the change of variables $\sigma=t^{1/s}u$, we get
\begin{align*}
K^{s}_{t}(g,g')&\lesssim\int^{\infty}_{0}\frac{t^{1+1/s}}{(t^{1/s}u)^{1+s}}
\frac{e^{{-C_{2}d_{\alpha}(g,g')^{2}/(t^{1/s}u)}}}{|B(g, {t^{1/(2s)}\sqrt{u}})|}du\\
&\lesssim
\int^{\infty}_{0}\frac{e^{{-C_{2}d_{\alpha}(g,g')^{2}/(t^{1/s}u)}}}{|B(g, {t^{1/(2s)}\sqrt{u}})|}u^{-(1+s)}du.
\end{align*}
Letting $d_{\alpha}(g,g')^{2}/t^{1/s}u=r^{2}$ reaches to
\begin{align*}
K^{s}_{t}(g,g')&\lesssim
\int^{\infty}_{0}\frac{e^{-C_{2}r^{2}}}{|B(g,
d_{\alpha}(g,g')/r)|}\frac{tr^{2s-1}}{d_{\alpha}(g,g')^{2s}}
dr \lesssim I_{1}+I_{2},
\end{align*}
where
\begin{align*}
  \left\{\begin{aligned}
  I_{1}&:=\frac{t}{d_{\alpha}(g,g')^{2s}}
         \int^{1}_{0}\frac{r^{2s-1}e^{-C_{2}r^{2}}}{|B(g,
           d_{\alpha}(g,g')/r)|}dr;\\
  I_{2}&:=\frac{t}{d_{\alpha}(g,g')^{2s}}\int^{\infty}_{1}\frac{r^{2s-1}e^{-C_{2}r^{2}}}{|B(g,
           d_{\alpha}(g,g')/r)|}dr.
  \end{aligned}\right.
\end{align*}
For $I_{2}$, since $r\geq1$,   (\ref{Re}) can be utilized to derive
\begin{align*}
\frac{1}{|B(g, {d_{\alpha}(g,g')/r})|}
&=\frac{1}{|B(g, {d_{\alpha}(g,g')})|}\frac{|B(g, {rd_{\alpha}(g,g')/r})|}{|B(g,{d_{\alpha}(g,g')/r})|}\\
&\lesssim\frac{r^{Q}}{|B(g, {d_{\alpha}(g,g')})|},
\end{align*}
which further yields
\begin{eqnarray*}
I_{2}
\lesssim\frac{{t}{d_{\alpha}(g,g')^{-2s}}}{|B(g, {d_{\alpha}(g,g')})|}\int^{\infty}_{1}r^{2s+Q-1}e^{-C_{2}r^{2}}dr
\lesssim\frac{td_{\alpha}(g,g')^{-2s}}{|B(g, {d_{\alpha}(g,g')})|}.
\end{eqnarray*}
 It remains to prove $I_{1}$. Since $0<r<1$, using (\ref{Re}), we also have
 $$|B(g, {d_{\alpha}(g,g')/r})|\gtrsim r^{-n}|B(g,{d_{\alpha}(g,g')})|,$$
which implies
\begin{eqnarray*}
I_{1}\lesssim\frac{{t}{d_{\alpha}(g,g')^{-2s}}}{|B(g, {d_{\alpha}(g,g')})|}\int^{1}_{0}r^{2s-1+n}e^{-C_{2}r^{2}}dr
\lesssim\frac{{t}{d_{\alpha}(g,g')^{-2s}}}{|B(g, {d_{\alpha}(g,g')})|}.
\end{eqnarray*}
Therefore, we obtain
\begin{equation}\label{3.4}
K^{s}_{t}(g,g')\lesssim\frac{1}{|B(g, {d_{\alpha}(g,g')})|}\frac{t}{d_{\alpha}(g,g')^{2s}}.
\end{equation}
On the other hand, noting that
$$K^{s}_{t}(g,g')\lesssim\int^{\infty}_{0}\frac{1}{|B(g,\sqrt{\sigma})|}\frac{1}{t^{1/s}}
\eta^{s}_{1}(\sigma/t^{1/s})d\sigma,$$
we can apply the change of variables $\tau=\sigma/t^{1/s}$ to  get
\begin{align*}
K^{s}_{t}(g,g')&\lesssim
\int^{\infty}_{0}\frac{1}{|B(g,\sqrt{\tau t^{1/s}})|}\frac{1}{t^{1/s}}
\eta^{s}_{1}(\tau)t^{1/s}d\tau\\
&\lesssim\int^{\infty}_{0}\frac{1}{|B(g,\sqrt{\tau t^{1/s}})|}
\eta^{s}_{1}(\tau)d\tau\\
&\lesssim I_{3}+I_{4},
\end{align*}
where
\begin{align*}
  \left\{\begin{aligned}
  I_{3}&:=\int^{1}_{0}\frac{1}{|B(g,\sqrt{\tau t^{1/s}})|}\eta^{s}_{1}(\tau)d\tau;\\
  I_{4}&:=\int^{\infty}_{1} \frac{1}{|B(g,\sqrt{\tau t^{1/s}})|}\eta^{s}_{1}(\tau)d\tau.
  \end{aligned}\right.
\end{align*}
Now we are in a position to deal with  $I_{3}$. By the fact that
$0<\tau<1$ and (\ref{Re}), we obtain
$$I_{3}\lesssim\frac{1}{|B(g, {t^{1/(2s)}})|}\int^{1}_{0}\tau^{-n/2}\eta^{s}_{1}(\tau)d\tau
\lesssim\frac{1}{|B(g, {t^{1/(2s)}})|}.$$ For $I_{4}$,
since $\tau\geq1$, we also conclude  that
$$I_{4}\lesssim\frac{1}{|B(g,{t^{1/(2s)}})|}\int^{\infty}_{1}\tau^{-Q/2}\eta^{s}_{1}(\tau)d\tau
\lesssim\frac{1}{|B(g,{t^{1/(2s)}})|}.$$
Therefore, the above estimates imply that
\begin{equation}\label{3.5}
K^{s}_{t}(g,g')\lesssim\frac{1}{|B(g, {t^{1/(2s)}})|}.
\end{equation}
Below we divide the range of $t^{1/(2s)}$ into two cases.

Case 1: If $t^{1/(2s)}\leq d_{\alpha}(g,g'),$ then $t^{1/(2s)}+d_{\alpha}(g,g')\leq 2d_{\alpha}(g,g').$ It follows from (\ref{3.4})
and (\ref{Re}) that
\begin{align*}
K^{s}_{t}(g,g')
&\lesssim\frac{1}{|B(g, {d_{\alpha}(g,g')})|}\frac{t}{d_{\alpha}(g,g')^{2s}}\\
&\lesssim\frac{1}{|B(g,t^{1/(2s)}+d_{\alpha}(g,g'))|}\frac{t}{({t^{1/(2s)}+d_{\alpha}(g,g')})^{2s}}.
\end{align*}

Case 2: If $t^{1/(2s)}>d_{\alpha}(g,g'),$ then $2t^{1/(2s)}>d_{\alpha}(g,g')+t^{1/(2s)}.$
By (\ref{3.5}) and (\ref{Re}), we get
\begin{align*}
K^{s}_{t}(g,g')
&\lesssim\frac{1}{|B(g, {t^{1/(2s)}})|}\frac{t}{t}\\
&\lesssim\frac{1}{|B(g,t^{1/(2s)}+d_{\alpha}(g,g'))|}\frac{t}{({t^{1/(2s)}+d_{\alpha}(g,g')})^{2s}}.
\end{align*}

Hence, in any case, we have
$$K^{s}_{t}(g,g')\lesssim\frac{1}{|B(g,t^{1/(2s)}+d_{\alpha}(g,g'))|}\frac{t}{( {t^{1/(2s)}+d_{\alpha}(g,g')})^{2s}}.$$

For lower bounds, we deduce from (\ref{k-1}), (\ref{eq-1.1}) and (\ref{Kt}) that
\begin{align*}
K^{s}_{t}(g,g')&\gtrsim\int^{\infty}_{\max\{t^{1/s},\, d_{\alpha}(g,g')^{2}\}}\frac{t}{\sigma^{1+s}}|B(g,\sqrt{\sigma})|^{-1}
e^{-C_{1}d_{\alpha}(g,g')^{2}/\sigma}d\sigma\\
&\gtrsim\int^{\infty}_{\max\{t^{1/s},\, d_{\alpha}(g,g')^{2}\}}\frac{t}{\sigma^{1+s}}|B(g,\sqrt{\sigma})|^{-1}d\sigma\\
&\gtrsim t |B(g,\sqrt{\max\{t^{1/s},d_{\alpha}(g,g')^{2}\}})|^{-1}\int^{2\max\{t^{1/s},\, d_{\alpha}(g,g')^{2}\}}_{\max\{t^{1/s},\, d_{\alpha}(g,g')^{2}\}}\frac{1}{\sigma^{1+s}}d\sigma\\
&\thicksim\frac{t}{|B(g,t^{1/(2s)}+d_{\alpha}(g,g'))|}\min\Big\{\frac{1}{t},\ \frac{1}{d_{\alpha}(g,g')^{2s}}\Big\}\\
&\gtrsim\frac{1}{|B(g,t^{1/(2s)}+d_{\alpha}(g,g'))|}\frac{t}{(t^{1/(2s)}+d_{\alpha}(g,g'))^{2s}},
\end{align*}
which gives the desired result.
\end{proof}

Similarly to the case of
$B_{p,q}^{\mathcal{L},\beta}(\mathbb{G}^{n}_{\alpha})$, we also
establish the relationship between Besov type spaces
$B_{p,q}^{\mathcal{L}^{s},\beta}(\mathbb{G}^{n}_{\alpha})$ in
Definition \ref{FBesov} and Besov classes
$B_{p,q}^{\beta}(\mathbb{G}^{n}_{\alpha})$ defined via the
difference (see (\ref{Dbesov})).

\begin{theorem}\label{com-2}
Let $(p,q,\beta)\in[1,\infty)\times[1,\infty]\times(0,1/p)$. Then for $s\in(0,1),$ we have
$$B_{p,q}^{\mathcal{L}^{s},2\beta}(\mathbb{G}^{n}_{\alpha})=B_{p,q}^{s\beta}(\mathbb{G}^{n}_{\alpha}).$$
\end{theorem}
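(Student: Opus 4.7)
The plan is to mirror the blueprint of Theorem \ref{com-1}, replacing the Gaussian bounds on $K_t$ by the two-sided estimate on $K^s_t$ furnished by Lemma \ref{wang-1}. For convenience set
\begin{equation*}
J(u,r)^p:=\int_{\mathbb{G}_{\alpha}^{n}}\int_{B(g,r)}\frac{|u(g)-u(g')|^p}{|B(g,r)|}\,dg'\,dg,\qquad \Phi(u,t):=\int_{\mathbb{G}_{\alpha}^{n}}e^{-t\mathcal{L}^s}(|u-u(g)|^p)(g)\,dg,
\end{equation*}
so that $N_{p,q}^{s\beta}(u)^q=\int_0^\infty r^{-2s\beta q}J(u,r)^q\,dr/r$ and $N_{p,q}^{\mathcal{L}^s,2\beta}(u)^q=\int_0^\infty t^{-\beta q}\Phi(u,t)^{q/p}\,dt/t$, with the obvious $\sup$ interpretation when $q=\infty$. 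The $L^p$-parts of the two norms coincide trivially, so it suffices to compare the two semi-norms.

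For the inclusion $B_{p,q}^{\mathcal{L}^s,2\beta}(\mathbb{G}_{\alpha}^{n})\subseteq B_{p,q}^{s\beta}(\mathbb{G}_{\alpha}^{n})$, the lower bound in Lemma \ref{wang-1} gives $K^s_t(g,g')\gtrsim|B(g,t^{1/(2s)})|^{-1}$ whenever $d_\alpha(g,g')\le t^{1/(2s)}$; restricting the defining integral of $\Phi(u,t)$ to this ball immediately yields $\Phi(u,t)\gtrsim J(u,t^{1/(2s)})^p$. The substitution $r=t^{1/(2s)}$ (so that $t^{-\beta q}=r^{-2s\beta q}$ and $dt/t=2s\,dr/r$) then produces $N_{p,q}^{\mathcal{L}^s,2\beta}(u)\gtrsim N_{p,q}^{s\beta}(u)$. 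The sup case $q=\infty$ is identical.

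For the reverse inclusion, I would employ the upper bound of Lemma \ref{wang-1} and decompose the integration in $g'$ dyadically according to the pieces $\{d_\alpha(g,g')\le t^{1/(2s)}\}$ and $\{2^{j-1}t^{1/(2s)}<d_\alpha(g,g')\le 2^{j}t^{1/(2s)}\}$ for $j\ge 1$. On the $j$-th annulus the kernel is controlled by $2^{-2sj}|B(g,2^{j}t^{1/(2s)})|^{-1}$, and together with the doubling inequality (\ref{Re}) this produces the key pointwise bound
\begin{equation*}
\Phi(u,t)\lesssim\sum_{j=0}^{\infty}2^{-2sj}\,J(u,2^{j}t^{1/(2s)})^p.
\end{equation*}
Raising to the power $q/p$, integrating against $t^{-\beta q}\,dt/t$, and performing the substitution $r=2^{j}t^{1/(2s)}$ in each summand reduce matters to a geometric series in $j$ whose exponent is $2sjp(\beta-1/p)$, hence summable precisely under the hypothesis $\beta<1/p$.

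The technical crux is arranging the $(\sum_j\cdots)^{q/p}$ step so that the resulting $j$-exponent reflects $1/p$ rather than $1/q$. When $q\le p$ the elementary subadditivity $(\sum_j a_j)^{q/p}\le\sum_j a_j^{q/p}$ distributes the power into the sum at once and gives the exponent $2sjq(\beta-1/p)$. When $q>p$ a naive Jensen step would produce only $\beta<1/q$, so instead one applies Minkowski's integral inequality in $L^{q/p}(dt/t)$:
\begin{equation*}
\bigg(\int_0^\infty t^{-\beta q}\Big(\sum_j\psi_j(t)\Big)^{q/p}\frac{dt}{t}\bigg)^{p/q}\le\sum_{j\ge 0}\bigg(\int_0^\infty t^{-\beta q}\psi_j(t)^{q/p}\frac{dt}{t}\bigg)^{p/q},
\end{equation*}
with $\psi_j(t)=2^{-2sj}J(u,2^{j}t^{1/(2s)})^p$; each summand contributes $2^{2sjp(\beta-1/p)}N_{p,q}^{s\beta}(u)^p$ (up to a constant) after the same substitution, and summing recovers the sharp condition $\beta<1/p$. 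The case $q=\infty$ is even more direct: inserting the estimate $J(u,2^{j}t^{1/(2s)})\le(2^{j}t^{1/(2s)})^{2s\beta}N_{p,\infty}^{s\beta}(u)$ into the dyadic bound for $\Phi(u,t)$ yields a geometric series in $j$ with ratio $2^{2s(\beta p-1)}$, again summable exactly under $\beta<1/p$. Combining the two directions establishes $B_{p,q}^{\mathcal{L}^s,2\beta}(\mathbb{G}_{\alpha}^{n})=B_{p,q}^{s\beta}(\mathbb{G}_{\alpha}^{n})$.
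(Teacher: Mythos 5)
Your proof is correct and follows essentially the same route as the paper: both directions rest on the two-sided kernel estimate of Lemma \ref{wang-1}, the decomposition into the ball $\{d_{\alpha}(g,g')<t^{1/(2s)}\}$ plus dyadic annuli $\{d_{\alpha}(g,g')\thicksim 2^{j}t^{1/(2s)}\}$, and the substitution $r=2^{j}t^{1/(2s)}$, with the hypothesis $\beta<1/p$ entering through the same geometric series. The only difference is that you justify the interchange of the dyadic sum with the outer power $q/p$ explicitly (subadditivity for $q\le p$, Minkowski's integral inequality in $L^{q/p}(dt/t)$ for $q>p$), a step the paper's argument passes over in silence; this is a clarification of the same method rather than a different one.
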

\begin{proof}
We only need to prove the case $(p,q,\beta)\in[1,\infty)\times[1,\infty)\times(0,1/p)$, since the proof of $(p,q,\beta)\in[1,\infty)\times\{\infty\}\times(0,1/p)$ is similar.
We first prove
$$B_{p,q}^{\mathcal{L}^{s},2\beta}(\mathbb{G}^{n}_{\alpha})\subseteq
B_{p,q}^{s\beta}(\mathbb{G}^{n}_{\alpha}).$$
Let us write
\begin{align*}
  \left\{\begin{aligned}
  \Phi_{1}(\sigma)&=(1+\sigma)^{-2s};\\
  \Phi_{2}(\sigma)&=(1+\sigma)^{-2s-n}.
  \end{aligned}\right.
\end{align*}
By Lemma \ref{wang-1}, we know from $d_{\alpha}(g,g')\leq t^{1/(2s)}$ that
$$K_{t}^{s}(g,g')\geq \Phi_{1}(1)\frac{1}{|B(g,t^{1/(2s)})|}.$$
 Therefore, we have
\begin{align*}
   &\int_{0}^{\infty}\frac{1}{t^{\beta q}}\Big(\int_{\mathbb{G}_{\alpha}^{n}}\int_{\mathbb{G}_{\alpha}^{n}}|u(g)-u(g')|^{p}K^{s}_{t}(g,g')dg'dg\Big)^{q/p}\frac{dt}{t}  \\
   &\geq \int_{0}^{\infty}\frac{1}{t^{\beta q}}\Big(\int_{\mathbb{G}_{\alpha}^{n}}\int_{B(g,t^{1/(2s)})}|u(g)-u(g')|^{p}K^{s}_{t}(g,g')dg'dg\Big)^{q/p}\frac{dt}{t}  \\
   & \gtrsim\int_{0}^{\infty}\Big(\int_{\mathbb{G}_{\alpha}^{n}}\int_{B(g,t^{1/(2s)})}\frac{|u(g)-u(g')|^{p}}{t^{\beta p}|B(g,t^{1/(2s)})|}dg'dg\Big)^{q/p}\frac{dt}{t}\\
   &\thicksim \int_{0}^{\infty}\Big(\int_{\mathbb{G}_{\alpha}^{n}}\int_{B(g,r)}\frac{|u(g)-u(g')|^{p}}{r^{2s\beta
   p}|B(g,r)|}dg'dg\Big)^{q/p}\frac{dr}{r},
\end{align*}
which deduces that
$$B_{p,q}^{\mathcal{L}^{s},2\beta}(\mathbb{G}^{n}_{\alpha})\subseteq
B_{p,q}^{s\beta}(\mathbb{G}^{n}_{\alpha}).$$

Conversely, it follows
from  Lemma \ref{wang-1} and (\ref{Re}) that
\begin{align*}
&\int_{0}^{\infty}\Big(\int_{\mathbb{G}_{\alpha}^{n}}\int_{\mathbb{G}_{\alpha}^{n}}|u(g)-u(g')|^{p}K^{s}_{t}(g,g')dg'dg\Big)^{q/p}\frac{dt}{t^{1+\beta q}}\\
&\lesssim\int_{0}^{\infty}\Big(\int_{\mathbb{G}_{\alpha}^{n}}\int_{d_{\alpha}(g,g')<t^{1/(2s)}}\frac{|u(g)-u(g')|^{p}}{|B(g,t^{1/(2s)})|}dg'dg\Big)^{q/p}\frac{dt}{t^{1+\beta q}}\\
&\quad+ \int_{0}^{\infty}\Big(\int_{\mathbb{G}_{\alpha}^{n}}\sum_{i=0}^{\infty}\int_{2^{i}t^{1/(2s)}
\leq d_{\alpha}(g,g')< 2^{i+1}t^{1/(2s)}}\frac{|u(g)-u(g')|^{p}\Phi_{1}(2^{i})}{|B(g,t^{1/(2s)}+2^{i}t^{1/2s})|}dg'dg\Big)^{q/p}\frac{dt}{t^{1+\beta q}}\\
&\lesssim \sum_{i=0}^{\infty}\Phi_{2}(2^{i})^{q/p}2^{i Qq/p}\int_{0}^{\infty}\Big(\int_{\mathbb{G}_{\alpha}^{n}}\int_{B(g, 2^{i}t^{1/(2s)})}\frac{|u(g)-u(g')|^{p}}{|B(g,2^{i}t^{1/(2s)})|}dg'dg\Big)^{q/p}\frac{dt}{t^{1+\beta q}}+(N_{p,q}^{s\beta}(u))^{q}\\
&\thicksim\sum_{i=0}^{\infty}\Phi_{2}(2^{i})^{q/p}2^{i Qq/p+2is\beta q}\int_{0}^{\infty}\Big(\int_{\mathbb{G}_{\alpha}^{n}}\int_{B(g,r)}\frac{|u(g)-u(g')|^{p}}{r^{2s\beta p}|B(g,r)|}dg'dg\Big)^{q/p}\frac{dr}{r}+(N_{p,q}^{s\beta}(u))^{q}\\
&\lesssim (N_{p,q}^{s\beta}(u))^{q},
\end{align*}
where we have used the fact that $\beta\in (0,1/p)$. This implies
that $B_{p,q}^{s\beta}(\mathbb{G}^{n}_{\alpha})\subseteq
B_{p,q}^{\mathcal{L}^{s},2\beta}(\mathbb{G}^{n}_{\alpha})$.
Therefore we obtain the desired result.
\end{proof}

According to  Theorems \ref{com-1} \&\ \ref{com-2}, we have the
following conclusion.
\begin{corollary}
Let $(p,q,\beta)\in[1,\infty)\times[1,\infty]\times(0,s/p).$  Then for any $s\in(0,1),$
$$B_{p,q}^{\mathcal{L},2\beta}(\mathbb{G}^{n}_{\alpha})=B_{p,q}^{\mathcal{L}^{s},2\beta/s}(\mathbb{G}^{n}_{\alpha})
=B_{p,q}^{\beta}(\mathbb{G}^{n}_{\alpha}).$$
\end{corollary}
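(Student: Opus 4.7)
The plan is to read off this corollary as a direct consequence of Theorems \ref{com-1} and \ref{com-2} by choosing parameters appropriately. Both theorems identify the semigroup-based Besov norms with the difference-based Besov norm $N_{p,q}^{\beta}$, so matching the smoothness indices will immediately produce the three-way equality. The hypothesis $\beta\in(0,s/p)$ is precisely what is needed to stay inside the parameter range where Theorem \ref{com-2} applies after the necessary rescaling.

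First, I would invoke Theorem \ref{com-1}, which is stated for all $(p,q,\beta)\in[1,\infty)\times[1,\infty]\times(0,\infty)$. In particular, since $\beta>0$ is automatic from $\beta\in(0,s/p)$, we obtain
\[
B_{p,q}^{\mathcal{L},2\beta}(\mathbb{G}^{n}_{\alpha})=B_{p,q}^{\beta}(\mathbb{G}^{n}_{\alpha}).
\]
Next, I would apply Theorem \ref{com-2} with the smoothness parameter $\beta$ there replaced by $\beta/s$. This substitution is legitimate precisely because $\beta<s/p$ forces $\beta/s<1/p$, which is the admissibility condition in the statement of Theorem \ref{com-2}. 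The theorem then yields
\[
B_{p,q}^{\mathcal{L}^{s},2\beta/s}(\mathbb{G}^{n}_{\alpha})=B_{p,q}^{s\cdot(\beta/s)}(\mathbb{G}^{n}_{\alpha})=B_{p,q}^{\beta}(\mathbb{G}^{n}_{\alpha}).
\]

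Chaining these two identifications through the common space $B_{p,q}^{\beta}(\mathbb{G}^{n}_{\alpha})$ produces the claimed triple equality
\[
B_{p,q}^{\mathcal{L},2\beta}(\mathbb{G}^{n}_{\alpha})=B_{p,q}^{\mathcal{L}^{s},2\beta/s}(\mathbb{G}^{n}_{\alpha})=B_{p,q}^{\beta}(\mathbb{G}^{n}_{\alpha}),
\]
with the equalities understood as equalities of sets with comparable norms; the implicit constants inherit the dependencies on $p,q,\beta,s,Q$ already produced in the proofs of Theorems \ref{com-1} and \ref{com-2}. There is no real obstacle here: the only thing to be careful about is the parameter bookkeeping for the restriction on $\beta$, since Theorem \ref{com-1} imposes no upper bound on $\beta$ while Theorem \ref{com-2} requires $\beta/s<1/p$ after the rescaling, and the hypothesis $\beta\in(0,s/p)$ is precisely what makes both invocations simultaneously valid.
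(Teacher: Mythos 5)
Your proposal is correct and matches the paper's (implicit) argument exactly: the paper states this corollary as an immediate consequence of Theorems \ref{com-1} and \ref{com-2} without further detail, and your parameter substitution $\beta\mapsto\beta/s$ in Theorem \ref{com-2}, justified by $\beta<s/p\Leftrightarrow\beta/s<1/p$, is precisely the bookkeeping that makes the chain of identifications through $B_{p,q}^{\beta}(\mathbb{G}^{n}_{\alpha})$ work.
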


\subsection{Some limiting behaviour of Besov semi-norms}\label{sec-3.3}
In a celebrated work by Bourgain, Brezis and Mironescu \cite{BBM},
they study the asymptotic behavior of the
Aronszajn-Gagliardo-Slobedetzky semi-norm (\ref{AGS}) when the order
of differentiability approaches one. To be precise, for
$p\in[1,\infty)$ and $u\in W^{1,p}(\mathbb{R}^{n}),$ it holds
$$\lim_{s\rightarrow1^{-}}(1-s)[u]_{s,p}^{p}=C(p,n)\|\nabla u\|^{p}_{L^{p}(\mathbb{R}^{n})},$$
where $C(p,n)$ is a constant that depends only on $p$ and $n$, and can be computed explicitly.

Later, Maz'ya and Shaposhnikova \cite{MS} and \cite{MS1} used a
different normalization to deal with the case where the parameter
$s$ tends to zero, and proved that
$$\lim_{s\rightarrow0^{+}}s[u]_{s,p}^{p}=c(p,n)\|u\|^{p}_{L^{p}(\mathbb{R}^{n})}.$$
These results have sparked strong research interest and have been applied and promoted
in many different directions, see
\cite{ABC2020,ABCRST2020,ABCRST2021,BGT2022,AGT2020,AGT2022,MPPP2007} and the references
therein.

In this section, we study the behaviour as $\beta\rightarrow0^{+}$
and $\beta\rightarrow1^{-}$ of some semigroup based Besov semi-norms
associated with Grushin-Laplace operators. Our results generalize a
previous one of Maz'ya-Shaposhnikova and Bourgain-Brezis-Mironescu
for the classical fractional Sobolev spaces on the Grushin setting.
Note that the constants present in our results are independent of
the spatial dimension.

Throughout Section \ref{sec-3.3}, we consider Besov spaces
$\mathfrak{B}^{\mathcal{L},\beta}_{p,q}(\mathbb{G}^{n}_{\alpha})$
with $p=q.$ The following lemma plays an important role in proving
the main theorem in this subsection.
\begin{lemma}\label{MS}
Let $q\in[1,\infty)$ and $u\in C_{c}^{\infty}(\mathbb{G}_{\alpha}^{n}).$ Then we have
$$\lim_{\beta\rightarrow0^{+}}\beta\int_{1}^{\infty}\frac{1}{t^{{\beta q}/{2}+1}}
\int_{\mathbb{G}^{n}_{\alpha}}e^{-t\mathcal{L}}(|u-u(g)|^{q})(g)dgdt=\dfrac{4}{q}\|u\|^{q}_{L^{q}(\mathbb{G}^{n}_{\alpha})}.$$
\end{lemma}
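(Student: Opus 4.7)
The plan is to reduce the asymptotic to a dominated-convergence calculation, once the large-$t$ behaviour of the quantity
\[
f(t):=\int_{\mathbb{G}^{n}_{\alpha}}e^{-t\mathcal{L}}(|u-u(g)|^{q})(g)\,dg
\]
is correctly identified. The subtle point—and what I expect to be the main obstacle—is that the \emph{pointwise} limit of the integrand $e^{-t\mathcal{L}}(|u-u(g)|^{q})(g)$ equals $|u(g)|^{q}$, which naively would produce only $\|u\|_{L^{q}}^{q}$; the correct value of the $t$-limit of $f(t)$ is in fact $2\|u\|_{L^{q}}^{q}$, the extra factor arising from mass escaping to the region where $u$ vanishes.

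First, I would prove that $f(t)\to 2\|u\|_{L^{q}(\mathbb{G}^{n}_{\alpha})}^{q}$ as $t\to\infty$. Set $A=\operatorname{supp} u$, which is compact. Applying Fubini to $f(t)=\iint K_{t}(g,g')|u(g)-u(g')|^{q}\,dg\,dg'$ and splitting the domain into $A\times A$, $A\times A^{c}$, $A^{c}\times A$, and $A^{c}\times A^{c}$:
\begin{itemize}
\item On $A\times A$, Lemma \ref{upper} and (\ref{Re}) give $K_{t}(g,g')\lesssim t^{-Q/2}$ uniformly, so the contribution is $O(t^{-Q/2}|A|^{2})\to 0$.
\item On $A^{c}\times A^{c}$, both $u(g)$ and $u(g')$ vanish, so the contribution is $0$.
\item On $A\times A^{c}$, $u(g')=0$, so $|u(g')-u(g)|^{q}=|u(g)|^{q}$, and by stochastic completeness (\ref{SC}) the contribution equals
\[
\int_{A}|u(g)|^{q}\Big(1-\int_{A}K_{t}(g,g')\,dg'\Big)dg\longrightarrow \|u\|_{L^{q}(\mathbb{G}^{n}_{\alpha})}^{q},
\]
since $\int_{A}K_{t}(g,g')\,dg'\to 0$ uniformly on $A$ by the same kernel estimate.
\item The symmetry $K_{t}(g,g')=K_{t}(g',g)$ gives the same limit on $A^{c}\times A$.
\end{itemize}
Summing yields the stated value $2\|u\|_{L^{q}(\mathbb{G}^{n}_{\alpha})}^{q}$.

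Next, I would record a uniform bound on $f(t)$: by the triangle inequality and (\ref{SC}), $e^{-t\mathcal{L}}(|u-u(g)|^{q})(g)\le 2^{q-1}\bigl(e^{-t\mathcal{L}}|u|^{q}(g)+|u(g)|^{q}\bigr)$, and integrating (using symmetry of $K_{t}$ to get $\int e^{-t\mathcal{L}}|u|^{q}\,dg=\|u\|_{L^{q}}^{q}$) gives $f(t)\le 2^{q}\|u\|_{L^{q}(\mathbb{G}^{n}_{\alpha})}^{q}$ for all $t>0$.

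Finally, I would perform the substitution $t=e^{s/\beta}$ (so $s=\beta\log t$), under which $dt/t^{\beta q/2+1}=\frac{1}{\beta}e^{-qs/2}\,ds$, and therefore
\[
\beta\int_{1}^{\infty}\frac{f(t)}{t^{\beta q/2+1}}\,dt=\int_{0}^{\infty}e^{-qs/2}f(e^{s/\beta})\,ds.
\]
For each fixed $s>0$, $e^{s/\beta}\to\infty$ as $\beta\to 0^{+}$, so $f(e^{s/\beta})\to 2\|u\|_{L^{q}(\mathbb{G}^{n}_{\alpha})}^{q}$ by the first step; the integrand is dominated by $2^{q}\|u\|_{L^{q}(\mathbb{G}^{n}_{\alpha})}^{q}e^{-qs/2}$, which is integrable on $(0,\infty)$, by the second step. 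Dominated convergence then gives
\[
\lim_{\beta\to 0^{+}}\int_{0}^{\infty}e^{-qs/2}f(e^{s/\beta})\,ds=2\|u\|_{L^{q}(\mathbb{G}^{n}_{\alpha})}^{q}\int_{0}^{\infty}e^{-qs/2}\,ds=\frac{4}{q}\|u\|_{L^{q}(\mathbb{G}^{n}_{\alpha})}^{q},
\]
which is the desired identity.
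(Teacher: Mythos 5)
Your proof is correct, and it takes a genuinely different route from the paper's. The paper treats $q>1$ and $q=1$ separately: for $q>1$ it uses the elementary inequality $\bigl||a-b|^{q}-|a|^{q}-|b|^{q}\bigr|\leq C_{q}(|a|^{q-1}|b|+|a||b|^{q-1})$ together with the $L^{1}\to L^{q}$ and $L^{1}\to L^{q'}$ ultracontractivity of Proposition \ref{ulc} to show the cross terms are $O(\beta)$, reducing the integrand to $|u(g)|^{q}+|u(g')|^{q}$, whose kernel integral is exactly $2\|u\|_{L^{q}}^{q}$; for $q=1$ (where that inequality degenerates) it runs a separate two-sided $\liminf/\limsup$ argument with a compact exhaustion $F_{\varepsilon}$. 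You instead identify the genuine large-time limit $f(t)\to 2\|u\|_{L^{q}}^{q}$ directly, exploiting the compact support of $u$ and the uniform bound $K_{t}(g,g')\lesssim t^{-Q/2}$ (which follows from Lemma \ref{upper} and $|B(g,\sqrt{t})|\gtrsim t^{Q/2}$), and then convert the $\beta\to0^{+}$ asymptotic into a single dominated-convergence computation via the substitution $t=e^{s/\beta}$. This buys a unified treatment of all $q\in[1,\infty)$, makes the origin of the factor $2$ transparent (mass escaping to $\{u=0\}$), and isolates the Tauberian step $\beta\int_{1}^{\infty}h(t)t^{-\beta q/2-1}\,dt\to \frac{2}{q}\lim_{t\to\infty}h(t)$ as a clean general principle; the only minor point worth noting is that the pointwise convergence $f(e^{s/\beta})\to 2\|u\|_{L^{q}}^{q}$ fails at $s=0$, which is harmless for dominated convergence. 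The paper's $q>1$ argument, by contrast, only needs $u\in L^{1}\cap L^{q}$ rather than compact support, so it is marginally more general in that regime, but your argument covers exactly the class $C_{c}^{\infty}(\mathbb{G}_{\alpha}^{n})$ for which the lemma is stated.
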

\begin{proof}
Assume first that $q>1.$ We claim that for any $u\in C_{c}^{\infty}(\mathbb{G}_{\alpha}^{n})\subset L^{p}(\mathbb{G}_{\alpha}^{n}),$
\begin{align}\label{lim}
\lim_{\beta\rightarrow0^{+}}\beta\int_{1}^{\infty}\int_{\mathbb{G}_{\alpha}^{n}}\int_{\mathbb{G}_{\alpha}^{n}}
K_{t}(g,g')(|u(g)|^{q-1}|u(g')|+|u(g)||u(g')|^{q-1})dg'dg\frac{dt}{t^{{\beta q}/{2}+1}}=0.
\end{align}
In fact, by exploiting H\"older's inequality and
the boundedness properties of
$$e^{-t\mathcal{L}}:
L^{p}(\mathbb{G}^{n}_{\alpha})\rightarrow L^{q}(\mathbb{G}^{n}_{\alpha})
\ \ \mbox{and}\ \ L^{p}(\mathbb{G}^{n}_{\alpha})\rightarrow L^{q'}(\mathbb{G}^{n}_{\alpha})$$
  with $p=1$
(see Proposition \ref{ulc}), we obtain
\begin{align*}
&\beta\int_{1}^{\infty}\int_{\mathbb{G}_{\alpha}^{n}}\int_{\mathbb{G}_{\alpha}^{n}}
K_{t}(g,g')\big(|u(g)|^{q-1}|u(g')|+|u(g)||u(g')|^{q-1}\big)dg'dg\frac{dt}{t^{{\beta q}/{2}+1}}\\
&=\beta\int_{1}^{\infty}\int_{\mathbb{G}_{\alpha}^{n}}
\bigg\{|u(g)|^{q-1}e^{-t\mathcal{L}}(|u|)(g)+|u(g)|e^{-t\mathcal{L}}(|u|^{q-1})(g)\bigg\}dg\frac{dt}{t^{{\beta q}/{2}+1}}\\
&\leq\beta\int_{1}^{\infty}
\bigg\{\|u\|_{L^{q}(\mathbb{G}_{\alpha}^{n})}^{q-1}\|e^{-t\mathcal{L}}(|u|)\|_{L^{q}(\mathbb{G}_{\alpha}^{n})}
+\|u\|_{L^{q}(\mathbb{G}_{\alpha}^{n})}\|e^{-t\mathcal{L}}(|u|^{q-1})\|_{L^{q'}(\mathbb{G}_{\alpha}^{n})}\bigg\}\frac{dt}{t^{{\beta q}/{2}+1}}\\
&\lesssim\beta\|u\|_{L^{q}(\mathbb{G}_{\alpha}^{n})}^{q-1}\|u\|_{L^{1}(\mathbb{G}_{\alpha}^{n})}
\int_{1}^{\infty}\frac{dt}{t^{{\beta q}/{2}+1+{Q(q-1)}/{(2q)}}}\\
&\quad
+\beta\|u\|_{L^{q}(\mathbb{G}_{\alpha}^{n})}\|u^{q-1}\|_{L^{1}(\mathbb{G}_{\alpha}^{n})}
\int_{1}^{\infty}\frac{dt}{t^{{\beta q}/{2}+1+{Q(q'-1)}/{(2q')}}}\\
&\lesssim\frac{2\beta q}{\beta q^{2}+Q(q-1)}\|u\|_{L^{q}(\mathbb{G}_{\alpha}^{n})}^{q-1}\|u\|_{L^{1}(\mathbb{G}_{\alpha}^{n})}
+\frac{2\beta q}{\beta q^{2}+Q}\|u\|_{L^{q}(\mathbb{G}_{\alpha}^{n})}\|u^{q-1}\|_{L^{1}(\mathbb{G}_{\alpha}^{n})}.
\end{align*}
Therefore, the last term tends to 0 as $\beta\rightarrow0^{+}.$ This shows (\ref{lim}).

In addition,
observe that there exists a positive constant $C_{q}$ such that for all $a,b\in\mathbb{R}$,
$$\big||a-b|^{q}-|a|^{q}-|b|^{q}\big|\leq C_{q}(|a|^{q-1}|b|+|a||b|^{q-1}).$$
We use the above inequality to deduce that
\begin{align*}
&\beta\bigg|\int_{1}^{\infty}\bigg\{\int_{\mathbb{G}_{\alpha}^{n}}\frac{e^{-t\mathcal{L}}(|u-u(g)|^{q})(g)}{t^{{\beta q}/{2}+1}}dg
-\int_{\mathbb{G}_{\alpha}^{n}}\int_{\mathbb{G}_{\alpha}^{n}}\frac{K_{t}(g,g')}{t^{{\beta q}/{2}+1}}
(|u(g)|^{q}+|u(g')|^{q})dg'dg\bigg\}dt\bigg|\\
&=\bigg|\beta\int_{1}^{\infty}\bigg\{\int_{\mathbb{G}_{\alpha}^{n}}\int_{\mathbb{G}_{\alpha}^{n}}K_{t}(g,g')
(|u(g)-u(g')|^{q}-|u(g)|^{q}-|u(g')|^{q})dg'dg\bigg\}\frac{dt}{t^{{\beta q}/{2}+1}}\bigg|\\
&\lesssim \beta\int_{1}^{\infty}\bigg\{\int_{\mathbb{G}_{\alpha}^{n}}\int_{\mathbb{G}_{\alpha}^{n}}
K_{t}(g,g')\big(|u(g)|^{q-1}|u(g')|+|u(g)||u(g')|^{q-1}\big)dg'dg\bigg\}\frac{dt}{t^{{\beta q}/{2}+1}}.
\end{align*}
Combining this estimate with (\ref{lim}), we conclude that
\begin{align*}
&\lim_{\beta\rightarrow0^{+}}\beta\int_{1}^{\infty}
\int_{\mathbb{G}^{n}_{\alpha}}e^{-t\mathcal{L}}(|u-u(g)|^{q})(g)dg\frac{dt}{t^{{\beta q}/{2}+1}}\\
&=\lim_{\beta\rightarrow0^{+}}\beta\int_{1}^{\infty}\bigg\{\int_{\mathbb{G}_{\alpha}^{n}}\int_{\mathbb{G}_{\alpha}^{n}}K_{t}(g,g')
(|u(g)|^{q}+|u(g')|^{q})dg'dg\bigg\}\frac{dt}{t^{{\beta q}/{2}+1}}.
\end{align*}
Hence, we obtain from (\ref{SC}) and Proposition
\ref{G-Pro} (iv) that
\begin{align*}
&\beta\int_{1}^{\infty}\int_{\mathbb{G}_{\alpha}^{n}}\int_{\mathbb{G}_{\alpha}^{n}}K_{t}(g,g')
(|u(g)|^{q}+|u(g')|^{q})dg'dg\frac{dt}{t^{{\beta q}/{2}+1}}\\
&=2\beta\|u\|^{q}_{L^{q}(\mathbb{G}_{\alpha}^{n})}\int_{1}^{\infty}t^{-{\beta q}/{2}-1}dt\\
&=\dfrac{4}{q}\|u\|^{q}_{L^{q}(\mathbb{G}^{n}_{\alpha})}.
\end{align*}
At this point, the desired conclusion follows by passing to the limit for the above equality in the case $q>1.$

We thus turn the attention to the case $q=1.$ For $\beta>0,$ by using (\ref{SC}) and Proposition \ref{G-Pro} (iv) again, we have
\begin{align*}
&\beta\int_{1}^{\infty}\frac{1}{t^{{\beta}/{2}+1}}
\int_{\mathbb{G}^{n}_{\alpha}}e^{-t\mathcal{L}}(|u-u(g)|)(g)dgdt\\
&\leq\beta\int_{1}^{\infty}\frac{1}{t^{{\beta}/{2}+1}}
\int_{\mathbb{G}^{n}_{\alpha}}\int_{\mathbb{G}^{n}_{\alpha}}K_{t}(g,g')(|u(g)|+|u(g')|)dg'dgdt\\
&=2\beta\|u\|_{L^{1}(\mathbb{G}^{n}_{\alpha})}\int_{1}^{\infty}t^{-{\beta}/{2}-1}dt=4\|u\|_{L^{1}(\mathbb{G}^{n}_{\alpha})},
\end{align*}
which implies
$$\limsup_{\beta\rightarrow0^{+}}\beta\int_{1}^{\infty}\frac{1}{t^{{\beta }/{2}+1}}
\int_{\mathbb{G}^{n}_{\alpha}}e^{-t\mathcal{L}}(|u-u(g)|)(g)dgdt\leq
4\|u\|_{L^{1}(\mathbb{G}^{n}_{\alpha})}.$$ Thus, to finish the proof
of Lemma \ref{MS}, we only need to prove
\begin{align}\label{liminf}
\liminf_{\beta\rightarrow0^{+}}\beta\int_{1}^{\infty}\frac{1}{t^{{\beta}/{2}+1}}
\int_{\mathbb{G}^{n}_{\alpha}}e^{-t\mathcal{L}}(|u-u(g)|)(g)dgdt\geq 4\|u\|_{L^{1}(\mathbb{G}^{n}_{\alpha})}.
\end{align}
We first notice that for a fixed
 $\varepsilon>0.$ Since $u\in L^{1}(\mathbb{G}^{n}_{\alpha}),$ we can find a compact set $F_{\varepsilon}
\subset\mathbb{G}^{n}_{\alpha}$ such that
\begin{align}\label{AC}
\int_{\mathbb{G}^{n}_{\alpha}\setminus F_{\varepsilon}}|u(\xi)|d\xi\leq\varepsilon.
\end{align}
Moreover, it follows from the symmetry of the kernel $K_{t}(\cdot,\cdot)$ and (\ref{SC}) that
\begin{align}\label{liminf1}
&\beta\int_{1}^{\infty}\frac{1}{t^{{\beta}/{2}+1}}
\int_{\mathbb{G}^{n}_{\alpha}}e^{-t\mathcal{L}}(|u-u(g)|)(g)dgdt\nonumber\\
&\geq \beta\int_{1}^{\infty}\frac{1}{t^{{\beta }/{2}+1}}
\int_{F_{\varepsilon}}\int_{\mathbb{G}^{n}_{\alpha}\setminus F_{\varepsilon}}K_{t}(g,g')(|u(g)|-|u(g')|)dg'dgdt\nonumber\\
&\quad+\beta\int_{1}^{\infty}\frac{1}{t^{{\beta }/{2}+1}}
\int_{\mathbb{G}^{n}_{\alpha}\setminus F_{\varepsilon}}\int_{F_{\varepsilon}}K_{t}(g,g')(|u(g')|-|u(g)|)dg'dgdt\nonumber\\
&=\beta\int_{1}^{\infty}\frac{1}{t^{{\beta }/{2}+1}}
\int_{F_{\varepsilon}}|u(g)|\bigg(1-\int_{F_{\varepsilon}}K_{t}(g,g')dg'\bigg)dgdt\nonumber\\
&\quad+\beta\int_{1}^{\infty}\frac{1}{t^{{\beta }/{2}+1}}
\int_{F_{\varepsilon}}|u(g')|\bigg(1-\int_{F_{\varepsilon}}K_{t}(g,g')dg\bigg)dg'dt\nonumber\\
&\quad-\beta\int_{1}^{\infty}\frac{1}{t^{{\beta }/{2}+1}}
\int_{F_{\varepsilon}}\int_{\mathbb{G}^{n}_{\alpha}\setminus F_{\varepsilon}}K_{t}(g,g')|u(g')|dg'dgdt\nonumber\\
&\quad-\beta\int_{1}^{\infty}\frac{1}{t^{{\beta }/{2}+1}}
\int_{\mathbb{G}^{n}_{\alpha}\setminus F_{\varepsilon}}\int_{F_{\varepsilon}}K_{t}(g,g')|u(g)|dg'dgdt\\
&= I_{1}-I_{2}-I_{3},\nonumber
\end{align}
where
\begin{align*}
  \left\{\begin{aligned}
  I_{1}&:=\beta\int_{1}^{\infty}\frac{1}{t^{{\beta }/{2}+1}}
  \int_{F_{\varepsilon}}|u(g)|dgdt+\beta\int_{1}^{\infty}\frac{1}{t^{{\beta}/{2}+1}} \int_{F_{\varepsilon}}|u(g')|dg'dt;\\
  I_{2}&:=\beta\int_{1}^{\infty}\frac{1}{t^{{\beta }/{2}+1}}
       \int_{F_{\varepsilon}}|u(g)|\int_{F_{\varepsilon}}K_{t}(g,g')dg'dgdt\\
     &\ \ \ +\beta\int_{1}^{\infty}\frac{1}{t^{{\beta }/{2}+1}}
       \int_{F_{\varepsilon}}|u(g')|\int_{F_{\varepsilon}}K_{t}(g,g')dgdg'dt;\\
  I_{3}&:=\beta\int_{1}^{\infty}\frac{1}{t^{{\beta }/{2}+1}}
          \int_{\mathbb{G}^{n}_{\alpha}\setminus F_{\varepsilon}}|u(g')|\int_{F_{\varepsilon}}K_{t}(g,g')dgdg'dt\\
         &\ \ \ +\beta\int_{1}^{\infty}\frac{1}{t^{{\beta }/{2}+1}}
         \int_{\mathbb{G}^{n}_{\alpha}\setminus F_{\varepsilon}}|u(g)|\int_{F_{\varepsilon}}K_{t}(g,g')dg'dgdt.
  \end{aligned}\right.
\end{align*}
By using (\ref{AC}), it is easy to verify
\begin{align*}
I_{1}&\geq2\beta\int_{1}^{\infty}\frac{1}{t^{{\beta }/{2}+1}}
\int_{F_{\varepsilon}}|u(g)|dgdt\\
&\geq 2\beta\int_{1}^{\infty}\frac{1}{t^{{\beta }/{2}+1}}
\Big(\int_{\mathbb{G}^{n}_{\alpha}}|u(g)|dg-\int_{\mathbb{G}^{n}_{\alpha}\setminus F_{\varepsilon}}|u(g)|dg\Big)dt\\
&\geq 4(\|u\|_{L^{1}(\mathbb{G}^{n}_{\alpha})}-\varepsilon).\\
\end{align*}
For $I_{2},$ we deduce from the Gaussian upper bound
(\ref{Kt}) and (\ref{B}) that
\begin{align*}
I_{2}&\lesssim \beta|F_{\varepsilon}|\int_{1}^{\infty}\frac{1}{t^{{\beta }/{2}+1+{Q}/{2}}}
\int_{F_{\varepsilon}}|u(g)|dgdt+
\beta |F_{\varepsilon}|\int_{1}^{\infty}\frac{1}{t^{{\beta }/{2}+1+{Q}/{2}}}
\int_{F_{\varepsilon}}|u(g')|dg'dt\\
&\lesssim 2\beta|F_{\varepsilon}|\|u\|_{L^{1}(\mathbb{G}^{n}_{\alpha})}\int_{1}^{\infty}
t^{-{\beta}/{2}-1-{Q}/{2}}dt\\
&=\frac{4\beta}{\beta+Q}|F_{\varepsilon}|\|u\|_{L^{1}(\mathbb{G}^{n}_{\alpha})}.
\end{align*}
On the other hand, the symmetry of $K_{t}(\cdot,\cdot)$
and (\ref{SC}) together with (\ref{AC}) ensure that
\begin{align*}
I_{3}&\leq\beta\int_{1}^{\infty}\frac{1}{t^{{\beta }/{2}+1}}
\int_{\mathbb{G}^{n}_{\alpha}\setminus F_{\varepsilon}}|u(g')|dg'dt
+\beta\int_{1}^{\infty}\frac{1}{t^{{\beta }/{2}+1}}
\int_{\mathbb{G}^{n}_{\alpha}\setminus F_{\varepsilon}}|u(g)|dgdt\\
&\leq 2\beta\varepsilon\int_{1}^{\infty}t^{-{\beta}/{2}-1}dt=4\varepsilon.
\end{align*}
Combining the estimates of the terms $I_{1}$, $I_{2}$ and $I_{3}$ with the inequality (\ref{liminf1})
derives that
\begin{align*}
\beta\int_{1}^{\infty}\frac{1}{t^{{\beta }/{2}+1}}
\int_{\mathbb{G}^{n}_{\alpha}}e^{-t\mathcal{L}}(|u-u(g)|)(g)dgdt
\geq4\|u\|_{L^{1}(\mathbb{G}^{n}_{\alpha})}-8\varepsilon-\frac{4\beta}{\beta+Q}|F_{\varepsilon}|\|u\|_{L^{1}(\mathbb{G}^{n}_{\alpha})},
\end{align*}
which implies (\ref{liminf}) by letting $\beta\rightarrow0^{+}$ and by using the arbitrariness of $\varepsilon.$
This concludes the proof of Lemma \ref{MS}.
\end{proof}

According to Lemma \ref{MS}, we can immediately prove the main
theorem of this section.
\begin{theorem}\label{MS1}
Let $p\in[1,\infty).$ If $u\in \mathfrak{B}^{\mathcal{L},\beta}_{p,p}(\mathbb{G}^{n}_{\alpha})$ for some $\beta\in(0,1),$ then
$$\lim_{\beta\rightarrow0^{+}}\beta N_{p,p}^{\mathcal{L},\beta}(u)^{p}=\frac{4}{p}\|u\|^{p}_{L^{p}(\mathbb{G}^{n}_{\alpha})}.$$
\end{theorem}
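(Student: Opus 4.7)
The plan is to split the integral defining $\beta N_{p,p}^{\mathcal{L},\beta}(u)^{p}$ at $t=1$ and handle the two pieces separately. Fix $\beta_{0}\in(0,1)$ such that $u\in\mathfrak{B}_{p,p}^{\mathcal{L},\beta_{0}}(\mathbb{G}_{\alpha}^{n})$. By Proposition~\ref{desi}, $u\in B_{p,p}^{\mathcal{L},\beta_{0}}(\mathbb{G}_{\alpha}^{n})$, so $N_{p,p}^{\mathcal{L},\beta_{0}}(u)$ is finite. For any $\beta\in(0,\beta_{0})$, on $(0,1]$ the weight satisfies $t^{-\beta p/2-1}\le t^{-\beta_{0}p/2-1}$, whence
\begin{equation*}
\beta\int_{0}^{1}\int_{\mathbb{G}_{\alpha}^{n}}e^{-t\mathcal{L}}(|u-u(g)|^{p})(g)\,dg\,\frac{dt}{t^{\beta p/2+1}}\le\beta\, N_{p,p}^{\mathcal{L},\beta_{0}}(u)^{p}\longrightarrow 0
\end{equation*}
as $\beta\to 0^{+}$, so the $(0,1]$-piece contributes nothing in the limit.

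For the $[1,\infty)$-piece, Lemma~\ref{MS} already covers the case $u\in C_{c}^{\infty}(\mathbb{G}_{\alpha}^{n})$. By definition of $\mathfrak{B}_{p,p}^{\mathcal{L},\beta_{0}}$, there exists $\{u_{k}\}\subset C_{c}^{\infty}(\mathbb{G}_{\alpha}^{n})$ with $\|u_{k}-u\|_{B_{p,p}^{\mathcal{L},\beta_{0}}}\to 0$; in particular $\|u_{k}-u\|_{L^{p}(\mathbb{G}_{\alpha}^{n})}\to 0$. For each $k$, Lemma~\ref{MS} gives
\begin{equation*}
\lim_{\beta\to 0^{+}}\beta\int_{1}^{\infty}\int_{\mathbb{G}_{\alpha}^{n}}e^{-t\mathcal{L}}(|u_{k}-u_{k}(g)|^{p})(g)\,dg\,\frac{dt}{t^{\beta p/2+1}}=\frac{4}{p}\|u_{k}\|_{L^{p}(\mathbb{G}_{\alpha}^{n})}^{p}.
\end{equation*}

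The crux is a $\beta$-uniform comparison between $u$ and $u_{k}$. Writing
\begin{equation*}
a_{\beta}(v)^{p}:=\beta\int_{1}^{\infty}\int_{\mathbb{G}_{\alpha}^{n}}e^{-t\mathcal{L}}(|v-v(g)|^{p})(g)\,dg\,\frac{dt}{t^{\beta p/2+1}},
\end{equation*}
Minkowski's inequality combined with the convexity bound $|a-b|^{p}\le 2^{p-1}(|a|^{p}+|b|^{p})$, the symmetry of $K_{t}(\cdot,\cdot)$ (Proposition~\ref{G-Pro}(iii)), stochastic completeness~(\ref{SC}), and the identity $\beta\int_{1}^{\infty}t^{-\beta p/2-1}\,dt=2/p$ yield
\begin{equation*}
a_{\beta}(u-u_{k})^{p}\le \frac{2^{p+1}}{p}\,\|u-u_{k}\|_{L^{p}(\mathbb{G}_{\alpha}^{n})}^{p},
\end{equation*}
uniformly in $\beta>0$. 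Since $a_{\beta}$ is subadditive, $|a_{\beta}(u)-a_{\beta}(u_{k})|\le a_{\beta}(u-u_{k})$. A standard $\varepsilon/3$ argument---letting first $\beta\to 0^{+}$ with $k$ fixed, then $k\to\infty$---then gives $a_{\beta}(u)^{p}\to (4/p)\|u\|_{L^{p}(\mathbb{G}_{\alpha}^{n})}^{p}$, which together with the vanishing of the $(0,1]$-contribution delivers the theorem.

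The main obstacle is precisely this $\beta$-uniform approximation: the density of $C_{c}^{\infty}$ in $\mathfrak{B}_{p,p}^{\mathcal{L},\beta_{0}}$ is only at the level of the norm attached to the fixed parameter $\beta_{0}$, so one cannot transfer the limit formula by mere continuity in $\beta$. The cancellation $\beta\int_{1}^{\infty}t^{-\beta p/2-1}\,dt=2/p$, together with $e^{-t\mathcal{L}}1=1$, is what makes the error bound independent of $\beta$ and hence unlocks the density argument.
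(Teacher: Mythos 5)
Your proof is correct and follows essentially the same route as the paper's: both rest on Lemma \ref{MS} for $C_{c}^{\infty}$ functions, a split of the integral at $t=1$ with the head killed by the finiteness of $N_{p,p}^{\mathcal{L},\beta_{0}}(u)$, and an $\varepsilon/3$ density argument with a $\beta$-uniform error bound coming from the identity $\beta\int_{1}^{\infty}t^{-\beta p/2-1}\,dt=2/p$ together with stochastic completeness. The only (cosmetic) difference is that you control the tail error by $\|u-u_{k}\|_{L^{p}}$ alone via subadditivity of $a_{\beta}$, whereas the paper controls the full seminorm difference using both $\|u-u_{k}\|_{L^{p}}$ and $N_{p,p}^{\mathcal{L},\beta'}(u-u_{k})$.
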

\begin{proof}
Let $p\in[1,\infty).$ Assume that $u\in
\mathfrak{B}^{\mathcal{L},\beta'}_{p,p}(\mathbb{G}^{n}_{\alpha}) $
with  $\beta'\in(0,1)$. For any $\beta\in(0,\beta']$ and
$k\in\mathbb{N},$ we have
\begin{align}\label{apro}
\bigg|\beta
N_{p,p}^{\mathcal{L},\beta}(u)^{p}-\dfrac{4}{p}\|u\|^{p}_{L^{p}(\mathbb{G}^{n}_{\alpha})}\bigg|
&\leq
\beta\bigg|N_{p,p}^{\mathcal{L},\beta}(u)^{p}-N_{p,p}^{\mathcal{L},\beta}(u_k)^{p}\bigg|
+\bigg|\beta N_{p,p}^{\mathcal{L},\beta}(u_k)^{p}-\frac{4}{p}\|u_k\|^{p}_{L^{p}(\mathbb{G}^{n}_{\alpha})}\bigg|\nonumber\\
&\quad+\frac{4}{p}\bigg|\|u_k\|^{p}_{L^{p}(\mathbb{G}^{n}_{\alpha})}-\|u\|^{p}_{L^{p}(\mathbb{G}^{n}_{\alpha})}\bigg|
\end{align} with $ u_k \in C_{c}^{\infty}(\mathbb{G}_{\alpha}^{n})$.
Now we estimate the above three terms, respectively. We first note
that for any $a,b>0$ such that $a\neq b,$ the elementary inequality
$$|a^{p}-b^{p}|\leq \max\{a,b\}^{p-1}|a-b| $$ holds true.
Therefore, it follows from the proof of Proposition \ref{emb} that
\begin{align*}
&\beta\big|N_{p,p}^{\mathcal{L},\beta}(u)^{p}-N_{p,p}^{\mathcal{L},\beta}(u_k)^{p}\big|\\
&\leq\beta
\max\Big\{N_{p,p}^{\mathcal{L},\beta}(u),\ N_{p,p}^{\mathcal{L},\beta}(u_k)\Big\}^{p-1}
\big|N_{p,p}^{\mathcal{L},\beta}(u)-N_{p,p}^{\mathcal{L},\beta}(u_k)\big|\\
&\leq
\max\Big\{\beta^{1/p}N_{p,p}^{\mathcal{L},\beta}(u),\ \beta^{1/p}N_{p,p}^{\mathcal{L},\beta}(u_k)\Big\}^{p-1}
\beta^{1/p}N_{p,p}^{\mathcal{L},\beta}(u-u_k)\\
&\leq \max\Big\{\beta'N_{p,p}^{\mathcal{L},\beta'}(u)^{p}+\frac{2^{p+1}}{p}\|u\|^{p}_{L^{p}(\mathbb{G}^{n}_{\alpha})},\
\beta'N_{p,p}^{\mathcal{L},\beta'}(u_k)^{p}+\frac{2^{p+1}}{p}\|u_k\|^{p}_{L^{p}(\mathbb{G}^{n}_{\alpha})}\Big\}^{1-{1}/{p}}\\
&\quad\times
\big(\beta'N_{p,p}^{\mathcal{L},\beta'}(u-u_k)^{p}+\frac{2^{p+1}}{p}\|u-u_k\|^{p}_{L^{p}(\mathbb{G}^{n}_{\alpha})}\big)^{1/p}.
\end{align*}
On the other hand, we know from Definition \ref{Besov2} that there exist a sequence
$\{u_k\}_{k\in\mathbb{N}}\subset C_{c}^{\infty}(\mathbb{G}_{\alpha}^{n})$
such that
$$\|u_k-u\|_{L^{p}(\mathbb{G}^{n}_{\alpha})},\
N_{p,p}^{\mathcal{L},\beta'}(u_k-u)\rightarrow0\ \mbox{as}\
k\rightarrow\infty.$$ Specially, for any given $\varepsilon>0$ there
exists $N(\varepsilon)\in\mathbb{N}$ such that
\begin{align}\label{apro1}
\frac{4}{p}\big|\|u_k\|^{p}_{L^{p}(\mathbb{G}^{n}_{\alpha})}-\|u\|^{p}_{L^{p}(\mathbb{G}^{n}_{\alpha})}
\big|\leq\frac{\varepsilon}{3},\ \mbox{whenever}\ k\geq
N(\varepsilon).
\end{align}
Moreover, there exists $N'(\varepsilon)\in\mathbb{N}$ such that for any $\beta\in(0,\beta'],$
\begin{align}\label{apro2}
\beta\big|N_{p,p}^{\mathcal{L},\beta}(u)^{p}-N_{p,p}^{\mathcal{L},\beta}(u_k)^{p}\big|\leq\frac{\varepsilon}{3},\
\mbox{whenever}\ k\geq N'(\varepsilon).
\end{align}
Let $N''(\varepsilon)=\max\{N(\varepsilon),N'(\varepsilon)\}$ and
fix $k'\geq N''(\varepsilon).$ For any $\beta\in(0,\beta'],$ we
deduce from (\ref{apro}), (\ref{apro1}) and (\ref{apro2}) that
$$\big|\beta N_{p,p}^{\mathcal{L},\beta}(u)^{p}-\frac{4}{p}\|u\|^{p}_{L^{p}(\mathbb{G}^{n}_{\alpha})}\big|
\leq \frac{2\varepsilon}{3}+\big|\beta
N_{p,p}^{\mathcal{L},\beta}(u_{k'})^{p}-\frac{4}{p}\|u_{k'}\|^{p}_{L^{p}(\mathbb{G}^{n}_{\alpha})}\big|.$$
In what follows, we need to prove that
\begin{align}\label{ms}
\lim_{\beta\rightarrow0^{+}}\beta
N_{p,p}^{\mathcal{L},\beta}(u_{k'})^{p}=\frac{4}{p}\|u_{k'}\|^{p}_{L^{p}(\mathbb{G}^{n}_{\alpha})}.
\end{align}
In fact, from the definition of $N_{p,p}^{\mathcal{L},\beta}(\cdot),$ we can write
\begin{align*}
\beta
N_{p,p}^{\mathcal{L},\beta}(u_{k'})^{p}&=\beta\int_{0}^{1}\frac{1}{t^{{\beta
p}/{2}+1}}
\int_{\mathbb{G}^{n}_{\alpha}}e^{-t\mathcal{L}}(|u_{k'}-u_{k'}(g)|^{p})(g)dgdt\\
&\quad+\beta\int_{1}^{\infty}\frac{1}{t^{{\beta p}/{2}+1}}
\int_{\mathbb{G}^{n}_{\alpha}}e^{-t\mathcal{L}}(|u_{k'}-u_{k'}(g)|^{p})(g)dgdt.
\end{align*}
Since $u_{k'}\in
\mathfrak{B}_{p,p}^{\mathcal{L},\beta'}(\mathbb{G}^{n}_{\alpha})$
for some $\beta'\in(0,1),$ then for any $\beta\in(0,\beta'],$ we
have
\begin{align*}
&\beta\int_{0}^{1}\frac{1}{t^{{\beta p}/{2}+1}}
\int_{\mathbb{G}^{n}_{\alpha}}e^{-t\mathcal{L}}(|u_{k'}-u_{k'}(g)|^{p})(g)dgdt\\
&\leq \beta\int_{0}^{1}\frac{1}{t^{{\beta' p}/{2}+1}}
\int_{\mathbb{G}^{n}_{\alpha}}e^{-t\mathcal{L}}(|u_{k'}-u_{k'}(g)|^{p})(g)dgdt\leq
\beta N_{p,p}^{\mathcal{L},\beta'}(u_{k'})^{p}<\infty.
\end{align*}
This implies that
$$\lim_{\beta\rightarrow0^{+}}\beta\int_{0}^{1}\frac{1}{t^{{\beta p}/{2}+1}}
\int_{\mathbb{G}^{n}_{\alpha}}e^{-t\mathcal{L}}(|u_{k'}-u_{k'}(g)|^{p})(g)dgdt=0.$$
Combining this with Lemma \ref{MS}, we conclude that (\ref{ms}) is valid.

 Therefore, there exists $\beta''<\beta'$ such that for any
$\beta\in(0,\beta''],$
$$\big|\beta N_{p,p}^{\mathcal{L},\beta}(u_{k'})^{p}-\frac{4}{p}\|u_{k'}\|^{p}_{L^{p}(\mathbb{G}^{n}_{\alpha})}\big|\leq \frac{\varepsilon}{3},$$
which derives
$$\big|\beta N_{p,p}^{\mathcal{L},\beta}(u)^{p}-\frac{4}{p}\|u\|^{p}_{L^{p}(\mathbb{G}^{n}_{\alpha})}\big|
\leq \frac{2\varepsilon}{3}+\frac{\varepsilon}{3}=\varepsilon.$$
This completes the proof of the theorem.
\end{proof}

A complementary and natural question is what happens when we
consider  the limit as $\beta\rightarrow1^{-}$. The answer is
contained in the following proposition.
\begin{proposition}\label{BBM}
Let $p\in[1,\infty)$ and $u\in L^{p}(\mathbb{G}^{n}_{\alpha}).$ Then we have
\begin{align*}
&\frac{2}{p}\liminf_{t\rightarrow0^{+}}t^{-{p}/{2}}\int_{\mathbb{G}^{n}_{\alpha}}
e^{-t\mathcal{L}}(|u-u(g)|^{p})(g)dg\\
&\leq \liminf_{\beta\rightarrow1^{-}}(1-\beta)N_{p,p}^{\mathcal{L},\beta}(u)^{p}\\
&\leq \limsup_{\beta\rightarrow1^{-}}(1-\beta)N_{p,p}^{\mathcal{L},\beta}(u)^{p}\\
&\leq
\frac{2}{p}\limsup_{t\rightarrow0^{+}}t^{-{p}/{2}}\int_{\mathbb{G}^{n}_{\alpha}}
e^{-t\mathcal{L}}(|u-u(g)|^{p})(g)dg.
\end{align*}
In particular, if $\lim_{\beta\rightarrow1^{-}}(1-\beta)N_{p,p}^{\mathcal{L},\beta}(u)^{p}$
exists, then
\begin{align*}
&\frac{2}{p}\liminf_{t\rightarrow0^{+}}t^{-{p}/{2}}\int_{\mathbb{G}^{n}_{\alpha}}
e^{-t\mathcal{L}}(|u-u(g)|^{p})(g)dg\\
&\leq \lim_{\beta\rightarrow1^{-}}(1-\beta)N_{p,p}^{\mathcal{L},\beta}(u)^{p}\\
&\leq\frac{2}{p}\limsup_{t\rightarrow0^{+}}t^{-{p}/{2}}\int_{\mathbb{G}^{n}_{\alpha}}
e^{-t\mathcal{L}}(|u-u(g)|^{p})(g)dg.
\end{align*}
\end{proposition}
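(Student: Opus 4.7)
My plan is to set $F(t) := \int_{\mathbb{G}^{n}_{\alpha}} e^{-t\mathcal{L}}(|u-u(g)|^{p})(g)\,dg$, so that
\[
(1-\beta)\,N_{p,p}^{\mathcal{L},\beta}(u)^{p} = (1-\beta)\int_{0}^{\infty} t^{-\beta p/2-1} F(t)\,dt.
\]
The first observation is a crude uniform upper bound: by the convexity inequality $|a-b|^{p}\leq 2^{p-1}(|a|^{p}+|b|^{p})$, together with the stochastic completeness (\ref{SC}) and Proposition~\ref{G-Pro}(iv), one gets $F(t)\leq 2^{p}\|u\|_{L^{p}(\mathbb{G}^{n}_{\alpha})}^{p}$ for every $t>0$. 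This global bound is what will let the tail of the integral at $\infty$ disappear in the limit.

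For the upper inequality, set $L := \limsup_{t\to 0^{+}} t^{-p/2}F(t)$; we may assume $L<\infty$. Given $\varepsilon>0$ choose $\delta>0$ so that $F(t)\leq (L+\varepsilon)t^{p/2}$ for all $t\in(0,\delta]$. Splitting the integral at $\delta$, the near-zero piece is bounded by
\[
(1-\beta)(L+\varepsilon)\int_{0}^{\delta} t^{(1-\beta)p/2-1}\,dt = \frac{2(L+\varepsilon)}{p}\,\delta^{(1-\beta)p/2},
\]
which tends to $2(L+\varepsilon)/p$ as $\beta\to 1^{-}$, while the tail over $[\delta,\infty)$ is dominated by $(1-\beta)\cdot 2^{p}\|u\|_{L^{p}}^{p}\,\delta^{-\beta p/2}/(\beta p/2)$, which vanishes. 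Sending $\varepsilon\to 0$ yields the desired upper estimate. The lower inequality is handled symmetrically: set $\ell := \liminf_{t\to 0^{+}} t^{-p/2}F(t)$; for any $\varepsilon\in(0,\ell)$ (or any $M>0$ if $\ell=\infty$), choose $\delta>0$ with $F(t)\geq (\ell-\varepsilon)t^{p/2}$ on $(0,\delta]$, drop the integral over $[\delta,\infty)$ (nonnegative integrand), and compute
\[
(1-\beta)\,N_{p,p}^{\mathcal{L},\beta}(u)^{p} \geq (1-\beta)(\ell-\varepsilon)\int_{0}^{\delta} t^{(1-\beta)p/2-1}\,dt = \frac{2(\ell-\varepsilon)}{p}\,\delta^{(1-\beta)p/2},
\]
which converges to $2(\ell-\varepsilon)/p$ as $\beta\to 1^{-}$.

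The "in particular" statement then follows by sandwiching. There is no real obstacle here: the whole argument is a Karamata–Tauberian type cancellation in which the factor $(1-\beta)$ precisely balances the $1/[(1-\beta)p/2]$ produced by integrating $t^{(1-\beta)p/2-1}$ on $(0,\delta]$. The only points that require the Grushin structure at all are the uniform bound $F(t)\leq 2^{p}\|u\|_{L^{p}}^{p}$ (for which (\ref{SC}) and $L^{p}$-contractivity of $e^{-t\mathcal{L}}$ are used) and the fact that $F$ is well defined and finite as a function of $t$; everything else is a direct computation on the real half-line.
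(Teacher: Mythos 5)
Your proof is correct and follows essentially the same route as the paper: split the integral defining $N_{p,p}^{\mathcal{L},\beta}(u)^{p}$ at a small threshold, control the tail by the uniform bound $F(t)\leq 2^{p}\|u\|_{L^{p}(\mathbb{G}^{n}_{\alpha})}^{p}$ coming from (\ref{SC}) and Proposition \ref{G-Pro}(iv) so that the factor $(1-\beta)$ kills it, and let the near-zero piece produce the exact constant $2/p$ against the $\limsup$/$\liminf$ of $t^{-p/2}F(t)$. The only (immaterial) difference is that the paper phrases the near-zero bounds via $\sup_{\tau\in(0,\varepsilon)}$ and $\inf_{\tau\in(0,\varepsilon)}$ of $\tau^{-p/2}F(\tau)$ rather than via $L+\varepsilon$ and $\ell-\varepsilon$.
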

\begin{proof}
We first prove that
\begin{align}\label{belim}
\limsup_{\beta\rightarrow1^{-}}(1-\beta)N_{p,p}^{\mathcal{L},\beta}(u)^{p}\leq
\frac{2}{p}\limsup_{t\rightarrow0^{+}}t^{-{p}/{2}}\int_{\mathbb{G}^{n}_{\alpha}}
e^{-t\mathcal{L}}(|u-u(g)|^{p})(g)dg.
\end{align}
If $$\limsup_{t\rightarrow0^{+}}t^{-{p}/{2}}\int_{\mathbb{G}^{n}_{\alpha}}
e^{-t\mathcal{L}}(|u-u(g)|^{p})(g)dg=\infty,$$ then the desired conclusion (\ref{belim}) is
trivial. Thus we assume that $$\limsup_{t\rightarrow0^{+}}t^{-{p}/{2}}\int_{\mathbb{G}^{n}_{\alpha}}
e^{-t\mathcal{L}}(|u-u(g)|^{p})(g)dg<\infty,$$
then  there exists a positive constant $\varepsilon_{0}$ such that
$$\sup_{\tau\in(0,\varepsilon_{0})}\tau^{-{p}/{2}}\int_{\mathbb{G}^{n}_{\alpha}}
e^{-\tau\mathcal{L}}(|u-u(g)|^{p})(g)dg<\infty.$$
This implies
\begin{align}\label{belim1}
&\int_{0}^{\varepsilon}\frac{1}{t^{{\beta p}/{2}+1}}
\int_{\mathbb{G}^{n}_{\alpha}}e^{-t\mathcal{L}}(|u-u(g)|^{p})(g)dgdt\nonumber\\
&\leq\bigg(\sup_{\tau\in(0,\varepsilon)}\tau^{-{p}/{2}}\int_{\mathbb{G}^{n}_{\alpha}}
e^{-\tau\mathcal{L}}(|u-u(g)|^{p})(g)dg\bigg)\int_{0}^{\varepsilon}\frac{dt}{t^{{p}(\beta-1)/{2}+1}}\nonumber\\
&=\bigg(\sup_{\tau\in(0,\varepsilon)}\tau^{-{p}/{2}}\int_{\mathbb{G}^{n}_{\alpha}}
e^{-\tau\mathcal{L}}(|u-u(g)|^{p})(g)dg\bigg)\frac{2\varepsilon^{{p}(1-\beta)/{2}}}{p(1-\beta)}
\end{align}
for any $\varepsilon\in(0,\varepsilon_{0}).$ On the other hand, by
using (\ref{SC}) and Proposition \ref{G-Pro} (iv) together with the
elementary inequality $|a-b|^{p}\leq 2^{p-1}(a^{p}+b^{p})$, we have
\begin{align*}
\int_{\varepsilon}^{\infty}\frac{1}{t^{{\beta p}/{2}+1}}
\int_{\mathbb{G}^{n}_{\alpha}}e^{-t\mathcal{L}}(|u-u(g)|^{p})(g)dgdt
&\leq
2^{p}\|u\|^{p}_{L^{p}(\mathbb{G}^{n}_{\alpha})}\int_{\varepsilon}^{\infty}\frac{dt}{t^{{\beta
p}/{2}+1}}\\
& =\frac{2^{p+1}}{\beta p}\varepsilon^{-{\beta
p}/{2}}\|u\|^{p}_{L^{p}(\mathbb{G}^{n}_{\alpha})}.
\end{align*} Combining the
previous estimates with (\ref{belim1}) derives
\begin{align}\label{belim2}
N_{\beta,p}^{\mathcal{L}}(u)^{p}&=\int_{0}^{\varepsilon}\frac{1}{t^{{\beta p}/{2}+1}}
\int_{\mathbb{G}^{n}_{\alpha}}e^{-t\mathcal{L}}(|u-u(g)|^{p})(g)dgdt\\
&\ \ \ +\int_{\varepsilon}^{\infty}\frac{1}{t^{{\beta p}/{2}+1}}
\int_{\mathbb{G}^{n}_{\alpha}}e^{-t\mathcal{L}}(|u-u(g)|^{p})(g)dgdt\nonumber\\
&\leq\bigg(\sup_{\tau\in(0,\varepsilon)}\tau^{-{p}/{2}}\int_{\mathbb{G}^{n}_{\alpha}}
e^{-\tau\mathcal{L}}(|u-u(g)|^{p})(g)dg\bigg)\frac{2\varepsilon^{{p}(1-\beta)/{2}}}{p(1-\beta)}\nonumber\\
&\quad+
\frac{2^{p+1}}{\beta p}\varepsilon^{-{\beta p}/{2}}\|u\|^{p}_{L^{p}(\mathbb{G}^{n}_{\alpha})}.\nonumber
\end{align}
Multiplying by $(1-\beta)$ in (\ref{belim2}) and taking the $\limsup_{\beta\rightarrow1^{-}},$ we find for any $\varepsilon\in(0,\varepsilon_{0}),$
$$\limsup_{\beta\rightarrow1^{-}}(1-\beta)N_{p,p}^{\mathcal{L},\beta}(u)^{p}\leq\frac{2}{p}
\sup_{\tau\in(0,\varepsilon)}\tau^{-{p}/{2}}\int_{\mathbb{G}^{n}_{\alpha}}
e^{-\tau\mathcal{L}}(|u-u(g)|^{p})(g)dg,$$
which shows the validity of (\ref{belim}) by letting $\varepsilon\rightarrow0^{+}.$

Finally,  we prove that
\begin{align}\label{belim3}
\frac{2}{p}\liminf_{t\rightarrow0^{+}}t^{-{p}/{2}}\int_{\mathbb{G}^{n}_{\alpha}}
e^{-t\mathcal{L}}(|u-u(g)|^{p})(g)dg\leq \liminf_{\beta\rightarrow1^{-}}(1-\beta)N_{p,p}^{\mathcal{L},\beta}(u)^{p}.
\end{align}
For any $\beta\in(0,1)$ and any $\varepsilon>0,$ we have
\begin{align*}
(1-\beta)N_{p,p}^{\mathcal{L},\beta}(u)^{p}&\geq (1-\beta)\int_{0}^{\varepsilon}\frac{1}{t^{{\beta p}/{2}+1}}
\int_{\mathbb{G}^{n}_{\alpha}}
e^{-t\mathcal{L}}(|u-u(g)|^{p})(g)dgdt\\
&\geq(1-\beta)\bigg(\inf_{\tau\in(0,\varepsilon)}\tau^{-{p}/{2}}\int_{\mathbb{G}^{n}_{\alpha}}
e^{-\tau\mathcal{L}}(|u-u(g)|^{p})(g)dg\bigg)\int_{0}^{\varepsilon}t^{{p}(1-\beta)/{2}-1}dt\\
&=\frac{2}{p}\varepsilon^{{p}(1-\beta)/{2}}\inf_{\tau\in(0,\varepsilon)}\tau^{-{p}/{2}}\int_{\mathbb{G}^{n}_{\alpha}}
e^{-\tau\mathcal{L}}(|u-u(g)|^{p})(g)dg.
\end{align*}
Taking the $\liminf_{\beta\rightarrow1^{-}}$ in the latter
inequality  yields
$$\liminf_{\beta\rightarrow1^{-}}(1-\beta)N_{p,p}^{\mathcal{L},\beta}(u)^{p}\geq \frac{2}{p}
\inf_{\tau\in(0,\varepsilon)}\tau^{-{p}/{2}}\int_{\mathbb{G}^{n}_{\alpha}}
e^{-\tau\mathcal{L}}(|u-u(g)|^{p})(g)dg.$$ Letting
$\varepsilon\rightarrow0^{+},$ we then get (\ref{belim3}) and hence
this  completes the proof of this proposition.
\end{proof}

\section{Fractional perimeters on Grushin spaces}\label{sec-4}
The bounded variation and the perimeter measure on the ambient space
with different measures have been widely studied. See
\cite{A2002,ABC2020,ABCRST2020,ABCRST2021,F2003,GN1996,GT2020,M2003,Monti}
and the references therein. Inspired by \cite{ABC2020},
\cite{ABCRST2020}, \cite{GT2020} and \cite{M2003}, we introduce
fractional bounded variation functions and fractional perimeters on
Grushin spaces.
\begin{definition}\label{FBV}
Let $s\in(0,1/2).$ We say that $u\in L^{1}(\mathbb{G}^{n}_{\alpha})$
has bounded $s$-variation if there is a sequence of functions
$\{u_{k}\}_{k\in\mathbb{N}}\subset
\mathscr{S}(\mathbb{G}^{n}_{\alpha})$ such that $u_{k}\rightarrow u$
in $L^{1}(\mathbb{G}^{n}_{\alpha})$ and
$$\liminf_{k\rightarrow\infty}\|\mathcal{L}^{s}u_{k}\|_{L^{1}(\mathbb{G}^{n}_{\alpha})}<\infty.$$
We denote by
$\mathcal{BV}_{s}^{\mathcal{L}}(\mathbb{G}_{\alpha}^{n})$ the
bounded $s$-variation space consisting of  all functions with
bounded $s$-variation on $\mathbb{G}^{n}_{\alpha}$.
\end{definition}
\begin{remark}
For any $u\in\mathscr{S}(\mathbb{G}_{\alpha}^{n}),$ we deduce from Remark \ref{rem1} (i) that $\mathcal{L}^{s}u\in L^{1}(\mathbb{G}_{\alpha}^{n}).$
This ensures the validity of the above definition.
\end{remark}

For $u\in \mathcal{BV}_{s}^{\mathcal{L}}(\mathbb{G}_{\alpha}^{n}),$
the $s$-total variation of $u$ is defined by
$$|V_{s}^{\mathcal{L}}u|(\mathbb{G}^{n}_{\alpha})=\inf\Big\{\liminf_{k\rightarrow\infty}\|\mathcal{L}^{s}u_{k}\|_{L^{1}
(\mathbb{G}^{n}_{\alpha})}:\{u_{k}\}_{k\in\mathbb{N}}\subset \mathscr{S}(\mathbb{G}^{n}_{\alpha}),
u_{k}\rightarrow u\ \mbox{in}\ L^{1}(\mathbb{G}^{n}_{\alpha})\Big\}.$$

\begin{definition}\label{perimeter}
Let $E\subset \mathbb{G}^{n}_{\alpha}$ be a measurable set with finite measure. We will say that $E$ has a
finite $s$-perimeter if $\mathbf{1}_{E}\in\mathcal{BV}_{s}^{\mathcal{L}}(\mathbb{G}_{\alpha}^{n}).$
In that case, we will denote
\begin{align*}
&P_{s}^{\mathcal{L}}(E):=|V_{s}^{\mathcal{L}}\mathbf{1}_{E}|(\mathbb{G}^{n}_{\alpha})\\
&=\inf\Big\{\liminf_{k\rightarrow\infty}\|\mathcal{L}^{s}u_{k}\|_{L^{1}
(\mathbb{G}^{n}_{\alpha})}:\{u_{k}\}_{k\in\mathbb{N}}\subset \mathscr{S}(\mathbb{G}^{n}_{\alpha}),
u_{k}\rightarrow \mathbf{1}_{E}\ \mbox{in}\ L^{1}(\mathbb{G}^{n}_{\alpha})\Big\}.
\end{align*}
\end{definition}

As we have said in the introduction, when $\alpha=0,\mathcal{L}=-\Delta,$
 up to a renormalizing factor the semi-norm $N_{s,p}^{-\Delta}$  coincides
with the classical Aronszajn-Gagliardo-Slobedetzky semi-norm
(\ref{AGS}): in such framework it is nowadays a common practice to
call {\it nonlocal perimeter}, defined in (\ref{Ps}),  the
Aronszajn-Gagliardo-Slobedetzky semi-norm of the characteristic
function (see \ref{AGSP}), and we refer the reader to the
influential work \cite{CRS2010} where the structure of the critical
points of nonlocal perimeters was first analyzed.

Inspired by the above description, we introduce another fractional perimeters based on norm (\ref{Besov}) in this section.
\begin{definition}\label{perimeter1}
Let $s\in (0,1/2).$ We say that a measurable set $E\subset\mathbb{G}_{\alpha}^{n}$
has finite $s$-perimeter$^{*}$ if $\mathbf{1}_{E}\in \mathfrak{B}_{1,1}^{\mathcal{L},2s}(\mathbb{G}_{\alpha}^{n})$
and we define the $s$-perimeter$^{*}$ associated with $\mathcal{L}$ as
$$P_{s}^{\mathcal{L},*}(E):=N_{1,1}^{\mathcal{L},2s}(\mathbf{1}_{E}).$$
\end{definition}
By comparing the two fractional perimeters in Definitions \ref{perimeter} \& \ref{perimeter1},
we immediately have the following results.
\begin{proposition}\label{com}
Let $s\in(0,1/2).$ If $\mathbf{1}_{E}\in \mathfrak{B}_{1,1}^{\mathcal{L},2s}(\mathbb{G}_{\alpha}^{n})$,
then  $$P_{s}^{\mathcal{L}}(E)\leq \frac{s}{\Gamma(1-s)}P_{s}^{\mathcal{L},*}(E).$$
\end{proposition}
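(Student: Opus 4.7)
The plan is to use the density definition of $\mathfrak{B}_{1,1}^{\mathcal{L},2s}(\mathbb{G}_{\alpha}^{n})$ to produce a Schwartz approximating sequence for $\mathbf{1}_{E}$ and then compare $\|\mathcal{L}^{s}u_{k}\|_{L^{1}}$ with $N_{1,1}^{\mathcal{L},2s}(u_{k})$ via the Balakrishnan formula (\ref{-G}).

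First, since $\mathbf{1}_{E}\in\mathfrak{B}_{1,1}^{\mathcal{L},2s}(\mathbb{G}_{\alpha}^{n})$, Definition \ref{Besov2} gives a sequence $\{u_{k}\}_{k\in\mathbb{N}}\subset C_{c}^{\infty}(\mathbb{G}_{\alpha}^{n})\subset\mathscr{S}(\mathbb{G}_{\alpha}^{n})$ with $\|u_{k}-\mathbf{1}_{E}\|_{B_{1,1}^{\mathcal{L},2s}(\mathbb{G}^{n}_{\alpha})}\to 0$. In particular $u_{k}\to\mathbf{1}_{E}$ in $L^{1}(\mathbb{G}^{n}_{\alpha})$, and by the reverse triangle inequality for the semi-norm $N_{1,1}^{\mathcal{L},2s}(\cdot)$ one obtains $N_{1,1}^{\mathcal{L},2s}(u_{k})\to N_{1,1}^{\mathcal{L},2s}(\mathbf{1}_{E})=P_{s}^{\mathcal{L},*}(E)$, which provides the correct target quantity on the right-hand side.

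Next I would prove the pointwise-in-$g$ estimate
\begin{equation*}
|\mathcal{L}^{s}u_{k}(g)|\leq\frac{s}{\Gamma(1-s)}\int_{0}^{\infty}t^{-s-1}\,e^{-t\mathcal{L}}(|u_{k}-u_{k}(g)|)(g)\,dt.
\end{equation*}
To see this, start from (\ref{-G}), use the stochastic completeness identity (\ref{SC}) to write $e^{-t\mathcal{L}}u_{k}(g)-u_{k}(g)=\int_{\mathbb{G}^{n}_{\alpha}}K_{t}(g,g')\bigl(u_{k}(g')-u_{k}(g)\bigr)\,dg'$, bring the absolute value inside the integral in $g'$ exactly as in (\ref{ineq20}), and invoke Remark \ref{rem} to justify that the outer $t$-integral converges absolutely in $L^{1}(\mathbb{G}^{n}_{\alpha})$, so that Fubini may be applied without issue. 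Integrating the above pointwise bound in $g$ and swapping the order of integration yields
\begin{equation*}
\|\mathcal{L}^{s}u_{k}\|_{L^{1}(\mathbb{G}^{n}_{\alpha})}\leq\frac{s}{\Gamma(1-s)}\int_{0}^{\infty}\frac{1}{t^{s+1}}\int_{\mathbb{G}^{n}_{\alpha}}e^{-t\mathcal{L}}(|u_{k}-u_{k}(g)|)(g)\,dg\,dt=\frac{s}{\Gamma(1-s)}N_{1,1}^{\mathcal{L},2s}(u_{k}),
\end{equation*}
where I have used that the weight in (\ref{Besov}) with $p=q=1$ and $\beta=2s$ is precisely $t^{-s-1}$.

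Finally, I would take $\liminf_{k\to\infty}$ on both sides. On the right the limit exists and equals $\frac{s}{\Gamma(1-s)}P_{s}^{\mathcal{L},*}(E)$ by the first step, while the left-hand side together with the $L^{1}$-convergence $u_{k}\to\mathbf{1}_{E}$ is, by Definition \ref{perimeter}, admissible in the infimum defining $P_{s}^{\mathcal{L}}(E)$. This directly gives $P_{s}^{\mathcal{L}}(E)\leq\frac{s}{\Gamma(1-s)}P_{s}^{\mathcal{L},*}(E)$. The only mildly delicate point is the Fubini step in the pointwise bound; the estimates (\ref{First})--(\ref{Second}) from Remark \ref{rem} (applied to $u_{k}\in\mathscr{S}(\mathbb{G}^{n}_{\alpha})$) supply the absolute integrability needed there, so no serious obstacle arises.
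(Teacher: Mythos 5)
Your proof is correct, and it takes a genuinely different route from the paper's. The paper first establishes the \emph{exact} identity $\|\mathcal{L}^{s}\mathbf{1}_{E}\|_{L^{1}(\mathbb{G}^{n}_{\alpha})}=\frac{s}{\Gamma(1-s)}N_{1,1}^{\mathcal{L},2s}(\mathbf{1}_{E})$ for characteristic functions: this rests on the observation that $e^{-t\mathcal{L}}\mathbf{1}_{E}-\mathbf{1}_{E}$ has a fixed sign on $E$ and on $E^{c}$ (so the absolute value commutes with the $t$-integral in the Balakrishnan formula), together with the identity (\ref{fact}). From this the paper deduces $\mathbf{1}_{E}\in\mathcal{W}^{2s,1}(\mathbb{G}^{n}_{\alpha})$ and then invokes the density result Proposition \ref{W-de} to produce a Schwartz sequence converging to $\mathbf{1}_{E}$ in the $\mathcal{W}^{2s,1}$-norm, so that $\|\mathcal{L}^{s}u_{k}\|_{L^{1}}$ converges exactly to $\frac{s}{\Gamma(1-s)}P_{s}^{\mathcal{L},*}(E)$. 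You instead work with the $C_{c}^{\infty}$ sequence furnished directly by Definition \ref{Besov2} and need only the one-sided bound $\|\mathcal{L}^{s}u\|_{L^{1}}\le\frac{s}{\Gamma(1-s)}N_{1,1}^{\mathcal{L},2s}(u)$ for Schwartz $u$, which is just Minkowski inside the Balakrishnan integral followed by Tonelli on nonnegative integrands (the pointwise a.e.\ finiteness you need is indeed supplied by (\ref{First})--(\ref{Second})); you then pass to the limit using that $N_{1,1}^{\mathcal{L},2s}$ is a seminorm, so $N_{1,1}^{\mathcal{L},2s}(u_{k})\to N_{1,1}^{\mathcal{L},2s}(\mathbf{1}_{E})$. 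Your argument is leaner — it bypasses both the sign argument and Proposition \ref{W-de} — and note that Proposition \ref{desi} guarantees $N_{1,1}^{\mathcal{L},2s}(u_{k})<\infty$ for $s\in(0,1/2)$, so nothing degenerates. What the paper's heavier route buys is the exact identity (\ref{equ-sta})/(\ref{eq}), which is reused later in the coarea formula (Lemma \ref{co-area}) and in the equivalence assertion of Theorem \ref{main2}; your inequality alone would not suffice for those subsequent results.
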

\begin{proof}
We first prove that for any measurable $E\subset \mathbb{G}^{n}_{\alpha},$
\begin{align}\label{B-W-eq}
\mathbf{1}_{E}\in \mathcal{W}^{2s,1}(\mathbb{G}^{n}_{\alpha})\Longleftrightarrow\mathbf{1}_{E}\in B_{1,1}^{\mathcal{L},2s}(\mathbb{G}^{n}_{\alpha}).
\end{align}
In fact, the above statements can be directly obtained from the following equality:
\begin{align}\label{equ-sta}
\|\mathcal{L}^{s}\mathbf{1}_{E}\|_{L^{1}(\mathbb{G}^{n}_{\alpha})}
&=\frac{s}{\Gamma(1-s)}\int_{\mathbb{G}^{n}_{\alpha}}\bigg|\int_{0}^{\infty}\frac{1}{t^{1+s}}
\big(e^{-t\mathcal{L}}\mathbf{1}_{E}(g)-\mathbf{1}_{E}(g)\big)dt\bigg|dg\nonumber\\
&=\frac{s}{\Gamma(1-s)}\int_{0}^{\infty}\frac{1}{t^{1+s}}\|e^{-t\mathcal{L}}
\mathbf{1}_{E}-\mathbf{1}_{E}\|_{L^{1}(\mathbb{G}^{n}_{\alpha})}dt\nonumber\\
&=\frac{s}{\Gamma(1-s)}\int_{0}^{\infty}\frac{1}{t^{1+s}}\int_{\mathbb{G}^{n}_{\alpha}}
e^{-t\mathcal{L}}(|\mathbf{1}_{E}-\mathbf{1}_{E}(g)|)(g)dgdt\nonumber\\
&=\frac{s}{\Gamma(1-s)}N_{1,1}^{\mathcal{L},2s}(\mathbf{1}_{E}),
\end{align}
where we have used (\ref{-G}) and the equality:
\begin{align}\label{fact}
\|e^{-t\mathcal{L}}\mathbf{1}_{E}-\mathbf{1}_{E}\|_{L^{1}(\mathbb{G}^{n}_{\alpha})}
&=\int_{E}\big(1-e^{-t\mathcal{L}}\mathbf{1}_{E}(g)\big)dg+\int_{E^{c}}
e^{-t\mathcal{L}}\mathbf{1}_{E}(g)dg\nonumber\\
&=\int_{\mathbb{G}^{n}_{\alpha}}\mathbf{1}_{E}(g)e^{-t\mathcal{L}}(1-\mathbf{1}_{E})(g)dg
+\int_{\mathbb{G}^{n}_{\alpha}}\mathbf{1}_{E^{c}}(g)e^{-t\mathcal{L}}\mathbf{1}_{E}(g)dg\nonumber\\
&=\int_{\mathbb{G}^{n}_{\alpha}}\int_{\mathbb{G}^{n}_{\alpha}}K_{t}(g,g')\bigg(
\mathbf{1}_{E^{c}}(g)\mathbf{1}_{E}(g')+\mathbf{1}_{E}(g)
\mathbf{1}_{E^{c}}(g')\bigg)dg'dg\nonumber\\
&=\int_{\mathbb{G}^{n}_{\alpha}}\int_{\mathbb{G}^{n}_{\alpha}}K_{t}(g,g')|\mathbf{1}_{E}(g')-
\mathbf{1}_{E}(g)|dg'dg.
\end{align}
Note that $\mathfrak{B}_{1,1}^{\mathcal{L},2s}(\mathbb{G}_{\alpha}^{n})\subseteq B_{1,1}^{\mathcal{L},2s}(\mathbb{G}_{\alpha}^{n})$
(see Proposition \ref{desi}).
Thus, if $\mathbf{1}_{E}\in \mathfrak{B}_{1,1}^{\mathcal{L},2s}(\mathbb{G}_{\alpha}^{n}),$
we know from (\ref{equ-sta}) and Definition \ref{perimeter1} that
\begin{align}\label{eq}
\|\mathcal{L}^{s}\mathbf{1}_{E}\|_{L^{1}(\mathbb{G}^{n}_{\alpha})}
=\frac{s}{\Gamma(1-s)}P_{s}^{\mathcal{L},*}(E).
\end{align}
On the other hand,
if $\mathbf{1}_{E}\in \mathfrak{B}_{1,1}^{\mathcal{L},2s}(\mathbb{G}_{\alpha}^{n}),$
then we deduce from (\ref{B-W-eq}) and Proposition \ref{W-de} that there exists $\{u_{k}\}_{k\in\mathbb{N}}\subset\mathscr{S}(\mathbb{G}_{\alpha}^{n})$
such that
$$\|u_{k}-\mathbf{1}_{E}\|_{\mathcal{W}^{2s,1}(\mathbb{G}_{\alpha}^{n})}\rightarrow0,\ \mbox{as}\ k\rightarrow\infty,$$
which, together with (\ref{eq}), yields,
$$\lim_{k\rightarrow\infty}\|\mathcal{L}^{s}u_{k}\|_{L^{1}(\mathbb{G}^{n}_{\alpha})}
=\|\mathcal{L}^{s}\mathbf{1}_{E}\|_{L^{1}(\mathbb{G}^{n}_{\alpha})}
=\frac{s}{\Gamma(1-s)}
P_{s}^{\mathcal{L},*}(E).$$
Combining this with Definition \ref{perimeter} we reach the desired conclusion.
\end{proof}

Another important tool for our purposes is the validity of a coarea
formula which yields a further link between the semi-norm
$N_{1,1}^{\mathcal{L},2s}(\cdot)$ and the fractional perimeter.
\begin{lemma}\label{co-area}
Let $s\in(0,1/2).$ For any $u\in
\mathfrak{B}_{1,1}^{\mathcal{L},2s}(\mathbb{G}^{n}_{\alpha}),$
\begin{align}\label{coarea}
N_{1,1}^{\mathcal{L},2s}(u)\geq \frac{\Gamma(1-s)}{s}\int_{\mathbb{R}}P_{s}^{\mathcal{L}}(\{u>\sigma\})d\sigma.
\end{align}
For the fractional perimeter $P_{s}^{\mathcal{L},*}(\cdot)$ in
Definition \ref{perimeter1}, we also have
\begin{align}\label{coarea1}
N_{1,1}^{\mathcal{L},2s}(u)=\int_{\mathbb{R}}P_{s}^{\mathcal{L},*}(\{u>\sigma\})d\sigma.
\end{align}
\end{lemma}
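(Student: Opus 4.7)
The approach is to reduce both (\ref{coarea}) and (\ref{coarea1}) to the classical layer-cake decomposition of the modulus of a difference. Specifically, for any measurable $u$ one has the pointwise identity
\begin{equation*}
|u(g)-u(g')| = \int_{\mathbb{R}} \bigl|\mathbf{1}_{\{u>\sigma\}}(g) - \mathbf{1}_{\{u>\sigma\}}(g')\bigr|\, d\sigma,
\end{equation*}
obtained by splitting according to whether $u(g) > u(g')$ or $u(g) < u(g')$ and observing that in either case the integrand on the right-hand side has constant sign on the interval with endpoints $u(g), u(g')$ and vanishes off of it.

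First I would rewrite the semi-norm using the kernel representation (as in the computation leading to (\ref{fact})):
\begin{equation*}
N_{1,1}^{\mathcal{L},2s}(u) = \int_0^\infty \int_{\mathbb{G}^n_\alpha}\int_{\mathbb{G}^n_\alpha} K_t(g,g')|u(g')-u(g)|\, dg'\, dg\, \frac{dt}{t^{s+1}}.
\end{equation*}
Inserting the layer-cake identity above and applying Tonelli's theorem (the integrand being non-negative) to pull the $\sigma$-integral to the outside, I obtain
\begin{equation*}
N_{1,1}^{\mathcal{L},2s}(u) = \int_{\mathbb{R}} N_{1,1}^{\mathcal{L},2s}\bigl(\mathbf{1}_{\{u>\sigma\}}\bigr)\, d\sigma.
\end{equation*}
Once I know that $\mathbf{1}_{\{u>\sigma\}}\in \mathfrak{B}_{1,1}^{\mathcal{L},2s}(\mathbb{G}^n_\alpha)$ for a.e.~$\sigma$, Definition \ref{perimeter1} identifies the inner quantity with $P_s^{\mathcal{L},*}(\{u>\sigma\})$, yielding (\ref{coarea1}). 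The inequality (\ref{coarea}) then follows by applying Proposition \ref{com} level-set by level-set and integrating in $\sigma$, picking up the factor $s/\Gamma(1-s)$ on the correct side.

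The main technical obstacle is verifying that $\mathbf{1}_{\{u>\sigma\}}\in \mathfrak{B}_{1,1}^{\mathcal{L},2s}(\mathbb{G}^n_\alpha)$ for almost every $\sigma$, so that Definition \ref{perimeter1} and hence Proposition \ref{com} are applicable (in particular, $P_s^{\mathcal{L},*}(\{u>\sigma\})$ is finite). Since $u \in \mathfrak{B}_{1,1}^{\mathcal{L},2s}$, there is a sequence $\{u_k\}\subset C_c^\infty(\mathbb{G}^n_\alpha)$ with $\|u_k-u\|_{B_{1,1}^{\mathcal{L},2s}(\mathbb{G}^n_\alpha)}\to 0$; composing the $u_k$ with smooth monotone truncations $\varphi_\varepsilon$ approximating $\mathbf{1}_{(\sigma,\infty)}$ and then letting $\varepsilon\to 0$ and $k\to\infty$ through a diagonal argument, combined with Fatou's lemma applied to the already-established coarea identity for $u_k$, should yield the required $C_c^\infty$-approximants of $\mathbf{1}_{\{u>\sigma\}}$ for a.e.~$\sigma$ and simultaneously give $\int_{\mathbb{R}} P_s^{\mathcal{L},*}(\{u>\sigma\})\,d\sigma<\infty$. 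Apart from this density/measurability bookkeeping, the proof is essentially just layer-cake plus Tonelli.
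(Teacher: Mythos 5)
Your layer-cake identity plus Tonelli computation is exactly the paper's starting point, and your derivation of the equality (\ref{coarea1}) coincides with the paper's: once $N_{1,1}^{\mathcal{L},2s}(u)=\int_{\mathbb{R}}N_{1,1}^{\mathcal{L},2s}(\mathbf{1}_{\{u>\sigma\}})\,d\sigma$ is in hand, Definition \ref{perimeter1} identifies the integrand with $P_{s}^{\mathcal{L},*}(\{u>\sigma\})$. So far, so good.

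The gap is in your route to the inequality (\ref{coarea}). You propose to apply Proposition \ref{com} level set by level set, which requires $\mathbf{1}_{\{u>\sigma\}}\in \mathfrak{B}_{1,1}^{\mathcal{L},2s}(\mathbb{G}^{n}_{\alpha})$, i.e.\ membership in the \emph{closure of $C_{c}^{\infty}$} in the Besov norm. But Tonelli only gives you finiteness of the semi-norm for a.e.\ $\sigma$, i.e.\ $\mathbf{1}_{\{u>\sigma\}}\in B_{1,1}^{\mathcal{L},2s}(\mathbb{G}^{n}_{\alpha})$; passing from $B$ to $\mathfrak{B}$ is precisely the density question that the paper explicitly flags as \emph{open} on Grushin spaces (see the remark following Proposition \ref{desi}), because the metric is not translation invariant. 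Your proposed fix --- truncating the $C_c^\infty$ approximants of $u$ and diagonalizing --- is not worked out at the one point where it is hard: you would need $\varphi_{\varepsilon}\circ u_{k}\to \mathbf{1}_{\{u>\sigma\}}$ in the Besov \emph{norm}, and neither $L^{1}$-convergence nor convergence of the semi-norms $N_{1,1}^{\mathcal{L},2s}(\varphi_{\varepsilon}\circ u_{k})$ delivers $N_{1,1}^{\mathcal{L},2s}(\varphi_{\varepsilon}\circ u_{k}-\mathbf{1}_{\{u>\sigma\}})\to 0$. The paper avoids this entirely: from the exact identity (\ref{equ-sta}) (valid for any characteristic function), finiteness of $N_{1,1}^{\mathcal{L},2s}(\mathbf{1}_{\{u>\sigma\}})$ already yields $\mathbf{1}_{\{u>\sigma\}}\in\mathcal{W}^{2s,1}(\mathbb{G}^{n}_{\alpha})$ with $\|\mathcal{L}^{s}\mathbf{1}_{\{u>\sigma\}}\|_{L^{1}}=\tfrac{s}{\Gamma(1-s)}N_{1,1}^{\mathcal{L},2s}(\mathbf{1}_{\{u>\sigma\}})$, and then the \emph{known} density of $\mathscr{S}(\mathbb{G}^{n}_{\alpha})$ in $\mathcal{W}^{2s,1}$ (Proposition \ref{W-de}) produces one approximating sequence in $L^{1}$ with $\|\mathcal{L}^{s}u_{k}\|_{L^{1}}\to\|\mathcal{L}^{s}\mathbf{1}_{\{u>\sigma\}}\|_{L^{1}}$, which bounds the infimum $P_{s}^{\mathcal{L}}(\{u>\sigma\})$ from above by exactly the quantity you want, with no appeal to $\mathfrak{B}$-membership of the level sets. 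You should replace the detour through Proposition \ref{com} by this direct argument.
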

\begin{proof}
For any measurable function $u$ and for any $\sigma\in\mathbb{R},$
we denote
$$E_{\sigma}=\{g\in\mathbb{G}^{n}_{\alpha}:u(g)>\sigma\}.$$
It follows from (\ref{fact}) that
\begin{align*}
\|e^{-t\mathcal{L}}\mathbf{1}_{E_{\sigma}}-\mathbf{1}_{E_{\sigma}}\|_{L^{1}(\mathbb{G}^{n}_{\alpha})}
=\int_{\mathbb{G}^{n}_{\alpha}}\int_{\mathbb{G}^{n}_{\alpha}}K_{t}(g,g')|\mathbf{1}_{E_{\sigma}}(g')-
\mathbf{1}_{E_{\sigma}}(g)|dg'dg.
\end{align*}
Note that $$\int_{\mathbb{R}}|\mathbf{1}_{E_{\sigma}}(g')-
\mathbf{1}_{E_{\sigma}}(g)|d\sigma=|u(g')-u(g)|.$$ Via these facts
and Tonelli's theorem, we obtain
\begin{align*}
&\int_{\mathbb{R}}\int_{0}^{\infty}\frac{1}{t^{1+s}}\|e^{-t\mathcal{L}}\mathbf{1}_{E_{\sigma}}-
\mathbf{1}_{E_{\sigma}}\|_{L^{1}(\mathbb{G}^{n}_{\alpha})}dtd\sigma\\
&=\int_{0}^{\infty}\frac{1}{t^{1+s}}\int_{\mathbb{G}^{n}_{\alpha}}\int_{\mathbb{G}^{n}_{\alpha}}K_{t}(g,g')|u(g')-u(g)|dg'dgdt\\
&=\int_{0}^{\infty}\frac{1}{t^{1+s}}\int_{\mathbb{G}^{n}_{\alpha}}e^{-t\mathcal{L}}(|u-u(g)|)(g)dgdt\\
&=N_{1,1}^{\mathcal{L},2s}(u).
\end{align*}
If $u\in \mathfrak{B}_{1,1}^{\mathcal{L},2s}(\mathbb{G}^{n}_{\alpha}),$ then
for almost every $\sigma\in\mathbb{R},$
$$\int_{0}^{\infty}\frac{1}{t^{1+s}}\|e^{-t\mathcal{L}}\mathbf{1}_{E_{\sigma}}-
\mathbf{1}_{E_{\sigma}}\|_{L^{1}(\mathbb{G}^{n}_{\alpha})}dt<\infty.$$
Thus, for such $\sigma,$
we deduce from the proof of Lemma
\ref{co}, (\ref{B-W-eq}) and (\ref{equ-sta}) that
$$\mathbf{1}_{E_{\sigma}}\in
\mathcal{W}^{2s,1}(\mathbb{G}^{n}_{\alpha}),\ N_{1,1}^{\mathcal{L},2s}(u)=\frac{\Gamma(1-s)}{s}\int_{\mathbb{R}}\|\mathcal{L}^{s}
\mathbf{1}_{E_{\sigma}}\|_{L^{1}(\mathbb{G}^{n}_{\alpha})}d\sigma,$$
where we note that the fact $\mathfrak{B}_{1,1}^{\mathcal{L},2s}(\mathbb{G}^{n}_{\alpha})\subseteq B_{1,1}^{\mathcal{L},2s}(\mathbb{G}^{n}_{\alpha}).$

To reach the desired conclusion we only need to prove that
\begin{align}\label{Per1}
\|\mathcal{L}^{s}\mathbf{1}_{E_{\sigma}}\|_{L^{1}(\mathbb{G}^{n}_{\alpha})}\geq P_{s}^{\mathcal{L}}(E_{\sigma}).
\end{align}
Indeed, we know from Proposition \ref{W-de} that there exists $\{u_{k}\}_{k\in\mathbb{N}}\subset\mathscr{S}(\mathbb{G}_{\alpha}^{n})$
such that
$$\|u_{k}-\mathbf{1}_{E_{\sigma}}\|_{\mathcal{W}^{2s,1}(\mathbb{G}_{\alpha}^{n})}\rightarrow0\ \mbox{as}\ k\rightarrow\infty.$$
 Hence, we obtain from Definition \ref{perimeter} that
$$P_{s}^{\mathcal{L}}(E_{\sigma})\leq \lim_{k\rightarrow\infty}
\|\mathcal{L}^{s}u_{k}\|_{L^{1}(\mathbb{G}^{n}_{\alpha})}=\|\mathcal{L}^{s}\mathbf{1}_{E_{\sigma}}\|_{L^{1}(\mathbb{G}^{n}_{\alpha})}.$$
This completes the proof of (\ref{coarea}).

To prove the last statement (\ref{coarea1}), by using (\ref{eq}), we conclude that
$$N_{1,1}^{\mathcal{L},2s}(u)=\frac{\Gamma(1-s)}{s}\int_{\mathbb{R}}\|\mathcal{L}^{s}\mathbf{1}_{E_{\sigma}}
\|_{L^{1}(\mathbb{G}^{n}_{\alpha})}d\sigma=\int_{\mathbb{R}}P_{s}^{\mathcal{L},*}(\{u>\sigma\})d\sigma.$$
\end{proof}

In closing we mention that, as a special case of their celebrated works, in \cite[Corollary 5]{BBM},
 or in \cite{MS}, Bourgain, Brezis and Mironescu obtained a new characterisation
of BV functions and  of De Giorgi's perimeters, based on their two
sided bounds,
$$C_{1}P(E)\leq\liminf_{s\nearrow1/2}(1/2-s)P_{s}(E)\leq\limsup_{s\nearrow1/2}(1/2-s)P_{s}(E)\leq C_{2}P(E).$$

The next result provides   two sided bounds for the limiting case
$s=1/2$ similar to the Bourgain, Brezis and Mironescu's bounds.
\begin{proposition}\label{BBM1}
For any measurable set $E\subset \mathbb{G}_{\alpha}^{n},$ we have
$$\liminf_{s\nearrow1/2}(1-2s)P_{s}^{\mathcal{L},*}(E)\geq \liminf_{t\rightarrow0^{+}}\frac{2}{\sqrt{t}}
\|e^{-t\mathcal{L}}\mathbf{1}_{E}-\mathbf{1}_{E}\|_{L^{1}(\mathbb{G}_{\alpha}^{n})}.$$
Moreover, if $|E|<\infty,$ then we have
$$\limsup_{s\nearrow1/2}(1-2s)P_{s}^{\mathcal{L},*}(E)\leq \limsup_{t\rightarrow0^{+}}\frac{2}{\sqrt{t}}
\|e^{-t\mathcal{L}}\mathbf{1}_{E}-\mathbf{1}_{E}\|_{L^{1}(\mathbb{G}_{\alpha}^{n})}.$$
\end{proposition}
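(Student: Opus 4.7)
The plan is to specialize the Bourgain--Brezis--Mironescu strategy already carried out in Proposition \ref{BBM} to the indicator function $u = \mathbf{1}_E$, with $p = q = 1$ and $\beta = 2s$. By Definition \ref{perimeter1} together with the identity (\ref{fact}), we obtain the clean representation
\begin{equation*}
P_s^{\mathcal{L},*}(E) = N_{1,1}^{\mathcal{L},2s}(\mathbf{1}_E) = \int_0^\infty t^{-s-1}\phi(t)\,dt, \qquad \phi(t) := \|e^{-t\mathcal{L}}\mathbf{1}_E - \mathbf{1}_E\|_{L^1(\mathbb{G}^{n}_{\alpha})}.
\end{equation*}
Consequently, the proposition reduces to analysing the limit as $s \to 1/2^-$ of the weighted integral $(1-2s)\int_0^\infty t^{-s-1}\phi(t)\,dt$. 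The normalization $2/\sqrt{t}$ on the right-hand side arises from the elementary computation $(1-2s)\int_0^\varepsilon t^{-s-1/2}\,dt = 2\varepsilon^{1/2-s}$, which is the engine of the whole argument.

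For the lower bound, fix $\varepsilon > 0$ and restrict the integral to $t \in (0,\varepsilon)$. Writing $\phi(t) = t^{1/2}[t^{-1/2}\phi(t)]$ and pulling out $\inf_{\tau \in (0,\varepsilon)}\tau^{-1/2}\phi(\tau)$ yields
\begin{equation*}
(1-2s)P_s^{\mathcal{L},*}(E) \geq 2\varepsilon^{1/2-s}\inf_{\tau \in (0,\varepsilon)}\tau^{-1/2}\phi(\tau).
\end{equation*}
Since $\varepsilon^{1/2-s} \to 1$ as $s \to 1/2^-$, taking $\liminf_{s \to 1/2^-}$ and then letting $\varepsilon \to 0^+$ produces the required lower bound. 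No finiteness assumption on $|E|$ is needed for this step, because only the behavior of $\phi(t)$ near $t = 0$ is involved.

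For the upper bound, assume $|E| < \infty$, so that Proposition \ref{G-Pro}(iv) gives $\phi(t) \leq 2|E|$ for every $t > 0$. Splitting $\int_0^\infty = \int_0^\varepsilon + \int_\varepsilon^\infty$, the tail is dominated by $\frac{2|E|(1-2s)}{s}\varepsilon^{-s}$, which vanishes as $s \to 1/2^-$ for fixed $\varepsilon$ since $(1-2s)/s \to 0$. On $(0,\varepsilon)$ the bound $\phi(t) \leq t^{1/2}\sup_{\tau \in (0,\varepsilon)}\tau^{-1/2}\phi(\tau)$ together with the same computation used for the lower bound furnishes the estimate $2\varepsilon^{1/2-s}\sup_{\tau \in (0,\varepsilon)}\tau^{-1/2}\phi(\tau)$. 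Taking $\limsup_{s \to 1/2^-}$ and sending $\varepsilon \to 0^+$ concludes.

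I do not anticipate any serious obstacle: the decomposition into a small-$t$ piece carrying the perimeter information and a large-$t$ tail controlled by $|E|$ is the canonical Bourgain--Brezis--Mironescu pattern already executed in Proposition \ref{BBM}. The sole delicate point is the role of the hypothesis $|E| < \infty$, which is precisely what makes the tail estimate meaningful in the upper bound and cannot be dispensed with there, while the lower bound survives without it since it isolates the singular behavior near the origin.
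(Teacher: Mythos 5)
Your proof is correct and follows essentially the same route as the paper, which simply applies Proposition \ref{BBM} with $u=\mathbf{1}_E$, $p=1$, $\beta=2s$ and uses the identity $\int_{\mathbb{G}^n_\alpha}e^{-t\mathcal{L}}(|\mathbf{1}_E-\mathbf{1}_E(g)|)(g)\,dg=\|e^{-t\mathcal{L}}\mathbf{1}_E-\mathbf{1}_E\|_{L^1(\mathbb{G}^n_\alpha)}$. You merely inline the proof of Proposition \ref{BBM} in this special case (same split at $\varepsilon$, same tail bound via the $L^1$ contraction, same extraction of $\inf/\sup$ near the origin), and your remarks about where $|E|<\infty$ is and is not needed are accurate.
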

\begin{proof}
For any measurable set $E\subset \mathbb{G}_{\alpha}^{n}$ with finite $s$-perimeter$^{*}$, via choosing $u=\mathbf{1}_{E}$ in Proposition \ref{BBM}
and by using Definition \ref{perimeter1}, the desired result can be deduced from Proposition \ref{BBM}.
\end{proof}

The following theorem is readily derived from Theorem \ref{MS1}.
\begin{theorem}
Assume that $E$ has finite $s$-perimeter$^{*}$ for some $s\in(0,1/2),$ then
$$\lim_{s\rightarrow0^{+}}s P_{s}^{\mathcal{L},*}(E)=2|E|.$$
\end{theorem}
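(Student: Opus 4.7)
The plan is to deduce this statement as a direct consequence of Theorem \ref{MS1} applied to the indicator function $u = \mathbf{1}_E$ in the case $p=1$, modulo the trivial reparametrization $\beta = 2s$.

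First, I would unpack the hypothesis: saying that $E$ has finite $s_0$-perimeter$^*$ for some $s_0 \in (0,1/2)$ means, by Definition \ref{perimeter1}, precisely that $\mathbf{1}_E \in \mathfrak{B}_{1,1}^{\mathcal{L}, 2s_0}(\mathbb{G}^n_\alpha)$ with $2s_0 \in (0,1)$. This is exactly the hypothesis needed to feed into Theorem \ref{MS1} with the choices $p = q = 1$ and $\beta = 2s_0$.

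Next, I would apply Theorem \ref{MS1} to $u = \mathbf{1}_E$. Since $\|\mathbf{1}_E\|_{L^1(\mathbb{G}^n_\alpha)}^1 = |E|$, the conclusion of that theorem yields
\begin{equation*}
\lim_{\beta \to 0^+} \beta \, N_{1,1}^{\mathcal{L},\beta}(\mathbf{1}_E) \;=\; \frac{4}{1}\,|E| \;=\; 4|E|.
\end{equation*}

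Finally, I would invoke Definition \ref{perimeter1} once more, which identifies $P_s^{\mathcal{L},*}(E) = N_{1,1}^{\mathcal{L},2s}(\mathbf{1}_E)$ whenever the right-hand side is finite. Performing the change of variable $\beta = 2s$ (so that $\beta \to 0^+$ if and only if $s \to 0^+$) in the limit above gives $\lim_{s\to 0^+} 2s \, P_s^{\mathcal{L},*}(E) = 4|E|$, from which the desired identity $\lim_{s\to 0^+} s\,P_s^{\mathcal{L},*}(E) = 2|E|$ follows immediately. Since the statement is a one-line corollary of Theorem \ref{MS1}, there is essentially no obstacle; the only point worth being careful about is clearly distinguishing the fixed parameter $s_0$ in the hypothesis (used only to guarantee membership in a Besov space needed to start the limiting procedure) from the running parameter $s$ in the limit.
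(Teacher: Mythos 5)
Your proposal is correct and is exactly the route the paper takes: the paper states that this theorem "is readily derived from Theorem \ref{MS1}," and your argument simply spells out that derivation by taking $p=q=1$, $u=\mathbf{1}_E$, and reparametrizing $\beta=2s$ via Definition \ref{perimeter1}. Your remark distinguishing the fixed $s_0$ in the hypothesis from the running $s$ in the limit is a sensible clarification of a point the paper leaves implicit.
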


At the end of this section, we consider the semi-norm
$N_{p,q}^{\mathcal{L},\beta}(\cdot)$ with $p=1,q=\infty$ and
$\beta\in(0,1)$ (see (\ref{NP})). The definition of fractional
perimeters $P_{s,\infty}^{\mathcal{L}}$ is given as follows:
\begin{definition}\label{FP3}
Let $s\in (0,1/2).$ We say that a measurable set
$E\subset\mathbb{G}_{\alpha}^{n}$ has finite
$s$-perimeter$^{\infty}$ if $\mathbf{1}_{E}\in
B_{1,\infty}^{\mathcal{L},2s}(\mathbb{G}_{\alpha}^{n})$ and we
define the $s$-perimeter$^{\infty}$ associated to $\mathcal{L}$ as
$$P_{s,\infty}^{\mathcal{L}}(E):=N_{1,\infty}^{\mathcal{L},2s}(\mathbf{1}_{E}).$$
\end{definition}
For the case of the Dirichlet space satisfying the weak
Bakry-\'{E}mery condition,  the above fractional perimeter had been
investigated in \cite{ABCRST2020}. In Theorem \ref{main1}, we obtain
the isoperimetric inequality for the $s$-perimeter$^{\infty}$. It
should be noted that we   don't know if the Grushin space satisfies
the weak Bakry-\'{E}mery condition so far.

\subsection{Proofs of Theorems \ref{main1} and \ref{main2}}\label{sec-4.1}

In order to prove Theorem \ref{main1}, we need the following lemmas.
\begin{lemma}\label{non-incre}
Let $s\in(0,1)$ and $p\in[1,\infty).$
For any $u\in\mathscr{S}(\mathbb{G}^{n}_{\alpha}),$ the function
$t\mapsto\|\mathcal{L}^{s}e^{-t\mathcal{L}}u\|_{L^{p}(\mathbb{G}^{n}_{\alpha})}$
is non-increasing on $(0,\infty).$ Moreover,
$$\lim_{t\rightarrow0^{+}}\|\mathcal{L}^{s}e^{-t\mathcal{L}}u\|_{L^{p}(\mathbb{G}^{n}_{\alpha})}
=\sup_{t>0}\|\mathcal{L}^{s}e^{-t\mathcal{L}}u\|_{L^{p}(\mathbb{G}^{n}_{\alpha})}<\infty.$$
\end{lemma}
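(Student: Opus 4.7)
The plan is to combine Lemma~\ref{co} with the $L^p$-contractivity and the semigroup law established in Proposition~\ref{G-Pro}. Fix $u\in\mathscr{S}(\mathbb{G}^{n}_{\alpha})$ and let $0<t<t'$. By Lemma~\ref{co}, $\mathcal{L}^{s}$ commutes with every $e^{-\tau\mathcal{L}}$ applied to $u$, so
\begin{equation*}
\mathcal{L}^{s}e^{-t'\mathcal{L}}u \;=\; e^{-t'\mathcal{L}}\mathcal{L}^{s}u \;=\; e^{-(t'-t)\mathcal{L}}\bigl(e^{-t\mathcal{L}}\mathcal{L}^{s}u\bigr) \;=\; e^{-(t'-t)\mathcal{L}}\bigl(\mathcal{L}^{s}e^{-t\mathcal{L}}u\bigr),
\end{equation*}
where in the second equality I use the semigroup property (Proposition~\ref{G-Pro}\,(i)) and in the third Lemma~\ref{co} once more. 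The $L^{p}(\mathbb{G}^{n}_{\alpha})$-contractivity of Proposition~\ref{G-Pro}\,(iv) applied to $e^{-(t'-t)\mathcal{L}}$ then yields
\begin{equation*}
\|\mathcal{L}^{s}e^{-t'\mathcal{L}}u\|_{L^{p}(\mathbb{G}^{n}_{\alpha})} \;\leq\; \|\mathcal{L}^{s}e^{-t\mathcal{L}}u\|_{L^{p}(\mathbb{G}^{n}_{\alpha})},
\end{equation*}
which is the desired monotonicity.

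For the second assertion, the monotonicity of $t\mapsto \|\mathcal{L}^{s}e^{-t\mathcal{L}}u\|_{L^{p}(\mathbb{G}^{n}_{\alpha})}$ is automatic: the supremum over $t>0$ coincides with the right limit at $0^{+}$. To verify finiteness, recall from Remark~\ref{rem1}\,(i) that $\mathscr{S}(\mathbb{G}^{n}_{\alpha})\subseteq \mathcal{W}^{2s,p}(\mathbb{G}^{n}_{\alpha})$, hence $\mathcal{L}^{s}u\in L^{p}(\mathbb{G}^{n}_{\alpha})$. Applying Proposition~\ref{G-Pro}\,(iv) to $\mathcal{L}^{s}u$ and using Lemma~\ref{co} again,
\begin{equation*}
\sup_{t>0}\|\mathcal{L}^{s}e^{-t\mathcal{L}}u\|_{L^{p}(\mathbb{G}^{n}_{\alpha})} \;=\; \sup_{t>0}\|e^{-t\mathcal{L}}\mathcal{L}^{s}u\|_{L^{p}(\mathbb{G}^{n}_{\alpha})} \;\leq\; \|\mathcal{L}^{s}u\|_{L^{p}(\mathbb{G}^{n}_{\alpha})} \;<\; \infty.
\end{equation*}
Finally, strong continuity of the semigroup on $L^{p}(\mathbb{G}^{n}_{\alpha})$ (Proposition~\ref{G-Pro}\,(v)) gives $e^{-t\mathcal{L}}\mathcal{L}^{s}u\to \mathcal{L}^{s}u$ in $L^{p}$ as $t\to 0^{+}$, so the supremum equals $\|\mathcal{L}^{s}u\|_{L^{p}(\mathbb{G}^{n}_{\alpha})}$ and coincides with $\lim_{t\to 0^{+}}\|\mathcal{L}^{s}e^{-t\mathcal{L}}u\|_{L^{p}(\mathbb{G}^{n}_{\alpha})}$.

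There is no substantive obstacle here: the only subtlety is making sure the identity $\mathcal{L}^{s}e^{-t\mathcal{L}}u=e^{-t\mathcal{L}}\mathcal{L}^{s}u$ is invoked as an $L^{p}$-identity (valid thanks to Lemma~\ref{co}, since $u\in\mathscr{S}(\mathbb{G}^{n}_{\alpha})$ and therefore both sides lie in $\mathcal{W}^{2s,p}(\mathbb{G}^{n}_{\alpha})$), after which the argument reduces to the standard contractivity--semigroup mechanism.
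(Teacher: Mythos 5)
Your proposal is correct and follows essentially the same route as the paper: both arguments rest on the commutation identity of Lemma \ref{co}, the semigroup law of Proposition \ref{G-Pro}\,(i), and the $L^{p}$-contractivity of Proposition \ref{G-Pro}\,(iv) to obtain monotonicity and the uniform bound by $\|\mathcal{L}^{s}u\|_{L^{p}(\mathbb{G}^{n}_{\alpha})}$. Your extra step identifying the limit as $\|\mathcal{L}^{s}u\|_{L^{p}(\mathbb{G}^{n}_{\alpha})}$ via strong continuity is a harmless (and correct) refinement beyond what the paper records.
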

\begin{proof}
We set $t=\sigma+\tau,$ where $\sigma,\tau>0.$
It follows from Proposition \ref{G-Pro} (i) and Lemma \ref{co} that
$$\mathcal{L}^{s}e^{-t\mathcal{L}}u=e^{-\sigma\mathcal{L}}\mathcal{L}^{s}
e^{-\tau\mathcal{L}}u,$$
which, together with Proposition \ref{G-Pro} (iv) and Remark \ref{rem1} (i), implies
\begin{align*}
&\|\mathcal{L}^{s}e^{-t\mathcal{L}}u\|_{L^{p}(\mathbb{G}^{n}_{\alpha})}
=\|e^{-\sigma\mathcal{L}}\mathcal{L}^{s}
e^{-\tau\mathcal{L}}u\|_{L^{p}(\mathbb{G}^{n}_{\alpha})}\\
&\leq \|\mathcal{L}^{s}
e^{-\tau\mathcal{L}}u\|_{L^{p}(\mathbb{G}^{n}_{\alpha})}
=\|e^{-\tau\mathcal{L}}\mathcal{L}^{s}
u\|_{L^{p}(\mathbb{G}^{n}_{\alpha})}
\leq \|\mathcal{L}^{s}
u\|_{L^{p}(\mathbb{G}^{n}_{\alpha})}<\infty.
\end{align*}
This completes the proof.
\end{proof}

\begin{lemma}\label{Led}
Let $s\in(0,1).$ For any $u\in \mathscr{S}(\mathbb{G}^{n}_{\alpha})$
and for all $t>0,$
$$\|e^{-t\mathcal{L}}u-u\|_{L^{1}(\mathbb{G}^{n}_{\alpha})}\leq\frac{2t^{s}}{\Gamma(1+s)}
\sup_{\sigma>0}\|\mathcal{L}^{s}e^{-\sigma\mathcal{L}}u\|_{L^{1}(\mathbb{G}^{n}_{\alpha})}.$$
\end{lemma}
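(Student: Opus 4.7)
The plan is to leverage the inversion formula from Lemma \ref{FF}, $u = \mathscr{I}_{2s}(\mathcal{L}^{s} u)$, together with the integral representation (\ref{R}) of $\mathscr{I}_{2s}$, in order to rewrite $u - e^{-t\mathcal{L}} u$ as an integral whose integrand already carries a factor of $\mathcal{L}^{s}$. Since $u \in \mathscr{S}(\mathbb{G}^{n}_{\alpha}) \subset \mathcal{W}^{2s,1}(\mathbb{G}^{n}_{\alpha})$ by Remark \ref{rem1}(i), and $e^{-t\mathcal{L}}$ is a function of $\mathcal{L}$ (hence commutes with $\mathscr{I}_{2s}$) while $\mathcal{L}^{s} e^{-\sigma\mathcal{L}} = e^{-\sigma\mathcal{L}}\mathcal{L}^{s}$ by Lemma \ref{co}, setting $\phi_{\sigma} := \mathcal{L}^{s}e^{-\sigma\mathcal{L}} u$ one should obtain the key identity
\begin{equation*}
u - e^{-t\mathcal{L}} u = \frac{1}{\Gamma(s)}\int_{0}^{\infty} \tau^{s-1}(\phi_{\tau} - \phi_{\tau+t})\,d\tau.
\end{equation*}

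Next I would perform the substitution $\tau \mapsto \tau - t$ in the piece involving $\phi_{\tau+t}$ and recombine the two resulting expressions on a common domain, arriving at
\begin{equation*}
u - e^{-t\mathcal{L}}u = \frac{1}{\Gamma(s)}\bigg[\int_{0}^{t} \tau^{s-1}\phi_{\tau}\,d\tau + \int_{t}^{\infty}\big(\tau^{s-1} - (\tau-t)^{s-1}\big)\phi_{\tau}\,d\tau\bigg].
\end{equation*}
The key observation is that since $s \in (0,1)$, the factor $\tau^{s-1} - (\tau-t)^{s-1}$ is non-positive on $(t,\infty)$, so after taking $L^{1}$ norms and using the uniform bound $\|\phi_{\tau}\|_{L^{1}(\mathbb{G}^{n}_{\alpha})} \leq A := \sup_{\sigma>0}\|\mathcal{L}^{s} e^{-\sigma\mathcal{L}}u\|_{L^{1}(\mathbb{G}^{n}_{\alpha})}$, which is finite by Lemma \ref{non-incre}, I obtain
\begin{equation*}
\|u - e^{-t\mathcal{L}}u\|_{L^{1}(\mathbb{G}^{n}_{\alpha})} \leq \frac{A}{\Gamma(s)}\bigg[\int_{0}^{t}\tau^{s-1}\,d\tau + \int_{t}^{\infty}\big((\tau-t)^{s-1} - \tau^{s-1}\big)d\tau\bigg].
\end{equation*}

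A direct computation then shows that each of the two scalar integrals equals $t^{s}/s$: the first is immediate, and for the second, computing the antiderivative and letting the upper limit go to infinity gives $\lim_{R\to\infty} s^{-1}\big((R-t)^{s} - R^{s} + t^{s}\big) = t^{s}/s$, since $(R-t)^{s} - R^{s} \to 0$ as $R \to \infty$ (here one uses $s < 1$). Combining the two contributions and invoking $s\Gamma(s) = \Gamma(1+s)$ produces exactly the constant $2t^{s}/\Gamma(1+s)$ claimed in the statement.

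The principal technical obstacle will be the rigorous justification of the various interchanges of order of integration and operator: commuting $e^{-t\mathcal{L}}$ with $\mathscr{I}_{2s}$ inside the Riesz-potential integral, pulling $\mathcal{L}^{s}$ through $\mathscr{I}_{2s}$ via Fubini, and making sense of $\int_{0}^{\infty}\tau^{s-1}\phi_{\tau}\,d\tau$ as a Bochner integral in $L^{1}(\mathbb{G}^{n}_{\alpha})$. The integrand $\tau^{s-1}\phi_{\tau}$ need not be $L^{1}$-integrable in $\tau$ near infinity, since $\|\phi_{\tau}\|_{L^{1}(\mathbb{G}^{n}_{\alpha})}$ has no a priori decay, but the difference $\tau^{s-1}(\phi_{\tau} - \phi_{\tau+t})$ does, owing to the asymptotic cancellation $\tau^{s-1} - (\tau-t)^{s-1} = O(\tau^{s-2})$ as $\tau \to \infty$, combined with the uniform bound from Lemma \ref{non-incre}; near $\tau = 0$ integrability is immediate since $s > 0$. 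This forces one to work with the split form of the identity throughout, rather than treating the two pieces separately.
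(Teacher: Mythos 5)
Your proposal is correct and follows essentially the same route as the paper: both use the inversion $u=\mathscr{I}_{2s}(\mathcal{L}^{s}u)$ together with the representation (\ref{R}) to express $e^{-t\mathcal{L}}u-u$ as $\int_{0}^{\infty}h_{s}(\sigma,t)\,\mathcal{L}^{s}e^{-\sigma\mathcal{L}}u\,d\sigma$ with $h_{s}(\sigma,t)=\Gamma(s)^{-1}\big(\mathbf{1}_{(t,\infty)}(\sigma)(\sigma-t)^{s-1}-\sigma^{s-1}\big)$, compute $\int_{0}^{\infty}|h_{s}(\sigma,t)|\,d\sigma=2t^{s}/\Gamma(1+s)$, and conclude by Minkowski's inequality and the uniform bound from Lemma \ref{non-incre}. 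Your remark that the identity must be handled in its combined (split) form because $\tau^{s-1}\phi_{\tau}$ alone need not be Bochner integrable near infinity is a valid point that the paper's write-up passes over silently.
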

\begin{proof}
For any $u\in \mathscr{S}(\mathbb{G}^{n}_{\alpha})$ and for all
$t>0,$ we first notice  that
\begin{align}\label{P}
e^{-t\mathcal{L}}u(g)-u(g)=\int_{0}^{\infty}h_{s}(\sigma,t)\mathcal{L}^{s}e^{-\sigma\mathcal{L}}
u(g)d\sigma,
\end{align}
where the function $h_{s}(\sigma,t)$ has the property
\begin{align}\label{P1}
\int_{0}^{\infty}|h_{s}(\sigma,t)|d\sigma=\frac{2t^{s}}{\Gamma(1+s)}.
\end{align}
In fact,   using Lemma \ref{FF} and (\ref{R}), we obtain
\begin{align*}
e^{-t\mathcal{L}}u-u&=e^{-t\mathcal{L}}(\mathscr{I}_{2s}(\mathcal{L}^{s}u))-\mathscr{I}_{2s}(\mathcal{L}^{s}u)\\
&=\frac{\mathcal{L}^{s}}{\Gamma(s)}\int_{0}^{\infty}\sigma^{s-1}(e^{-(t+\sigma)\mathcal{L}}u
-e^{-\sigma\mathcal{L}}u)d\sigma\\
&=\frac{\mathcal{L}^{s}}{\Gamma(s)}\bigg(\int_{t}^{\infty}(\sigma-t)^{s-1}
e^{-\sigma\mathcal{L}}ud\sigma-\int_{0}^{\infty}\sigma^{s-1}e^{-\sigma\mathcal{L}}ud\sigma\bigg)\\
&=\int_{0}^{\infty}h_{s}(\sigma,t)\mathcal{L}^{s}e^{-\sigma\mathcal{L}}
u(g)d\sigma,
\end{align*}
where
$$h_{s}(\sigma,t)=\frac{1}{\Gamma(s)}\big(\mathbf{1}_{(t,\infty)}(\sigma)(\sigma-t)^{s-1}
-\mathbf{1}_{(0,\infty)}(\sigma)\sigma^{s-1}\big).$$ Note that the
validity of the above chain of identities is guaranteed by Proposition \ref{G-Pro} (v) and
Proposition \ref{lem4}. On the other hand,
via a simple calculation, we can obtain
$$\int_{0}^{\infty}|h_{s}(\sigma,t)|d\sigma=\frac{1}{\Gamma(s)}\int_{0}^{\infty}
\big|\mathbf{1}_{(t,\infty)}(\sigma)(\sigma-t)^{s-1}
-\mathbf{1}_{(0,\infty)}(\sigma)\sigma^{s-1}\big|d\sigma=\frac{2t^{s}}{s\Gamma(s)},$$
which proves (\ref{P1}), and therefore (\ref{P}).
Finally, we deduce from Minkowski's inequality that
\begin{align*}
\|e^{-t\mathcal{L}}u-u\|_{L^{1}(\mathbb{G}^{n}_{\alpha})}&\leq \int_{0}^{\infty}
|h_{s}(\sigma,t)|\|\mathcal{L}^{s}e^{-\sigma\mathcal{L}}u\|_{L^{1}(\mathbb{G}^{n}_{\alpha})}d\sigma\\
&\leq \sup_{\sigma>0}\|\mathcal{L}^{s}e^{-\sigma\mathcal{L}}u\|_{L^{1}(\mathbb{G}^{n}_{\alpha})}
\int_{0}^{\infty}|h_{s}(\sigma,t)|d\sigma\\
&=\frac{2t^{s}}{\Gamma(1+s)}
\sup_{\sigma>0}\|\mathcal{L}^{s}e^{-\sigma\mathcal{L}}u\|_{L^{1}(\mathbb{G}^{n}_{\alpha})}.
\end{align*}
This completes the proof of the lemma.
\end{proof}

We are now in a position to prove Theorem \ref{main1}.
\begin{proof}[Proof of Theorem \ref{main1}]
We divide the proof into three cases.

{\bf Case 1:} When $\mathscr{P}^{\mathcal{L}}_{s}=P^{\mathcal{L}}_{s}.$
If $P_{s}^{\mathcal{L}}(E)=\infty,$ then the desired conclusion is
obviously valid. Thus we assume that $$P_{s}^{\mathcal{L}}(E)<\infty.$$
By the hypothesis, there exists a sequence $\{u_{k}\}_{k\in\mathbb{N}}\subset \mathscr{S}(\mathbb{G}^{n}_{\alpha})$
such that
$$\|u_{k}-\mathbf{1}_{E}\|_{L^{1}(\mathbb{G}^{n}_{\alpha})}\rightarrow 0\ \ as\ \ k\rightarrow\infty,$$
which, together with Proposition \ref{G-Pro} (iv), derives $$\|e^{-t\mathcal{L}}u_{k}-e^{-t\mathcal{L}}
\mathbf{1}_{E}\|_{L^{1}(\mathbb{G}^{n}_{\alpha})}\rightarrow0.$$
Further, we can also obtain
$$\|e^{-t\mathcal{L}}u_{k}-u_{k}\|_{L^{1}(\mathbb{G}^{n}_{\alpha})}
\rightarrow\|e^{-t\mathcal{L}}\mathbf{1}_{E}-
\mathbf{1}_{E}\|_{L^{1}(\mathbb{G}^{n}_{\alpha})}\ \
\mbox{\textrm{as}}\ \ k\rightarrow\infty.$$ Combining Lemma
\ref{Led} with Lemma \ref{non-incre}, for all $k\in\mathbb{N}$,  we
conclude that
\begin{align*}
\|e^{-t\mathcal{L}}u_{k}-u_{k}\|_{L^{1}(\mathbb{G}^{n}_{\alpha})}
&\lesssim \frac{t^{s}}{\Gamma(1+s)}
\sup_{\sigma>0}\|\mathcal{L}^{s}e^{-\sigma\mathcal{L}}u_{k}\|_{L^{1}(\mathbb{G}^{n}_{\alpha})}\\
&=\frac{t^{s}}{\Gamma(1+s)}\lim_{\sigma\rightarrow0^{+}}\|\mathcal{L}^{s}e^{-\sigma\mathcal{L}}u_{k}\|_{L^{1}(\mathbb{G}^{n}_{\alpha})}.
\end{align*}
From Remark \ref{rem1} (i) and Proposition \ref{G-Pro} (v), we
conclue that
$$\|e^{-\sigma\mathcal{L}}\mathcal{L}^{s}u_{k}-\mathcal{L}^{s}u_{k}\|_{L^{1}(\mathbb{G}^{n}_{\alpha})}
\rightarrow0 \ \ as \ \ \sigma\rightarrow0^{+}.$$  Therefore,
\begin{align*}
\|e^{-t\mathcal{L}}u_{k}-u_{k}\|_{L^{1}(\mathbb{G}^{n}_{\alpha})}
&\lesssim \frac{t^{s}}{\Gamma(1+s)}\lim_{\sigma\rightarrow0^{+}}\|\mathcal{L}^{s}e^{-\sigma\mathcal{L}}u_{k}\|_{L^{1}(\mathbb{G}^{n}_{\alpha})}\\
&=\frac{t^{s}}{\Gamma(1+s)}\lim_{\sigma\rightarrow0^{+}}\|e^{-\sigma\mathcal{L}}\mathcal{L}^{s}u_{k}\|_{L^{1}(\mathbb{G}^{n}_{\alpha})}\\
&=\frac{t^{s}}{\Gamma(1+s)}\|\mathcal{L}^{s}u_{k}\|_{L^{1}(\mathbb{G}^{n}_{\alpha})},
\end{align*}
where we have used Lemma \ref{co} in the first equality.
 Taking the $\liminf_{k\rightarrow\infty}$ in the above
inequality, we have
$$\|e^{-t\mathcal{L}}\mathbf{1}_{E}-\mathbf{1}_{E}\|_{L^{1}(\mathbb{G}^{n}_{\alpha})}
\lesssim
\frac{t^{s}}{\Gamma(1+s)}\liminf_{k\rightarrow\infty}\|\mathcal{L}^{s}u_{k}\|_{L^{1}
(\mathbb{G}^{n}_{\alpha})},$$ which, together with Definition \ref{perimeter}, implies,
\begin{align}\label{equa1}
\|e^{-t\mathcal{L}}\mathbf{1}_{E}-\mathbf{1}_{E}\|_{L^{1}(\mathbb{G}^{n}_{\alpha})}
\lesssim \frac{t^{s}}{\Gamma(1+s)}P_{s}^{\mathcal{L}}(E).
\end{align}
On the other hand, by using the symmetry of $K_{t}(\cdot,\cdot)$
and (\ref{SC}), we have
\begin{align*}
\|e^{-t\mathcal{L}}\mathbf{1}_{E}-\mathbf{1}_{E}\|_{L^{1}(\mathbb{G}^{n}_{\alpha})}
&=\int_{E}\big(1-e^{-t\mathcal{L}}\mathbf{1}_{E}(g)\big)dg+\int_{E^{c}}e^{-t\mathcal{L}}\mathbf{1}_{E}(g)dg\\
&=\int_{E}\big(1-e^{-t\mathcal{L}}\mathbf{1}_{E}(g)\big)dg+\int_{E}e^{-t\mathcal{L}}\mathbf{1}_{E^{c}}(g)dg\\
&=2\Big(|E|-\int_{E}e^{-t\mathcal{L}}\mathbf{1}_{E}(g)dg\Big).
\end{align*}
It follows from (\ref{B}) and Lemma \ref{upper} that
$$\int_{E}e^{-t\mathcal{L}}\mathbf{1}_{E}(g)dg=\int_{E}\int_{E}K_{t}(g,g')dgdg'
\lesssim t^{-Q/2}|E|^{2},$$
which leads to
\begin{align}\label{equa2}
\|e^{-t\mathcal{L}}\mathbf{1}_{E}-\mathbf{1}_{E}\|_{L^{1}(\mathbb{G}^{n}_{\alpha})}
\gtrsim|E|-t^{-Q/2}|E|^{2}.
\end{align}
Combining (\ref{equa1}) with (\ref{equa2}),  we conclude that, for
all $t>0$,
$$|E|\lesssim\frac{t^{s}}{\Gamma(1+s)}P_{s}^{\mathcal{L}}(E)+t^{-Q/2}|E|^{2}.$$
If we minimize the right-hand side of the above inequality by
choosing $t$ satisfying
$$t=\Big(\frac{Q|E|^{2}\Gamma(1+s)}{2sP_{s}^{\mathcal{L}}(E)}\Big)^{2/(Q+2s)},$$
then the desired conclusion (\ref{Per}) can be proved for some
positive constant $C(Q,s)$ depending exclusively on $Q$ and $s.$

{\bf Case 2:} When $\mathscr{P}^{\mathcal{L}}_{s}=P^{\mathcal{L},*}_{s}.$
The desired conclusion (\ref{Per}) can be directly obtained from Proposition \ref{com} and {\bf Case 1}.

{\bf Case 3:} When $\mathscr{P}^{\mathcal{L}}_{s}=P^{\mathcal{L}}_{s,\infty}.$
From Definition \ref{Besov} (ii) and (\ref{ineq20}), we deduce that
$$\|e^{-t\mathcal{L}}\textbf{1}_{E}-\textbf{1}_{E}\|_{L^{1}(\mathbb{G}^{n}_{\alpha})}\leq t^{s}N_{1,\infty}^{\mathcal{L},2s}(\textbf{1}_{E})=t^{s}P_{s,\infty}^{\mathcal{L}}(E),$$
which, together with (\ref{equa2}), implies
$$|E|\lesssim t^{s}P_{s,\infty}^{\mathcal{L}}(E)+t^{-Q/2}|E|^{2}.$$
Minimising the function in the above inequality with respect to $t>0$, we easily
infer the desired conclusion (\ref{Per}) for some constant $C(Q,s)$ depending exclusively on
$Q$ and $s.$
\end{proof}

As an application of Theorem \ref{main1}, we can directly derive
Theorem \ref{main2} by exploiting a coarea formula (\ref{co-area})
and Theorem \ref{main1}. We also need the following auxiliary lemma.
\begin{lemma}\cite[p.50, Lemma 1.86]{MZ1997}\label{inequality}
Let $0<p\leq q\leq\infty$ and $\gamma\in\mathbb{R}.$ If $U\geq0$ is a non-increasing function
on $[0,\infty),$ then there exists a positive constant $C$ depending on $p,q$ and $\gamma$ such that
$$\bigg(\int_{0}^{\infty}\big(t^{\gamma}U(t)\big)^{q}\frac{dt}{t}\bigg)^{1/q}
\leq C\bigg(\int_{0}^{\infty}\big(t^{\gamma}U(t)\big)^{p}\frac{dt}{t}\bigg)^{1/p}.$$
\end{lemma}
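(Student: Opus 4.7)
My plan is to reduce everything to a pointwise upper bound on $t^{\gamma}U(t)$ coming from monotonicity, and then bootstrap from the $L^{p}$-norm to the $L^{q}$-norm using $q \geq p$. This is essentially the classical Hardy-type trick behind the embedding $L^{p,p} \hookrightarrow L^{p,q}$ for Lorentz spaces.

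First I would exploit monotonicity: since $U$ is non-increasing, for $s \in (0,t)$ one has $U(s) \geq U(t)$. Writing
\begin{equation*}
A := \Big(\int_{0}^{\infty}(s^{\gamma}U(s))^{p}\,\frac{ds}{s}\Big)^{1/p}
\end{equation*}
for the right-hand side of the desired inequality (the lemma is vacuous if $A = \infty$; note that for a non-trivial non-increasing $U$, finiteness of $A$ forces $\gamma > 0$, since the weight $s^{\gamma p - 1}$ must be integrable near the origin), this yields
\begin{equation*}
A^{p} \;\geq\; \int_{0}^{t} s^{\gamma p - 1} U(s)^{p}\,ds \;\geq\; U(t)^{p}\int_{0}^{t} s^{\gamma p - 1}\,ds \;=\; \frac{t^{\gamma p}U(t)^{p}}{\gamma p}.
\end{equation*}
Hence $t^{\gamma}U(t) \leq (\gamma p)^{1/p} A$ for every $t > 0$, which is already the $q = \infty$ case of the claim (the norm on the left then being $\sup_{t} t^{\gamma} U(t)$).

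For $q \in [p,\infty)$ I would then factor $(t^{\gamma}U(t))^{q} = (t^{\gamma}U(t))^{q-p}\,(t^{\gamma}U(t))^{p}$, bound the first factor pointwise by $((\gamma p)^{1/p}A)^{q-p}$ via the previous step, and integrate the second against $dt/t$ to get $A^{p}$:
\begin{equation*}
\int_{0}^{\infty}(t^{\gamma}U(t))^{q}\,\frac{dt}{t} \;\leq\; (\gamma p)^{(q-p)/p}\,A^{q-p}\cdot A^{p} \;=\; (\gamma p)^{(q-p)/p}\,A^{q}.
\end{equation*}
Taking $q$-th roots gives the conclusion with $C = (\gamma p)^{(q-p)/(pq)}$, depending only on $p, q, \gamma$. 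No step is genuinely hard; the one delicate point is the implicit sign condition on $\gamma$, which is inherent to the finiteness of $A$ for non-trivial monotone $U$ and is automatically satisfied in the paper's intended applications, where $\gamma$ comes from a positive fractional regularity index.
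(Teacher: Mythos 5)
Your proof is correct. The paper does not prove this lemma at all --- it simply cites Mal\'y--Ziemer --- and your argument is precisely the standard one behind that reference: monotonicity of $U$ gives the pointwise bound $t^{\gamma}U(t)\leq(\gamma p)^{1/p}A$ (which is the $q=\infty$ case), and the factorization $(t^{\gamma}U(t))^{q}=(t^{\gamma}U(t))^{q-p}(t^{\gamma}U(t))^{p}$ bootstraps this to any $q\geq p$. Your side remark about the sign of $\gamma$ is also handled correctly: for $\gamma\leq 0$ and $U\not\equiv 0$ the right-hand side is infinite and the statement is vacuous, so the restriction to $\gamma>0$ in the quantitative part loses nothing.
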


\begin{proof}[Proof of Theorem \ref{main2}]
We first prove that (\ref{BEG}) holds when $\mathscr{P}^{\mathcal{L}}_{s}=P^{\mathcal{L}}_{s}.$
Assume that $u\in
\mathfrak{B}_{1,1}^{\mathcal{L},2s}(\mathbb{G}_{\alpha}^{n}).$ For
$\sigma\geq0,$ let
$$E_{\sigma}=\{g\in \mathbb{G}_{\alpha}^{n}:|u(g)|>\sigma\},$$ and let $U(\sigma)=|E_{\sigma}|.$
Thus, $U$ is non-increasing on $[0,\infty).$
In Lemma \ref{inequality}, we set
$$p=(Q-2s)/Q, \ q=1\ \ \mbox{and}\ \ \gamma=Q/(Q-2s).$$
Clearly, we have $$0<p=(Q-2s)/Q<1=q.$$
Therefore, there exists a positive constant $C$ depending on $Q$ and $s$ such that
$$\int_{0}^{\infty}\sigma^{{2s}/{(Q-2s)}}U(\sigma)d\sigma \leq C\bigg(\int_{0}^{\infty}U(\sigma)^{{(Q-2s)}/{Q}}d\sigma\bigg)^{Q/(Q-2s)}.$$
Using the above inequality and Theorem \ref{main1}, we deduce that
\begin{align*}
\|u\|_{L^{\frac{Q}{Q-2s}}(\mathbb{G}_{\alpha}^{n})}&=\bigg(\int_{\mathbb{G}_{\alpha}^{n}}
|u(g)|^{{Q}/{(Q-2s)}}dg\bigg)^{(Q-2s)/Q}\\
&=\bigg(\frac{Q}{Q-2s}\int_{0}^{\infty}\sigma^{{2s}/{(Q-2s)}}U(\sigma)d\sigma\bigg)^{(Q-2s)/Q}\\
&\leq \int_{0}^{\infty}U(\sigma)^{{(Q-2s)}/{Q}}d\sigma=\int_{0}^{\infty}|E_{\sigma}|^{{(Q-2s)}/{Q}}d\sigma\\
&\leq C(Q,s)^{-1}\int_{0}^{\infty}P_{s}^{\mathcal{L}}(E_{\sigma}).
\end{align*}
Lemma \ref{co-area} implies that
$$\int_{0}^{\infty}P_{s}^{\mathcal{L}}(E_{\sigma})\leq \frac{s}{\Gamma(1-s)}N_{1,1}^{\mathcal{L},2s}(|u|)
\leq \frac{s}{\Gamma(1-s)}N_{1,1}^{\mathcal{L},2s}(u),$$
which completes the proof of (\ref{BEG}).

To prove the remaining part of Theorem \ref{main2}.
Assume that $\mathscr{P}^{\mathcal{L}}_{s}=P^{\mathcal{L},*}_{s}.$
Firstly, let $u=\mathbf{1}_{E}$ in (\ref{BEG1}). It follows
from Definition \ref{perimeter1} that (\ref{Per}) holds.

Conversely, for any $0\leq
u\in C_{c}^{\infty}(\mathbb{G}_{\alpha}^{n}),$ by the coarea formula
(\ref{coarea1})
and (\ref{Per}), we have
$$N_{1,1}^{\mathcal{L},2s}(u)\gtrsim\int_{0}^{\infty}P^{\mathcal{L},*}_{s}(E_{\sigma})
d\sigma\gtrsim\int_{0}^{\infty}|E_{\sigma}|^{(Q-2s)/Q}d\sigma,$$
where $E_{\sigma}=\{g\in\mathbb{G}_{\alpha}^{n}:u(g)>\sigma\}.$ For any
$\sigma\in\mathbb{R},$ let
$$u_{\sigma}=\min\{\sigma,u\},\psi(\sigma)=\Big(\int_{\mathbb{G}_{\alpha}^{n}}\big(u_{\sigma}(g)\big)^{Q/(Q-2s)}dg\Big)^{(Q-2s)/Q}.$$
Obviously,
$$\lim_{\sigma\rightarrow\infty}\psi(\sigma)=\Big(\int_{\mathbb{G}_{\alpha}^{n}}|u(g)|^{Q/(Q-2s)}dg\Big)^{(Q-2s)/Q}.$$
We can check that $\psi$ is nondecreasing on $(0,\infty)$ and for
$t>0,$
$$0\leq\psi(\sigma+t)-\psi(\sigma)\leq\Big(\int_{\mathbb{G}_{\alpha}^{n}}|u_{\sigma+t}(g)-u_{\sigma}(g)|^{Q/(Q-2s)}dg\Big)^{(Q-2s)/Q}\leq t|E_{\sigma}|^{(Q-2s)/Q}.$$ Then $\psi$ is locally Lipschitz and $$\psi'(\sigma)\leq|E_{\sigma}|^{(Q-2s)/Q}\ \ \mbox{for a.e.} \ \ \sigma>0.$$
Therefore,
$$\Big(\int_{\mathbb{G}_{\alpha}^{n}}|u(g)|^{Q/(Q-2s)}dg\Big)^{(Q-2s)/Q}
=\int_{0}^{\infty}\psi'(\sigma)d\sigma\leq\int_{0}^{\infty}|E_{\sigma}|^{(Q-2s)/Q}d\sigma\lesssim N_{1,1}^{\mathcal{L},2s}(u).$$
Based on the above arguments, we deduce that
(\ref{Per})$\Leftrightarrow$(\ref{BEG1}) when $\mathscr{P}^{\mathcal{L}}_{s}=P^{\mathcal{L},*}_{s}.$
\end{proof}

\subsection{Proofs of Theorem \ref{Sobolev} and Corollary \ref{propo2.6}}\label{sec-4.2}
In this section, we first introduce the radial maximal
function related to the semigroup $\{e^{-t\mathcal{L}^{s}}\}_{t>0}$ when $s=1/2,$ and then use the
boundedness of this maximal function (see Lemma \ref{stro-weak}) and
a very important pointwise estimation (\ref{Frac-ineq}) to obtain
Hardy-Littlewood-Sobolev embeddings (see Proposition \ref{HLS}).
Finally, via Lemma \ref{FF}, we complete the proof of Theorem
\ref{Sobolev}.

For $u\in \mathscr{S}(\mathbb{G}^{n}_{\alpha}),$ the
radial maximal function related to the semigroup $\{e^{-t\sqrt{\mathcal{L}}}\}_{t>0}$ is defined by
\begin{align}\label{maximal}
\mathcal{M}u(g)=\sup_{t>0}|e^{-t\sqrt{\mathcal{L}}}u(g)|,\ \ g\in\mathbb{G}^{n}_{\alpha}.
\end{align}
Moreover, we denote $$u^{*}(g)=\sup_{z>0}\bigg|\frac{1}{z}\int_{0}^{z}e^{-s\mathcal{L}}u(g)ds\bigg|.$$

In the following results, we establish strong and weak estimates for
the radial maximal operator.
\begin{lemma}\label{stro-weak}
Let $p\in[1,\infty].$ The following statements hold:
\begin{itemize}
  \item [(i)] If $p=1,$ then for $u\in L^{1}(\mathbb{G}^{n}_{\alpha})$ and for any $\lambda>0,$
              $$|\{g\in\mathbb{G}^{n}_{\alpha}:\mathcal{M}u(g)>\lambda\}|\lesssim\lambda^{-1}\|u\|_{L^{1}(\mathbb{G}^{n}_{\alpha})}.$$
  \item [(ii)] If $p\in(1,\infty],$ then for any $u\in L^{p}(\mathbb{G}^{n}_{\alpha}),$
             $$\|\mathcal{M}u\|_{L^{p}(\mathbb{G}^{n}_{\alpha})}\lesssim\|u\|_{L^{p}(\mathbb{G}^{n}_{\alpha})}.$$
\end{itemize}
\end{lemma}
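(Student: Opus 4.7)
The plan is to reduce the bounds for the radial maximal operator $\mathcal{M}$ to the boundedness of the centered Hardy--Littlewood maximal function
$$Mu(g):=\sup_{r>0}\frac{1}{|B(g,r)|}\int_{B(g,r)}|u(g')|\,dg'$$
on the doubling metric measure space $(\mathbb{G}^{n}_{\alpha},d_{\alpha},dg)$. The case $p=\infty$ is immediate from Proposition \ref{Poi}(ii), so the work is to establish the weak $(1,1)$ bound in (i) and the strong $(p,p)$ bound in (ii) for finite $p$.

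The key step is to prove the pointwise domination $\mathcal{M}u(g)\lesssim Mu(g)$ for every $g\in\mathbb{G}^{n}_{\alpha}$. For this, apply Lemma \ref{wang-1} with $s=1/2$ to get the Poisson-type pointwise bound
$$K^{1/2}_{t}(g,g')\lesssim \frac{1}{|B(g,t+d_{\alpha}(g,g'))|}\cdot\frac{t}{t+d_{\alpha}(g,g')},$$
then split the integral defining $e^{-t\sqrt{\mathcal{L}}}u(g)$ into the local ball $\{d_{\alpha}(g,g')<t\}$ and the dyadic annuli $A_{j}:=\{2^{j}t\le d_{\alpha}(g,g')<2^{j+1}t\}$ for $j\ge 0$. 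The local piece is bounded directly by $Mu(g)$, and on $A_{j}$ the kernel is of size $2^{-j}|B(g,2^{j}t)|^{-1}$, so by the doubling estimate (\ref{Re}) the contribution of $A_{j}$ is $\lesssim 2^{-j}Mu(g)$. Summing the geometric series in $j$ yields $\mathcal{M}u(g)\lesssim Mu(g)$ uniformly in $t>0$.

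With this pointwise domination in hand, the lemma follows from the classical Hardy--Littlewood maximal inequality on the doubling metric measure space $\mathbb{G}^{n}_{\alpha}$; this is available because (\ref{Re}) supplies the doubling property needed for a standard Vitali covering argument, which yields the weak $(1,1)$ bound in (i) and (by Marcinkiewicz interpolation with the trivial $L^{\infty}$ bound) the strong $(p,p)$ bound in (ii) for $p\in(1,\infty)$.

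The main obstacle is the annular decomposition step, where one must carefully exploit the anisotropic doubling estimate (\ref{Re}) rather than a simple homogeneity of ball volumes (since $|B(g,r)|$ depends on the base point $g$ via $|x|$, cf.\ (\ref{B})). Everything else is standard once the domination $\mathcal{M}u\lesssim Mu$ is established.
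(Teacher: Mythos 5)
Your proof is correct, but it takes a genuinely different route from the paper's. The paper does not use the kernel estimates at all: it introduces the ergodic averages $u^{*}(g)=\sup_{z>0}\bigl|z^{-1}\int_{0}^{z}e^{-\sigma\mathcal{L}}u(g)\,d\sigma\bigr|$, invokes the Hopf--Dunford--Schwartz maximal ergodic theorem (which applies because $\{e^{-t\mathcal{L}}\}_{t>0}$ is a strongly continuous contraction semigroup on every $L^{p}$) to get the weak $(1,1)$ bound for $u^{*}$, and then proves the pointwise domination $\mathcal{M}u\lesssim u^{*}$ by writing the subordination formula for $e^{-t\sqrt{\mathcal{L}}}$ with the explicit density $\eta^{1/2}_{t}$ and integrating by parts in $z$ against $zf(z)=\int_{0}^{z}e^{-\sigma\mathcal{L}}u\,d\sigma$. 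By contrast, you dominate $\mathcal{M}u$ by the centered Hardy--Littlewood maximal function $Mu$ using the Poisson-type upper bound of Lemma \ref{wang-1} at $s=1/2$, a local-ball-plus-dyadic-annuli decomposition, and the doubling property (\ref{Re})/(\ref{Q}), and then appeal to the Hardy--Littlewood maximal theorem on a doubling metric measure space. Your annulus computation is sound: on $\{2^{j}t\le d_{\alpha}(g,g')<2^{j+1}t\}$ the kernel is $\lesssim 2^{-j}|B(g,2^{j}t)|^{-1}$ and doubling absorbs the ratio $|B(g,2^{j+1}t)|/|B(g,2^{j}t)|$, so the series in $j$ converges geometrically. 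What each approach buys: the paper's argument is softer and would survive in settings where two-sided heat kernel bounds or doubling are unavailable, since it uses only the abstract contraction/positivity structure of the semigroup; your argument leans on the geometric information already established in the paper (Lemma \ref{wang-1} and volume doubling) but yields the stronger and independently useful pointwise estimate $\mathcal{M}u\lesssim Mu$, from which the weak $(1,1)$ and strong $(p,p)$ bounds follow by entirely classical covering and interpolation arguments.
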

\begin{proof}
(i): Observe that the semigroup $\{e^{-t\mathcal{L}}\}_{t>0}$
is a contraction
semigroup on $L^{p}(\mathbb{G}^{n}_{\alpha}), p\in[1,\infty],$ strongly
continuous when $p\in[1,\infty)$ (see Proposition \ref{G-Pro} (iv) \& (v)).
Hence, we use \cite[Lemma 6]{D1956}
to deduce that
\begin{align}\label{claim}
|\{g\in\mathbb{G}^{n}_{\alpha}:u^{*}(g)>\lambda\}|\lesssim\frac{1}{\lambda}
\int_{\{g\in\mathbb{G}^{n}_{\alpha}:|u(g)|>\lambda/2\}}|u(g)|dg\lesssim\lambda^{-1}\|u\|_{L^{1}(\mathbb{G}^{n}_{\alpha})}
\end{align}
for any $\lambda>0.$

In what follows we claim that for any $g\in\mathbb{G}^{n}_{\alpha},$
\begin{align}\label{claim1}
\mathcal{M}u(g)\lesssim u^{*}(g).
\end{align}
Taking the claim for granted, from it and (\ref{claim}) we obtain
$$|\{g\in\mathbb{G}^{n}_{\alpha}:\mathcal{M}u(g)>\lambda\}|\lesssim
|\{g\in\mathbb{G}^{n}_{\alpha}:u^{*}(g)>\lambda\}|\lesssim\lambda^{-1}\|u\|_{L^{1}(\mathbb{G}^{n}_{\alpha})},$$
which implies the desired conclusion.

We are thus left with proving (\ref{claim1}).
Let $$\rho(t,z):=\frac{tz^{-3/2}}{\sqrt{4\pi}}e^{-{t^{2}}/{(4z)}},\ f(z):=\frac{1}{z}\int_{0}^{z}
e^{-s\mathcal{L}}u(g)ds.$$
We can write (\ref{possion}) (applied with $s=1/2$) as
$$e^{-t\sqrt{\mathcal{L}}}u(g)=\int_{0}^{\infty}\rho(t,z)\frac{d}{dz}(zf(z))dz.$$
From Proposition \ref{G-Pro} (iv), it is easy to see
$$|f(z)|\leq \|u\|_{L^{\infty}(\mathbb{G}^{n}_{\alpha})}.$$
Note that $z\rho(t,z)\rightarrow0$ as $z\rightarrow\infty.$
Therefore, we have
$$|e^{-t\sqrt{\mathcal{L}}}u(g)|=\bigg|\int_{0}^{\infty}z\frac{\partial\rho}{\partial z}(t,z)
f(z)dz\bigg|\leq u^{*}(g)\int_{0}^{\infty}z\bigg|\frac{\partial
\rho}{\partial z}(t,z)\bigg|dz.$$ Via a simple verification, it can
be deduced that
$$z\frac{\partial\rho}{\partial z}(t,z)=\frac{t^{2}}{z}\rho(t,z)-\frac{3}{2}\rho(t,z)$$ and
$$\int_{0}^{\infty}\rho(t,z)dz=1,\ \int_{0}^{\infty}\frac{t^{2}}{z}\rho(t,z)dz=2.$$
Furthermore, we conclude that
$$\int_{0}^{\infty}z\big|\frac{\partial \rho}{\partial z}(t,z)\big|dz\leq \int_{0}^{\infty}\frac{t^{2}}{z}\rho(t,z)dz+
\frac{3}{2}\int_{0}^{\infty}\rho(t,z)dz=\frac{7}{2},$$ which proves
$$|e^{-t\sqrt{\mathcal{L}}}u(g)|\leq\frac{7}{2}u^{*}(g),$$ and
therefore,  (\ref{claim1}) is valid.

(ii)  By using Proposition \ref{Poi} (ii) with $s=1/2$, we have
$$e^{-t\sqrt{\mathcal{L}}}:L^{\infty}(\mathbb{G}^{n}_{\alpha})\rightarrow L^{\infty}(\mathbb{G}^{n}_{\alpha})\ \mbox{with} \ \|e^{-t\sqrt{\mathcal{L}}}\|_{L^{\infty}(\mathbb{G}^{n}_{\alpha})\rightarrow L^{\infty}(\mathbb{G}^{n}_{\alpha})}\le1.$$
This implies that
$$\mathcal{M}:L^{\infty}(\mathbb{G}^{n}_{\alpha})\rightarrow L^{\infty}(\mathbb{G}^{n}_{\alpha})\ \mbox{with} \ \|\mathcal{M}\|_{L^{\infty}(\mathbb{G}^{n}_{\alpha})\rightarrow L^{\infty}(\mathbb{G}^{n}_{\alpha})}\le1.$$
The proof is complete  by combining (i) of Lemma \ref{stro-weak}
with the Marcinckiewicz  interpolation  theorem.
\end{proof}

According to (\ref{possion}) and Remark \ref{1/2}, we can derive the potential operator based on the semigroup $\{e^{-t\sqrt{\mathcal{L}}}\}_{t>0}$,
which is consistent with the definition in (\ref{R}). More precisely,
for any $u\in \mathscr{S}(\mathbb{G}^{n}_{\alpha})$ we have
\begin{align}\label{Frac}
\mathscr{I}_{\tilde{\alpha}}u(g)=\frac{1}{\Gamma(\tilde{\alpha})}
\int_{0}^{\infty}t^{\tilde{\alpha}-1}e^{-t\sqrt{\mathcal{L}}}u(g)dt.
\end{align}
Indeed, we deduce from (\ref{possion}) and Remark \ref{1/2} that
\begin{align*}
\int_{0}^{\infty}t^{\tilde{\alpha}-1}e^{-t\sqrt{\mathcal{L}}}u(g)dt
&=\frac{1}{\sqrt{4\pi}}\int_{0}^{\infty}t^{\tilde{\alpha}-1}\int_{0}^{\infty}
\frac{t}{z^{3/2}}e^{-{t^{2}}/{(4z)}}e^{-z\mathcal{L}}u(g)dzdt\\
&=\frac{1}{\sqrt{4\pi}}\int_{0}^{\infty}\frac{1}{z^{3/2}}\bigg(\int_{0}^{\infty}
t^{\tilde{\alpha}+1}e^{-{t^{2}}/{(4z)}}\frac{dt}{t}\bigg)e^{-z\mathcal{L}}u(g)dz\\
&=\frac{2^{\tilde{\alpha}-1}\Gamma({(\tilde{\alpha}+1)}/{2})}{\sqrt{\pi}}
\int_{0}^{\infty}z^{\tilde{\alpha}/2-1}e^{-z\mathcal{L}}u(g)dz\\
&=\frac{2^{\tilde{\alpha}-1}\Gamma({(\tilde{\alpha}+1})/{2})\Gamma({\tilde{\alpha}}/{2})}{\sqrt{\pi}}\mathscr{I}_{\tilde{\alpha}}u(g)\\
&=\Gamma(\tilde{\alpha})\mathscr{I}_{\tilde{\alpha}}u(g),
\end{align*}
where  we have used the fact
$$2^{2x-1}\Gamma(x)\Gamma(x+1/2)=\sqrt{\pi}\Gamma(2x) $$ in the last
equality.

Via Lemma \ref{stro-weak}, we also establish the following Hardy-Littlewood-Sobolev
 inequality for potential operators $\mathscr{I}_{\tilde{\alpha}}$,
which is the key step in the proof of Theorem \ref{Sobolev}.
\begin{proposition}\label{HLS}
Let $\tilde{\alpha}\in(0,Q),p\in[1,Q/\tilde{\alpha})$ and $1/p-1/q=\tilde{\alpha}/Q.$
Then the following statements hold:
\begin{itemize}
  \item [(i)] If $p=1,$ then there exists a positive constant $C,$ depending on $Q$ and $\tilde{\alpha},$ such that for any $u\in L^{1}(\mathbb{G}^{n}_{\alpha}),$
  $$\sup_{\lambda>0}\lambda|\{g\in\mathbb{G}^{n}_{\alpha}:|\mathscr{I}_{\tilde{\alpha}}u(g)|>\lambda\}|^{{(Q-\tilde{\alpha})}/{Q}}
  \leq C\|u\|_{L^{1}(\mathbb{G}^{n}_{\alpha})}.$$
  \item [(ii)] If $p\in(1,Q/\tilde{\alpha})$ then there exists a positive constant $C,$ depending on $Q,p$ and $\tilde{\alpha},$ such that for any $u\in L^{p}(\mathbb{G}^{n}_{\alpha}),$
  $$\|\mathscr{I}_{\tilde{\alpha}}u\|_{L^{q}(\mathbb{G}^{n}_{\alpha})}\leq C\|u\|_{L^{p}(\mathbb{G}^{n}_{\alpha})}.$$
\end{itemize}
\end{proposition}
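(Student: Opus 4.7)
The plan is to establish a Hedberg-type pointwise inequality relating $\mathscr{I}_{\tilde{\alpha}}u$ to the radial maximal function $\mathcal{M}u$ introduced in (\ref{maximal}), and then harvest the mapping properties of $\mathcal{M}$ recorded in Lemma \ref{stro-weak}. Starting from the representation (\ref{Frac}), I would split the defining integral at a free parameter $T>0$:
$$\Gamma(\tilde{\alpha})\mathscr{I}_{\tilde{\alpha}}u(g)=\int_{0}^{T}t^{\tilde{\alpha}-1}e^{-t\sqrt{\mathcal{L}}}u(g)\,dt+\int_{T}^{\infty}t^{\tilde{\alpha}-1}e^{-t\sqrt{\mathcal{L}}}u(g)\,dt.$$
On the first piece, the trivial bound $|e^{-t\sqrt{\mathcal{L}}}u(g)|\le\mathcal{M}u(g)$ gives a control of size $\tilde{\alpha}^{-1}T^{\tilde{\alpha}}\mathcal{M}u(g)$. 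On the second piece, the Poisson ultracontractivity from Proposition \ref{Poi}(iii) with $s=1/2$ furnishes $|e^{-t\sqrt{\mathcal{L}}}u(g)|\lesssim t^{-Q/p}\|u\|_{L^{p}(\mathbb{G}^{n}_{\alpha})}$, and the hypothesis $p<Q/\tilde{\alpha}$ is exactly what makes the integral $\int_{T}^{\infty}t^{\tilde{\alpha}-1-Q/p}dt$ converge, yielding a bound of size $T^{\tilde{\alpha}-Q/p}\|u\|_{L^{p}(\mathbb{G}^{n}_{\alpha})}/(Q/p-\tilde{\alpha})$.

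Next I would optimize in $T$ by balancing the two contributions, i.e.\ choosing $T^{Q/p}=\|u\|_{L^{p}(\mathbb{G}^{n}_{\alpha})}/\mathcal{M}u(g)$. A short algebraic check, using the Sobolev relation $1/p-1/q=\tilde{\alpha}/Q$ to identify the exponents $p\tilde{\alpha}/Q=1-p/q$, produces the Hedberg-type pointwise inequality
\begin{equation}\label{Frac-ineq}
|\mathscr{I}_{\tilde{\alpha}}u(g)|\lesssim \|u\|_{L^{p}(\mathbb{G}^{n}_{\alpha})}^{1-p/q}\,\mathcal{M}u(g)^{p/q}.
\end{equation}
This pointwise inequality is the heart of the proposition and converts the Riesz-potential problem into a maximal-function problem.

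From (\ref{Frac-ineq}) the two mapping properties are immediate. For (ii) with $p\in(1,Q/\tilde{\alpha})$, raising (\ref{Frac-ineq}) to the $q$-th power, integrating over $\mathbb{G}^{n}_{\alpha}$, and using the strong $(p,p)$ bound for $\mathcal{M}$ from Lemma \ref{stro-weak}(ii) yields
$$\|\mathscr{I}_{\tilde{\alpha}}u\|_{L^{q}(\mathbb{G}^{n}_{\alpha})}^{q}\lesssim \|u\|_{L^{p}(\mathbb{G}^{n}_{\alpha})}^{q-p}\|\mathcal{M}u\|_{L^{p}(\mathbb{G}^{n}_{\alpha})}^{p}\lesssim \|u\|_{L^{p}(\mathbb{G}^{n}_{\alpha})}^{q},$$
which is exactly the desired strong-type inequality. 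For (i) with $p=1$ and $q=Q/(Q-\tilde{\alpha})$, (\ref{Frac-ineq}) shows that $\{g\in\mathbb{G}^{n}_{\alpha}:|\mathscr{I}_{\tilde{\alpha}}u(g)|>\lambda\}\subseteq \{g\in\mathbb{G}^{n}_{\alpha}:\mathcal{M}u(g)>c\lambda^{q}/\|u\|_{L^{1}(\mathbb{G}^{n}_{\alpha})}^{q-1}\}$, and the weak $(1,1)$ bound of Lemma \ref{stro-weak}(i) then gives a measure estimate of order $\|u\|_{L^{1}(\mathbb{G}^{n}_{\alpha})}^{q}/\lambda^{q}$; taking the $1/q=(Q-\tilde{\alpha})/Q$-th power delivers the claimed weak-type bound.

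The only delicate point is bookkeeping: one must verify that the tail estimate from Proposition \ref{Poi}(iii) with $s=1/2$ is indeed applicable at a.e.\ $g$ for $u\in L^{p}$ (it is, since $e^{-t\sqrt{\mathcal{L}}}u$ is defined a.e. and the bound holds uniformly in $g$), and that the exponents in the optimization collapse to the Sobolev exponent $q$. The main structural obstacle would be if the maximal function $\mathcal{M}$ failed to satisfy strong and weak bounds, but Lemma \ref{stro-weak} already supplies these, so the proof reduces essentially to the classical Hedberg argument transcribed into the Grushin semigroup framework.
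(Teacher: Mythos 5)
Your proposal is correct and follows essentially the same route as the paper: the identical split of the integral in (\ref{Frac}) at a free parameter, the trivial maximal-function bound on the near part, Proposition \ref{Poi}(iii) with $s=1/2$ on the tail, and the Hedberg optimization leading to the pointwise inequality (\ref{Frac-ineq}), after which Lemma \ref{stro-weak} finishes both cases. The only cosmetic difference is in part (i), where the paper picks the parameter so that the tail term equals $\lambda$ and uses the resulting superlevel-set inclusion directly, whereas you optimize first and then read off the same inclusion from the product-form inequality; both yield the same weak-type bound.
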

\begin{proof}
We first prove that for any $u\in\mathscr{S}(\mathbb{G}^{n}_{\alpha})$ and $\varepsilon>0,$ there exist positive constants
$C(\tilde{\alpha})$ and $C(\tilde{\alpha},p,Q)$ such that
\begin{align}\label{Frac-ineq}
|\mathscr{I}_{\tilde{\alpha}}u(g)|\leq C(\tilde{\alpha})\mathcal{M}u(g)\varepsilon^{\tilde{\alpha}}+
C(\tilde{\alpha},p,Q)\|u\|_{L^{p}(\mathbb{G}^{n}_{\alpha})}
\varepsilon^{\tilde{\alpha}-{Q}/{p}},
\end{align}
where $\mathcal{M}$ is as in (\ref{maximal}) and $p\in[1,Q/\tilde{\alpha}).$

Indeed, for any $\varepsilon>0,$ we deduce from (\ref{Frac}) that
\begin{align*}
|\mathscr{I}_{\tilde{\alpha}}u(g)|\leq
I+II.
\end{align*}
where
\begin{align*}
  \left\{\begin{aligned}
  I&:=\frac{1}{\Gamma(\tilde{\alpha})}\int_{0}^{\varepsilon}
       t^{\tilde{\alpha}-1}|e^{-t\sqrt{\mathcal{L}}}u(g)|dt,\\
  II&:=\frac{1}{\Gamma(\tilde{\alpha})}\int_{\varepsilon}^{\infty}
       t^{\tilde{\alpha}-1}|e^{-t\sqrt{\mathcal{L}}}u(g)|dt.
  \end{aligned}\right.
\end{align*}
The first term $I$ is controlled by the estimate
$$I\leq \frac{1}{\Gamma(1+\tilde{\alpha})}\mathcal{M}u(g)\varepsilon^{\tilde{\alpha}}=C(\tilde{\alpha})\mathcal{M}u(g)
\varepsilon^{\tilde{\alpha}}.$$
For the second term $II,$ we know from Proposition \ref{Poi} (iii) (applied with $s=1/2$) that
$$|e^{-t\sqrt{\mathcal{L}}}u(g)|\leq C(p,Q) t^{-Q/p}\|u\|_{L^{p}(\mathbb{G}^{n}_{\alpha})}.$$
Therefore, we obtain
$$II\leq \frac{C(p,Q)}{\Gamma(\tilde{\alpha})}\|u\|_{L^{p}(\mathbb{G}^{n}_{\alpha})}\int_{\varepsilon}^{\infty}
t^{\tilde{\alpha}-1-Q/p}dt=C(\tilde{\alpha},p,Q)\|u\|_{L^{p}(\mathbb{G}^{n}_{\alpha})}
\varepsilon^{\tilde{\alpha}-{Q}/{p}},$$
where we have used the fact that $p\in[1,Q/\tilde{\alpha}).$
This proves (\ref{Frac-ineq}).

In what follows we prove that (i) holds. Without loss of generality,
we assume that $u\in L^{1}(\mathbb{G}^{n}_{\alpha})$ with
$\|u\|_{L^{1}(\mathbb{G}^{n}_{\alpha})}\neq0.$ For any given
$\lambda>0,$ we choose $\varepsilon>0$ such that
$$C(\tilde{\alpha},1,Q)\|u\|_{L^{1}(\mathbb{G}^{n}_{\alpha})}
\varepsilon^{\tilde{\alpha}-Q}=\lambda.$$ Therefore, it follows from
(\ref{Frac-ineq}) with $p=1$ and Lemma \ref{stro-weak} (i) that
\begin{align*}
\Big|\Big\{g\in\mathbb{G}^{n}_{\alpha}:\ |\mathscr{I}_{\tilde{\alpha}}u(g)|>2\lambda\Big\}\Big|&\leq
\Big|\Big\{g\in\mathbb{G}^{n}_{\alpha}:\ C(\tilde{\alpha})\mathcal{M}u(g)\varepsilon^{\tilde{\alpha}}>\lambda\Big\}\Big|\\
&\leq \frac{C(\tilde{\alpha})\varepsilon^{\tilde{\alpha}}}{\lambda}\|u\|_{L^{1}(\mathbb{G}^{n}_{\alpha})}.
\end{align*}
Note that $$\varepsilon^{\tilde{\alpha}}=\Big(\frac{C(\tilde{\alpha},1,Q)\|u\|_{L^{1}(\mathbb{G}^{n}_{\alpha})}}{\lambda}\Big)
^{\tilde{\alpha}/(Q-\tilde{\alpha})}.$$ This proves (i).

To prove (ii), we assume that $p\in(1,Q/\tilde{\alpha}).$
We minimize the right-hand side of the inequality (\ref{Frac-ineq}) by choosing $\varepsilon$ satisfying
$$\varepsilon=\bigg(\frac{(Q-\tilde{\alpha}p)C(\tilde{\alpha},p,Q)\|u\|_{L^{p}(\mathbb{G}^{n}_{\alpha})}}
{\tilde{\alpha}pC(\tilde{\alpha})\mathcal{M}u(g)}\bigg)^{p/Q}$$ and
for this choice of $\varepsilon, $  we obtain
$$|\mathscr{I}_{\tilde{\alpha}}u(g)|\leq C(\tilde{\alpha},p,Q)\mathcal{M}u(g)^{1-{\tilde{\alpha}p}/{Q}}
\|u\|_{L^{p}(\mathbb{G}^{n}_{\alpha})}^{{\tilde{\alpha}p}/{Q}}.$$
Combining this with Lemma \ref{stro-weak} (ii), we conclude that
 Proposition \ref{HLS} (ii) is valid.
\end{proof}

\begin{proof}[Proof of Theorem \ref{Sobolev}]
From Lemma \ref{FF}, we know that for any $g\in\mathbb{G}^{n}_{\alpha},$
$$|u(g)|=|\mathscr{I}_{2s}(\mathcal{L}^{s}u(g))|.$$
We use the above equality and   Proposition \ref{HLS} to obtain the
desired result.
\end{proof}

\begin{proof}[Proof of Corollary \ref{propo2.6}]
The desired result can be directly derived from Corollary \ref{BW} and Theorem \ref{Sobolev}.
\end{proof}

%
%
%
%
%
%


%
%


\begin{thebibliography}{}



\bibitem{A1998}
D. Allcock,   An isoperimetric inequality for the Heisenberg groups,
Geom. Funct. Anal. {\bf8} (1998), no. 2, 219--233.

\bibitem{AL1989}
F. Almgren,  E. Lieb,  Symmetric decreasing rearrangement is
sometimes continuous, J. Amer. Math. Soc. {\bf 2} (1989), no. 4
683--773.


\bibitem{ABC2020}
P. Alonso-Ruiz, F.  Baudoin, L.  Chen,  L. Rogers,  N.
Shanmugalingam,  A. Teplyaev,   Besov class via heat semigroup on
Dirichlet spaces I: Sobolev type inequalities, J. Funct. Anal.
{\bf278} (2020), no. 11, 108459, 48 pp.

\bibitem{ABCRST2020}
P. Alonso-Ruiz, F.  Baudoin, L.  Chen,  L. Rogers,  N.
Shanmugalingam,  A. Teplyaev, Besov class via heat semigroup on
Dirichlet spaces II: BV functions and Gaussian heat kernel
estimates, Calc. Var. Partial Differential Equations {\bf59} (2020),
no. 3, Paper No. 103, 32 pp.

\bibitem{ABCRST2021}
P. Alonso-Ruiz, F.  Baudoin, L.  Chen,  L. Rogers,  N.
Shanmugalingam,  A. Teplyaev, Besov class via heat semigroup on
Dirichlet spaces III: BV functions and sub-Gaussian heat kernel
estimates, Calc. Var. Partial Differential Equations {\bf60} (2021),
no. 5, Paper No. 170, 38 pp.




\bibitem{A2002}
L. Ambrosio,   Fine properties of sets of finite perimeter in
doubling metric measure spaces, Set-Valued Anal. {\bf10} (2002), no.
2-3, 111--128.

\bibitem{B}
A. Balakrishnan,   Fractional powers of closed operators and the
semigroups generated by them,  Pac. J. Math. {\bf10} (1960),
419--437.


\bibitem{Bar}
M. Barlow,   Diffusion on fractals, Lectures on Probability and
Statistics, 1--121.  Lecture Notes in Mathematics no. 1690, Springer
Verlag, Berlin,  1998.


\bibitem{bor}
C. Borell,  The Brunm-Minkowski inequality in Gauss space, Invent.
Math. {\bf 30} (1975), 207--216.



\bibitem{BBM}
J. Bourgain, H.  Brezis,  P. Mironescu,   Another look at Sobolev
spaces,  Optimal Control and Partial Differential Equations,
439--455. IOS Press, Amsterdam, 2001.

\bibitem{BGT2022}
F. Buseghin, N.  Garofalo,  G. Tralli,   On the limiting behaviour
of some nonlocal seminorms: a new phenomenon,  Ann. Sc. Norm. Super.
Pisa Cl. Sci. (5)  {\bf23} (2022), no. 2, 837--875.


\bibitem{CRS2010}
L. Caffarelli, J.-M.  Roquejoffre,  O. Savin,   Nonlocal minimal
surfaces,  Commun. Pure Appl. Math. {\bf 63} (2010), no. 9,
1111--1144.

\bibitem{CG2022}
J. Cao,  A. Grigor'yan,   Heat kernels and Besov spaces on metric
measure spaces,  J. Anal. Math. {\bf 148} (2022), no. 2, 637--680.

\bibitem{CDG1994}
L. Capogna,  D. Danielli, N.  Garofalo,   An isoperimetric
inequality and the geometric Sobolev embedding for vector fields,
Math. Res. Lett. {\bf 1} (1994), no. 2,  263--268.

\bibitem{CDPT2007}
L. Capogna,  D. Danielli, S.  Pauls, J.  Tyson,   An introduction to
the Heisenberg group and the sub-Riemannian isoperimetric problem,
Progr.  Math. 259,  Birkh\"{a}user Verlag, Basel,
 2007.

\bibitem{car}
E. Carlen, C. Kerce,   On the cases of equality in Bobkovis
inequality and Gaussian rearrangement,
 Calc. Var. Partial Differential Equations. {\bf 13} (2001), no. 1,  1--18.










\bibitem{D1990}
E. Davies,  Heat kernels and spectral theory, Cambridge Tracts in
Math.  92, Cambridge University Press, Cambridge, 1990.


\bibitem{D1956}
N. Dunford,  J. Schwartz,   Convergence almost everywhere of
operator averages, J. Rational Mech. Anal. {\bf 5} (1956), 129--178.

\bibitem{EN}
K.-J. Engel,  R. Nagel,   A short course on operator semigroups,
Universitext. Springer, New York, 2006.

\bibitem{F2016}
V. Franceschi,  Sharp and quantitative isoperimetric inequalities in
Carnot-Carath\'{e}odory Spaces. (2016),
http://cvgmt.sns.it/paper/3130/,  cvgmt preprint.

\bibitem{FM2016}
V. Franceschi,  R. Monti,   Isoperimetric problem in $H$-type groups
and Grushin spaces, Rev. Mat. Iberoam.
 {\bf 32} (2016), no. 4, 1227--1258.

 \bibitem{F2003}
B. Franchi,   $BV$ spaces and rectifiability for
Carnot-Carath\'{e}odory metrics: an introduction, Czechoslovak
Academy of Sciences, Mathematical Institute, Prague, (2003),
72--132.


\bibitem{FL}
B. Franchi,  E. Lanconelli,   Une m\'{e}rtrique associ\'{e}e \`{a}
une classe d'op\'{e}rateurs elliptiques d\'{e}rg\'{e}n\'{e}r\'{e}s,
A metric associated with a class of degenerate elliptic operators,
Rend. Sem. Mat. Univ. Politec. Torino  (1983), 105--114.


\bibitem{FS2008}
R. Frank,  R. Seiringer,   Non-linear ground state representations
and sharp Hardy inequalities, J. Funct. Anal. {\bf 255} (2008), no.
12, 3407--3430.


  \bibitem{FMM2011}
N. Fusco,  V. Millot, M.  Morini,   A quantitative isoperimetric
inequality for fractional perimeters, J. Funct. Anal.
 {\bf 261} (2011), no. 3, 697--715.







  \bibitem{G2020}
N. Garofalo,   On the best constant in the nonlocal isoperimetric
inequality of Almgren and Lieb,  Atti Accad. Naz. Lincei Rend.
Lincei Mat. Appl. {\bf 31} (2020), no. 2, 465--470.

\bibitem{GN1996}
N. Garofalo,  D-M. Nhieu,   Isoperimetric and Sobolev inequalities
for Carnot-Carath\'{e}odory spaces and the existence of minimal
surfaces,  Commun. Pure Appl. Math. {\bf 49} (1996), no. 10,
1081--1144.

\bibitem{GT2020}
N. Garofalo, G.  Tralli,   Nonlocal isoperimetric inequalities for
Kolmogorov-Fokker-Planck operators, J. Funct. Anal. {\bf 279}
(2020), no. 3, 108591, 40 pp.

\bibitem{GT2021}
N. Garofalo, G.  Tralli, Hardy-Littlewood-Sobolev inequalities for a
class of non-symmetric and non-doubling hypoelliptic semigroups,
Math. Ann. {\bf 383} (2022), no. 1-2, 1--38.

\bibitem{AGT2020}
N. Garofalo, G.  Tralli, A Bourgain-Brezis-Mironescu-D\'{a}vila
theorem in Carnot groups of step two,  arXiv: 2004.08529,  to appear
in Comm. Anal. Geom.

\bibitem{AGT2022}
N. Garofalo, G.  Tralli, A universal heat semigroup characterisation
of Sobolev and BV spaces in Carnot groups,  arXiv: 2205.04574.



\bibitem{De1954}
E. De Giorgi,   Su una teoria generale della misura
$(r-1)$-dimensionale in uno spazio ad $r$ dimensioni,  Ann. Mat.
Pura Appl. (4) {\bf 36} (1954), 191--213.

\bibitem{De1958}
E. De Giorgi,   Sulla propriet\`{a} isoperimetrica dell'ipersfera,
nella classe degli insiemi aventi frontiera orientata di misura
finita,  Atti Accad. Naz. Lincei Mem. Cl. Sci. Fis. Mat. Natur. Sez.
Ia   (8) {\bf5} (1958), 33--44.


\bibitem{gri}
 A. Grigor'yan,
 Heat kernels and function theory on metric measure spaces,  Contemp. Math. {\bf 338 } (2003), 143--172.

\bibitem{GL2015}
A. Grigor'yan, L.  Liu,   Heat kernel and Lipschitz-Besov spaces,
Forum Math. {\bf 27} (2015), no. 6, 3567--3613.


\bibitem{G1970}
V. Gru\u{s}in,   A certain class of hypoelliptic operators, Mat. Sb.
(N. S.)  {\bf 83} (125) (1970), 456--473.

\bibitem{G1971}
V. Gru\u{s}in,  A certain class of elliptic pseudodifferential
operators that are degenerate on a submanifold, Mat. Sb. (N. S.)
{\bf 84} (126) (1971), 163--195.


\bibitem{J}
N. Jacob,   Pseudo differential operators and Markov processes. Vol.
I. Fourier analysis and semigroups,  Imperial College Press, London,
2001.



%


\bibitem{L1994}
M. Ledoux,   Semigroup proofs of the isoperimetric inequality in
Euclidean and Gauss space, Bull. Sci. Math. {\bf 118} (1994), no. 6,
485--510.

\bibitem{L2018}
A. Lunardi,   Interpolation Theory, third edition, Appunti. Scuola
Normale Superiore di Pisa (Nuova Serie) (Lecture Notes. Scuola
Normale Superiore di Pisa (New Series)), vol. 16, Edizioni della
Normale, Pisa, 2018.




\bibitem{MZ1997}
J. Mal\'{y},  W. Ziemer,   Fine regularity of rolutions of elliptic
partial differential equations,  Math. Surveys Monogr.  51, American
Mathematical Society, Providence, RI, 1997.




\bibitem{maz}
J. Maz\'{o}n, J. Rossi, J.   Toledo,   Nonlocal perimeter, curvature
and minimal surfaces for measurable sets, Front. Math.,
Birkh\"{a}user/Springer, Cham, 2019.

\bibitem{M2010}V. Maz'ya,   Sobolev spaces with applications to elliptic partial differential
equations,
Second, revised and augmented edition. Grundlehren Math. Wiss. 342,
[Fundamental Principles of Mathematical Sciences] Springer,
Heidelberg, 2011.

\bibitem{MS}
V. Maz'ya,  T. Shaposhnikova,
  On the Bourgain, Brezis, and Mironescu theorem concerning limiting
embeddings of fractional Sobolev spaces, J. Funct. Anal. {\bf 195 }
(2002), no. 2, 230--238.


\bibitem{MS1}V. Maz'ya,  T. Shaposhnikova,
 Erratum to: ``On the Bourgain, Brezis and Mironescu theorem
 concerning limiting embeddings of fractional Sobolev spaces'' [J. Funct. Anal. 195 (2002), no. 2, 230--238;
 MR1940355],  J. Funct. Anal. 201 (2003), no. 1, 298--300.

\bibitem{miao}
C.  Miao,  B. Yuan,   B. Zhang,
 Well-posedness of the Cauchy problem for the fractional power dissipative
 equations,
 Nonlinear Anal. {\bf 68 } (2008), no. 3, 461--484.




\bibitem{M2003}
Jr. Miranda M,   Functions of bounded variation on ``good" metric
spaces, J. Math. Pures Appl.  {\bf82} (2003), no. 8, 975--1004.

\bibitem{Monti}R. Monti,  E.  Cassano,  Surface measures in Carnot-Carath\'{e}odory
spaces,
 Calc. Var. Partial Differential Equations {\bf 13} (2001), no. 3, 339--376.

\bibitem{MM}R.
Monti, D.  Morbidelli,   Isoperimetric inequality in the Grushin
plane, J. Geom. Anal. {\bf14} (2004), no. 2,  355--368.

\bibitem{MPPP2007}
M. Miranda Jr,  D. Pallara,  F.  Paronetto, M. Preunkert, Short-time
heat flow and functions of bounded variation in $\mathbb{R}^{N}$,
Ann. Fac. Sci. Toulouse Math. (6) {\bf16}  (2007), no. 1,  125--145.







\bibitem{P2010}
K. Pietruska-Pa{\l}uba,   Heat kernel characterisation of
Besov-Lipschitz spaces on metric measure spaces, Manuscr. Math.
{\bf131} (2010), no. 1-2, 199--214.


\bibitem{P2004}
M. Preunkert,   A semigroup version of the isoperimetric inequality,
Semigroup Forum {\bf68} (2004), no. 2,  233--245.


\bibitem{RS1}
D. Robinson, A.  Sikora,   Analysis of degenerate slliptic operators
of Gru\v{s}in type, Math. Z. {\bf 260} (2008), no. 3, 475--508.

\bibitem{RS2}
D. Robinson, A.  Sikora, The limitations of the Poincar\'{e}
inequality for Gru\v{s}in type operators, J. Evol. Equ. {\bf 14}
(2014), no. 3, 535--563.



\bibitem{S2002}
L. Saloff-Coste,    Aspects of Sobolev-type inequalities, London
Math. Soc. Lecture Note Ser.  289, Cambridge University Press,
Cambridge, 2002.

\bibitem{SW2010}
A. Santiago,  M. Warma,   A class of quasi-linear parabolic and
elliptic equations with nonlocal Robin boundary conditions, J. Math.
Anal. Appl. {\bf372} (2010), no. 1, 120--139.

\bibitem{sch}
J. Scheuor, C.  Xia,   Locally constrained inverse curvature flows,
Trans. Amer. Math. Soc. {\bf 372} (2019), no. 10, 6771-6803.


\bibitem{S1970}
E.M. Stein,   Topics in harmonic analysis related to the
Littlewood-Paley theory,  Ann. of Math. Stud.  No. 63,  Princeton
University Press, Princeton, NJUniversity of Tokyo Press, Tokyo,
1970.


\bibitem{val}

E. Valdinoci,   A fractional framework for permeters and phsae
transitions,  Milan J. Math. {\bf 81} (2013), no. 1,  1-23.
%

\bibitem{V1985}
N. Varopoulos,   Hardy-Littlewood theory for semigroups, J. Funct.
Anal.  {\bf63} (1985), no. 2,  240--260.

\bibitem{V1991}
A. Visintin,  Generalized coarea formula and fractal sets, Japan J.
Indust. Appl. Math.  {\bf8} (1991), no. 2, 175--201.

\bibitem{W2015}
M. Warma,  The fractional relative capacity and the fractional
Laplacian with Neumann and Robin boundary conditions on open sets,
Potential Anal. {\bf42} (2015), no. 2, 499--547.

\bibitem{W}
J-M. Wu,   Geometry of Grushin spaces, Illinois  J. Math. {\bf59}
(2015), no.1,  21--41.




\bibitem{ZL2023}
N. Zhao,   Y. Liu,   Nonnegative solutions of a fractional
differential inequality on Grushin spaces and nilpotent Lie groups,
Forum Math. {\bf 35} (2023),  no. 1, 123--145.
\end{thebibliography}


\end{document}